\documentclass[11pt]{article}

\usepackage[a4paper,margin=1in]{geometry}
\usepackage{amsmath,amssymb,amsthm,mathtools}
\usepackage{enumitem}
\usepackage{array}
\usepackage{cite}
\usepackage{tabularx}
\usepackage{ragged2e}
\usepackage{hyperref}

\hypersetup{
  colorlinks=true,
  linkcolor=blue,
  citecolor=blue,
  urlcolor=blue
}

\numberwithin{equation}{section}

\newtheorem{theorem}{Theorem}[section]
\newtheorem{proposition}[theorem]{Proposition}
\newtheorem{lemma}[theorem]{Lemma}
\newtheorem{corollary}[theorem]{Corollary}
\newtheorem{definition}[theorem]{Definition}
\newtheorem{remark}[theorem]{Remark}
\newtheorem{maintheorem}{Theorem}

\newcommand{\T}{\mathbb T}
\newcommand{\R}{\mathbb R}
\newcommand{\Id}{I}
\newcommand{\Spp}{\mathbb S_{++}}
\newcommand{\Sym}{\mathbb S}
\newcommand{\tr}{\operatorname{tr}}
\newcommand{\divv}{\operatorname{div}}

\newcommand{\Log}{\operatorname{Log}}
\newcommand{\dd}{\,dx}

\newcommand{\calE}{\mathcal E}
\newcommand{\calM}{\mathcal M}
\newcommand{\calN}{\mathcal N}

\newcommand{\calA}{\mathcal A}
\newcommand{\calD}{\mathcal D}
\newcommand{\calK}{\mathcal K}

\newcommand{\eps}{\varepsilon}
\newcommand{\avg}[1]{\left\langle #1\right\rangle}
\newcommand{\norm}[1]{\left\lVert #1\right\rVert}
\newcolumntype{Y}{>{\RaggedRight\arraybackslash}X}
\title{Residual-Work Compatibility Criteria and Defect-Measure Compactness\\
for Positive-Cone Viscoelastic Reynolds States}

\author{Sai Peng\\
School of Mathematics and Computational Science, Xiangtan University\\
\texttt{pscfd@xtu.edu.cn}}
\date{}

\begin{document}
\maketitle

\begin{abstract}
We prove a residual-work compatibility theory for positive-cone viscoelastic
Reynolds states.  For Oldroyd--B, the entropy cancellation removes
incompressible transport and upper-convected stretching against polymeric stress
work.  After quotienting pressure tensors and spatial means, positive
pressure-free work can be paid only by the conformation residual through the
entropy-dual lever $G=I-A^{-1}$.  This identifies the closed residual-work cone
\[
  P+\frac{\alpha}{2}\int_{\mathbb T^d}G:S\,dx\le0,
\]
exact windowed tests, and the least-cost Hilbert-space repair; constrained
closures pay through the projected lever.  Strong residual-data limits preserve
the cone, whereas weak--weak limits are closed only after retaining the
product-defect measure carried by $G:S$, and this augmented topology is sharp
via localized residual packets.  For FENE-P the entropy-dual lever
$G_b(C)=\frac{b-d}{b-\operatorname{tr}C}I-C^{-1}$ makes the
finite-extensibility boundary a genuine residual-work boundary.
\end{abstract}

\noindent\textbf{2020 Mathematics Subject Classification.}
35Q35, 76A10, 35B44, 35B65, 35Q30.

\noindent\textbf{Keywords.}
Oldroyd--B system; FENE-P system; positive cone; Reynolds defects; residual
work; entropy-dual variables; log-conformation; energy--entropy admissibility;
finite extensibility.

\noindent\textbf{Short title.}
Residual-Work Compatibility.

\noindent\textbf{Correspondence.}
Sai Peng, School of Mathematics and Computational Science, Xiangtan
University, Xiangtan, China. Email: \texttt{pscfd@xtu.edu.cn}.

\section{Introduction}

The stress-diffusion-free Oldroyd--B and FENE-P systems contain a geometric
constraint that is absent from ordinary Reynolds-stress formulations: the
conformation tensor must remain in a positive cone, and in FENE-P it must also
stay below the finite-extensibility boundary.  In exact smooth solutions this
geometry is encoded by the entropy.  In relaxed descriptions, reduced models,
or coarse-grained Reynolds states, it imposes an additional restriction on the
defects: positive pressure-free work produced by the momentum residual must be
paid through the conformation residual in a direction seen by the entropy-dual
lever.

This paper isolates that residual-level restriction.  Its object is residual
admissibility rather than strong-solution continuation.  The pressure-quotient
and active-deviatoric cancellation mechanisms used in endpoint continuation
for smooth two-dimensional solutions \cite{PengEndpoint2026} belong to that
separate theory.  The present paper addresses the following question:

\begin{quote}
\emph{Given a smooth positive-cone Reynolds profile and its residuals, when is
its pressure-free work compatible with the Oldroyd--B or FENE-P
energy--entropy inequality?}
\end{quote}

The answer is a sharp signed work law.  For Oldroyd--B the entropy density is
\[
  \Phi(A)=\operatorname{tr}A-\log\det A-d,
\]
and the associated entropy-dual lever is
\[
  G(A)=D_A\Phi(A)=I-A^{-1}.
\]
After pressure and spatial-mean modes have been removed from the momentum
defect, admissibility forces the positive work to be balanced by the
conformation residual through the pairing \(-\int G:\mathcal S\).  This is not
an energy estimate with slack: the corresponding Hilbert-space cost is exact,
and equality identifies the unique aligned paying direction.

The theory is designed for residual traces produced by filtering,
coarse-graining, closure modeling, and a posteriori numerical reconstruction.
Given resolved fields and residuals, one forms the pressure-free work \(P\),
the entropy lever \(G\), and the conformation residual \(S\).  The criterion
then answers a concrete question: can this residual trace satisfy the
viscoelastic energy--entropy ledger after pressure and mean gauges have been
removed?  When the answer is no, the same theorem returns the exact
Hilbert-space repair and, for restricted closures, the projected direction
through which the closure must pay the work.

This point of view gives an operational bridge between the mathematics and
computations.  In a DNS-filtered or coarse-grid data set, \(P,G,S\) are computed
from the numerical residuals.  The finite-thickness shear layer in
Proposition~\ref{prop:concrete-layer} is used for a different purpose: it is a
calibration benchmark for the diagnostic, with explicit slopes for the work,
lever, residual, and violation ratio.  It verifies that the residual-work
postprocessor is measuring the right quantities before those quantities are
applied to a genuinely filtered simulation or closure data set.

The proof has two layers.  The PDE layer is the Oldroyd--B/FENE-P
energy--entropy cancellation: incompressible transport drops out of the entropy,
upper-convected stretching cancels polymeric stress work, and the remaining
conformation defect is paired with the entropy-dual lever.  This identifies the
unique channel that can pay pressure-free viscoelastic residual work.  The
geometric layer then treats the resulting signed budget as a Hilbert
half-space, proves the sharp metric projection, and determines the compactness
topology needed to preserve the product \(G:S\).  The final weak-solution
subsection formulates the corresponding packet-lifting problem as an explicit
open problem.

\subsection*{Relation to existing work and to the companion continuation theory}
The Oldroyd--B model originates in Oldroyd's constitutive theory and is a
standard differential model for dilute polymeric fluids; see also the classical
polymeric-fluid texts and analyses
\cite{Oldroyd,Bird,Renardy,GuillopeSaut,LionsMasmoudi}.
FENE-P introduces the finite-extensibility boundary and the Peterlin spring
factor; the role of this boundary is familiar from both numerical and analytic
studies of finitely extensible models \cite{Bird,ChilcottRallison1988,MasmoudiFENE}.
The log-conformation representation was introduced in numerical rheology to
preserve positivity at high Weissenberg number
\cite{FattalKupferman,FattalKupferman2005}; here it serves only as a coordinate
system for positive-cone Reynolds states.

The present paper is also distinct from direct continuation criteria for
stress-diffusion-free viscoelastic flows.  Such criteria monitor strong
solutions through velocity, stress, vorticity, or endpoint Besov quantities
\cite{BealeKatoMajda,CheminMasmoudi,LeiMasmoudiZhou}.  Stress diffusion and
small-data mechanisms lead to different regularity theories
\cite{BarrettBoyaval,ConstantinKliegl,ElgindiRousset,TuWangWen}.  A companion
manuscript by the author \cite{PengEndpoint2026} develops a pressure-quotient
and active-deviatoric endpoint continuation theory for smooth two-dimensional
Oldroyd--B and FENE-P solutions.  There the central problem is whether a smooth
solution can be continued once a velocity clock and the conformation geometry
are controlled.  Here the central problem is instead whether a prescribed
Reynolds-level residual profile is compatible with the energy--entropy
inequality after pressure and mean modes have been removed.  The two theories
therefore use overlapping geometric variables but answer different questions:
continuation controls genuine smooth solutions, while the present paper gives
sharp necessary and metric conditions for residual admissibility.  Finally, our
use of Reynolds defects is closer in spirit to admissibility and relaxation
questions for fluid equations \cite{DeLellisSzekelyhidi} and to the
oscillation/concentration bookkeeping of defect-measure compactness
\cite{DiPernaMajda,LionsIncompressible,FeireislBook} than to a classical
strong-solution continuation criterion.

\subsection*{Comparison with nearby theories}
The following table records the separation that is used throughout the paper.
It is included to make the scope of the present residual-data theorem explicit.

\begin{center}
\small
\renewcommand{\arraystretch}{1.12}
\begin{tabularx}{\textwidth}{YYY}
\hline
\textbf{Smooth continuation} & \textbf{Unconstrained Reynolds defects} &
\textbf{Positive-cone residual-work theory} \\
\hline
Exact smooth solutions are monitored by endpoint, vorticity, stress, or
conformation controls. & Momentum Reynolds stresses are relaxed variables;
conformation positivity and entropy payment are usually external constraints. &
A resolved or filtered positive-cone profile is tested through its signed
pressure-free residual work. \\
\hline
The output is a continuation criterion or a loss of continuation control. &
Weak limits may create stresses, concentrations, or product defects. & The
output is a computable ledger: cone membership, least-cost repair,
closure-subspace obstruction, and defect-measure compactification for weak
limits. \\
\hline
\end{tabularx}

\end{center}

\subsection*{Physical reading and numerical use}
The residual-work criterion has a direct physical interpretation.  The
momentum residual may inject positive pressure-free power.  Entropy
admissibility says that this power cannot disappear into pressure or mean
modes; it must be paid by a negative projection of the conformation residual
onto the entropy-dual lever.  The principal quantities used throughout the
paper may be read as follows.

\begin{center}
\small
\renewcommand{\arraystretch}{1.18}
\begin{tabularx}{\textwidth}{>{\RaggedRight\arraybackslash}p{0.13\textwidth}YY}
\hline
\textbf{Symbol} & \textbf{Residual-work role} & \textbf{Physical reading} \\
\hline
\(P\) & pressure-free momentum-residual work & the component of the coarse-grid
or Reynolds momentum defect that does work against the strain after pressure
and spatial-mean gauges have been removed \\
\(G=I-A^{-1}\) & entropy-dual lever & the conformation direction visible to
entropy; near equilibrium the lever is small, so paying positive work is
expensive \\
\(S\) & conformation residual & the unresolved conformation-equation defect;
only its projection onto \(-G\) pays positive pressure-free work \\
\(\eta_E\) & alignment efficiency & a measured number in \([0,1]\), not a
material parameter; it records how efficiently \(S\) points in the paying
direction over the window \(E\) \\
\(W_E,L_E,S_E\) & windowed work, lever, and residual sizes & the observable
channels in \(W_E\le(\alpha/2)\eta_E L_ES_E\); in certificates,
\(W_E^-\) is a certified lower bound and \(L_E^+,S_E^+\) are certified upper
bounds \\
\(\mu\) & weak product-defect measure & the hidden average of oscillatory
\(G:S\) products lost under weak--weak convergence \\
\hline
\end{tabularx}
\end{center}

\begin{figure}[h!]
\centering
\fbox{\begin{minipage}{0.88\textwidth}
\[
\begin{gathered}
\text{momentum residual }R
\ \longrightarrow\
P=\left[-\int R:D(u)\,dx\right]_{\rm pf}^{+}>0,\\[0.25em]
P+\frac{\alpha}{2}\int_{\T^d}G:S\,dx\le0,
\qquad
P>0\ \Longrightarrow\
-\int_{\T^d}G:S\,dx\ge\frac{2P}{\alpha},\\[0.25em]
\text{conformation residual }S
\text{ pays only through its projection onto }-G .
\end{gathered}
\]
If the last projection is absent, too small, or unresolved by the closure
subspace, the positive pressure-free work is inadmissible.
\end{minipage}}
\caption{Residual-work budget.  Pressure modes do not pay the ledger.  Positive
pressure-free work generated by the momentum residual must be balanced by the
conformation residual through the entropy-dual lever.}
\label{fig:residual-work-budget}
\end{figure}

\paragraph{A posteriori residual diagnostic for numerical data.}
The criterion can be used as a postprocessing test for a discrete Oldroyd--B,
FENE-P, or coarse-grained closure calculation.  As a postprocessor, it diagnoses
whether the residuals produced by a scheme or a closure are compatible with the
energy--entropy ledger.
\begin{enumerate}[label=\textbf{D\arabic*.},leftmargin=*]
\item On each stored time level, load or reconstruct
\((u_h,A_h,R_h,S_h)\).  Check positive-cone admissibility of \(A_h\), or use a
log-conformation representation to compute \(A_h^{-1}\) stably.
\item Compute the entropy lever: \(G_h=I-A_h^{-1}\) for Oldroyd--B, or
\(G_{b,h}=\frac{b-d}{b-\operatorname{tr}C_h}I-C_h^{-1}\) for FENE-P.
\item Remove the momentum gauges.  Equivalently, replace \(R_h\) by its
pressure-free representative, since isotropic pressure tensors and spatial
means do no work against \(D(u_h)\).  Then evaluate
\[
  P_h(t_n)=\left[-\int_{\T^d}R_h:D(u_h)\,dx\right]_{\rm pf},
  \qquad
  K_h(t_n)=\int_{\T^d}G_h:S_h\,dx .
\]
\item Form the signed defect gap
\[
  q_h(t_n)=P_h(t_n)+\frac{\alpha}{2}K_h(t_n).
\]
A resolved positive gap means that the discrete residual ledger is not
admissible at that time.
\item On a time window \(E\), compute
\[
  W_{E,h}=\int_E[P_h]_+\,dt,
  \qquad
  L_{E,h}=\|G_h\|_{L^2(E\times\T^d)},
  \qquad
  S_{E,h}=\|S_h\|_{L^2(E\times\T^d)},
\]
and either compute the alignment \(\eta_{E,h}\) or use a certified upper bound
\(\bar\eta_E\le1\).  The conservative exclusion test is
\[
  W_{E,h}^- > \frac{\alpha}{2}\bar\eta_E L_{E,h}^+S_{E,h}^+ +
  \delta_E^{\rm data},
\]
where the superscripts denote certified lower/upper bounds and
\(\delta_E^{\rm data}\) is the quadrature, projection, and residual-reconstruction
uncertainty.
\item If the data fail the test, compute the metric repair
\[
  S_h^{\rm rep}=S_h-
  \frac{2[q_h]_+}{\alpha\|G_h\|_{L^2}^2}G_h,
\]
or, for a restricted closure space \(\mathcal V_h\), replace \(G_h\) by
\(\Pi_{\mathcal V_h}G_h\).  The norm of this correction is a diagnostic measure
of how far the closure is from the admissible residual class.
\item If the gap is smaller than the certified data uncertainty, the diagnostic
returns an unresolved band.  In coarse-grid computations this separates
resolved violations from gaps below the residual-error floor.
\end{enumerate}
This workflow turns the abstract half-space theorem into a finite-dimensional
ledger test: compute the work, compute the lever, compute the paying projection,
and compare the resulting gap with the numerical uncertainty.

\subsection*{Four core contributions}
For readability the detailed statements below retain theorem-level navigation,
but the logical contribution is organized around four pillars.  First, the
Oldroyd--B entropy cancellation produces a pressure-free residual-work cone and
an exact Hilbert-space repair problem.  Second, the repair direction is shown to
be compatible with local positive-cone conformation dynamics under strict
slack.  Third, FENE-P is obtained through the correct entropy-dual lever, whose
blow-up identifies finite extensibility as a boundary residual-work phenomenon.
Fourth, strong residual-data closedness, topology sharpness, product-defect
compactification, and localized residual packets give the sharp compactness and
converse statements available at the residual-data level.

\subsection*{Core result 1: entropy-dual residual-work identity and cone}

\begin{maintheorem}[Defect-work identity and pressure-free reduction]
A positive-cone Oldroyd--B--Reynolds state is a smooth tuple
\((u,p,B,R,S)\), with \(A=e^B\), satisfying the momentum and conformation
equations with symmetric defects \(R\) and \(S\).  The free energy obeys
\[
\begin{aligned}
  \frac{d}{dt}\mathcal E(t)
  &+\nu\int_{\mathbb T^d}|\nabla u|^2\,dx
  +\frac{\alpha}{2\lambda}
   \int_{\mathbb T^d}\operatorname{tr}(A+A^{-1}-2I)\,dx \\
  &=
  -\int_{\mathbb T^d}R:D(u)\,dx
  +\frac{\alpha}{2}\int_{\mathbb T^d}(I-A^{-1}):S\,dx .
\end{aligned}
\]
Pressure tensors and spatial means do not contribute to the physical work.
Consequently the momentum defect has a canonical pressure-free representative
and a gauge-invariant work \(P(t)\).  This reduction is the starting point for
the sharp criterion.
\end{maintheorem}

\paragraph{Sharp cone and metric repair.}

The terminology in the following statement is made precise in
Sections~\ref{sec:notation}--\ref{sec:matching}.

\begin{maintheorem}[Residual-work compatibility criterion]
\label{thm:main-obstruction}
Let \((u,p,B)\) be a residual-matching profile on a time interval \(I\), and
let \(P(t)\) be its pressure-free work in
Definition~\ref{def:pressure-free-work}.  Set
\[
  A=e^B,\qquad G=I-A^{-1},\qquad
  K(t)=\int_{\mathbb T^d}G(t):\mathcal S[u,B](t)\,dx .
\]
If the associated residual-matching state is energy--entropy admissible, then
the pointwise signed-work inequality
\[
  P(t)+\frac{\alpha}{2}K(t)\le0
  \qquad\hbox{for a.e. }t\in I
\]
holds.  Equivalently, the pair \((P,K)\) belongs to the closed convex cone
\[
  \mathfrak C_E
  =
  \left\{(P,K):P+\frac{\alpha}{2}K\le0\hbox{ a.e. on }E\right\}
\]
on every measurable \(E\subset I\).  In windowed form,
\[
  W_E\le \frac{\alpha}{2}\eta_E L_ES_E,
\]
where
\[
  \eta_{\mathrm{al}}(t)=
  \begin{cases}
  \displaystyle
  \frac{\left[-K(t)\right]_+}
       {\|G(t)\|_{L^2}\|\mathcal S[u,B](t)\|_{L^2}},
  &\|G(t)\|_{L^2}\|\mathcal S[u,B](t)\|_{L^2}>0,\\[1.2em]
  0,&\text{otherwise},
  \end{cases}
  \qquad
  \eta_E=\operatorname*{ess\,sup}_{t\in E}\eta_{\mathrm{al}}(t).
\]
Thus the core criterion is the pointwise signed-work inequality; the windowed
bound is its Cauchy--Schwarz consequence and its computable a posteriori form.
\end{maintheorem}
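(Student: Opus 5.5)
The plan is to read the criterion directly off the defect-work identity of the first main theorem, combined with the definition of energy--entropy admissibility for the residual-matching state. By Definition~\ref{def:pressure-free-work}, the pressure-free work $P(t)$ is the gauge-invariant value of $-\int_{\T^d}R:D(u)\,dx$. Pressure tensors $\pi I$ give $\pi\,\divv u=0$. Spatial means give $\int M:D(u)\,dx=0$ on the torus, because $D(u)$ is a derivative of a periodic field. So $P(t)$ may be substituted for the momentum work in the identity, and the residual-matching defects satisfy $R=\mathcal R[u,p,B]$ and $S=\mathcal S[u,B]$. The identity then becomes
\[
\frac{d}{dt}\mathcal E+\nu\|\nabla u\|_{L^2}^2+\frac{\alpha}{2\lambda}\int_{\T^d}\tr(A+A^{-1}-2I)\,dx
=P(t)+\frac{\alpha}{2}K(t).
\]

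Next I use admissibility. I take it (as I expect Section~\ref{sec:matching} to state) as the requirement that the state obey the exact Oldroyd--B energy--entropy inequality
\[
\frac{d}{dt}\mathcal E+\nu\|\nabla u\|^2_{L^2}+\frac{\alpha}{2\lambda}\int_{\T^d}\tr(A+A^{-1}-2I)\,dx\le0
\]
in the distributional sense on $I$. Since the profile is smooth, $\mathcal E$ is absolutely continuous, and the distributional inequality becomes a pointwise a.e. inequality. Comparing with the identity gives $P+\frac{\alpha}{2}K\le0$ a.e.

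The main obstacle is to make sure that admissibility is formulated so that the defect terms carry no slack. If it were formulated with dissipation retained only partially, I would instead use that $\tr(A+A^{-1}-2I)\ge0$ and $\nu\|\nabla u\|^2\ge0$. This shows that those terms cannot produce the work, so the inequality holds in the stated form.

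Membership in $\mathfrak C_E$ for every measurable $E\subset I$ is a restatement of the a.e. inequality. Closedness and convexity are immediate, since the defining constraint is linear and is preserved under a.e.-convergent subsequences of $L^1$-limits.

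For the windowed form, the pointwise inequality gives $[P(t)]_+\le\frac{\alpha}{2}[-K(t)]_+$. By the definition of $\eta_{\mathrm{al}}$,
\[
[-K(t)]_+=\eta_{\mathrm{al}}(t)\,\|G(t)\|_{L^2}\|\mathcal S(t)\|_{L^2}\le\eta_E\|G(t)\|_{L^2}\|\mathcal S(t)\|_{L^2}.
\]
This holds because $\eta_{\mathrm{al}}$ is bounded by $1$ via Cauchy--Schwarz, and in the degenerate case where $\|G(t)\|_{L^2}\|\mathcal S(t)\|_{L^2}=0$ we have $K=0$. Integrating over $E$ and applying Cauchy--Schwarz in time then gives
\[
W_E=\int_E[P]_+\,dt\le\frac{\alpha}{2}\eta_E\int_E\|G\|_{L^2}\|\mathcal S\|_{L^2}\,dt\le\frac{\alpha}{2}\eta_E L_ES_E,
\]
with $L_E=\|G\|_{L^2(E\times\T^d)}$ and $S_E=\|\mathcal S\|_{L^2(E\times\T^d)}$.
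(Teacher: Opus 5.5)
Your proposal is correct and follows the paper's proof. The pointwise inequality is Definition~\ref{def:defect-admissibility} with $-\int_{\T^d}R:D(u)\,dx$ replaced by $P(t)$ through Lemma~\ref{lem:gauge} and Proposition~\ref{prop:pressure-free}; your energy--entropy reading of admissibility is equivalent to that definition by Proposition~\ref{prop:defect-work}. The windowed bound is derived exactly as you do, from $[P]_+\le\frac{\alpha}{2}[-K]_+$, $\eta_{\mathrm{al}}\le\eta_E$ a.e., and Cauchy--Schwarz in time.

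One caution: your fallback remark is backwards. If admissibility dropped part of the non-negative dissipation, you would only get $P+\frac{\alpha}{2}K\le\nu\|\nabla u\|_{L^2}^2+\frac{\alpha}{2\lambda}\int_{\T^d}\tr(A+A^{-1}-2I)\,dx$, not $\le0$. It is harmless here only because the paper's definition is precisely the no-slack form.
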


\begin{proposition}[Optimal residual repair and Hilbert geometry]
\label{prop:intro-metric-repair}
At a fixed time, assume \(G\not\equiv0\).  Among all conformation residuals with
the same lever \(G\), the least \(L^2\)-size needed to pay positive work is
\[
  \frac{2[P]_+}{\alpha\|G\|_{L^2}},
\]
with minimizer
\[
  S_{\min}=
  -\frac{2P}{\alpha\|G\|_{L^2}^2}G
  \qquad(P>0).
\]
For a proposed residual \(S_0\), define
\[
  \mathfrak v=
  \left[P+\frac{\alpha}{2}\int_{\mathbb T^d}G:S_0\,dx\right]_+ .
\]
The nearest admissible residual is
\[
  \widehat S
  =
  S_0-\frac{2\mathfrak v}{\alpha\|G\|_{L^2}^2}G,
  \qquad
  \|\widehat S-S_0\|_{L^2}
  =
  \frac{2\mathfrak v}{\alpha\|G\|_{L^2}} .
\]
On time windows this pointwise projection gives the unique minimizer of the
global repair problem under the zero-lever and integrability conditions of
Theorem~\ref{thm:global-variational-projection}.  The only singular stratum is
\(G=0\) with positive work, where no finite conformation residual can pay the
ledger.
\end{proposition}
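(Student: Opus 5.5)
The proof works at a fixed time in the Hilbert space $H=L^2(\T^d;\Sym)$ with inner product $\langle X,Y\rangle=\int_{\T^d}X:Y\,dx$. The admissible set is
\[
  \mathcal H=\Bigl\{S\in H:\ P+\tfrac{\alpha}{2}\langle G,S\rangle\le0\Bigr\}.
\]
When $G\not\equiv0$ this is a closed affine half-space, and every claim in Proposition~\ref{prop:intro-metric-repair} reduces to the elementary geometry of projecting onto a half-space. I would do the pointwise statements first and then explain how the windowed statement reduces to them.

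\textbf{Least cost.} If $P\le0$, then $S=0$ is admissible, so the minimal size is $0=2[P]_+/(\alpha\|G\|)$. If $P>0$, every admissible $S$ satisfies $-\langle G,S\rangle\ge 2P/\alpha$. Cauchy--Schwarz then gives $\|S\|_{L^2}\ge 2P/(\alpha\|G\|_{L^2})$. Equality in Cauchy--Schwarz forces $S=-cG$ with $c\ge0$, and saturating the constraint gives $c=2P/(\alpha\|G\|^2)$. This identifies $S_{\min}$ and shows it is unique.

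\textbf{Nearest admissible residual.} Given $S_0$, I minimise $\|S-S_0\|$ over $\mathcal H$. If $\mathfrak v=0$, then $S_0$ is already admissible and $\widehat S=S_0$. Otherwise the constraint is violated, and the metric projection onto $\{\langle G,S\rangle\le -2P/\alpha\}$ is
\[
  S_0-\frac{\langle G,S_0\rangle+2P/\alpha}{\|G\|^2}\,G
  \;=\;S_0-\frac{2\mathfrak v}{\alpha\|G\|^2}\,G .
\]
To prove this, write $S=\widehat S+Y$. The constraint forces $\langle G,Y\rangle\le0$, and $\widehat S-S_0$ is a negative multiple of $G$. Hence
\[
  \|S-S_0\|^2=\|\widehat S-S_0\|^2+2\langle \widehat S-S_0,Y\rangle+\|Y\|^2\ \ge\ \|\widehat S-S_0\|^2,
\]
since the middle term is nonnegative, with equality only when $Y=0$. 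The distance $2\mathfrak v/(\alpha\|G\|)$ follows by taking norms.

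\textbf{Singular stratum.} If $G=0$ and $P>0$, then $\langle G,S\rangle=0$ for every $S$. The constraint reads $P\le0$, which fails, so $\mathcal H=\emptyset$ and no finite residual can pay the ledger.

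\textbf{Windowed statement.} Here I would invoke Theorem~\ref{thm:global-variational-projection}. Under its zero-lever and integrability hypotheses, the global problem of minimising $\int_E\|S-S_0\|^2\,dt$ subject to the a.e.-in-time cone constraint decouples in time. The constraint is pointwise in $t$, so the functional is minimised by minimising pointwise. This requires two checks: that the pointwise minimiser $\widehat S(t)$ is measurable in $t$, and that it is square-integrable. Measurability holds because $\widehat S(t)$ is an explicit formula in measurable quantities. Square-integrability is where the hypotheses enter: it is controlled by $\mathfrak v/\|G\|$, which the theorem's integrability condition bounds, while the zero-lever condition excludes positive work on $\{G=0\}$. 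Uniqueness then follows from the pointwise uniqueness established above.

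I expect the main obstacle to be this measurability and integrability bookkeeping near the degenerate set where $\|G(t)\|\to0$. The pointwise Hilbert-space steps are routine.
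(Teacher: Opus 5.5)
Your proposal is correct and follows essentially the paper's route. The least cost is obtained from Cauchy--Schwarz and its equality case, the repair is the orthogonal projection onto the half-space $\{P+\frac{\alpha}{2}\langle G,S\rangle\le0\}$ along the normal $G$, and the windowed claim comes from time-decoupling via Theorem~\ref{thm:global-variational-projection}. Your decomposition $S=\widehat S+Y$ simply writes out the half-space projection fact that the paper invokes as standard.
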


\paragraph{Closure-subspace repair.}

The metric repair in Proposition~\ref{prop:intro-metric-repair} assumes that the conformation residual can be
changed in every \(L^2\) direction.  Reduced models, finite-dimensional closures,
and numerical reconstructions often allow only a prescribed residual subspace.
The next result identifies the exact obstruction in that case.

\begin{maintheorem}[Constrained closure-subspace repair]
\label{thm:subspace-repair-main}
Fix a time and let \(\mathcal V\subset L^2(\mathbb T^d;\mathbb S^d)\) be a
closed linear subspace of admissible conformation-residual corrections.  For a
proposed residual \(S_0\), define
\[
  \mathfrak v=
  \left[P+\frac{\alpha}{2}\int_{\mathbb T^d}G:S_0\,dx\right]_+,
  \qquad H=\Pi_{\mathcal V}G .
\]
If \(H\not\equiv0\), the nearest residual of the form \(S_0+Z\),
\(Z\in\mathcal V\), which satisfies the admissibility inequality is
\[
  \widehat S_{\mathcal V}
  =
  S_0-
  \frac{2\mathfrak v}{\alpha\|H\|_{L^2}^2}H,
\]
and
\[
  \operatorname{dist}_{L^2}\left(S_0,(S_0+\mathcal V)\cap\mathcal A(P,G)\right)
  =
  \frac{2\mathfrak v}{\alpha\|H\|_{L^2}} .
\]
If \(H\equiv0\) and \(\mathfrak v>0\), no allowed correction in \(\mathcal V\)
can repair the positive work.  Thus a closure family can pay residual work only
through the component of the entropy-dual lever that it actually resolves.
\end{maintheorem}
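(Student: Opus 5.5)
The plan is to reduce the statement to the metric projection of the point \(S_0\) onto a closed half-space inside the affine subspace \(S_0+\mathcal V\). Throughout, \(\mathcal A(P,G)\) denotes the admissible set
\[
  \mathcal A(P,G)=\left\{S\in L^2(\mathbb T^d;\mathbb S^d):\ P+\frac{\alpha}{2}\int_{\mathbb T^d}G:S\,dx\le0\right\},
\]
as in Proposition~\ref{prop:intro-metric-repair}. Writing \(S=S_0+Z\) with \(Z\in\mathcal V\), admissibility of \(S\) is equivalent to
\[
  \frac{\alpha}{2}\langle G,Z\rangle_{L^2}\le -\Big(P+\frac{\alpha}{2}\langle G,S_0\rangle_{L^2}\Big).
\]
The key observation is that for \(Z\in\mathcal V\) we have \(\langle G,Z\rangle=\langle \Pi_{\mathcal V}G,Z\rangle=\langle H,Z\rangle\), because \(G-H\perp\mathcal V\). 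The constraint therefore depends on \(G\) only through \(H\). The problem becomes the following: minimize \(\|Z\|_{L^2}\) over \(Z\in\mathcal V\) subject to \(\langle H,Z\rangle\le -2q/\alpha\), where \(q=P+\frac{\alpha}{2}\int G:S_0\,dx\), so that \(\mathfrak v=[q]_+\).

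The argument then splits into cases.

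\emph{Case \(\mathfrak v=0\).} Here \(q\le0\), so \(Z=0\) is feasible. Hence \(\widehat S_{\mathcal V}=S_0\) and the distance is \(0\), matching both formulas.

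\emph{Case \(\mathfrak v>0\) and \(H\not\equiv0\).}
\begin{itemize}
\item For the lower bound, I will use Cauchy--Schwarz in \(\mathcal V\). Any feasible \(Z\) satisfies \(\|H\|\,\|Z\|\ge-\langle H,Z\rangle\ge 2\mathfrak v/\alpha\), so \(\|Z\|\ge 2\mathfrak v/(\alpha\|H\|)\).
\item For attainment, take \(Z_*=-\frac{2\mathfrak v}{\alpha\|H\|^2}H\). This lies in \(\mathcal V\), since \(H\in\mathcal V\) and \(\mathcal V\) is closed so that \(\Pi_{\mathcal V}\) is well defined. It gives \(\langle H,Z_*\rangle=-2\mathfrak v/\alpha\), so the constraint holds with equality, and \(\|Z_*\|=2\mathfrak v/(\alpha\|H\|)\).
\item For uniqueness, equality in Cauchy--Schwarz forces \(Z\) to be a nonpositive multiple of \(H\). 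The constraint then fixes the multiple.
\end{itemize}
Alternatively, uniqueness follows from strict convexity of the norm on the closed convex feasible set \(\{Z\in\mathcal V:\langle H,Z\rangle\le -2q/\alpha\}\), which is a half-space of the Hilbert space \(\mathcal V\). This yields \(\widehat S_{\mathcal V}\) and the distance formula.

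\emph{Case \(H\equiv0\) and \(\mathfrak v>0\).} Then \(\langle G,Z\rangle=0\) for every \(Z\in\mathcal V\). The gap \(q>0\) is therefore unchanged by every allowed correction, so \((S_0+\mathcal V)\cap\mathcal A(P,G)=\emptyset\) and no repair exists.

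There is no serious obstacle. The one point that needs care is the reduction \(\langle G,Z\rangle=\langle H,Z\rangle\) on \(\mathcal V\), which requires \(\mathcal V\) to be closed so that the orthogonal decomposition \(G=H+(G-H)\) holds. A second point is that the minimizer must be taken inside \(\mathcal V\) rather than in all of \(L^2\). Once these are in place, the result is Proposition~\ref{prop:intro-metric-repair} applied in the Hilbert space \(\mathcal V\) with lever \(H\) in place of \(G\).
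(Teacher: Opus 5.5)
Your proposal is correct and follows the paper's proof (Proposition~\ref{prop:subspace-repair}). There, too, the reduction $\langle G,Z\rangle_{L^2}=\langle H,Z\rangle_{L^2}$ for $Z\in\mathcal V$ turns the constraint into a half-space in $\mathcal V$ with normal $H$, the nearest correction $-cH$ is fixed by the boundary equation, and the case $H\equiv0$ follows because the functional is constant on $S_0+\mathcal V$. Your Cauchy--Schwarz lower bound, with its equality case, just writes out the half-space projection fact that the paper quotes.
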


\subsection*{Core result 2: local positive-cone compatibility of residual repair}

The optimal paying direction in Proposition~\ref{prop:intro-metric-repair} is an \(L^2\)-projection.  The next
result verifies that this direction is compatible with the positive-cone
conformation channel on short space-time slabs.

\begin{maintheorem}[Local positive-cone compatibility of the aligned repair]
\label{thm:local-corrector-main}
Let \(d\in\{2,3\}\), let \((u,p,B)\) be a smooth residual-matching profile on a
compact time interval \(J\), and set \(A=e^B\).  Assume that \(A\) lies in a
compact spectral envelope and that, on \(J\),
\[
  P(t)\ge p_0>0,
  \qquad
  \|G(t)\|_{L^2(\mathbb T^d)}\ge g_0>0,
  \qquad G=I-A^{-1}.
\]
For \(\gamma>0\), define the overpaying aligned residual
\[
  S_\gamma(t,x)
  =-
  \frac{2(1+\gamma)P(t)}{\alpha\|G(t)\|_{L^2}^2}
  G(t,x).
\]
For every \(t_0\in J\) there exist subintervals \(J_\tau\ni t_0\),
\(|J_\tau|\simeq\tau\), and smooth fields \(B_\tau\) such that
\(A_\tau=e^{B_\tau}\) remains in a slightly larger compact subset of
\(\mathbb S_{++}^d\),
\[
  \|A_\tau-A\|_{L^\infty(J_\tau;C^k_x)}\le C_k\tau
  \qquad\text{for every fixed }k,
\]
and
\[
  \|\mathcal S[u,B_\tau]-S_\gamma\|_{L^\infty(J_\tau;C^k_x)}\le C_k\tau
  \qquad\text{for every fixed }k.
\]
If \(P_\tau\) and \(G_\tau=I-A_\tau^{-1}\) are computed from \((u,B_\tau)\),
then, for sufficiently small \(\tau\),
\[
  P_\tau(t)+\frac{\alpha}{2}
  \int_{\mathbb T^d}G_\tau(t):\mathcal S[u,B_\tau](t)\,dx
  \le -\frac{\gamma p_0}{2},
  \qquad t\in J_\tau .
\]
Thus the aligned repair direction is locally compatible with the positive
cone and with the conformation residual-matching equation, up to a short-time
error.
\end{maintheorem}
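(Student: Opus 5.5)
The statement to prove is Theorem~\ref{thm:local-corrector-main}. I will treat $\mathcal S[u,B]$ as the conformation residual operator from the residual-matching equation, i.e.
\[
  \mathcal S[u,B]=\partial_tA+u\cdot\nabla A-(\nabla u)A-A(\nabla u)^{T}+\tfrac1\lambda(A-I),\qquad A=e^B.
\]
This operator is first order in time, and I will prescribe its value. The construction is to solve, on a short slab $J_\tau=[t_0,t_0+\tau]$, the linear transport--stretching equation
\[
  \partial_tA_\tau+u\cdot\nabla A_\tau-(\nabla u)A_\tau-A_\tau(\nabla u)^T+\tfrac1\lambda(A_\tau-I)=S_\gamma,
  \qquad A_\tau(t_0)=A(t_0).
\]
Because $u$ is smooth, this linear ODE along characteristics has a unique smooth solution. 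The data $S_\gamma$ is smooth with $C^k$ bounds on $J$, since $P\ge p_0$, $\|G\|_{L^2}\ge g_0$, and $P,G$ are smooth. With this choice $\mathcal S[u,B_\tau]=S_\gamma$ exactly, once $B_\tau=\Log A_\tau$ is defined.

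Step 1 (closeness and cone). Write $A_\tau(t)-A(t)=\int_{t_0}^t(\partial_sA_\tau-\partial_sA)\,ds$. Both time derivatives are bounded in $C^k_x$ by constants depending on the envelope, $u$, $p_0$ and $g_0$. Hence
\[
  \|A_\tau-A\|_{L^\infty(J_\tau;C^k)}\le C_k\tau .
\]
The compact envelope has eigenvalues bounded below by some $m>0$. For $\tau$ small, $A_\tau$ therefore stays within $m/2$ of the envelope in $L^\infty$, so its eigenvalues lie in $[m/2,M+1]$. Then $B_\tau=\Log A_\tau$ is well defined and smooth, because the matrix logarithm is analytic on $\mathbb S^d_{++}$. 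The bound $\|\mathcal S[u,B_\tau]-S_\gamma\|\le C_k\tau$ holds trivially, with difference zero. If instead one wants $B_\tau$ to retain the original residual-matching structure, one takes $A_\tau=A+(t-t_0)(S_\gamma-\mathcal S[u,B](t_0))$ to first order. The residual then differs from $S_\gamma$ by $O(\tau)$ through the smooth dependence on $t$, which gives the same bounds.

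Step 2 (signed work). The pressure-free work $P_\tau$ depends only on $u$ and the momentum residual. I need to check from Definition~\ref{def:pressure-free-work} whether $R$ involves the polymer stress $\alpha(A-I)$. If it does, $P_\tau-P=O(\tau)$ by the $C^0$ closeness of $A_\tau$ and $A$, and I expect this to be the main bookkeeping point. Next, $G_\tau=I-A_\tau^{-1}$ satisfies $\|G_\tau-G\|_{C^0}\le C\tau$, using the uniform lower eigenvalue bound and the identity $A_\tau^{-1}-A^{-1}=A_\tau^{-1}(A-A_\tau)A^{-1}$. Then
\[
  \int G_\tau:S_\gamma=\int G:S_\gamma+O(\tau)=-\frac{2(1+\gamma)P}{\alpha}+O(\tau).
\]
Also $G(t)$ versus $G(t_0)$ needs no comparison, since $S_\gamma(t)$ is built from $G(t)$ at the same time. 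Therefore
\[
  P_\tau+\frac\alpha2\int G_\tau:\mathcal S[u,B_\tau]=-\gamma P+O(\tau)\le-\gamma p_0+C\tau\le-\frac{\gamma p_0}{2}
\]
for $\tau\le\gamma p_0/(2C)$.

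Main obstacle. The delicate point is uniformity of the constants $C_k$ in $t_0$ together with the $P_\tau$ dependence. The constants come from Grönwall bounds on $\|A_\tau\|_{C^k}$ along the flow of $u$, which are uniform on the compact interval $J$. The dependence of $P_\tau$ on $B_\tau$, if present, must be Lipschitz in $C^1$, and that is controlled by the $k=1$ estimate. I would first verify the definition of $P$ to settle whether this term appears at all.
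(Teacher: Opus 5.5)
Your argument is correct, but your corrector is built differently from the paper's. The paper never solves the conformation equation. It sets $A_\tau=A+h_\tau Y$ with $Y=S_\gamma-\mathcal S[u,B]$ and a cutoff $h_\tau$ supported in a $2\tau$-neighbourhood of $t_0$, with $h_\tau'=1$ on $J_\tau$ and $|h_\tau|\le C\tau$. Because $\mathcal S[u,\cdot]$ is affine in $A$ for fixed $u$, this gives $\mathcal S[u,B_\tau]=S_\gamma+O(\tau)$ by pure algebra. That mismatch is then absorbed into the margin $\gamma p_0$ together with the $O(\tau)$ changes of $G$ and $P$.

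You instead solve the linear transport--stretching system with source $S_\gamma$ exactly. This matches the residual with zero error on $J_\tau$, which is stronger than required. The price is well-posedness of the linear system and propagation of $C^{k+1}$ regularity along the flow of $u$, since bounding $\partial_tA_\tau$ in $C^k$ uses one more derivative of $A_\tau$. The cleanest route to $\|A_\tau-A\|_{C^k}\le C_k\tau$ is Duhamel applied to $A_\tau-A$, which has zero data at $t_0$ and forcing $S_\gamma-\mathcal S[u,B]$.

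The paper's ansatz needs no transport theory and is localized in time: $A_\tau=A$ away from $t_0$, which is what one wants for patching. Your solution drifts linearly away from $A$ and is controlled only on $J_\tau$. It must also be run backward when $t_0$ is the right endpoint of $J$. Your fallback $A+(t-t_0)\bigl(S_\gamma-\mathcal S[u,B](t_0)\bigr)$ is essentially the paper's construction.

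Finally, the hedge in Step 2 resolves in the ``it does'' case. The difference $\calN[u,B_\tau]-\calN[u,B]=-\alpha\operatorname{div}(A_\tau-A)$ has zero mean, so $R_{\calN}-\alpha(A_\tau-A)$ is an admissible pressure-free representative. By Lemma~\ref{lem:gauge}, $P_\tau-P=\alpha\int_{\T^d}(A_\tau-A):D(u)\,dx=O(\tau)$. Plain $L^2$ closeness of $A_\tau$ and $A$ suffices; no $C^1$ Lipschitz dependence is needed.
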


\paragraph{Strict local sufficiency.}

The same construction gives a local converse in the conformation channel.
Beyond the aligned special case, any smooth target residual with strict slack
can be matched to leading order while preserving positivity.

\begin{maintheorem}[Strict local conformation-channel sufficiency]
\label{thm:strict-local-sufficiency}
Let \(d\in\{2,3\}\), let \((u,p,B)\) be a smooth residual-matching profile on a
compact time interval \(J\), and set \(A=e^B\), \(G=I-A^{-1}\).  Assume that
\(A\) lies in a compact spectral envelope.  Let \(T(t,x)\) be a smooth symmetric
conformation-residual target such that, for some \(\sigma>0\),
\[
  P(t)+\frac{\alpha}{2}\int_{\mathbb T^d}G(t):T(t)\,dx
  \le -\sigma,
  \qquad t\in J .
\]
For every \(t_0\in J\) there exist subintervals \(J_\tau\ni t_0\),
\(|J_\tau|\simeq \tau\), and smooth fields \(B_\tau\) such that
\(A_\tau=e^{B_\tau}\) remains in a slightly larger compact subset of
\(\mathbb S_{++}^d\),
\[
  \|A_\tau-A\|_{L^\infty(J_\tau;C^k_x)}\le C_k\tau,
  \qquad
  \|\mathcal S[u,B_\tau]-T\|_{L^\infty(J_\tau;C^k_x)}\le C_k\tau
\]
for every fixed \(k\), and the perturbed residual balance is strictly
admissible:
\[
  P_\tau(t)+\frac{\alpha}{2}
  \int_{\mathbb T^d}G_\tau(t):\mathcal S[u,B_\tau](t)\,dx
  \le -\frac{\sigma}{2},
  \qquad t\in J_\tau .
\]
Here \(G_\tau=I-A_\tau^{-1}\) and \(P_\tau\) is the canonical pressure-free work
computed from \((u,B_\tau)\).  Thus strict residual-work slack is locally sufficient for conformation-channel
residual matching inside the positive cone.
\end{maintheorem}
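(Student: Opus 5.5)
We construct \(B_\tau\) by solving the conformation residual-matching equation with the target residual \(T\) as a prescribed forcing, started from \(B(t_0)\). The residual operator \(\mathcal S[u,B]\) is the defect in the upper-convected Oldroyd--B equation,
\[
\mathcal S[u,B]=\partial_tA+u\cdot\nabla A-\nabla u\,A-A\nabla u^{T}+\lambda^{-1}(A-I),\qquad A=e^B .
\]
We define \(A_\tau\) on \(J_\tau=[t_0,t_0+\tau]\) (or a symmetric window, clipped to \(J\)) as the solution of the linear transport--stretching ODE along the characteristics of \(u\):
\[
\partial_tA_\tau+u\cdot\nabla A_\tau-\nabla u\,A_\tau-A_\tau\nabla u^{T}+\lambda^{-1}(A_\tau-I)=T,\qquad A_\tau(t_0)=A(t_0).
\]
This is a linear hyperbolic system with smooth coefficients, so it has a unique smooth solution on \(J_\tau\) with all \(C^k_x\) norms bounded uniformly in \(\tau\).

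By construction \(\mathcal S[u,B_\tau]=T\) exactly, which is stronger than the required \(C_k\tau\) bound. Since both \(A\) and \(A_\tau\) solve first-order evolution equations in time with smooth data and the same initial value at \(t_0\), Duhamel's formula along characteristics gives
\[
\|A_\tau-A\|_{L^\infty(J_\tau;C^k_x)}\le C_k\tau .
\]
The constant depends on \(C^{k+1}\) bounds of \(u,A,T\) and on the difference of forcings \(T-\mathcal S[u,B]\), which is bounded. For positivity, the compact spectral envelope gives \(A\ge c_0 I\), so \(A_\tau\ge (c_0-C_0\tau)I\) stays in a slightly larger compact subset of \(\Spp^d\) for \(\tau\) small. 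Hence \(B_\tau=\log A_\tau\) is well defined and smooth, since the matrix logarithm is analytic on compact subsets of \(\Spp^d\). If the paper's statement needs \(B_\tau\) only up to \(O(\tau)\) matching, a cutoff or mollification in time preserves these bounds.

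For admissibility, note that the velocity \(u\) and momentum defect are unchanged. How \(P_\tau\) depends on \(B_\tau\) depends on how the pressure-free work of Definition~\ref{def:pressure-free-work} involves the polymer stress. If \(R\) contains \(\alpha\,\divv\) of the polymeric stress difference, then \(P_\tau-P\) is a linear functional of \(A_\tau-A\) paired with \(D(u)\), so \(|P_\tau-P|\le C\tau\) by the \(C^0\) estimate. Likewise
\[
|G_\tau-G|=|A^{-1}(A_\tau-A)A_\tau^{-1}|\le C\tau
\]
on the envelope, and \(\mathcal S[u,B_\tau]=T\). Therefore
\[
P_\tau+\frac{\alpha}{2}\int G_\tau:T\,dx\le P+\frac{\alpha}{2}\int G:T\,dx+C\tau(1+\|T\|_{L^1})\le-\sigma+C\tau\le-\frac{\sigma}{2}
\]
once \(\tau\le\sigma/(2C)\).

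The main obstacle is the bookkeeping of the gauge-invariant work \(P_\tau\). We must check from Definition~\ref{def:pressure-free-work} and the pressure-free reduction in Theorem 1 that \(P\) depends continuously (Lipschitz in \(C^1_x\)) on \(B\) through the momentum residual, with pressure and mean quotients commuting with the perturbation. The ODE construction and positivity are routine. The \(C_k\tau\) estimate for \(A_\tau-A\) needs uniform \(C^{k+1}\) control of \(u\), which holds on the compact interval \(J\).
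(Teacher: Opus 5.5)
Your proof is correct, but you build the corrector differently from the paper.

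\textbf{Paper's construction.} Through Proposition~\ref{prop:local-conformation-solvability}, the paper solves no equation at all. It uses only that $\mathcal S[u,\cdot]$ is affine in $A$ for fixed $u$. It takes the explicit ansatz $A_\tau=A+h_\tau Y$, with $Y=T-\mathcal S[u,B]$ and a scalar $h_\tau$ supported within $2\tau$ of $t_0$, satisfying $h_\tau'=1$ on $J_\tau$ and $|h_\tau|\le C\tau$. This gives $\mathcal S[u,B_\tau]=T+h_\tau\bigl(\partial_tY+u\cdot\nabla Y-\nabla u\,Y-Y(\nabla u)^T+\lambda^{-1}Y\bigr)=T+O(\tau)$ on $J_\tau$.

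\textbf{Your construction.} You solve the forced linear transport--stretching system exactly. This gives $\mathcal S[u,B_\tau]=T$ with no error. You get $A_\tau-A=O(\tau)$ in $C^k_x$ by Gr\"onwall, since $A_\tau-A$ solves the same linear system with forcing $T-\mathcal S[u,B]$ and zero data at $t_0$.

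\textbf{Trade-off.} Your route removes the residual-mismatch term from the admissibility budget entirely. The margin is then eroded only through $G_\tau-G$ and $P_\tau-P$. The cost is invoking existence and $C^k$ regularity for a linear hyperbolic system. The paper's route is purely algebraic and time-localized: $A_\tau=A$ outside a $2\tau$-neighbourhood of $t_0$, which suits patching. Its cost is an $O(\tau)$ mismatch in the residual.

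\textbf{Two small points.}

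First, you use symmetry of your $A_\tau$ implicitly when you write $A_\tau\ge(c_0-C_0\tau)\Id$ and $\log A_\tau\in\Sym^d$. This symmetry follows from uniqueness, because the equation commutes with transposition.

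Second, the step you flag as the main obstacle is settled by the definitions, so your hedge is not needed. The difference $\calN[u,B_\tau]-\calN[u,B]=-\alpha\divv(A_\tau-A)$ has zero mean, so the profile remains residual-matching. It also has the explicit symmetric antidivergence $-\alpha(A_\tau-A)$. Hence Lemma~\ref{lem:gauge} gives $P_\tau=P+\alpha\int_{\T^d}(A_\tau-A):D(u)\,dx$ exactly. This is slightly cleaner than the paper's appeal to boundedness of the canonical antidivergence.
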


\subsection*{Core result 3: FENE-P finite-extensibility residual work}

The mechanism is not tied to the Hookean entropy.  For entropy-dual closures,
the lever is the derivative of the conformation entropy.  In particular, for
FENE-P on compact finite-extensibility envelopes, with \(b>d\),
\[
  f_b(C)=\frac{b-d}{b-\operatorname{tr}C},\qquad
  G_b(C)=f_b(C)I-C^{-1}.
\]
The finite-extensibility boundary is not a notational change: the isotropic
part of \(G_b\) becomes singular as \(\operatorname{tr}C\uparrow b\), and the
stability constants of the residual geometry degenerate at the boundary.

\begin{maintheorem}[Entropy-dual/FENE-P residual-work criterion]
\label{thm:intro-fenep}
For smooth FENE-P residual-matching profiles whose conformation tensors remain
in a compact finite-extensibility envelope, energy--entropy admissibility
implies, on every measurable time window \(E\),
\[
  W_E^{\rm FENE}
  \le
  \frac{\alpha}{2}\eta_E^{\rm FENE}
  L_E^{\rm FENE}S_E^{\rm FENE},
\]
where \(L_E^{\rm FENE}=\|G_b(C)\|_{L^2(E\times\mathbb T^d)}\),
\(S_E^{\rm FENE}\) is the conformation-residual size, and
\(\eta_E^{\rm FENE}\) is the corresponding negative-alignment coefficient.
At each fixed time the least residual that pays positive pressure-free work is
again the aligned entropy-dual residual, with \(G\) replaced by \(G_b(C)\).
Moreover, Proposition~\ref{prop:fenep-boundary-geometry} identifies the new
finite-extensibility boundary regime: on lower spectral envelopes,
\(\norm{G_b(C)}\to\infty\) as \(\operatorname{tr}C\uparrow b\), and the
derivative of the lever contains the singular factor
\((b-\operatorname{tr}C)^{-2}\).
\end{maintheorem}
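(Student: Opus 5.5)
We follow the Oldroyd--B argument behind Theorem~\ref{thm:main-obstruction} line by line, replacing the Hookean entropy by the FENE-P entropy. Concretely, we take
\[
  \Phi_b(C)=-(b-d)\log\Bigl(1-\frac{\operatorname{tr}C}{b}\Bigr)-\log\det C+\text{const},
\]
which is smooth and strictly convex on the envelope $\{C\in\mathbb S^d_{++}:\operatorname{tr}C<b\}$ and satisfies $D_C\Phi_b(C)=G_b(C)$.

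\textbf{Step 1: the FENE-P defect-work identity.} We differentiate $\mathcal E_b=\frac12\int|u|^2+\frac{\alpha}{2}\int\Phi_b(C)$ along a FENE-P residual-matching profile, using the same four cancellations as before.
\begin{itemize}
\item Incompressible transport drops out, because $\int u\cdot\nabla\Phi_b=0$.
\item Upper-convected stretching pairs with the lever through $G_b:(\nabla u\,C+C\nabla u^T)=2f_b\,C:D(u)-2\operatorname{div}u$. Since $\operatorname{div}u=0$, this term cancels exactly the polymeric stress work of $\tau=\alpha(f_bC-I)$ in the momentum equation.
\item The relaxation term becomes a nonnegative dissipation $\frac{\alpha}{2\lambda}\int G_b:(f_bC-I)\ge0$, which we check eigenvalue-wise.
\item The defects contribute $-\int R:D(u)+\frac{\alpha}{2}\int G_b:S$.
\end{itemize}
Pressure tensors and spatial means are removed exactly as in the Oldroyd--B reduction, because $\operatorname{tr}D(u)=0$ and $\int D(u)=0$. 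This yields the same gauge-invariant work $P(t)$.

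\textbf{Step 2: the pointwise cone and the windowed bound.} Energy--entropy admissibility asserts that $\frac{d}{dt}\mathcal E_b$ plus the dissipation is $\le0$ in the residual-matching sense. Combining this with Step~1 and dropping the nonnegative dissipation gives
\[
  P(t)+\frac{\alpha}{2}K_b(t)\le0\quad\text{a.e.},\qquad K_b(t)=\int G_b(C):S\,dx .
\]
Hence $[P]_+\le\frac{\alpha}{2}[-K_b]_+$. Applying the definition of the alignment coefficient with $G_b$ in place of $G$ gives $[P]_+\le\frac{\alpha}{2}\eta^{\rm FENE}_{\rm al}\|G_b\|_{L^2}\|S\|_{L^2}$. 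Integrating over $E$, bounding $\eta_{\rm al}$ by its essential supremum, and applying Cauchy--Schwarz in time produces $W_E^{\rm FENE}\le\frac{\alpha}{2}\eta_E^{\rm FENE}L_E^{\rm FENE}S_E^{\rm FENE}$. On the compact envelope, $G_b$ is bounded, so all these quantities are finite.

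\textbf{Step 3: least residual.} The fixed-time minimization is the same Hilbert-space half-space projection as in Proposition~\ref{prop:intro-metric-repair}, with $G$ replaced by $G_b(C)$. The argument uses only the linear constraint $\int G_b:S\le-2P/\alpha$, so it transfers verbatim.

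\textbf{Step 4: boundary behaviour and the main obstacle.} For Proposition~\ref{prop:fenep-boundary-geometry}, a lower spectral bound $C\ge c_0I$ keeps $C^{-1}$ bounded. Then $|G_b|\ge\sqrt d\,f_b-|C^{-1}|\to\infty$ as $\operatorname{tr}C\uparrow b$, because $f_b\to\infty$. Differentiating gives
\[
  D_CG_b[H]=\frac{(b-d)\operatorname{tr}H}{(b-\operatorname{tr}C)^2}I+C^{-1}HC^{-1},
\]
which exhibits the singular factor. The main obstacle is Step~1, specifically the exact stretching/stress cancellation with the Peterlin factor and the sign of the FENE dissipation. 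For the dissipation I would diagonalize $C$ and write $G_b:(f_bC-I)=\sum_i(f_b-c_i^{-1})(f_bc_i-1)=\sum_i c_i^{-1}(f_bc_i-1)^2\ge0$, which settles it directly.
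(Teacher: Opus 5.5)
Your proposal is correct and follows essentially the paper's route: the paper packages your Step~1 as the abstract entropy-dual closure identity (Theorem~\ref{thm:entropy-dual-closure}) and then, in Corollary~\ref{thm:fenep-residual}, checks $G_bC=T_b$ and $G_b:T_b=\operatorname{tr}(C^{-1}T_b^2)\ge0$, which is exactly your eigenvalue computation; your Steps~2--4 then coincide with that corollary and with Proposition~\ref{prop:fenep-boundary-geometry}. One small point: nothing needs to be ``dropped'' in Step~2, because admissibility (nonpositivity of $\frac{d}{dt}\mathcal E_b$ plus dissipation, equivalently of the defect-work right-hand side) already gives $P+\frac{\alpha}{2}K_b\le0$ directly from the identity.
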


\subsection*{Core result 4: residual-data closedness, topology sharpness, and compactification}

The residual-work budget is not merely a formal inequality attached to one
chosen representative.  It is closed under the natural strong convergence of the
three quantities that enter the budget: pressure-free work, entropy lever, and
conformation residual.

\begin{maintheorem}[Closed strong residual-data realization obstruction]
\label{thm:intro-closed-realization}
Let \((u,p,B)\) be a smooth residual-matching profile on a time window \(E\),
with data \(P,G,\mathcal S\).  Suppose that it is strongly residual-data
realizable on \(E\) by energy--entropy admissible positive-cone Reynolds states,
in the sense of Definition~\ref{def:strong-residual-realization}.  Then
\[
  P(t)+\frac{\alpha}{2}\int_{\mathbb T^d}G(t):\mathcal S(t)\,dx\le0
  \qquad\hbox{for a.e. }t\in E.
\]
Consequently the sharp windowed residual-work estimate holds on every
measurable \(F\subset E\).  If a profile violates this signed-work inequality on
a set of positive measure, or violates the windowed budget on any subwindow, it
admits no such strong residual-data realization.  In particular, the
finite-thickness shear-layer profiles constructed in
Proposition~\ref{prop:concrete-layer} are not strongly residual-data realizable
for all sufficiently small thicknesses.
\end{maintheorem}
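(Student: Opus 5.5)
The plan is to pass to the limit in the pointwise signed-work inequality of Theorem~\ref{thm:main-obstruction}, applied along the realizing sequence. By Definition~\ref{def:strong-residual-realization}, strong residual-data realizability provides energy--entropy admissible positive-cone Reynolds states indexed by $n$, with data $(P_n,G_n,\mathcal S_n)$. I will assume these converge strongly to $(P,G,\mathcal S)$ on $E$: $P_n\to P$ in $L^1(E)$, and $G_n\to G$, $\mathcal S_n\to\mathcal S$ in $L^2(E\times\T^d)$, or in whatever strong topology the definition fixes.

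\textbf{Step 1 (inequality for each approximant).} For each $n$, Theorem~\ref{thm:main-obstruction} gives
\[
  q_n(t):=P_n(t)+\frac{\alpha}{2}K_n(t)\le0 \quad\text{for a.e. } t\in E,\qquad K_n(t)=\int_{\T^d}G_n:\mathcal S_n\,dx .
\]
The first Main Theorem (the defect-work identity) is what makes $P_n$ gauge-invariant. So no pressure or mean-mode ambiguity arises when comparing $P_n$ with $P$.

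\textbf{Step 2 (convergence of the product).} Write
\[
  K_n-K=\int_{\T^d}(G_n-G):\mathcal S_n\,dx+\int_{\T^d}G:(\mathcal S_n-\mathcal S)\,dx .
\]
Cauchy--Schwarz in $x$, followed by integration in $t$, gives
\[
  \|K_n-K\|_{L^1(E)}\le \|G_n-G\|_{L^2}\,\|\mathcal S_n\|_{L^2}+\|G\|_{L^2}\,\|\mathcal S_n-\mathcal S\|_{L^2}\to0,
\]
using that strongly convergent sequences are bounded. Hence $q_n\to q:=P+\frac{\alpha}{2}K$ in $L^1(E)$.

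\textbf{Step 3 (passing to the limit).} The set $\{f\in L^1(E): f\le0 \text{ a.e.}\}$ is closed in $L^1$; concretely, a subsequence converges a.e. Therefore $q\le0$ a.e. on $E$, which is the claimed inequality.

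\textbf{Step 4 (consequences).} The windowed estimate on any measurable $F\subset E$ then follows exactly as in Theorem~\ref{thm:main-obstruction}. The argument is Cauchy--Schwarz: $[P]_+\le\frac{\alpha}{2}[-K]_+\le\frac{\alpha}{2}\eta_{\mathrm{al}}\|G\|_{L^2}\|\mathcal S\|_{L^2}$ pointwise in $t$, then take the essential supremum of $\eta_{\mathrm{al}}$ over $F$ and apply Cauchy--Schwarz in time. The non-realizability statement is the contrapositive. For the shear layers, I will invoke Proposition~\ref{prop:concrete-layer}: its explicit slopes should show that $W_F$ exceeds $\frac{\alpha}{2}\eta_F L_F S_F$ once the thickness is small enough. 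I expect the work to scale at a lower power of the thickness than the product of lever and residual, so that it eventually dominates.

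\textbf{Main obstacle.} The delicate step is Step 2, because it requires strong convergence of at least one factor and boundedness of the other. If the definition only provides strong convergence of $\mathcal S_n$ and weak convergence of $G_n$, the product still converges. In that case the first term must be handled by pairing the weak limit against the strongly convergent factor, instead of by the norm bound above. Weak--weak convergence would fail, which is exactly where the defect measure of the later sections enters. A secondary check is the shear-layer scaling in Step 4. If the violation appears only on a subwindow, I will apply the contrapositive on that subwindow, which the "every measurable $F\subset E$" clause permits.
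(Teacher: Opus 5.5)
Your proposal is correct and follows the paper's proof essentially step for step. The paper uses the same splitting of $K_n-K$ with Cauchy--Schwarz to get $L^1(E)$ convergence, then closedness of the non-positive cone in $L^1(E)$, then Theorem~\ref{thm:main-obstruction} applied to the limit data on each subwindow, and finally the contrapositive combined with the slopes of Proposition~\ref{prop:concrete-layer} (Corollaries~\ref{cor:strong-nonrealization} and~\ref{cor:finite-thickness-nonrealization}). The hedge in your ``main obstacle'' paragraph is unnecessary, because Definition~\ref{def:strong-residual-realization} already gives strong convergence of all three components.
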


\paragraph{Quantitative separation.}

The obstruction has a quantitative form.  For residual data \(P,G,S\) on a time
window \(E\), define the defect gap
\begin{maintheorem}[Quantitative residual-data separation]
\label{thm:intro-quantitative-gap}
\[
  \mathfrak D_E(P,G,S)
  =
  \int_E
  \left[
  P(t)+\frac{\alpha}{2}
  \int_{\mathbb T^d}G(t):S(t)\,dx
  \right]_+dt .
\]
If \((\widetilde P,\widetilde G,\widetilde S)\) is admissible, then
\[
\begin{aligned}
  \mathfrak D_E(P,G,S)
  &\le
  \|P-\widetilde P\|_{L^1(E)}
  +\frac{\alpha}{2}
   \|G-\widetilde G\|_{L^2(E\times\mathbb T^d)}
   \|S\|_{L^2(E\times\mathbb T^d)}\\
  &\quad
  +\frac{\alpha}{2}
   \|\widetilde G\|_{L^2(E\times\mathbb T^d)}
   \|S-\widetilde S\|_{L^2(E\times\mathbb T^d)} .
\end{aligned}
\]
Thus a positive defect gap gives an explicit lower bound on the residual-data
distance from the admissible class.  For the finite-thickness shear layers,
\(\mathfrak D_E\gtrsim\varepsilon^{-(2\beta+1)}\).
\end{maintheorem}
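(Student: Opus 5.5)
The plan is a pointwise-in-time Lipschitz estimate for the positive part, integrated over \(E\) and closed with Cauchy--Schwarz in space--time.

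\textbf{Step 1 (pointwise comparison).} Write
\[
  q(t)=P(t)+\frac{\alpha}{2}\int_{\T^d}G(t):S(t)\,dx,
  \qquad
  \widetilde q(t)=\widetilde P(t)+\frac{\alpha}{2}\int_{\T^d}\widetilde G(t):\widetilde S(t)\,dx .
\]
Admissibility of \((\widetilde P,\widetilde G,\widetilde S)\) means \(\widetilde q(t)\le0\) for a.e. \(t\in E\); this is the meaning of the cone \(\mathfrak C_E\) in Theorem~\ref{thm:main-obstruction}. Since \(s\mapsto[s]_+\) is monotone and \(1\)-Lipschitz, and \([\widetilde q]_+=0\), we get
\[
  [q]_+\le[q-\widetilde q]_+\le|q-\widetilde q|
  \qquad\text{a.e. on }E .
\]

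\textbf{Step 2 (splitting the bilinear term).} Add and subtract \(\widetilde G:S\), so that
\[
  G:S-\widetilde G:\widetilde S=(G-\widetilde G):S+\widetilde G:(S-\widetilde S).
\]
This gives
\[
  |q-\widetilde q|\le|P-\widetilde P|
  +\frac{\alpha}{2}\left|\int_{\T^d}(G-\widetilde G):S\,dx\right|
  +\frac{\alpha}{2}\left|\int_{\T^d}\widetilde G:(S-\widetilde S)\,dx\right| .
\]
The order of the splitting matters. It must put \(S\), not \(\widetilde S\), on the first term and \(\widetilde G\) on the second, to match the statement exactly.

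\textbf{Step 3 (integration and Cauchy--Schwarz).} Integrate over \(E\). The first term becomes \(\|P-\widetilde P\|_{L^1(E)}\). For each product term, do not estimate in space first. Treat \(\int_E\int_{\T^d}|(G-\widetilde G):S|\,dx\,dt\) as a single space--time integral and apply Cauchy--Schwarz in \(L^2(E\times\T^d)\); the Frobenius inequality \(|X:Y|\le|X||Y|\) handles the pointwise product. The same step bounds the \(\widetilde G:(S-\widetilde S)\) term. This yields the claimed inequality whenever the right-hand side is finite, and it is trivial otherwise.

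Measurability of \(t\mapsto q(t)\) follows from Fubini once the data lie in the stated \(L^1\) and \(L^2\) classes, so the integral \(\mathfrak D_E\) is well defined.

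\textbf{Step 4 (lower bound on distance and the shear-layer rate).} The "explicit lower bound on the distance" is just a reading of the inequality. Any admissible triple must make at least one of the three distance terms large relative to \(\mathfrak D_E\).

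The main obstacle is the final claim \(\mathfrak D_E\gtrsim\eps^{-(2\beta+1)}\) for the shear layers. This is not part of the abstract inequality. I would take it from the explicit slopes in Proposition~\ref{prop:concrete-layer}: the positive work should scale like \(P(t)\gtrsim\eps^{-(2\beta+1)}\) on a window of fixed length, while the paying term \(-\frac{\alpha}{2}K(t)\) is of strictly lower order in \(\eps\). Then \(q(t)\ge cP(t)\) for small \(\eps\), and integrating over \(E\) gives the rate.

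The risk is whether that proposition gives the work slope at exactly this exponent and shows the lever–residual pairing is subdominant. If it does not, I would recompute \(P\) and \(K\) directly from the layer profile, with velocity gradient of order \(\eps^{-1}\) in a layer of width \(\eps\) and amplitude exponent \(\beta\).
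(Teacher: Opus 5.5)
Your proof of the inequality is the paper's proof. The paper also uses $\widetilde\Delta\le0$ to get $[\Delta]_+\le|\Delta-\widetilde\Delta|$, then the same splitting $G:S-\widetilde G:\widetilde S=(G-\widetilde G):S+\widetilde G:(S-\widetilde S)$, then space--time Cauchy--Schwarz.

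The only difference is the shear-layer rate, which the paper proves in Corollary~\ref{cor:finite-thickness-quantitative-gap} by an exact sign argument rather than by dominance. The layer is steady, and $u^\varepsilon\cdot\nabla A^\varepsilon=0$. The lever $G^\varepsilon$ is diagonal, while the stretching part of $S^\varepsilon$ is off-diagonal. Hence
$G^\varepsilon:S^\varepsilon=\lambda^{-1}\tr\bigl(A^\varepsilon+(A^\varepsilon)^{-1}-2I\bigr)\ge0$.
So $K\ge0$: the conformation residual does not pay at all, it adds to the gap. This gives $\mathfrak D_E\ge\int_E P^\varepsilon\,dt=|E|\nu\|\theta'\|_{L^2(\R)}^2\varepsilon^{-(2\beta+1)}$ with that explicit constant, for every $\varepsilon$ in the range of the construction.

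Your dominance route also works, and the risk you flag does not materialize. Because the profile is steady, $P^\varepsilon(t)=\nu\varepsilon^{-(2\beta+1)}\|\theta'\|_{L^2(\R)}^2$ for each $t$. The bounds of Proposition~\ref{prop:concrete-layer} then hold pointwise in time, so $|K(t)|\le\|G^\varepsilon(t)\|_{L^2}\|S^\varepsilon(t)\|_{L^2}\lesssim\varepsilon^{1/2}\varepsilon^{-(\beta+1/2)}=\varepsilon^{-\beta}$. With the crude lever bound this is at worst $\varepsilon^{-(\beta+1/2)}$. Either way it is $o(\varepsilon^{-(2\beta+1)})$, so $q\ge P/2$ for small $\varepsilon$.

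Compared with the paper, your version gives the rate only for small $\varepsilon$ and with a worse constant. On the other hand, it uses only the scale bounds, so it would survive perturbations of the ansatz that break the diagonal/off-diagonal orthogonality. The paper's sign argument is sharper but tied to that exact structure.
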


The practical diagnostic need not rely on an exact evaluation of
\(\eta_E\).  The signed defect gap \(\mathfrak D_E\) is computed directly from
the residual data and is Lipschitz stable under \(L^1\times L^2\times L^2\)
data errors.  If only bulk estimates are available, any certified upper bound
\(\eta_E\le\bar\eta_E\), including the universal bound \(\bar\eta_E=1\), gives
a conservative exclusion test.  Thus the sharp constants describe the limiting
threshold, while applications use error bars and conservative upper bounds.

\paragraph{Topology sharpness.}

The strong topology in Theorem~G is not only a convenient proof device.  The
product \(G:S\) is the quantity that pays pressure-free work, and this product
is not determined by weak limits of \(G\) and \(S\).

\begin{maintheorem}[Sharpness of residual-data closure]
\label{thm:intro-weak-sharpness}
The admissible residual class remains closed if one of the two factors
\(G\) or \(S\) converges strongly and the other converges weakly in
\(L^2(E\times\mathbb T^d)\), together with \(P_n\to P\) in \(L^1(E)\).
However, weak convergence of both factors is insufficient.  There exist smooth
oscillatory residual data \((P_n,G_n,S_n)\) with \(P_n\to p_0>0\) strongly in
\(L^1(E)\) and \(G_n\rightharpoonup0\), \(S_n\rightharpoonup0\) weakly in
\(L^2(E\times\mathbb T^d)\), such that every triple \((P_n,G_n,S_n)\) is
admissible while the weak limit \((p_0,0,0)\) violates the signed-work
inequality.  Thus no weak-weak closure theorem can be formulated using only
the weak limits of \(P,G,S\).
\end{maintheorem}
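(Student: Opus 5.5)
The statement has two halves: a closedness result under strong--weak convergence and a counterexample under weak--weak convergence.

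\emph{Closedness.} Admissibility of a triple means
\[
  q_n(t)=P_n(t)+\frac{\alpha}{2}\int_{\T^d}G_n:S_n\,dx\le0
  \qquad\text{for a.e. } t\in E .
\]
This is equivalent to requiring, for every bounded measurable $\phi\ge0$ on $E$,
\[
  \int_E\phi P_n\,dt+\frac{\alpha}{2}\int_E\int_{\T^d}\phi\,G_n:S_n\,dx\,dt\le0 .
\]
I will pass to the limit in this tested form.

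\begin{itemize}
\item The first term converges because $P_n\to P$ in $L^1(E)$ and $\phi\in L^\infty$.
\item Suppose, say, $G_n\to G$ strongly and $S_n\rightharpoonup S$ weakly in $L^2(E\times\T^d)$. Then $\phi G_n\to\phi G$ strongly. The weakly convergent sequence $S_n$ is bounded, so the strong--weak product converges:
\[
  \iint\phi\,G_n:S_n\to\iint\phi\,G:S .
\]
\item The case with $S_n$ strong and $G_n$ weak is symmetric.
\end{itemize}

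Hence $\int_E\phi\,q\,dt\le0$ for all $\phi\ge0$. Choosing $\phi=\mathbf 1_{\{q>0\}}$ gives $q\le0$ a.e., which is the signed-work inequality for the limit. This is the same mechanism as Theorem~\ref{thm:intro-closed-realization}, only with the weak-factor bound inserted.

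\emph{Counterexample.} Fix a unit vector $e$, a nonzero symmetric matrix $M$ with $|M|=1$, and the time window $E$. Set
\[
  G_n(t,x)=M\sin(n e\cdot x)\quad\text{(with }e\text{ chosen to have integer coordinates, so that }G_n\text{ is periodic)},
  \qquad
  S_n=-cG_n,
  \qquad
  P_n\equiv p_0 .
\]
\begin{itemize}
\item Both $G_n$ and $S_n$ tend to $0$ weakly in $L^2(E\times\T^d)$ by the Riemann--Lebesgue lemma.
\item Their product is
\[
  \int_{\T^d}G_n:S_n\,dx=-c\,|M|^2\int_{\T^d}\sin^2(n e\cdot x)\,dx=-c\,\frac{|\T^d|}{2}.
\]
\item Choosing $c=4p_0/(\alpha|\T^d|)$ gives $q_n\equiv-p_0<0$, so every triple is admissible.
\item The weak limit $(p_0,0,0)$ has $q=p_0>0$ on all of $E$, so it violates the inequality.
\end{itemize}

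If the definition of residual data requires $G_n$ to be an actual lever $I-A_n^{-1}$ with $A_n$ in the positive cone, I will instead take $A_n=I+\epsilon M\sin(n e\cdot x)$. Then $G_n=\epsilon M\sin(n e\cdot x)+O(\epsilon^2)$. The $O(\epsilon^2)$ part converges only weakly, to its average, which need not be zero. To handle this I will replace $G_n$ by its oscillatory part, or choose $\epsilon$ small and rescale $c$ so that $q_n<0$ still holds. The profiles are smooth, and $P_n\to p_0$ strongly holds trivially.

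\emph{Main obstacle.} I expect the only delicate point to be matching the counterexample to the paper's precise definition of residual data. If $S_n$ must be a genuine residual $\mathcal S[u,B_n]$, I would use Theorem~\ref{thm:strict-local-sufficiency} with strict slack $\sigma=p_0/2$. That result realizes each oscillatory target $-cG_n$ up to an $O(\tau)$ error, and I would choose $\tau=\tau_n$ small enough that admissibility persists and the weak limits are unchanged.
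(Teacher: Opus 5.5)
Your proposal is correct and follows the paper's proof essentially line for line. Part (i) tests the admissibility inequality against non-negative $\varphi\in L^\infty(E)$ and passes the strong--weak product to the limit. Part (ii) uses $P_n\equiv p_0$, a weakly null Fourier mode $G_n$ in a fixed symmetric direction, and $S_n$ a negative multiple of $G_n$, just as the paper takes $G_n=\Psi_n$, $S_n=-(2p_0/\alpha)\Psi_n$ with $\|\Psi_n\|_{L^2}=1$.

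Two small remarks. First, with your $c=4p_0/(\alpha|\T^d|)$ one gets $q_n\equiv0$, not $-p_0$. This is still admissible and is exactly the paper's saturated choice; use $c=8p_0/(\alpha|\T^d|)$ if you want a strict margin. Second, your contingency paragraphs are unnecessary, because the residual-data space $\mathfrak X_E$ imposes no positive-cone constraint on $G_n$. If one did want that constraint, setting $A_n=(I-\epsilon M\sin(n e\cdot x))^{-1}$ makes $G_n=I-A_n^{-1}$ exactly weakly null, which avoids the $O(\epsilon^2)$ mean you worried about.
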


\paragraph{Defect-measure compactification.}

The preceding failure has a precise closure mechanism.  A weak residual limit is
not described only by \((P,G,S)\); it must also retain the product defect carried
by \(G_n:S_n\).

\begin{maintheorem}[Defect-measure compactification of weak residual limits]
\label{thm:intro-defect-compactification}
Let \(P_n\to P\) in \(L^1(E)\), let \(G_n\rightharpoonup G\) and
\(S_n\rightharpoonup S\) weakly in \(L^2(E\times\mathbb T^d)\), and suppose that
the product measures
\[
  \left(\int_{\mathbb T^d}G_n(t):S_n(t)\,dx\right)dt
\]
converge weakly-* to
\[
  \left(\int_{\mathbb T^d}G(t):S(t)\,dx\right)dt+\mu
\]
for a finite signed measure \(\mu\) on \(E\).  If all
\((P_n,G_n,S_n)\) are admissible, then the augmented limit satisfies
\[
  P\,dt+\frac{\alpha}{2}
  \left(\int_{\mathbb T^d}G:S\,dx\right)dt
  +\frac{\alpha}{2}\mu\le0
\]
as a signed measure on \(E\).  Conversely, every smooth density \(m(t)\,dt\)
can occur as such a product defect for weakly null smooth oscillatory pairs.
Thus two sequences with the same marginal weak limits may carry different
product defects; the augmented variable is not a bookkeeping choice but the
missing component of the weak residual topology.
\end{maintheorem}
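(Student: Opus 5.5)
The statement has two halves: a closure inequality for augmented weak limits, and a realization (converse) result for product defects. I will treat them separately.

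\textbf{Closure half.} I test everything against nonnegative functions. Fix $\varphi\in C_c(E)$ with $\varphi\ge0$, or more generally $\varphi\in C(\bar E)$ if the weak-* convergence is understood on $\bar E$.

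Admissibility of each triple gives the pointwise inequality $P_n(t)+\frac{\alpha}{2}K_n(t)\le0$ for a.e.\ $t$, where $K_n(t)=\int_{\T^d}G_n:S_n\,dx$. Multiplying by $\varphi$ and integrating gives
\[
  \int_E\varphi P_n\,dt+\frac{\alpha}{2}\int_E\varphi K_n\,dt\le0 .
\]
Each term then passes to the limit:
\begin{itemize}[leftmargin=*]
\item The first term converges to $\int_E\varphi P\,dt$, because $P_n\to P$ in $L^1(E)$ and $\varphi$ is bounded.
\item The second term converges to $\int_E\varphi K\,dt+\int_E\varphi\,d\mu$, where $K=\int_{\T^d}G:S\,dx$. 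This is exactly the assumed weak-* convergence of the product measures.
\end{itemize}
Note that $K_n\in L^1(E)$ by Cauchy--Schwarz, since $G_n,S_n\in L^2$, so $K_n\,dt$ is a finite measure. Hence the limit gives
\[
  \int\varphi\,d\Big(P\,dt+\tfrac{\alpha}{2}K\,dt+\tfrac{\alpha}{2}\mu\Big)\le0
  \qquad\text{for all }\varphi\ge0 .
\]
By the Riesz representation theorem, this says the signed measure is nonpositive. This half is routine.

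\textbf{Converse half.} Given a smooth $m(t)$ on $E$, I build weakly null smooth pairs whose product defect is $m\,dt$. I use the oscillatory packets behind Theorem~\ref{thm:intro-weak-sharpness}. Fix a unit symmetric matrix $M$, take $e_1$ the first coordinate vector, and set
\[
  G_n(t,x)=a(t)\sqrt2\cos(2\pi n x_1)\,M,\qquad
  S_n(t,x)=b(t)\sqrt2\cos(2\pi n x_1)\,M .
\]
Here $a,b$ are smooth with $a\,b=m$; for instance $a=1$ and $b=m$.

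Both sequences tend to $0$ weakly in $L^2(E\times\T^d)$ by Riemann--Lebesgue. On the other hand, $\int_{\T^d}2\cos^2(2\pi n x_1)\,dx=1$ exactly, so
\[
  K_n(t)=a(t)b(t)=m(t)
\]
for every $n$. The product measures are therefore identically $m\,dt=0+\mu$ with $\mu=m\,dt$.

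To show that two sequences with the same marginal weak limits can carry different defects, I take $m$ and $m'$ distinct; both give weakly null pairs. If admissibility is also wanted, I choose $P_n=P$ with $P+\frac{\alpha}{2}m\le0$.

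\textbf{Main obstacle.} The genuine subtlety is whether $G_n$ must actually be an entropy lever $I-A_n^{-1}$ with $A_n$ in the positive cone. The proposal above treats residual data abstractly, and the quantifier ``weakly null smooth oscillatory pairs'' suggests that this is allowed.

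If levers are required, I would set $A_n=(I-G_n)^{-1}$ with small amplitude, so that $\|G_n\|_{L^\infty}<1$ keeps $A_n$ positive definite. The product defect scales bilinearly, so I put the small factor into $a$ and the compensating large factor into $b=m/a$. This realizes any $m$ while keeping $A_n$ in a compact spectral envelope.

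For the weak-* convergence at the boundary of $E$, I would work on $\bar E$ or with compactly supported tests. Finiteness of the defect measure follows from the uniform $L^2$ bounds on $G_n$ and $S_n$.
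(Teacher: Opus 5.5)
Your proposal is correct and is essentially the paper's proof: you test the admissibility inequality against non-negative \(\varphi\in C_c(E)\) and pass to the limit using \(P_n\to P\) in \(L^1(E)\) and the assumed weak-* convergence of \(K_n\,dt\), and for the converse your choice \(a=1\), \(b=m\) is exactly the paper's \(G_n=\varphi_n H\), \(S_n=m(t)\varphi_n H\) with normalized weakly null modes. One cosmetic fix is needed because the paper's torus is \((\mathbb R/2\pi\mathbb Z)^d\): there \(\cos(2\pi n x_1)\) is not periodic and \(\int_{\T^d}2\cos^2\,dx\neq1\), so use \(\varphi_n=\sqrt2\,(2\pi)^{-d/2}\cos(nx_1)\) instead.
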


\paragraph{Strict augmented residual-data converse.}

The augmented budget is not only a closure condition.  At the residual-data
level it is also sufficient, provided one works with strict slack.

\begin{maintheorem}[Strict converse for positive-cone residual data]
\label{thm:intro-strict-augmented-converse}
Let \(E\) be a compact time interval.  Let \(P,G,S\) be smooth residual data,
let \(m\in C^\infty(E)\), assume that \(I-G(t,x)\ge\kappa I\) for some
\(\kappa>0\), and assume that, for some \(\sigma>0\),
\[
  P(t)+\frac{\alpha}{2}\int_{\mathbb T^d}G(t):S(t)\,dx
  +\frac{\alpha}{2}m(t)\le-\sigma
  \qquad\hbox{on }E .
\]
Then there are smooth admissible residual triples
\((P_n,G_n,S_n)\) such that \(P_n=P\), \(I-G_n\ge(\kappa/2)I\),
\[
  G_n\rightharpoonup G,\qquad S_n\rightharpoonup S
  \quad\hbox{weakly in }L^2(E\times\mathbb T^d),
\]
and
\[
  \left(\int_{\mathbb T^d}G_n(t):S_n(t)\,dx\right)dt
  \stackrel{*}{\rightharpoonup}
  \left(\int_{\mathbb T^d}G(t):S(t)\,dx\right)dt+m(t)\,dt .
\]
For all sufficiently large \(n\), the approximating triples satisfy the strict
pointwise admissibility inequality with margin \(\sigma/2\).  Thus the
defect-measure budget has a positive-cone converse realization at the
residual-data level.  Moreover, the oscillatory corrections \(G_n-G\) and
\(S_n-S\) may be supported in any prescribed non-empty open set
\(U\subset\mathbb T^d\), with arbitrarily small \(L^\infty\) lever amplitude
and explicit reciprocal \(L^2\) cost in the conformation residual; this
reciprocal cost is optimal up to constants under such localized small-lever
constraints.
\end{maintheorem}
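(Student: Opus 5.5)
We build the oscillatory sequence explicitly. Fix a nonempty open set $U\subset\mathbb T^d$, a smooth cutoff $\chi\in C_c^\infty(U)$ with $\int_{\mathbb T^d}\chi^2\,dx=1$, a unit wave vector $k$, and a fixed traceless symmetric matrix $M$ with $|M|=1$. For parameters $\delta\in(0,\kappa/4]$ (lever amplitude) and $n\in\mathbb N$ (frequency), set
\[
  G_n=G+\delta\,\chi(x)\sqrt2\cos(n k\cdot x)\,M,\qquad
  S_n=S+\frac{m(t)}{\delta}\,\chi(x)\sqrt2\cos(n k\cdot x)\,M,
  \qquad P_n=P .
\]
The first step is to check the positive-cone constraint. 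Since $|\delta\chi\sqrt2\cos(\cdot)M|\le\sqrt2\,\delta\|\chi\|_\infty$, we can rescale $\chi$ or shrink $\delta$ so that this perturbation has operator norm at most $\kappa/2$. Then $I-G_n\ge(\kappa/2)I$, with arbitrarily small $L^\infty$ lever amplitude. The residual correction has $L^2$ size $\simeq\|m\|/\delta$, which is the advertised reciprocal cost.

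The second step is the weak convergence and the limiting product. By Riemann--Lebesgue, $\chi\cos(nk\cdot x)\rightharpoonup0$ in $L^2$, uniformly in the smooth time-dependent amplitudes. Hence $G_n\rightharpoonup G$ and $S_n\rightharpoonup S$. Expanding the product gives
\[
\int G_n:S_n\,dx=\int G:S\,dx+\delta\,r_n^{(1)}(t)+\frac{m}{\delta}r_n^{(2)}(t)+m(t)\int 2\chi^2\cos^2(nk\cdot x)\,dx .
\]
Here $r_n^{(1)}=\int\chi\sqrt2\cos(nk\cdot x)\,M:S\,dx$ and $r_n^{(2)}=\int\chi\sqrt2\cos(nk\cdot x)\,M:G\,dx$. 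Since $S,G,\chi$ are smooth, repeated integration by parts in the direction $k$ shows $r_n^{(i)}=O(n^{-N})$ uniformly in $t\in E$. Likewise, $2\cos^2=1+\cos(2nk\cdot x)$ makes the last integral equal to $1+O(n^{-N})$. Thus the product densities converge uniformly, not merely weakly-*, to $\int G:S\,dx+m(t)$. Integrating the strict hypothesis then shows that for $n\ge n_0(\sigma,\delta)$ the triple satisfies $P+\frac\alpha2\int G_n:S_n\,dx\le-\sigma/2$ pointwise on $E$, which is admissibility with margin $\sigma/2$. This uniform error control, with constants depending on $\|m\|_{C^0}/\delta$ and $C^N$ norms of $G,S,\chi$, is the routine bulk of the argument.

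The main obstacle is the optimality claim for the reciprocal cost. Suppose the corrections are supported in $U$ and satisfy $\|G_n-G\|_{L^\infty}\le\delta$. The defect contribution is then asymptotically $\int_E\int(G_n-G):(S_n-S)\,dx\,dt$, because the cross terms vanish in the weak limit. By Hölder,
\[
|m|(E)\lesssim\liminf_n\ \delta\,|U|^{1/2}|E|^{1/2}\,\|S_n-S\|_{L^2(E\times U)} .
\]
This gives $\liminf\|S_n-S\|_{L^2}\gtrsim\|m\|_{L^1(E)}/(\delta|U|^{1/2}|E|^{1/2})$, which matches the construction up to constants depending on $\chi$ and $U$. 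The delicate point is justifying that the cross terms $\int G:(S_n-S)$ and $\int(G_n-G):S$ tend to zero. This follows from weak convergence against the fixed $L^2$ fields $G$ and $S$, which holds here since $E$ is compact and the data are smooth.
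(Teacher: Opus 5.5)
Your argument is essentially the paper's: the same localized high-frequency packet $\widehat G_n=\delta\varphi_n M$, $\widehat S_n=(m/\delta)\varphi_n M$ with $\varphi_n=\sqrt2\,\chi\cos(nk\cdot x)$, uniform vanishing of the cross terms, and the $\sigma/2$ margin from strict slack. Your $\varphi_n$ is normalized only up to an $O(n^{-N})$ error, which is harmless. Two small fixes are needed: take $k\in\mathbb Z^d\setminus\{0\}$ so the modes are periodic even when $U=\mathbb T^d$, and shrink $\delta$ until $\sqrt2\,\delta\norm{\chi}_{L^\infty}\le\kappa/2$ rather than rescaling $\chi$, which would spoil $\int\chi^2\,dx=1$.

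For optimality, the paper applies Cauchy--Schwarz at each fixed time, assuming the self-interaction converges in $L^2(E)$, and obtains the sharper bound $|U|^{-1/2}\eta^{-1}\norm{m}_{L^2(E)}$. Your space-time H\"older argument works from the weak-* limit and also forces the $\delta^{-1}$ scaling. However, it yields $\norm{m}_{L^1(E)}$ only after testing against continuous $\varphi$ with $|\varphi|\le1$; integrating over all of $E$ gives just $\bigl|\int_E m\,dt\bigr|$.
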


\subsection*{Interface with weak-solution realization}

The main theorem list above stops at the residual-data converse.  Subsection~\ref{sec:weak-compactness}
then records the interface with genuine weak solutions.  Theorem~\ref{thm:weak-compactness-realization}
shows that any weak-solution realizing sequence whose defects can be identified
must satisfy the same augmented residual-work budget.  Theorem~\ref{thm:strong-error-lifting-certificate}
gives an explicit residual-error certificate for an admissible packet lift,
Theorem~\ref{thm:conditional-full-converse} states the corresponding converse
under such a certificate, and Theorem~\ref{thm:packet-ledger-obstruction} explains why a
stretching-packet cancellation does not automatically remove the weak product
defect.  These statements locate the PDE realization problem after the
residual theorem; the residual theorem itself is the closed contribution of the
paper.

\subsection*{Organization}
Section~\ref{sec:notation} fixes the residual-work notation and the residual
data spaces, strong topology, augmented topology, and residual-work cone.
Section~\ref{sec:states}
derives the defect-work identity.  Section~\ref{sec:matching} removes pressure
and mean gauges, identifies the budget obeyed by dynamically generated residual
traces, and proves the sharp residual cost, including the global variational
projection and constrained closure-subspace repair.  Section~\ref{sec:local-corrector}
proves the local positive-cone corrector and realizes strict metric repairs in
the conformation channel.  Section~\ref{sec:application}
gives structured-family tests and a finite-thickness obstruction.  Section~\ref{sec:fenep-residual}
extends the signed work law to entropy-dual closures and identifies the FENE-P
finite-extensibility boundary geometry.
Section~\ref{sec:closed-realization} proves closedness of the admissible
residual class, turns residual-work violations into strong non-realization
statements, identifies the product-defect compactification, proves the strict
positive-cone residual-data converse for augmented budgets, and gathers the
weak-solution compactness interface in Subsection~\ref{sec:weak-compactness}.
This keeps the weak compactness language next to the defect measure that closes
the residual ledger.  The final discussion and conclusion return to the
residual interpretation, numerical diagnostic meaning, and open lifting
problem.

\section{Residual-work notation in dimension \texorpdfstring{$d$}{d}}
\label{sec:notation}

In Sections~\ref{sec:states}--\ref{sec:closed-realization} we fix
\(d\in\{2,3\}\) and work on \(\T^d=(\R/2\pi\mathbb Z)^d\).  Matrix
contractions are Frobenius products, \(A:C=\tr(A^TC)\), and spatial averages
are denoted by
\[
  \avg{f}=|\T^d|^{-1}\int_{\T^d}f(x)\,dx .
\]
All function spaces in these sections are over \(\T^d\).  We write
\(H^s=H^s(\T^d)\) and \(L^2=L^2(\T^d)\), with componentwise norms for vector
and matrix fields.  Space-time norms over a measurable time window \(E\subset
[0,T]\) are written, for instance, as \(L^2(E\times\T^d)\).  The conformation
tensor is parametrized by
\[
  A=e^B,
  \qquad
  B(t,x)\in\Sym^d,
\]
so \(A(t,x)\in\Spp^d\) automatically.  The Hookean entropy and entropy-dual
lever are
\[
  \Phi_d(A)=\tr A-\log\det A-d,
  \qquad
  G(A)=D_A\Phi_d(A)=\Id-A^{-1}.
\]

For a smooth profile \((u,p,B)\), the momentum and conformation residuals are
\(\calM[u,p,B]\) and \(\mathcal S[u,B]\).  The momentum residual is always
understood modulo spatial means and pressure tensors.  A pressure-free
symmetric anti-divergence \(R_{\calN}\) is paired with \(D(u)\) through
\[
  P(t)=\int_{\T^d}-R_{\calN}(t):D(u(t))\,dx .
\]
Lemma~\ref{lem:gauge} shows that this number is independent of the
pressure-free representative.  On a time window \(E\), the residual-work
diagnostics are
\[
  W_E=\int_E[P(t)]_+\,dt,
  \qquad
  L_E=\norm{G}_{L^2(E\times\T^d)},
  \qquad
  S_E=\norm{\mathcal S[u,B]}_{L^2(E\times\T^d)},
\]
together with the following alignment coefficient.

\begin{definition}[Windowed residual-work data]
\label{def:windowed-data}
For a smooth residual-matching profile and a measurable time window \(E\), set
\[
  \eta_{\rm al}(t)=
  \begin{cases}
  \displaystyle
  \frac{\left[-\int_{\T^d}G(t):\mathcal S[u,B](t)\,dx\right]_+}
       {\|G(t)\|_{L^2}\|\mathcal S[u,B](t)\|_{L^2}},
  & \|G(t)\|_{L^2}\|\mathcal S[u,B](t)\|_{L^2}>0,\\[1.2em]
  0, & \text{otherwise},
  \end{cases}
\]
and
\[
  \eta_E=\operatorname*{ess\,sup}_{t\in E}\eta_{\rm al}(t).
\]
The windowed residual-work data are
\[
  (W_E,L_E,S_E,\eta_E).
\]
The degenerate convention above means that positive work on a zero-lever or
zero-residual set is immediately inadmissible.
\end{definition}

These four quantities are the complete input for the windowed admissibility
test:
\[
\begin{array}{c|l|l}
\hbox{symbol} & \hbox{definition} & \hbox{role}\\ \hline
P(t) & \displaystyle\int_{\T^d}-R_{\calN}:D(u)\,dx
& \hbox{gauge-invariant pressure-free work}\\[0.5em]
W_E & \displaystyle\int_E[P(t)]_+\,dt
& \hbox{positive work to be paid}\\[0.5em]
L_E & \norm{G}_{L^2(E\times\T^d)}
& \hbox{entropy-lever size}\\[0.5em]
S_E & \norm{\mathcal S[u,B]}_{L^2(E\times\T^d)}
& \hbox{conformation-residual size}\\[0.5em]
\eta_E & \operatorname*{ess\,sup}_{t\in E}\eta_{\mathrm{al}}(t)
& \hbox{negative-alignment efficiency}
\end{array}
\]

The condition is windowed rather than a frozen spatial snapshot.  The scalar
\(P(t)\) is computed after the full time-dependent momentum residual has been
reduced modulo pressure and mean gauges, so the convective, viscous, polymeric,
and forcing contributions of a proposed evolution enter through the same
pressure-free work.  For disjoint time windows \(E_j\),
\[
  W_{\cup_j E_j}=\sum_j W_{E_j},
\]
and the admissibility budget must hold on each subwindow as well as on their
union.  Thus sustained positive pressure-free production cannot be hidden by a
favorable instantaneous picture at isolated times; it must be paid by the
time-integrated lever--residual pairing on every time slab.

\begin{definition}[Residual-data spaces and topologies]
\label{def:residual-data-space}
For a measurable time window \(E\), set
\[
  \mathfrak X_E
  =
  L^1(E)\times L^2(E\times\T^d;\Sym^d)
  \times L^2(E\times\T^d;\Sym^d).
\]
An element is written \(X=(P,G,S)\).  The strong residual-data topology on
\(\mathfrak X_E\) is the product topology
\[
  P_n\to P\hbox{ in }L^1(E),\qquad
  G_n\to G,\quad S_n\to S\hbox{ in }L^2(E\times\T^d).
\]
The one-strong residual topology means \(P_n\to P\) in \(L^1(E)\), one of
\(G_n,S_n\) converges strongly in \(L^2\), and the other converges weakly in
\(L^2\).  The augmented residual-data space is
\[
  \mathfrak X_E^{\rm aug}
  =
  \mathfrak X_E\times\mathcal M(E),
\]
where \(\mathcal M(E)\) is the space of finite signed Radon measures on \(E\)
with the weak-* topology.  We write
\[
  (P_n,G_n,S_n)\to(P,G,S,\mu)
\]
in the augmented topology if \(P_n\to P\) in \(L^1(E)\),
\[
  G_n\rightharpoonup G,\qquad S_n\rightharpoonup S
  \quad\hbox{weakly in }L^2(E\times\T^d),
\]
and
\[
  \left(\int_{\T^d}G_n(t):S_n(t)\,dx\right)dt
  \stackrel{*}{\rightharpoonup}
  \left(\int_{\T^d}G(t):S(t)\,dx\right)dt+\mu
  \quad\hbox{in }\mathcal M(E).
\]
\end{definition}

\begin{definition}[Residual-work cone and admissible fibers]
\label{def:residual-work-cone}
For \(P,K\in L^1(E)\), define the residual-work cone
\[
  \mathfrak C_E
  =
  \left\{(P,K)\in L^1(E)\times L^1(E):
  P+\frac{\alpha}{2}K\le0\hbox{ a.e. on }E\right\}.
\]
This is a closed convex cone in \(L^1(E)\times L^1(E)\).  For
\((P,G)\in L^1(E)\times L^2(E\times\T^d;\Sym^d)\), the admissible fiber over
\((P,G)\) is
\[
  \mathfrak A_E(P,G)
  =
  \left\{S\in L^2(E\times\T^d;\Sym^d):
  \left(P,\int_{\T^d}G:S\,dx\right)\in\mathfrak C_E\right\}.
\]
Equivalently,
\[
  P(t)+\frac{\alpha}{2}\int_{\T^d}G(t):S(t)\,dx\le0
  \quad\hbox{for a.e. }t\in E.
\]
Thus admissibility is linear and convex in the signed-work variables
\((P,K)\), and it is a closed Hilbert half-space in the conformation-residual
fiber once \(P\) and \(G\) are fixed.  The pullback in the raw variables
\((P,G,S)\) is not asserted to be convex because the map
\((G,S)\mapsto\int G:S\) is bilinear.
\end{definition}

\begin{proposition}[Geometry of the residual-work admissible set]
\label{prop:residual-cone-geometry}
The set \(\mathfrak C_E\) is a closed convex cone in
\(L^1(E)\times L^1(E)\).  Its norm interior in \(L^1\times L^1\) is empty, as
is usual for order cones in \(L^1\), so the useful geometry is an order
stratification.  Writing
\[
  M_{P,K}=P+\frac{\alpha}{2}K,
\]
the saturated stratum is \(\{M_{P,K}=0\}\), the strictly dissipative stratum is
\(\{M_{P,K}<0\}\), and the exterior violation is measured by the positive part
\([M_{P,K}]_+\).  In the signed-work coordinate,
\[
  \operatorname{dist}_{L^1}(M_{P,K},L^1_-(E))
  =
  \norm{[M_{P,K}]_+}_{L^1(E)},
\]
where \(L^1_-(E)=\{f\in L^1(E):f\le0\hbox{ a.e.}\}\).  For fixed \((P,G)\),
the fiber \(\mathfrak A_E(P,G)\) is closed and convex in
\(L^2(E\times\T^d;\Sym^d)\).
If \(G(t)\not\equiv0\) for a.e. \(t\), the fiber boundary is the affine
hyperplane
\[
  P(t)+\frac{\alpha}{2}\int_{\T^d}G(t):S(t)\,dx=0
  \quad\hbox{for the active times}.
\]
The singular stratum is the zero-lever set
\[
  Z_G=\{t\in E:\norm{G(t)}_{L^2_x}=0\}.
\]
On \(Z_G\), positive work cannot be repaired by any conformation residual,
whereas non-positive work imposes no conformation-residual constraint.
\end{proposition}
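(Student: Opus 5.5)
The plan is to verify the claims of Proposition~\ref{prop:residual-cone-geometry} one at a time, in the order stated, using only the definitions in Definition~\ref{def:residual-work-cone} and elementary facts about \(L^1\) and \(L^2\).

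\emph{Cone structure and closedness.} The map \(T(P,K)=M_{P,K}=P+\frac{\alpha}{2}K\) is bounded and linear from \(L^1\times L^1\) to \(L^1(E)\). Moreover \(\mathfrak C_E=T^{-1}(L^1_-(E))\).

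\begin{itemize}
\item \(L^1_-\) is a convex cone, because sums and nonnegative multiples of a.e.\ nonpositive functions are a.e.\ nonpositive.
\item \(L^1_-\) is closed: an \(L^1\)-convergent sequence has an a.e.\ convergent subsequence, which preserves the sign.
\end{itemize}

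So \(\mathfrak C_E\) is a closed convex cone.

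\emph{Empty interior.} Fix \((P,K)\in\mathfrak C_E\) and \(\epsilon>0\). I assume \(|E|>0\); otherwise the statement is vacuous. Choose a measurable \(F\subset E\) with \(0<|F|\) and \(\frac{2}{\alpha}\cdot(1+\|M_{P,K}\|_{L^\infty(F)})|F|<\epsilon\). If \(M\) is unbounded, first restrict to a set where \(|M|\le N\) and has positive measure.

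Perturb \(K\) by \(\frac{2}{\alpha}(1-M_{P,K})\mathbf 1_F\). This perturbation has \(L^1\) norm below \(\epsilon\), and on \(F\) the new \(M\) equals \(1>0\). Hence every \(L^1\)-ball around \((P,K)\) leaves \(\mathfrak C_E\), and the interior is empty.

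\emph{Distance formula.} Two inequalities are needed.

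\begin{itemize}
\item For any \(g\in L^1_-\), on \(\{M>0\}\) we have \(|M-g|\ge M=[M]_+\). This gives \(\|M-g\|_{L^1}\ge\|[M]_+\|_{L^1}\).
\item The choice \(g=\min(M,0)=-[M]_-\in L^1_-\) attains the value \(\|[M]_+\|_{L^1}\).
\end{itemize}

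The strata \(\{M=0\}\), \(\{M<0\}\) and \([M]_+\) are then just the natural order decomposition.

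\emph{Fiber closed and convex.} For fixed \((P,G)\), the map \(\Lambda:S\mapsto \int_{\T^d}G(t):S(t)\,dx\) is linear from \(L^2(E\times\T^d)\) to \(L^1(E)\). It is bounded, with \(\|\Lambda S\|_{L^1(E)}\le\|G\|_{L^2}\|S\|_{L^2}\) by Cauchy--Schwarz in \(x\) and then in \(t\).

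Hence \(\mathfrak A_E(P,G)\) is the preimage of the closed convex set \(\{K:(P,K)\in\mathfrak C_E\}=\frac{2}{\alpha}(L^1_--P)\) under a continuous affine map. So it is closed and convex.

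\emph{Boundary and singular stratum.} The main obstacle is the boundary statement, since the constraint is a continuum of time-indexed half-spaces rather than a single one. I interpret it fiberwise in \(t\). For a.e.\ \(t\) with \(G(t)\neq0\), the slice \(\{S(t):P(t)+\frac{\alpha}{2}\langle G(t),S(t)\rangle_{L^2_x}\le0\}\) is a nondegenerate closed half-space in \(L^2_x\). Its boundary is the stated affine hyperplane, because the functional \(\langle G(t),\cdot\rangle\) is nonzero and attains every real value.

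To justify "boundary" in the space-time sense, I would show that \(S\in\mathfrak A_E\) lies in the topological boundary exactly when the active set of times has positive measure. Given such an \(S\), add \(\delta\,\mathbf 1_F(t)\,G(t)/\|G(t)\|_{L^2_x}\) with \(F\) a small subset of the active set. This perturbation is small in \(L^2\) and makes the inequality strict in the wrong direction on \(F\), so \(S\) is a boundary point.

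Conversely, let \(S\) be a boundary point. Take \(S_n\to S\) in \(L^2\) with \(S_n\notin\mathfrak A_E\). Pass to a subsequence along which \(\Lambda S_n\to\Lambda S\) a.e. in \(t\), since \(\Lambda\) is continuous into \(L^1\). Then \(M\ge0\) must hold on a set of positive measure. I will state precisely this version.

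Finally, on \(Z_G\) we have \(\int G:S\,dx=0\) for every \(S\). The constraint therefore reduces to \(P(t)\le0\): it is unsatisfiable if \(P>0\) on a subset of positive measure and vacuous if \(P\le0\).
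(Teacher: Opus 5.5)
Most of your proposal is correct and follows the paper's own argument. Closedness and convexity of $\mathfrak C_E$ and of the fiber come from preimages of $L^1_-(E)$ under bounded affine maps. The distance formula comes from the pointwise projection, and the zero-lever stratum from the vanishing pairing on $Z_G$. Your time-concentrated perturbation also correctly proves the empty-interior claim, which the paper only asserts. The genuine error is the space-time characterization you commit to at the end. Write $M_S(t)=P(t)+\frac{\alpha}{2}\int_{\T^d}G(t):S(t)\,dx$. It is false that $S\in\mathfrak A_E(P,G)$ is a topological boundary point of the fiber exactly when the active set $\{M_S=0\}$ has positive measure. Your converse breaks at ``then $M\ge0$ must hold on a set of positive measure.'' The sets $F_n$ on which $M_{S_n}>0$ may satisfy $|F_n|\to0$, and a.e.\ convergence of $\Lambda S_n$ transfers nothing from shrinking sets.

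In fact, the trick you used for $\mathfrak C_E$ shows the fiber has empty interior in $L^2(E\times\T^d)$ whenever $g(t)=\norm{G(t)}_{L^2_x}>0$ on a set of positive measure. For $S$ in the fiber, $\rho=-M_S/g$ is finite a.e.\ on $\{g>0\}$, so some set $\{g>0,\ \rho\le N\}$ has positive measure. Take $F$ inside it with $|F|>0$ small, and add $\delta S=c\,\mathbf 1_F(t)\,G/g$ with $c=2(N+1)/\alpha$. Then $\norm{\delta S}_{L^2(E\times\T^d)}=c|F|^{1/2}$ is arbitrarily small, while $M_{S+\delta S}=g(\frac{\alpha}{2}c-\rho)\ge g>0$ on $F$. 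So even a strictly dissipative $S$ with $M_S\le-1$ is a boundary point, and the topological boundary of the fiber is the whole fiber.

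The boundary clause of the proposition can therefore only be meant slice by slice in $t$, which was your first interpretation. There the active set plays the role of the saturated order stratum, exactly as for $\mathfrak C_E$. You should drop the space-time claim or replace it with this empty-interior statement. Separately, for $|E|=0$ the empty-interior assertion is false rather than vacuous, because $\mathfrak C_E$ is then the whole zero space; the proposition tacitly assumes $|E|>0$.
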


\begin{proof}
The map \((P,K)\mapsto P+(\alpha/2)K\) is continuous from
\(L^1(E)\times L^1(E)\) to \(L^1(E)\), and the cone of non-positive
\(L^1\)-functions is closed and convex.  This proves that
\(\mathfrak C_E\) is closed and convex; it is a cone because the defining
inequality is homogeneous under multiplication by non-negative scalars.  The
positive-part distance formula follows from the pointwise projection of an
\(L^1\)-function onto \(L^1_-(E)\).  For fixed \((P,G)\), the map
\[
  S\mapsto P+\frac{\alpha}{2}\int_{\T^d}G:S\,dx
\]
is affine and continuous from \(L^2(E\times\T^d)\) to \(L^1(E)\), by Cauchy's
inequality.  Hence its inverse image of the non-positive cone is closed and
convex.  The description of the zero-lever stratum is immediate from the fact
that the pairing with \(S\) vanishes when \(G(t)\equiv0\).
\end{proof}

\section{Positive-Cone Reynolds States}
\label{sec:states}

\begin{definition}[Positive-cone Reynolds state]
\label{def:positive-cone-reynolds-state}
Let \(B(t,x)\in\Sym^d\) and set \(A=e^B\).  A positive-cone
Oldroyd--B--Reynolds state is a tuple
\[
  (u,p,B,R,S)
\]
with \(\divv u=0\), \(R,S\in\Sym^d\), satisfying
\begin{align}
  \partial_t u+\divv(u\otimes u)-\nu\Delta u+\nabla p
  &=
  \alpha\divv(A-\Id)+\divv R, \label{eq:reynolds-momentum}\\
  \partial_t A+u\cdot\nabla A-\nabla u\,A-A(\nabla u)^T
  +\lambda^{-1}(A-\Id)
  &=
  S. \label{eq:reynolds-conf}
\end{align}
\end{definition}

\begin{definition}[Defect admissibility]
\label{def:defect-admissibility}
A smooth positive-cone Reynolds state is energy--entropy admissible if
\[
  \int_{\T^d}
  \left[
  -R:D(u)+\frac{\alpha}{2}(\Id-A^{-1}):S
  \right]\dd
  \le0
  \quad\hbox{for a.e. }t.
  \label{eq:admissible}
\]
\end{definition}

\begin{definition}[Free energy and entropy lever]
For \(A\in\Spp^d\), define
\[
  \Phi(A)=\tr A-\log\det A-d,
  \qquad
  G(A)=D_A\Phi(A)=\Id-A^{-1}.
\]
The free energy of a positive-cone state is
\[
  \calE(t)=
  \frac12\int_{\T^d}|u(t)|^2\,dx
  +
  \frac{\alpha}{2}\int_{\T^d}\Phi(A(t))\,dx .
\]
\end{definition}

\begin{lemma}[Lever bounds on spectral windows]
\label{lem:lever-bounds}
If the spectrum of \(A\) lies in \([m,M]\), \(0<m<M<\infty\), then
\[
  |G(A)|\le C_{m,M}|A-\Id|,
  \qquad
  |A-\Id|\le C_{m,M}|G(A)|.
\]
Near equilibrium, \(G(A)=A-\Id+O(|A-\Id|^2)\).
\end{lemma}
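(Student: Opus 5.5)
The plan is to diagonalize. Write \(A=Q\Lambda Q^T\) with \(Q\) orthogonal and \(\Lambda=\operatorname{diag}(\lambda_1,\dots,\lambda_d)\), \(\lambda_i\in[m,M]\). Then
\[
  G(A)=Q\,\operatorname{diag}(1-\lambda_i^{-1})\,Q^T,
  \qquad
  A-\Id=Q\,\operatorname{diag}(\lambda_i-1)\,Q^T .
\]
Both matrices are diagonal in the same orthonormal frame. Since the Frobenius norm is invariant under orthogonal conjugation, it suffices to compare the scalar functions \(g(\lambda)=1-\lambda^{-1}\) and \(\lambda-1\) eigenvalue by eigenvalue.

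The key scalar identity is
\[
  1-\lambda^{-1}=\frac{\lambda-1}{\lambda}.
\]
For \(\lambda\in[m,M]\) this gives
\[
  M^{-1}|\lambda-1|\le|1-\lambda^{-1}|\le m^{-1}|\lambda-1| .
\]
Squaring, summing over \(i\), and taking square roots yields both inequalities, with \(C_{m,M}=\max(m^{-1},M)\).

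For the near-equilibrium expansion, write \(A=\Id+H\) with \(|H|\) small, say \(|H|\le\tfrac12\). Use the Neumann series
\[
  A^{-1}=\Id-H+H^2(\Id+H)^{-1}.
\]
This gives \(G(A)=H-H^2(\Id+H)^{-1}\). The remainder is bounded by \(2|H|^2\), using \(|(\Id+H)^{-1}|_{op}\le2\) and submultiplicativity of the Frobenius norm. Hence \(G(A)=A-\Id+O(|A-\Id|^2)\). Alternatively, one can argue spectrally via \(1-\lambda^{-1}=(\lambda-1)-(\lambda-1)^2/\lambda\). Either way, the constant is uniform once \(A\) is restricted to a neighbourhood of \(\Id\), or on the spectral window if \(O(\cdot)\) is read with a \(C_{m,M}\) constant.

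There is no real obstacle here. The only point needing care is justifying that the matrix norm reduces to eigenvalues. This holds because \(A\), \(A^{-1}\) and \(\Id\) commute and are simultaneously diagonalized by the same orthogonal \(Q\).
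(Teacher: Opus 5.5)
Your proof is correct, and it takes a more explicit route than the paper. The paper's proof is soft. It notes only that \(A\mapsto \Id-A^{-1}\) is smooth on the compact spectral set \(\{m\Id\le A\le M\Id\}\), with derivative equal to the identity at \(\Id\), and reads both the two-sided bounds and the expansion off that.

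You instead diagonalize \(A\), \(A^{-1}\) and \(\Id\) in a common orthonormal frame and reduce everything to the scalar identity \(1-\lambda^{-1}=(\lambda-1)/\lambda\). This buys explicit constants, \(|G(A)|\le m^{-1}|A-\Id|\) and \(|A-\Id|\le M|G(A)|\). Through your second identity \(1-\lambda^{-1}=(\lambda-1)-(\lambda-1)^2/\lambda\), it also gives the quadratic remainder \(|G(A)-(A-\Id)|\le m^{-1}|A-\Id|^2\) uniformly on the whole window, not just in a neighbourhood of \(\Id\).

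Your version also makes the reverse inequality immediate. The paper's two-line argument leaves that step implicit: smoothness yields the upper bound directly, but the lower bound needs either smoothness of the inverse map \(G\mapsto(\Id-G)^{-1}\) or a compactness argument on the ratio \(|A-\Id|/|G(A)|\).

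What the paper's soft route buys is portability. It uses only three facts: smoothness, that \(\Id\) is the unique zero of \(G\), and invertibility of \(DG(\Id)\). So the same argument would carry over to levers such as the FENE-P \(G_b(C)=\frac{b-d}{b-\operatorname{tr}C}\Id-C^{-1}\). There the trace coupling breaks your eigenvalue-by-eigenvalue decoupling, although \(G_b(\Id)=0\) and \(DG_b(\Id)[H]=H+\frac{\operatorname{tr}H}{b-d}\Id\) is still invertible.
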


\begin{proof}
The map \(A\mapsto \Id-A^{-1}\) is smooth on the compact spectral set
\(\{m\Id\le A\le M\Id\}\).  Its derivative at \(\Id\) is the identity map on
symmetric matrices, which gives the local expansion.
\end{proof}

\begin{proposition}[Defect-work identity]
\label{prop:defect-work}
Every smooth positive-cone Reynolds state satisfies
\[
\begin{aligned}
  \frac{d}{dt}\calE(t)
  &+\nu\int_{\T^d}|\nabla u|^2\dd
  +\frac{\alpha}{2\lambda}
  \int_{\T^d}\tr(A+A^{-1}-2\Id)\dd\\
  &=
  -\int_{\T^d}R:D(u)\dd
  +\frac{\alpha}{2}\int_{\T^d}(\Id-A^{-1}):S\dd .
\end{aligned}
\]
\end{proposition}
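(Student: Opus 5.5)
The plan is to split $\mathcal E$ into its kinetic and entropic parts and differentiate each directly, using $\divv u=0$ and periodicity to discard transport terms. At the end, the polymeric stress work in the momentum balance should cancel against the stretching contribution in the entropy balance.

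\emph{Kinetic part.} Take the $L^2$ pairing of \eqref{eq:reynolds-momentum} with $u$. Since $u$ is divergence free on $\T^d$, the convective term $\int \divv(u\otimes u)\cdot u$ and the pressure term $\int\nabla p\cdot u$ both vanish. The viscous term gives $\nu\int|\nabla u|^2$. Integrating by parts and using symmetry of $A-\Id$ and $R$, the right-hand side becomes $-\alpha\int (A-\Id):\nabla u=-\alpha\int A:D(u)$ (here $\int \Id:\nabla u=\int\divv u=0$) and $-\int R:D(u)$. Hence
\[
\tfrac{d}{dt}\tfrac12\int|u|^2+\nu\int|\nabla u|^2=-\alpha\int A:D(u)\dd-\int R:D(u)\dd .
\]

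\emph{Entropic part.} Using $\partial_t\Phi(A)=G(A):\partial_tA$ with $G=\Id-A^{-1}$, contract \eqref{eq:reynolds-conf} with $G$ and integrate; the four resulting terms are handled as follows.
\begin{enumerate}[label=(\roman*)]
\item The transport term is $G:(u\cdot\nabla A)=u\cdot\nabla\Phi(A)$, which integrates to zero because $\divv u=0$.
\item The stretching term is $G:(\nabla u A+A\nabla u^T)=2\tr(\nabla u A)-2\tr(\nabla u)=2A:D(u)-2\divv u$. Its integral is therefore $2\int A:D(u)$, and it enters with a minus sign.
\item The relaxation term is $\lambda^{-1}(\Id-A^{-1}):(A-\Id)=\lambda^{-1}\tr(A+A^{-1}-2\Id)$.
\item The residual term is $\int G:S$.
\end{enumerate}
Multiplying by $\alpha/2$ gives
\[
\tfrac{d}{dt}\tfrac\alpha2\int\Phi(A)-\alpha\int A:D(u)+\tfrac{\alpha}{2\lambda}\int\tr(A+A^{-1}-2\Id)=\tfrac\alpha2\int(\Id-A^{-1}):S .
\]

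Adding the kinetic and entropic identities cancels the $\alpha\int A:D(u)$ terms, which yields the stated identity. The main point needing care is step (ii). We need the trace identity $\tr(A^{-1}\nabla u A)=\tr\nabla u$, and we must keep the sign and transpose conventions for $\nabla u$ consistent with \eqref{eq:reynolds-momentum}, so that $(\divv(A-\Id))\cdot u$ integrates exactly to $-\int A:\nabla u$. This requires the convention $(\nabla u)_{ij}=\partial_j u_i$ in both equations. Positivity of $A=e^B$ guarantees that $A^{-1}$ and $\log\det A$ are smooth, so all differentiations are justified.
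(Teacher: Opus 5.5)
Your proof is correct and follows the paper's argument step for step. You test the momentum equation with $u$, pair the conformation equation with $\frac{\alpha}{2}G$, remove transport via $G:(u\cdot\nabla A)=u\cdot\nabla\Phi(A)$, evaluate the stretching and relaxation contractions, and cancel the elastic stress work. Your closing caveat about the index convention for $\nabla u$ is harmless but unnecessary: because $A$ is symmetric, both $-\int(A-\Id):\nabla u$ and $2\tr(\nabla u\,A)-2\divv u$ reduce to multiples of $\int A:D(u)$ under either convention.
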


\begin{proof}
Testing \eqref{eq:reynolds-momentum} by \(u\), integrating over the torus, and
using \(\divv u=0\), gives
\[
  \frac12\frac{d}{dt}\int_{\T^d}|u|^2\,dx
  +\nu\int_{\T^d}|\nabla u|^2\,dx
  =
  -\alpha\int_{\T^d}(A-\Id):\nabla u\,dx
  -\int_{\T^d}R:D(u)\,dx .
\]
The pressure and transport terms vanish, and \(R:D(u)=R:\nabla u\) because
\(R\) is symmetric.

For the elastic part, since \(D_A\Phi(A)=G=\Id-A^{-1}\),
\[
  \frac{d}{dt}\int_{\T^d}\Phi(A)\,dx
  =
  \int_{\T^d}G:\partial_t A\,dx .
\]
The transport term satisfies
\[
  \int_{\T^d}G:(u\cdot\nabla A)\,dx
  =
  \int_{\T^d}u\cdot\nabla\Phi(A)\,dx=0.
\]
Moreover \(G\) commutes with \(A\), and
\[
  G:(\nabla u\,A+A(\nabla u)^T)
  =
  2(A-\Id):\nabla u .
\]
Finally,
\[
  G:(A-\Id)=\tr(A+A^{-1}-2\Id).
\]
Pairing \eqref{eq:reynolds-conf} with \(\alpha G/2\) therefore yields
\[
  \frac{\alpha}{2}\frac{d}{dt}\int_{\T^d}\Phi(A)\,dx
  =
  \alpha\int_{\T^d}(A-\Id):\nabla u\,dx
  -\frac{\alpha}{2\lambda}
  \int_{\T^d}\tr(A+A^{-1}-2\Id)\,dx
  +\frac{\alpha}{2}\int_{\T^d}G:S\,dx .
\]
Adding the kinetic and elastic balances cancels the principal stress work and
gives the stated identity.
\end{proof}

\begin{remark}[Where the PDE structure enters]
\label{rem:pde-structure-input}
The identity above is the only point at which the special nonlinear structure
of Oldroyd--B is compressed into the residual theory.  The cancellation uses
three PDE facts: incompressible transport is entropy conservative, the
upper-convected stretching term produces exactly the elastic stress work, and
the relaxation term is non-negative in the trace-log entropy.  After these
facts have reduced the defect contribution to
\[
  -\int_{\T^d}R:D(u)\,dx+\frac{\alpha}{2}\int_{\T^d}G:S\,dx ,
\]
the remaining sharp-cost, projection, closedness, and defect-measure arguments
are functional-analytic.  This separation is deliberate: the paper studies the
residual consequence of the PDE energy--entropy structure, not a strong
regularity mechanism for the original evolution.
\end{remark}

\section{Residual Matching and Sharp Cost}
\label{sec:matching}

For smooth \(u,p,B\), define
\begin{align}
  \calM[u,p,B]
  &=
  \partial_t u+\divv(u\otimes u)-\nu\Delta u+\nabla p
  -\alpha\divv(A-\Id),\\
  \mathcal S[u,B]
  &=
  \partial_t A+u\cdot\nabla A-\nabla u\,A-A(\nabla u)^T
  +\lambda^{-1}(A-\Id).
\end{align}
The residual matching equations are
\[
  \divv R=\calM[u,p,B],\qquad S=\mathcal S[u,B].
\]

\begin{definition}[Residual-matching profile]
\label{def:residual-matching-profile}
A smooth triple \((u,p,B)\), with \(\divv u=0\) and \(A=e^B\), is called a
residual-matching profile if the mean of \(\calM[u,p,B]\) vanishes for a.e.
time and if \(S=\mathcal S[u,B]\) is used as the conformation residual.  A symmetric
tensor \(R\) is an admissible momentum representative for the profile if, for
some scalar pressure correction \(\pi\),
\[
  \divv R+\nabla\pi=\calM[u,p,B].
\]
Two representatives are identified if their difference is a divergence-free
symmetric tensor plus an isotropic tensor \(q\Id\), since the latter changes
only the pressure correction in the momentum equation.
\end{definition}

\begin{definition}[Pressure-free work of a profile]
\label{def:pressure-free-work}
For a residual-matching profile, set
\[
  \calN[u,B]=
  \partial_t u+\divv(u\otimes u)-\nu\Delta u-\alpha\divv(A-\Id).
\]
Let
\[
  \calN_0(t,x)=\calN(t,x)-\int_{\T^d}\calN(t,y)\,dy
\]
be the zero-mean part.  Let \(\mathfrak R_{\calN}(t)\) be the affine space of
all symmetric tensors \(R\) for which there exists a scalar \(\pi\) satisfying
\[
  \divv R+\nabla\pi=\calN_0 .
\]
We quotient this affine space by the linear subspace
\[
  \{Q+q\Id:\ Q\in L^2(\T^d;\Sym^d),\ \divv Q=0,\ q\in L^2(\T^d)\}.
\]
A pressure-free representative is any element of the resulting quotient class.
The pressure-free work is
\[
  P(t)=\int_{\T^d}-R_{\calN}(t):D(u(t))\,dx .
\]
By Lemma~\ref{lem:gauge}, this number is independent of the representative.
The removed mean is the harmless mean-velocity forcing mode and does no work
against periodic gradients.
\end{definition}

\begin{remark}[Mean mode]
On the torus, a constant vector field cannot be represented as the divergence
of a periodic tensor.  This is why the spatial mean of \(\calN\) is removed in
Definition~\ref{def:pressure-free-work}.  The residual-work obstruction is
concerned with deformation work, and \(D(u)\) only sees the gradient part of
the velocity.  Fixing the mean velocity therefore removes no admissibility
information relevant to the pressure-free work.
\end{remark}

\begin{proposition}[Symmetric pressure-free antidivergence]
\label{prop:antidiv}
If a vector field \(m\) on \(\T^d\) has zero mean, then there is a symmetric
tensor \(R_m\) with
\[
  \divv R_m=m,\qquad
  \norm{R_m}_{H^{s+1}}\le C_s\norm{m}_{H^s}.
\]
\end{proposition}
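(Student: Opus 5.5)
The plan is to construct $R_m$ explicitly on the Fourier side as the sum of a symmetrized gradient of a vector potential and an isotropic correction. Since $m$ has zero mean, $\hat m(0)=0$, and for $k\neq0$ we may divide by $|k|^2$. We look for
\[
  R_m=\nabla w+(\nabla w)^T-(\divv w)\,\Id ,
\]
with $w$ a zero-mean vector field to be chosen; $R_m$ is then symmetric by construction. A direct computation gives
\[
  \divv R_m=\Delta w+\nabla\divv w-\nabla\divv w=\Delta w .
\]
It therefore suffices to take $w=\Delta^{-1}m$, defined mode by mode as $\hat w(k)=-|k|^{-2}\hat m(k)$ for $k\neq0$ and $\hat w(0)=0$. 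With this choice $\divv R_m=m$ holds exactly.

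For the estimate, note that each entry of $R_m$ is a zeroth-order Fourier multiplier applied to $\nabla\Delta^{-1}m$: its symbol is a sum of terms of the form $k_ik_j/|k|^2$ (up to a factor $i$) times $\hat m_l(k)$, divided by a single extra factor of $k$. Concretely, $|\widehat{R_m}(k)|\le C|k|^{-1}|\hat m(k)|$ for $k\neq0$, and $\widehat{R_m}(0)=0$. Hence
\[
  \norm{R_m}_{H^{s+1}}^2=\sum_{k\neq0}(1+|k|^2)^{s+1}|\widehat{R_m}(k)|^2
  \le C\sum_{k\neq0}(1+|k|^2)^{s}|\hat m(k)|^2 ,
\]
using $(1+|k|^2)/|k|^2\le2$ for $|k|\ge1$. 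This gives $\norm{R_m}_{H^{s+1}}\le C_s\norm{m}_{H^s}$ with a constant depending only on $d$.

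There is no serious obstacle. The only point needing care is the zero-mode bookkeeping: the mean-zero hypothesis is exactly what allows $\Delta^{-1}$ to be defined on the torus, and it ensures that the resulting bound has no low-frequency loss. Smoothness of $R_m$ for smooth $m$ follows from the same multiplier bound applied for every $s$.
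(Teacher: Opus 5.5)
Your proposal is correct and is essentially the paper's proof: the paper uses the same ansatz $(R_m)_{ij}=\partial_i\phi_j+\partial_j\phi_i-\delta_{ij}\divv\phi$ with $\Delta\phi=m$ of zero mean. It checks $\divv R_m=m$ by the same cancellation and cites elliptic estimates for the bound, which you make explicit on the Fourier side via $|\widehat{R_m}(k)|\le C|k|^{-1}|\hat m(k)|$. Your verbal description of the symbol is slightly garbled: the map $m\mapsto R_m$ is a multiplier of order $-1$, with entries like $k_i/|k|^2$. The concrete bound you actually use is nonetheless the right one.
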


\begin{proof}
Let \(\Delta\phi=m\) with zero mean and set
\[
  (R_m)_{ij}=\partial_i\phi_j+\partial_j\phi_i-\delta_{ij}\divv\phi.
\]
Then \(R_m\) is symmetric and
\[
  (\divv R_m)_j
  =\Delta\phi_j+\partial_j\divv\phi-\partial_j\divv\phi
  =m_j .
\]
Elliptic estimates for the zero-mean Poisson equation give the displayed
\(H^{s+1}\) bound.
\end{proof}

\begin{lemma}[Gauge invariance of pressure-free work]
\label{lem:gauge}
Let \(R_1,R_2\in L^2(\T^d;\Sym^d)\) satisfy
\[
  \divv R_1=\divv R_2
\]
and assume \(R_1-R_2\) differs from a divergence-free symmetric tensor by an
isotropic pressure tensor.  Then, for every divergence-free \(u\),
\[
  \int_{\T^d}R_1:D(u)\,dx
  =
  \int_{\T^d}R_2:D(u)\,dx .
\]
\end{lemma}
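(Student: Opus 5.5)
The statement is Lemma~\ref{lem:gauge}. Write $R_1-R_2=Q+q\Id$ with $Q\in L^2(\T^d;\Sym^d)$, $\divv Q=0$ in the distributional sense, and $q\in L^2(\T^d)$. By linearity of the pairing, it suffices to prove
\[
  \int_{\T^d}Q:D(u)\,dx=0,
  \qquad
  \int_{\T^d}q\Id:D(u)\,dx=0
\]
for every divergence-free $u$. The hypothesis $\divv R_1=\divv R_2$ then forces $\nabla q=0$, so $q$ is constant. The argument does not need this, but it is a useful consistency check.

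The isotropic part is immediate. Pointwise $\Id:D(u)=\tr D(u)=\divv u=0$, so $\int q\,\divv u\,dx=0$. This holds for any $q\in L^2$ and requires no integration by parts.

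For the divergence-free part, use the symmetry of $Q$ to write $Q:D(u)=Q:\nabla u$. Integrate by parts on the torus, where periodicity produces no boundary terms:
\[
  \int_{\T^d}Q:\nabla u\,dx=-\int_{\T^d}(\divv Q)\cdot u\,dx=0 .
\]
For $u$ smooth (as for the profiles considered, $u(t)\in C^\infty$), this is exactly the definition of the distributional condition $\divv Q=0$ tested against the periodic field $u$. If one wants $u\in H^1$ only, approximate $u$ by mollified, still divergence-free, periodic fields $u_\epsilon\to u$ in $H^1$. Each pairing $\int Q:\nabla u_\epsilon\,dx$ vanishes, and $\int Q:\nabla u_\epsilon\,dx\to\int Q:\nabla u\,dx$ by Cauchy--Schwarz since $Q\in L^2$.

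Adding the two contributions gives $\int R_1:D(u)\,dx=\int R_2:D(u)\,dx$. Applied to the quotient in Definition~\ref{def:pressure-free-work}, this shows that $P(t)$ is well defined. The only delicate point is the regularity bookkeeping, namely interpreting $\divv Q=0$ for $L^2$ tensors in the weak sense and using a density argument when $u$ is not smooth. I do not expect a genuine obstacle.
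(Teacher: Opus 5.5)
Your proposal is correct and follows the paper's own proof: you split $R_1-R_2=Q+q\Id$, the isotropic part vanishes because $\Id:D(u)=\divv u=0$, and the divergence-free part vanishes by integration by parts on the torus. Your added remarks are correct refinements that the paper leaves implicit: that $q$ must in fact be constant, and that a mollification argument extends the claim to $u\in H^1$.
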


\begin{proof}
Write \(R_1-R_2=Q+p\Id\), with \(\divv Q=0\).  Then
\[
  \int_{\T^d}Q:D(u)\,dx
  =
  -\int_{\T^d}(\divv Q)\cdot u\,dx=0,
\]
and
\[
  \int_{\T^d}p\Id:D(u)\,dx
  =
  \int_{\T^d}p\,\divv u\,dx=0.
\]
\end{proof}

\begin{proposition}[Canonical pressure-free reduction]
\label{prop:canonical-pressure-free}
For every residual-matching profile, the value of \(P(t)\) in
Definition~\ref{def:pressure-free-work} is independent of the chosen
pressure-free representative.  Moreover, the energy--entropy admissibility of
the profile depends only on \(P(t)\), \(G(A)\), and \(\mathcal S[u,B]\).
\end{proposition}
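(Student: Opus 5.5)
The proof has two parts: show that \(P(t)\) is well defined on the quotient class, then show that the admissibility inequality of Definition~\ref{def:defect-admissibility} reduces to a statement about \((P,G,\mathcal S[u,B])\) only.

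\textbf{Independence of the representative.} First I need the affine space \(\mathfrak R_{\calN}(t)\) to be non-empty. Since \(\calN_0\) has zero mean, Proposition~\ref{prop:antidiv} gives a symmetric \(R\) with \(\divv R=\calN_0\) (take \(\pi=0\)). Now let \(R_1,R_2\in\mathfrak R_{\calN}(t)\) with pressures \(\pi_1,\pi_2\). Then \(\divv(R_1-R_2)=-\nabla(\pi_1-\pi_2)\). Writing \(q=\pi_2-\pi_1\), this says \(\divv(R_1-R_2-q\Id)=0\), so
\[
  R_1-R_2=Q+q\Id,\qquad \divv Q=0 .
\]
This is exactly the situation of Lemma~\ref{lem:gauge}. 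Strictly, the lemma assumes \(\divv R_1=\divv R_2\), but its proof only uses the decomposition \(R_1-R_2=Q+q\Id\), so it applies verbatim. Because \(\divv u=0\), we get \(\int R_1:D(u)=\int R_2:D(u)\). Adding an element of the quotient subspace changes the work by zero for the same reason. Hence \(P(t)\) is a function of the class alone.

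\textbf{Link to the original momentum residual.} Let \(R\) be any admissible momentum representative, so \(\divv R+\nabla\pi=\calM[u,p,B]\). By definition \(\calM=\calN+\nabla p\). Since the profile requires \(\calM\) to have zero mean and \(\nabla p\) has zero mean, the mean of \(\calN\) is also zero, so \(\calN=\calN_0\). Therefore \(\divv R+\nabla(\pi-p)=\calN_0\), which means \(R\in\mathfrak R_{\calN}(t)\). Consequently
\[
  -\int_{\T^d}R:D(u)\,dx=P(t)
\]
for every admissible representative, regardless of the pressure \(p\) chosen in the profile. Substituting this into Definition~\ref{def:defect-admissibility} turns the admissibility condition into
\[
  P(t)+\frac{\alpha}{2}\int_{\T^d}G(A):\mathcal S[u,B]\,dx\le0\quad\text{a.e. }t,
\]
which involves only \(P\), \(G(A)\) and \(\mathcal S[u,B]\).

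\textbf{Main obstacle.} The delicate step is the mean-mode bookkeeping. One must check that the zero-mean hypothesis on \(\calM\) is what allows every representative of the original momentum equation to be placed inside \(\mathfrak R_{\calN}\). This uses the fact that \(\nabla p\) and \(\divv R\) are both mean-free on the torus. Once that is settled, the rest is the integration by parts already carried out in Lemma~\ref{lem:gauge}, which is justified by the smoothness of the profile (or, for \(L^2\) tensors, by density).
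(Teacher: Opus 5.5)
Your proof is correct and follows the same route as the paper: independence of \(P(t)\) comes from the gauge lemma, and admissibility comes from substituting \(S=\mathcal S[u,B]\) into the admissibility inequality (the paper goes through the defect-work identity, whose right-hand side is exactly that integrand). Your additions make explicit what the paper's two-line proof leaves implicit: non-emptiness of \(\mathfrak R_{\calN}(t)\) via Proposition~\ref{prop:antidiv}; the remark that Lemma~\ref{lem:gauge} must be applied through its proof, since two representatives generally differ in divergence by \(\nabla q\); and the mean-mode check that every admissible representative of \(\calM[u,p,B]\) lies in \(\mathfrak R_{\calN}(t)\).
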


\begin{proof}
The independence of \(P(t)\) is Lemma~\ref{lem:gauge}.  Substituting
\(S=\mathcal S[u,B]\) into the defect-work identity shows that all representatives
with the same pressure-free work produce the same right-hand side in the
energy--entropy balance.  Therefore admissibility is determined by
\[
  P(t)+\frac{\alpha}{2}
  \int_{\T^d}G(t):\mathcal S[u,B](t)\,dx\le0 .
\]
\end{proof}

\begin{proposition}[Pressure-free residual work]
\label{prop:pressure-free}
For a residual-matching profile, the admissibility condition is
\[
  P(t)+\frac{\alpha}{2}
  \int_{\T^d}G(t):\mathcal S[u,B](t)\,dx\le0,
  \label{eq:pressure-free}
\]
for a.e. \(t\).
\end{proposition}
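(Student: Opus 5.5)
The plan is to reduce the statement to Definition~\ref{def:defect-admissibility} applied to the positive-cone Reynolds state canonically attached to the profile, and then to identify the momentum term with \(P(t)\) using the gauge results already established.

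\textbf{Step 1: build the state.} Fix a residual-matching profile \((u,p,B)\) and set \(A=e^B\) and \(S=\mathcal S[u,B]\). By definition of a residual-matching profile, \(\calM[u,p,B]\) has zero mean for a.e.\ time. We write
\[
\calM[u,p,B]=\calN[u,B]+\nabla p .
\]
Because \(\nabla p\) has zero mean, \(\calN\) also has zero mean, so \(\calN_0=\calN\).

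\textbf{Step 2: choose a momentum representative.} Proposition~\ref{prop:antidiv} gives a symmetric \(R_{\calN}\) with \(\divv R_{\calN}=\calN_0\). Then \(R_{\calN}\) is an admissible momentum representative, with pressure correction \(\pi=p\): indeed
\[
\divv R_{\calN}+\nabla p=\calM[u,p,B].
\]
Any other admissible representative \(R\) satisfies \(\divv R+\nabla\pi'=\calM\). Hence
\[
\divv(R-R_{\calN})=\nabla(p-\pi').
\]
Therefore \(R-R_{\calN}-(p-\pi')\Id\) is divergence-free and symmetric. Lemma~\ref{lem:gauge} then gives
\[
-\int R:D(u)\,dx=-\int R_{\calN}:D(u)\,dx=P(t),
\]
and this value is independent of the representative by Proposition~\ref{prop:canonical-pressure-free}.

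\textbf{Step 3: check the equations of Definition~\ref{def:positive-cone-reynolds-state}.} With the pressure \(p-\pi'\) absorbed into the momentum equation, \((u,\,p-\pi',\,B,\,R,\,S)\) satisfies \eqref{eq:reynolds-momentum}. It satisfies \eqref{eq:reynolds-conf} by the definition of \(\mathcal S\).

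\textbf{Step 4: read off the inequality.} Energy--entropy admissibility of this state means
\[
\int\Bigl[-R:D(u)+\tfrac{\alpha}{2}(\Id-A^{-1}):S\Bigr]dx\le0 .
\]
Substituting Step 2 turns this into \(P(t)+\frac{\alpha}{2}\int G:\mathcal S[u,B]\,dx\le0\) for a.e.\ \(t\). Conversely, the same substitution shows this inequality implies admissibility, so it is exactly the admissibility condition. Proposition~\ref{prop:defect-work} confirms that these are precisely the defect terms entering the energy balance.

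The main obstacle is the bookkeeping in Step 2. The pressure \(p\) has to be placed so that the isotropic and divergence-free differences really lie in the quotient subspace of Definition~\ref{def:pressure-free-work}, and the mean-zero hypothesis is exactly what makes \(\calN_0=\calN\), which is needed for the antidivergence to exist.
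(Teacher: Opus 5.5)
Your proposal is correct and follows essentially the paper's route: the isotropic pressure tensor does no work because \(p\Id:D(u)=p\divv u=0\), Lemma~\ref{lem:gauge} removes the divergence-free part of the gauge, and admissibility is then read off as in Proposition~\ref{prop:canonical-pressure-free}; you additionally make explicit the choice of representative \(R_{\calN}\) and the check of the Reynolds equations. One small citation point: since \(\divv R-\divv R_{\calN}=\nabla(p-\pi')\), Lemma~\ref{lem:gauge} as literally stated (it assumes \(\divv R_1=\divv R_2\)) does not cover your pair, so dispose of the isotropic piece \((p-\pi')\Id\) directly via \(\divv u=0\), exactly as the paper does before invoking the lemma.
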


\begin{proof}
Adding an isotropic pressure tensor \(p\Id\) changes the divergence constraint
but not the work pairing, since \(p\Id:D(u)=p\divv u=0\).  The remaining
gauge freedom is removed by Lemma~\ref{lem:gauge}.  Equivalently, this is the
canonical pressure-free reduction in
Proposition~\ref{prop:canonical-pressure-free}.
\end{proof}

\begin{proposition}[Dynamically generated residual traces]
\label{prop:dynamical-residual-traces}
Let \((u,p,B,R,S)\) be an energy--entropy admissible positive-cone Reynolds
state on a time window \(E\), and let \(P\) be the pressure-free work of its
momentum residual.  Then the associated residual trace satisfies
\begin{equation}
  P(t)+\frac{\alpha}{2}\int_{\T^d}G(t):S(t)\,dx\le0
  \qquad\hbox{for a.e. }t\in E .
  \label{eq:dynamical-trace-budget}
\end{equation}
In particular, an exact smooth Oldroyd--B solution has \(R=S=0\), hence
\(P=0\), and produces the zero residual trace.

More generally, let \((u_n,p_n,B_n,R_n,S_n)\) be energy--entropy admissible
positive-cone Reynolds states with pressure-free residual data
\((P_n,G_n,S_n)\).  If
\[
  P_n\to P\quad\hbox{in }L^1(E),\qquad
  G_n\to G,\quad S_n\to S
  \quad\hbox{strongly in }L^2(E\times\T^d),
\]
then the limiting residual trace \((P,G,S)\) also satisfies
\eqref{eq:dynamical-trace-budget}.  Consequently, a residual trace violating
\eqref{eq:dynamical-trace-budget} on a set of positive measure cannot arise as
a strong residual-data limit of such admissible dynamical traces.
\end{proposition}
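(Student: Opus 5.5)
The plan is to prove the two assertions of Proposition~\ref{prop:dynamical-residual-traces} separately: first the pointwise budget for a single admissible state, then its stability under strong residual-data limits.

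\textbf{Single admissible state.} Since the state is energy--entropy admissible, Definition~\ref{def:defect-admissibility} gives
\[
\int_{\T^d}\bigl[-R:D(u)+\tfrac{\alpha}{2}G:S\bigr]\,dx\le0 \quad\text{for a.e. } t .
\]
The only issue is to replace \(-\int R:D(u)\,dx\) by the pressure-free work \(P(t)\). From \eqref{eq:reynolds-momentum}, \(\divv R=\calM[u,p,B]\). Its mean vanishes, because a periodic divergence has zero mean. Hence \(R\) differs from the canonical antidivergence of \(\calN_0\) (Proposition~\ref{prop:antidiv}) only by a divergence-free symmetric tensor plus an isotropic tensor absorbing \(\nabla p\). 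Lemma~\ref{lem:gauge} then gives
\[
-\int_{\T^d} R:D(u)\,dx=P(t).
\]
Substituting this into the admissibility inequality yields \eqref{eq:dynamical-trace-budget}.

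For an exact smooth Oldroyd--B solution, \(R=S=0\) by definition, so \(P=0\) and the trace is zero.

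\textbf{Strong limits.} Set
\[
M_n(t)=P_n(t)+\tfrac{\alpha}{2}\int_{\T^d} G_n:S_n\,dx ,
\]
and let \(M\) denote the same quantity built from \((P,G,S)\). Write the product difference as
\[
G_n:S_n-G:S=(G_n-G):S_n+G:(S_n-S).
\]
Cauchy--Schwarz on \(E\times\T^d\), together with the uniform bound on \(\|S_n\|_{L^2}\), gives
\[
\int_{\T^d}G_n:S_n\,dx\to\int_{\T^d}G:S\,dx \quad\text{in } L^1(E).
\]
Combined with \(P_n\to P\) in \(L^1(E)\), this shows \(M_n\to M\) in \(L^1(E)\).

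Each \(M_n\) lies in the closed cone \(L^1_-(E)\), by Proposition~\ref{prop:residual-cone-geometry}. Passing to an a.e. convergent subsequence, or invoking closedness of the cone directly, gives \(M\le0\) a.e.

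\textbf{Contrapositive.} If a trace violates \eqref{eq:dynamical-trace-budget} on a set of positive measure, then \(M\notin L^1_-(E)\). Closedness of the cone then excludes any such approximating sequence.

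\textbf{Main obstacle.} The limit passage is routine. The main obstacle is the gauge identification: \(R\) need not coincide with the canonical representative. The point to check is that \(\divv R=\calN_0+\nabla\pi\) holds in the class required by Lemma~\ref{lem:gauge}. Specifically, the difference of two symmetric antidivergences of the same field is divergence-free, and the pressure contributes only \(-p\Id\). I would verify this carefully, noting that smoothness ensures the \(L^2\) requirements of Lemma~\ref{lem:gauge} are met.
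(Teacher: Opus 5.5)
Your proposal is correct and follows the paper's proof essentially step for step. The pointwise budget comes from admissibility plus the pressure-free gauge reduction (Lemma~\ref{lem:gauge}). The limit statement comes from the splitting \((G_n-G):S_n+G:(S_n-S)\) with Cauchy--Schwarz, which gives \(L^1(E)\) convergence of the signed defect, followed by closedness of the non-positive cone. Your explicit check is a slightly more careful version of what the paper compresses into ``the pressure-free reduction'': it shows that \(R\) differs from the canonical antidivergence of \(\mathcal N_0\) by a divergence-free symmetric tensor minus \(pI\).
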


\begin{proof}
For an admissible positive-cone Reynolds state, the defect-work identity and
the pressure-free reduction give exactly
\[
  P(t)+\frac{\alpha}{2}\int_{\T^d}G(t):S(t)\,dx\le0
\]
for a.e. \(t\).  If the state is an exact smooth Oldroyd--B solution, both
defects vanish, and hence \(P=0\) and \(S=0\).

For the limiting statement, set
\[
  K_n(t)=\int_{\T^d}G_n(t):S_n(t)\,dx,\qquad
  K(t)=\int_{\T^d}G(t):S(t)\,dx .
\]
The strong \(L^2\) convergence gives \(K_n\to K\) in \(L^1(E)\), since
\[
  \norm{K_n-K}_{L^1(E)}
  \le
  \norm{G_n-G}_{L^2}\norm{S_n}_{L^2}
  +\norm{G}_{L^2}\norm{S_n-S}_{L^2}.
\]
Together with \(P_n\to P\) in \(L^1(E)\), this implies
\[
  P_n+\frac{\alpha}{2}K_n
  \to
  P+\frac{\alpha}{2}K
  \qquad\hbox{in }L^1(E).
\]
The cone of non-positive \(L^1\) functions is closed under \(L^1\) convergence,
so the limiting inequality follows.
\end{proof}

\begin{proposition}[Sharp residual-work cost]
\label{prop:sharp-cost}
Assume \(G(t)\not\equiv0\).  Among all conformation residuals \(S\) satisfying
\[
  P(t)+\frac{\alpha}{2}\int_{\T^d}G(t):S(t)\,dx\le0,
\]
the minimal \(L^2\) norm is
\[
  \frac{2[P(t)]_+}{\alpha\norm{G(t)}_{L^2}},
\]
and, for \(P(t)>0\), the unique minimizer is
\[
  -\frac{2P(t)}{\alpha\norm{G(t)}_{L^2}^2}G(t).
\]
\end{proposition}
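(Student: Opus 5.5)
The plan is to treat this as a minimum-norm problem over a closed half-space in the Hilbert space \(L^2(\T^d;\Sym^d)\) at the fixed time \(t\). Write \(g=G(t)\), which is nonzero in \(L^2\) by assumption, and \(p=P(t)\). The admissible set is
\[
  \calA=\left\{S:\ \langle g,S\rangle_{L^2}\le -\frac{2p}{\alpha}\right\},
\]
a closed half-space by Proposition~\ref{prop:residual-cone-geometry}. We need its distance from the origin and the nearest point.

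First I will dispose of the case \(p\le0\). Then \(S=0\) lies in \(\calA\), so the minimal norm is \(0=2[p]_+/(\alpha\norm{g}_{L^2})\). Uniqueness is only claimed for \(p>0\).

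For \(p>0\), the lower bound comes from Cauchy--Schwarz. Any admissible \(S\) satisfies
\[
  \frac{2p}{\alpha}\le-\langle g,S\rangle\le\norm{g}_{L^2}\norm{S}_{L^2},
\]
hence \(\norm{S}_{L^2}\ge 2p/(\alpha\norm{g}_{L^2})\). The candidate \(S_*=-\frac{2p}{\alpha\norm{g}^2}g\) gives \(\langle g,S_*\rangle=-2p/\alpha\), so it is admissible with equality. It also has norm exactly \(2p/(\alpha\norm g)\), so the bound is attained.

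For uniqueness, the step needing care, I will use the equality case of Cauchy--Schwarz. If \(S\) is admissible with minimal norm, both inequalities in the chain are equalities. Equality in Cauchy--Schwarz with \(-\langle g,S\rangle=\norm g\norm S>0\) forces \(S=-cg\) with \(c\ge0\). The norm constraint then forces \(c=2p/(\alpha\norm g^2)\). Alternatively, uniqueness follows from strict convexity of the Hilbert norm on the convex set \(\calA\): if there were two minimizers, their midpoint would be admissible with strictly smaller norm. No step is a real obstacle; the only point to record is that \(g\not\equiv0\) is what makes the bound finite and the equality case meaningful.
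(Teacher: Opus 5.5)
Your proposal is correct and follows essentially the same route as the paper: Cauchy's inequality gives the lower bound \(\norm{S}_{L^2}\norm{G}_{L^2}\ge 2[P]_+/\alpha\), and its equality case forces the minimizer to be the displayed negative multiple of \(G\). Your separate treatment of \(P(t)\le0\), where \(S=0\) is already admissible, is slightly more careful than the paper's one-line proof, which writes the constraint as \(-\int G:S\,dx\ge 2P_+/\alpha\) although for \(P<0\) the constraint only requires \(-\int G:S\,dx\ge 2P/\alpha\).
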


\begin{proof}
The constraint requires
\[
  -\int_{\T^d}G:S\,dx\ge \frac{2P_+}{\alpha}.
\]
Cauchy's inequality gives
\[
  \norm{S}_{L^2}\norm{G}_{L^2}
  \ge \frac{2P_+}{\alpha},
\]
with equality exactly when \(S\) is a negative multiple of \(G\).
\end{proof}

\begin{proposition}[Metric projection onto the admissible residual half-space]
\label{prop:metric-projection}
Fix a time and suppose \(G\not\equiv0\).  For a proposed conformation residual
\(S_0\in L^2(\T^d;\Sym^d)\), define the signed violation
\[
  \mathfrak v
  =
  \left[P+\frac{\alpha}{2}\int_{\T^d}G:S_0\,dx\right]_+ .
\]
The set
\[
  \calA(P,G)=\left\{S\in L^2(\T^d;\Sym^d):
  P+\frac{\alpha}{2}\int_{\T^d}G:S\,dx\le0\right\}
\]
is a closed half-space.  The \(L^2\)-nearest point of \(\calA(P,G)\) to \(S_0\)
is
\[
  \widehat S
  =
  S_0-\frac{2\mathfrak v}{\alpha\norm{G}_{L^2}^2}G,
\]
and
\[
  \operatorname{dist}_{L^2}(S_0,\calA(P,G))
  =
  \frac{2\mathfrak v}{\alpha\norm{G}_{L^2}}.
\]
If \(G\equiv0\), then \(\calA(P,0)=L^2\) when \(P\le0\), while no
conformation residual can repair positive work when \(P>0\).
\end{proposition}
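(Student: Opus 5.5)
This is the standard projection onto a closed half-space in the Hilbert space \(L^2(\T^d;\Sym^d)\) with the Frobenius inner product \(\langle X,Y\rangle=\int_{\T^d}X:Y\,dx\). Rewrite the constraint as \(\langle G,S\rangle\le c\) with \(c=-2P/\alpha\). The argument splits into \(G\not\equiv0\) and \(G\equiv0\).

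\textbf{Case \(G\not\equiv0\).}

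\emph{Closedness.} The functional \(S\mapsto\langle G,S\rangle\) is bounded by Cauchy's inequality. Hence \(\calA(P,G)\) is the preimage of \((-\infty,c]\) under a continuous linear functional. It is therefore a closed half-space, with a nonzero normal vector \(G\).

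\emph{Admissible \(S_0\).} If \(S_0\in\calA(P,G)\), then \(\mathfrak v=0\), so \(\widehat S=S_0\) and the distance is zero. The formula holds trivially.

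\emph{Violating \(S_0\).} If \(S_0\notin\calA(P,G)\), then \(\mathfrak v=P+\frac{\alpha}{2}\langle G,S_0\rangle>0\). Write any competitor as \(S=S_0+Z\).

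\emph{Lower bound.} Admissibility of \(S\) requires
\[
  \frac{\alpha}{2}\langle G,Z\rangle\le-\mathfrak v .
\]
Cauchy's inequality then gives
\[
  \norm{Z}_{L^2}\ge\frac{2\mathfrak v}{\alpha\norm{G}_{L^2}} .
\]
This is the same argument as in Proposition~\ref{prop:sharp-cost}, with \(P\) replaced by the violation \(\mathfrak v\).

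\emph{Attainment and uniqueness.} Equality in Cauchy's inequality forces \(Z=-sG\) with \(s\ge0\). The constraint is active exactly when
\[
  s=\frac{2\mathfrak v}{\alpha\norm{G}_{L^2}^2}.
\]
This choice produces \(\widehat S\) and gives the stated distance. Uniqueness follows from strict convexity of the Hilbert norm, or directly from the equality case in Cauchy's inequality.

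\emph{Check of the projection characterization.} One can also verify the variational inequality \(\langle S_0-\widehat S,\,S-\widehat S\rangle\le0\) for all \(S\in\calA(P,G)\). Since \(S_0-\widehat S\) is a positive multiple of \(G\), and \(\langle G,\widehat S\rangle=c\ge\langle G,S\rangle\), the inequality holds.

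\textbf{Case \(G\equiv0\).} The constraint reduces to \(P\le0\), independent of \(S\).
\begin{itemize}
\item If \(P\le0\), every \(S\) is admissible, so \(\calA(P,0)=L^2\).
\item If \(P>0\), the set \(\calA(P,0)\) is empty, so no conformation residual can repair the positive work.
\end{itemize}

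There is no real obstacle in this proof. The only points needing care are the sign bookkeeping of the positive part \(\mathfrak v\), so that the admissible case is included, and stating the degenerate case \(G\equiv0\) separately.
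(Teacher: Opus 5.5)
Your proof is correct and follows the same route as the paper. Both treat $\calA(P,G)$ as the preimage of a closed half-line under a continuous affine functional with normal $G$, obtain $\widehat S$ by moving along $-G$ until the constraint becomes active, and handle $G\equiv0$ separately. The only difference is that the paper invokes the standard fact that the nearest point of a Hilbert half-space is reached orthogonally to its boundary. You instead justify minimality and uniqueness explicitly, via the lower bound $\norm{Z}_{L^2}\ge 2\mathfrak v/(\alpha\norm{G}_{L^2})$ from the Cauchy inequality and its equality case, which makes your argument self-contained.
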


\begin{proof}
For \(G\not\equiv0\), \(\calA(P,G)\) is the inverse image of the closed half-line
\((-\infty,0]\) under the continuous affine functional
\[
  S\mapsto P+\frac{\alpha}{2}\langle G,S\rangle_{L^2}.
\]
If \(S_0\in\calA(P,G)\), then \(\mathfrak v=0\) and the projection is \(S_0\).
Otherwise, the nearest point to a Hilbert-space half-space is obtained by moving
orthogonally to the boundary hyperplane.  The outward normal is \(G\), and the
amount of signed violation is \(\mathfrak v\).  Thus the correcting multiple
\(c\) must satisfy
\[
  P+\frac{\alpha}{2}\langle G,S_0-cG\rangle_{L^2}=0,
\]
which gives \(c=2\mathfrak v/(\alpha\norm{G}_{L^2}^2)\).  The distance formula
follows immediately.  The case \(G\equiv0\) is the displayed admissibility
condition with no conformation lever.
\end{proof}

\begin{proposition}[Constrained metric repair in a closure subspace]
\label{prop:subspace-repair}
Fix a time, suppose \(G\not\equiv0\), and let
\(\mathcal V\subset L^2(\T^d;\Sym^d)\) be a closed linear subspace of allowed
conformation-residual corrections.  For a proposed residual \(S_0\), set
\[
  \mathfrak v
  =
  \left[P+\frac{\alpha}{2}\int_{\T^d}G:S_0\,dx\right]_+,
  \qquad
  H=\Pi_{\mathcal V}G .
\]
If \(H\not\equiv0\), then among all residuals \(S_0+Z\), \(Z\in\mathcal V\),
that satisfy
\[
  P+\frac{\alpha}{2}\int_{\T^d}G:(S_0+Z)\,dx\le0,
\]
the unique nearest one to \(S_0\) is
\[
  \widehat S_{\mathcal V}
  =
  S_0-
  \frac{2\mathfrak v}{\alpha\|H\|_{L^2}^2}H,
\]
and the distance is
\[
  \operatorname{dist}_{L^2}
  \left(S_0,\,(S_0+\mathcal V)\cap\calA(P,G)\right)
  =
  \frac{2\mathfrak v}{\alpha\|H\|_{L^2}}.
\]
If \(H\equiv0\), then allowed corrections are invisible to the signed-work
functional: either \(S_0\) is already admissible, or no correction in
\(\mathcal V\) can make it admissible.
\end{proposition}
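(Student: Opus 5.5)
The plan is to reduce the constrained problem to a one-dimensional orthogonal projection inside the Hilbert space \(\mathcal V\), exactly as in Proposition~\ref{prop:metric-projection}.

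\textbf{Step 1 (the key identity).} For \(Z\in\mathcal V\) we have
\[
  \langle G,Z\rangle_{L^2}=\langle \Pi_{\mathcal V}G,Z\rangle_{L^2}=\langle H,Z\rangle_{L^2},
\]
because \(G-H\perp\mathcal V\). Hence the signed-work functional restricted to the affine slice \(S_0+\mathcal V\) is
\[
  Z\mapsto P+\frac{\alpha}{2}\langle G,S_0\rangle+\frac{\alpha}{2}\langle H,Z\rangle .
\]

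\textbf{Step 2 (reduction to a half-space in \(\mathcal V\)).} Write \(q=P+\frac{\alpha}{2}\langle G,S_0\rangle\), so that \(\mathfrak v=[q]_+\). The repair problem is then to minimize \(\|Z\|_{L^2}\) over
\[
  \mathcal H=\{Z\in\mathcal V:\ \langle H,Z\rangle\le -2q/\alpha\}.
\]
Since \(\mathcal V\) is closed, it is a Hilbert space, and \(\mathcal H\) is a closed half-space of it with normal \(H\in\mathcal V\).

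\textbf{Step 3 (the case \(H\not\equiv0\)).} If \(q\le0\), then \(Z=0\) is feasible and optimal, consistent with \(\mathfrak v=0\).

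If \(q>0\), Cauchy--Schwarz gives, for every feasible \(Z\),
\[
  \|Z\|\,\|H\|\ge -\langle H,Z\rangle\ge 2q/\alpha .
\]
This yields the lower bound \(2\mathfrak v/(\alpha\|H\|)\). Equality forces \(Z\) to be a nonpositive multiple of \(H\) with the constraint active, which pins down \(Z=-\frac{2\mathfrak v}{\alpha\|H\|^2}H\); this choice lies in \(\mathcal V\) because \(H\) does. Uniqueness follows from the equality case of Cauchy--Schwarz, or alternatively from strict convexity of the norm on the closed convex set \(\mathcal H\). The distance formula then follows, since \(\operatorname{dist}(S_0,S_0+\mathcal H)=\min_{Z\in\mathcal H}\|Z\|\).

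\textbf{Step 4 (the case \(H\equiv0\)).} By Step 1 the functional is constant on \(S_0+\mathcal V\) and equal to \(q\). Either \(q\le0\), in which case \(S_0\) is already admissible, or \(q>0\), in which case no \(Z\in\mathcal V\) can repair it.

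The only point requiring care is Step 1: the orthogonality \(G-\Pi_{\mathcal V}G\perp\mathcal V\) uses the closedness of \(\mathcal V\), which guarantees that the projection exists and that the component \(G-H\) is genuinely invisible to every allowed correction. The rest is the same half-space geometry already used in Proposition~\ref{prop:metric-projection}, transplanted into \(\mathcal V\).
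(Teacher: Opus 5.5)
Your proof is correct and follows the paper's argument. The identity \(\langle G,Z\rangle_{L^2}=\langle H,Z\rangle_{L^2}\) for \(Z\in\mathcal V\) turns the constraint into a half-space in \(\mathcal V\) with normal \(H\), and the case \(H\equiv0\) follows because the functional is constant on \(S_0+\mathcal V\). The differences are cosmetic: you locate the nearest point through the Cauchy--Schwarz equality case, as in Proposition~\ref{prop:sharp-cost}, rather than by citing orthogonal projection onto the boundary hyperplane, and you treat the case \(\mathfrak v=0\) explicitly.
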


\begin{proof}
For every \(Z\in\mathcal V\),
\[
  \langle G,Z\rangle_{L^2}=\langle \Pi_{\mathcal V}G,Z\rangle_{L^2}
  =\langle H,Z\rangle_{L^2}.
\]
Thus the admissibility constraint restricted to the affine space
\(S_0+\mathcal V\) is the half-space condition
\[
  P+\frac{\alpha}{2}\langle G,S_0\rangle_{L^2}
  +\frac{\alpha}{2}\langle H,Z\rangle_{L^2}
  \le0.
\]
If \(H\not\equiv0\), the Hilbert-space projection onto this half-space is
orthogonal to its boundary hyperplane inside \(\mathcal V\).  Hence the nearest
correction is \(Z=-cH\), and the boundary equation gives
\[
  c=\frac{2\mathfrak v}{\alpha\|H\|_{L^2}^2}.
\]
The distance formula follows.  If \(H\equiv0\), the affine functional is constant
on \(S_0+\mathcal V\), so an inadmissible \(S_0\) cannot be repaired within the
allowed subspace.
\end{proof}

\begin{theorem}[Global constrained residual-work projection]
\label{thm:global-variational-projection}
Let \(E\) be a measurable time window, let \(P\in L^1(E)\),
\(G,S_0\in L^2(E\times\T^d;\Sym^d)\), and set
\[
  g(t)=\norm{G(t)}_{L^2_x},\qquad
  v(t)=
  \left[
  P(t)+\frac{\alpha}{2}\int_{\T^d}G(t):S_0(t)\,dx
  \right]_+ .
\]
Consider the constrained variational problem
\begin{equation}
  \inf\left\{
  \frac12\norm{S-S_0}_{L^2(E\times\T^d)}^2:
  S\in\mathfrak A_E(P,G)
  \right\}.
  \label{eq:global-variational-problem}
\end{equation}
The feasible set is non-empty if and only if
\[
  P(t)\le0\quad\hbox{for a.e. }t\hbox{ with }g(t)=0.
  \label{eq:zero-lever-feasible}
\]
and
\[
  \frac{v}{g}\mathbf 1_{\{g>0\}}\in L^2(E).
  \label{eq:finite-distance-condition}
\]
In that case the infimum in \eqref{eq:global-variational-problem} is finite and
the unique minimizer is
\begin{equation}
  S_*(t,x)
  =
  S_0(t,x)
  -
  \frac{2v(t)}{\alpha g(t)^2}G(t,x)\mathbf 1_{\{g(t)>0\}},
  \label{eq:global-projection-formula}
\end{equation}
and
\[
  \operatorname{dist}_{L^2(E\times\T^d)}
  (S_0,\mathfrak A_E(P,G))^2
  =
  \int_{\{g>0\}}
  \left(\frac{2v(t)}{\alpha g(t)}\right)^2dt .
\]
Equivalently, the dual multiplier
\[
  \ell(t)=
  \frac{4v(t)}{\alpha^2g(t)^2}\mathbf 1_{\{g(t)>0\}}
\]
is non-negative, satisfies the complementary slackness condition
\[
  \ell(t)
  \left[
  P(t)+\frac{\alpha}{2}\int_{\T^d}G(t):S_*(t)\,dx
  \right]=0
  \quad\hbox{a.e.},
\]
and the Euler--Lagrange relation
\[
  S_*-S_0+\frac{\alpha}{2}\ell G=0 .
\]
Thus the only singular strata of the residual variational problem are the
zero-lever set \(\{G=0\}\) and the non-\(L^2\)-repair regime in which
\eqref{eq:finite-distance-condition} fails.
\end{theorem}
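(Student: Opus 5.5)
The problem decouples in time, so I would reduce it to the fixed-time projection of Proposition~\ref{prop:metric-projection} and then check measurability and integrability. By Fubini, the objective is
\[
\tfrac12\norm{S-S_0}_{L^2(E\times\T^d)}^2=\tfrac12\int_E\norm{S(t)-S_0(t)}_{L^2_x}^2\,dt .
\]
The constraint defining \(\mathfrak A_E(P,G)\) is a pointwise-in-time condition on \(S(t)\) alone. Hence a feasible \(S\) with finite cost exists iff the fibrewise problems are solvable with a square-integrable cost, and the global minimizer must be a.e.\ the fibrewise minimizer.

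\textbf{Feasibility.} I would split \(E\) into \(\{g=0\}\) and \(\{g>0\}\).
\begin{itemize}
\item On \(\{g=0\}\) the pairing \(\int G(t):S(t)\,dx\) vanishes. Admissibility there reduces to \(P(t)\le0\), so that condition is necessary.
\item On \(\{g>0\}\), Proposition~\ref{prop:metric-projection} gives, for every \(S\) in the fibre,
\[
\norm{S(t)-S_0(t)}_{L^2_x}\ge \frac{2v(t)}{\alpha g(t)}.
\]
So any feasible \(S\in L^2\) forces \((v/g)\mathbf 1_{\{g>0\}}\in L^2(E)\). Strictly speaking this says the \emph{distance} is finite. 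Literal non-emptiness of \(\mathfrak A_E(P,G)\subset L^2\) is the same statement as finiteness of the infimum, because \(S_0\in L^2\). I would note this reading explicitly.
\item Conversely, if both conditions hold, define \(S_*\) by \eqref{eq:global-projection-formula}. The functions \(v\) and \(g\) are measurable by Fubini, so \(S_*\) is measurable. Its correction term has \(L^2_x\)-norm \(2v/(\alpha g)\) at each time, so \(S_*-S_0\in L^2(E\times\T^d)\) with
\[
\norm{S_*-S_0}^2=\int_{\{g>0\}}\Big(\frac{2v}{\alpha g}\Big)^2dt .
\]
Finally, \(S_*\) is admissible: at times with \(v=0\) it equals \(S_0\), which already satisfies the inequality. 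At times with \(v>0\) the inequality is saturated.
\end{itemize}

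\textbf{Optimality and uniqueness.} Integrating the fibrewise lower bound over \(E\) shows that \(S_*\) attains the infimum, which gives the distance formula. For uniqueness there are two routes.
\begin{itemize}
\item Strict convexity of the squared norm on the closed convex set \(\mathfrak A_E(P,G)\) (Proposition~\ref{prop:residual-cone-geometry}) gives a unique minimizer directly.
\item Alternatively, any minimizer \(S\) must achieve the fibrewise minimum for a.e.\ \(t\); otherwise it would have a strictly larger cost on a set of positive measure. Uniqueness of the half-space projection then gives \(S(t)=S_*(t)\) for a.e.\ \(t\). On \(\{g=0\}\) the fibrewise minimizer is \(S_0(t)\) itself.
\end{itemize}

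\textbf{Dual statements.} These I would verify directly.
\begin{itemize}
\item The Euler--Lagrange relation is algebraic:
\[
\frac{\alpha}{2}\ell G=\frac{2v}{\alpha g^2}G\,\mathbf 1_{\{g>0\}}=S_0-S_* .
\]
\item Complementary slackness: \(\ell\ne0\) only where \(v>0\) and \(g>0\), and there the bracket vanishes by construction.
\item \(\ell\ge0\) is immediate.
\end{itemize}
One could also derive the multiplier from KKT theory for the closed half-space constraint, but direct verification suffices.

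The main obstacle I expect is the measure-theoretic bookkeeping. This covers measurability of \(t\mapsto v(t),g(t)\), the a.e.\ selection, and showing that a global minimizer cannot improve on the fibrewise minima on a null or non-measurable set. I would handle it by working throughout with the explicit \(S_*\) and the integrated pointwise inequality, rather than any measurable-selection theorem.
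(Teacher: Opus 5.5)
Your proposal is correct and follows essentially the same route as the paper: reduce fibrewise to Proposition~\ref{prop:metric-projection}, get necessity of both feasibility conditions from the zero-lever alternative and the integrated pointwise distance bound, obtain the distance formula and minimality of \(S_*\) by integrating that bound, deduce uniqueness from strict convexity, and verify the multiplier identities by direct substitution. One small point: your admissibility check for \(S_*\) at times with \(v>0\) tacitly uses \(g>0\) there, which holds a.e.\ by the zero-lever condition.
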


\begin{proof}
For each \(t\), the constraint defining \(\mathfrak A_E(P,G)\) is a closed
half-space in \(L^2(\T^d;\Sym^d)\) when \(g(t)>0\), and it is either the whole
space or the empty set when \(g(t)=0\), according as \(P(t)\le0\) or
\(P(t)>0\).  Thus \eqref{eq:zero-lever-feasible} is necessary.

On \(\{g>0\}\), the pointwise Hilbert projection onto the half-space is exactly
\eqref{eq:global-projection-formula}, by
Proposition~\ref{prop:metric-projection}.  Its pointwise squared displacement is
\((2v/(\alpha g))^2\).  Hence the projected field belongs to
\(L^2(E\times\T^d)\) exactly under
\eqref{eq:finite-distance-condition}.  If this condition fails, any admissible
field would have \(L^2\) distance from \(S_0\) at least the non-integrable
pointwise projection distance, impossible for two \(L^2\) fields.  Therefore
the feasible set is empty.  If both conditions hold, integrating the pointwise
projection inequality gives the global minimum and uniqueness, because the
objective is strictly convex in \(S\).  The formula for \(\ell\), complementary slackness,
and the Euler--Lagrange relation follow by writing the Lagrangian
\[
  \frac12\norm{S-S_0}_{L^2}^2
  +\int_E\ell(t)
  \left[
    P(t)+\frac{\alpha}{2}\int_{\T^d}G(t):S(t)\,dx
  \right]dt
\]
and substituting the projection formula.
\end{proof}

\begin{proof}[Proof of Theorem~\ref{thm:subspace-repair-main}]
This is Proposition~\ref{prop:subspace-repair} with the notation of the
introductory theorem.
\end{proof}

\begin{proof}[Proof of Theorem~\ref{thm:main-obstruction}]
The pressure-free identity \eqref{eq:pressure-free} implies
\[
  [P(t)]_+
  \le
  \frac{\alpha}{2}
  \left[-\int_{\T^d}G(t):\mathcal S[u,B](t)\,dx\right]_+.
\]
Indeed, if \(P(t)\le0\) there is nothing to prove, while for \(P(t)>0\) the
admissibility inequality forces the negative signed projection of
\(\mathcal S[u,B]\) onto \(G\) to pay \(P(t)\).  The positive part of this projection
is bounded by its absolute value, and the definition of \(\eta_{\mathrm{al}}(t)\)
therefore gives the pointwise bound in Theorem~\ref{thm:main-obstruction}, with
\(\eta_{\mathrm{al}}(t)=0\) on the zero-lever or zero-residual set.

Integrating the pointwise bound on \(E\) and using
\(\eta_{\mathrm{al}}(t)\le
\eta_E=\operatorname*{ess\,sup}_{t\in E}\eta_{\mathrm{al}}(t)\), we get
\[
  W_E
  \le
  \frac{\alpha}{2}\eta_E
  \int_E\norm{G(t)}_{L^2}\norm{\mathcal S[u,B](t)}_{L^2}\,dt .
\]
Cauchy's inequality in time gives
\[
  \int_E\norm{G(t)}_{L^2}\norm{\mathcal S[u,B](t)}_{L^2}\,dt
  \le L_ES_E,
\]
which is the windowed estimate in Theorem~\ref{thm:main-obstruction}.  The sharp cost is
Proposition~\ref{prop:sharp-cost}, and the nearest-repair formula is
Proposition~\ref{prop:metric-projection}.
\end{proof}

\section{A Local Positive-Cone Corrector}
\label{sec:local-corrector}

The sharp cost in Proposition~\ref{prop:sharp-cost} is an $L^2$
projection statement.  This section records the local positive-cone fact behind
that geometry.  A prescribed conformation residual can be matched to leading
order by a short-time perturbation of the conformation tensor while the path
stays inside the positive cone.  The construction works in the conformation
channel; the coupled momentum realization is treated later through the
residual-data and packet-lifting framework.

\begin{proposition}[Local conformation-channel solvability]
\label{prop:local-conformation-solvability}
Let $(u,B)$ be smooth on a compact time interval $J$, let $A=e^B$ stay in a
compact spectral envelope, and let $T(t,x)$ be any smooth symmetric tensor field.
For every $t_0\in J$ and every sufficiently small $\tau>0$ there are a
subinterval $J_\tau\ni t_0$, $|J_\tau|\simeq\tau$, and smooth fields
$B_\tau$ such that $A_\tau=e^{B_\tau}$ remains positive definite,
\[
  \norm{A_\tau-A}_{L^\infty(J_\tau;C^k_x)}\le C_k\tau,
  \qquad
  \norm{\mathcal S[u,B_\tau]-T}_{L^\infty(J_\tau;C^k_x)}\le C_k\tau
\]
for every fixed spatial order $k$.
\end{proposition}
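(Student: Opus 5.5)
The plan is to prescribe the time derivative of the perturbed conformation tensor so that the residual equals \(T\) exactly at the base time \(t_0\), and then to control the error on a slab of length \(\tau\) by smallness of the perturbation. The residual is affine in \(A\):
\[
  \mathcal S[u,B]=\partial_tA+\mathcal L_uA+\lambda^{-1}(A-\Id),
  \qquad
  \mathcal L_uA=u\cdot\nabla A-\nabla u\,A-A(\nabla u)^T .
\]
Both \(\mathcal L_u\) and the relaxation term are zeroth-order in time with smooth coefficients.

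First set \(D=T-\mathcal S[u,B]\), a smooth symmetric field on \(J\times\T^d\). Let \(J_\tau=[t_0,t_0+\tau]\cap J\), or the left-sided interval if \(t_0\) is the right endpoint, so that \(|J_\tau|\simeq\tau\). Then define the perturbation by solving the linear, non-autonomous transport–stretching equation
\[
  \partial_t Y+\mathcal L_uY+\lambda^{-1}Y=D,\qquad Y(t_0)=0 ,
\]
and set \(A_\tau=A+Y\). Since \(\mathcal S\) is affine in \(A\), this gives \(\mathcal S[u,B_\tau]=\mathcal S[u,B]+D=T\) exactly, which is even better than required.

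Solvability and smoothness of \(Y\) follow from the method of characteristics along the flow of \(u\). In Lagrangian variables the equation becomes a linear matrix ODE \(\dot Y=\nabla u\,Y+Y\nabla u^T-\lambda^{-1}Y+D\) along the smooth flow map. Differentiating in \(x\) gives, by Gronwall on the slab, \(\|Y(t)\|_{C^k_x}\le C_k|t-t_0|\sup\|D\|_{C^k}\le C_k\tau\), with constants depending on \(C^{k+1}\) norms of \(u\) and \(C^k\) norms of \(D\) on the compact set \(J\times\T^d\). This yields \(\|A_\tau-A\|_{L^\infty(J_\tau;C^k_x)}\le C_k\tau\).

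Positivity is where one must be careful. Since \(A\ge m\Id\) on the compact envelope and \(\|Y\|_{L^\infty}\le C_0\tau\), we get \(A_\tau\ge (m/2)\Id\) once \(C_0\tau\le m/2\). Hence \(A_\tau\) stays in the slightly larger compact envelope \([m/2,2M]\), and \(B_\tau=\log A_\tau\) is smooth because the matrix logarithm is analytic on \(\Spp^d\). This step is the reason \(\tau\) must be small, and it is the main point of the statement, since the exact residual match alone would hold for any \(\tau\).

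If one prefers a construction closer to the paper's "leading order" phrasing, there is a simpler alternative: take \(A_\tau=A+(t-t_0)D(t_0)\). This gives \(\mathcal S[u,B_\tau]-T=O(\tau)\) in every \(C^k\), because the residual error consists of \(D(t)-D(t_0)\) and \((t-t_0)(\mathcal L_u+\lambda^{-1})D(t_0)\), both of size \(O(\tau)\) with derivative losses absorbed into \(C_k\). The positivity argument is identical. Either version gives the proposition.
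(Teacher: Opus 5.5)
Your argument is correct, but your main construction takes a different route from the paper's. The paper solves no PDE. It sets \(Y=T-\mathcal S[u,B]\) and picks a scalar cutoff \(h_\tau\) supported in a \(2\tau\)-neighbourhood of \(t_0\), with \(h_\tau'=1\) on \(J_\tau\) and \(|h_\tau|\le C\tau\). It then takes \(A_\tau=A+h_\tau Y\). Because \(\mathcal S\) is affine in \(A\), this gives \(\mathcal S[u,B_\tau]=T+h_\tau(\partial_t+\mathcal L_u+\lambda^{-1})Y=T+O(\tau)\). That is essentially your ``simpler alternative'', except that \(Y\) is kept time-dependent and a time cutoff is used.

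Your primary route instead solves the linear transport--stretching equation \((\partial_t+\mathcal L_u+\lambda^{-1})Y=D\), \(Y(t_0)=0\), along characteristics. What it buys is exact matching, \(\mathcal S[u,B_\tau]=T\) on the slab, so \(\tau\) is needed only to keep \(A+Y\) in the positive cone. In the downstream local-corrector and strict-sufficiency proofs, this removes the residual-matching error and leaves only \(G_\tau-G\) and \(P_\tau-P\) to control. The price is a standard linear hyperbolic solve with \(C^k\) Gronwall bounds. Also, \(A+Y\) is only known to be positive definite near \(t_0\), so your \(B_\tau\) lives on the slab, or must be cut off and lose exactness outside \(J_\tau\). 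The paper's perturbation, by contrast, is explicit and algebraic, defined on all of \(J\), and equal to \(B\) outside a \(2\tau\)-neighbourhood of \(t_0\), which is convenient for localization and patching.

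One small repair: \([t_0,t_0+\tau]\cap J\) can be much shorter than \(\tau\) when \(t_0\) lies near, but not at, the right endpoint. Take \(J_\tau=[t_0-\tau,t_0+\tau]\cap J\) instead. Your characteristic argument lets you solve the linear equation in both time directions from \(t_0\), and the bound \(\|Y(t)\|_{C^k_x}\le C_k|t-t_0|\) is unchanged.
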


\begin{proof}
Let $S_0=\mathcal S[u,B]$ and $Y=T-S_0$.  Choose a smooth scalar function
$h_\tau$ supported in a $2\tau$-neighborhood of $t_0$ such that, on a
subinterval $J_\tau\ni t_0$ of length comparable to $\tau$,
\[
  h_\tau'(t)=1,
  \qquad
  |h_\tau(t)|\le C\tau .
\]
Set
\[
  A_\tau=A+h_\tau Y .
\]
Since $A$ stays in a compact spectral envelope and $Y$ is smooth, $A_\tau$
remains positive definite for sufficiently small $\tau$.  Define
$B_\tau=\Log A_\tau$.  The estimate for $A_\tau-A$ follows from
$|h_\tau|\le C\tau$.

The conformation residual is affine in $A$ once $u$ is fixed:
\[
  \mathcal S[u,B]
  =\partial_tA+u\cdot\nabla A-\nabla u\,A-A(\nabla u)^T
  +\lambda^{-1}(A-I).
\]
Therefore, on $J_\tau$,
\begin{align*}
  \mathcal S[u,B_\tau]
  &=\mathcal S[u,B]+h_\tau'Y
    +h_\tau\bigl(\partial_tY+u\cdot\nabla Y
      -\nabla u\,Y-Y(\nabla u)^T+\lambda^{-1}Y\bigr)\\
  &=S_0+Y+O(\tau)=T+O(\tau),
\end{align*}
with the error bounded in every fixed $C^k_x$ norm.  This proves the
proposition.
\end{proof}

\begin{proof}[Proof of Theorem~\ref{thm:local-corrector-main}]
Apply Proposition~\ref{prop:local-conformation-solvability} with
$T=S_\gamma$.  It remains only to check that the strict residual-work
inequality survives the $O(\tau)$ perturbation.

Since the map $A\mapsto I-A^{-1}$ is smooth on the compact spectral envelope,
$G_\tau=G+O(\tau)$ in every fixed spatial norm.  The pressure-free momentum
residual also changes by $O(\tau)$, because
\[
  \calN[u,B_\tau]-\calN[u,B]
  =-\alpha\divv(A_\tau-A),
\]
and the canonical symmetric anti-divergence is a bounded Fourier multiplier on
zero-mean smooth fields.  Hence $P_\tau=P+O(\tau)$, uniformly on $J_\tau$.

By the definition of the aligned overpaying residual,
\[
  P+\frac{\alpha}{2}\int_{\T^d}G:S_\gamma\,dx
  =P-(1+\gamma)P
  =-\gamma P .
\]
Combining this identity with the preceding perturbation bounds gives
\[
  P_\tau+\frac{\alpha}{2}
  \int_{\T^d}G_\tau:\mathcal S[u,B_\tau]\,dx
  \le -\gamma P(t)+C\tau .
\]
Since $P(t)\ge p_0$ on $J$, taking $\tau\le\gamma p_0/(2C)$ yields the stated
inequality.  The construction preserves the positive cone by working through
$A_\tau=e^{B_\tau}$.
\end{proof}

\begin{proof}[Proof of Theorem~\ref{thm:strict-local-sufficiency}]
Apply Proposition~\ref{prop:local-conformation-solvability} with the prescribed
target \(T\).  This gives \(B_\tau\) with \(A_\tau=e^{B_\tau}\) remaining in the
positive cone and
\[
  \mathcal S[u,B_\tau]=T+O(\tau)
\]
in every fixed spatial \(C^k\) norm on \(J_\tau\).  Smoothness of the map
\(A\mapsto I-A^{-1}\) on the compact spectral envelope gives
\(G_\tau=G+O(\tau)\).  The canonical pressure-free work also changes by
\(O(\tau)\), because the only change in the momentum residual caused by replacing
\(A\) with \(A_\tau\) is \(-\alpha\operatorname{div}(A_\tau-A)\), followed by the
bounded pressure-free anti-divergence on zero-mean smooth fields.  Hence
\[
  P_\tau+\frac{\alpha}{2}\int G_\tau:\mathcal S[u,B_\tau] \,dx
  =
  P+\frac{\alpha}{2}\int G:T\,dx+O(\tau)
  \le -\sigma+O(\tau).
\]
Choosing \(\tau\) small enough gives the bound \(-\sigma/2\).  Positivity is
preserved because the construction is made inside a compact spectral envelope
and \(A_\tau-A=O(\tau)\).
\end{proof}

\begin{corollary}[Local realization of the metric repair]
\label{cor:local-metric-repair}
Let the assumptions of Theorem~\ref{thm:strict-local-sufficiency} hold, and let
\(S_0\) be a smooth proposed residual whose signed violation satisfies
\[
  \mathfrak v(t)
  =
  \left[P(t)+\frac{\alpha}{2}\int_{\T^d}G(t):S_0(t)\,dx\right]_+
  \ge v_0>0
\]
on a compact interval \(J\), with \(\|G(t)\|_{L^2}\ge g_0>0\).  For any
\(\gamma>0\), set
\[
  T_\gamma(t)
  =
  S_0(t)-\frac{2(1+\gamma)\mathfrak v(t)}
  {\alpha\|G(t)\|_{L^2}^2}G(t).
\]
Then around every \(t_0\in J\), the target \(T_\gamma\) has a local
positive-cone conformation realization in the sense of
Theorem~\ref{thm:strict-local-sufficiency}, and the resulting perturbed balance
has a negative margin bounded below by \(\gamma v_0/2\) for all sufficiently
small \(\tau\).
\end{corollary}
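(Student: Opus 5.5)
The plan is to reduce the corollary to Theorem~\ref{thm:strict-local-sufficiency} with target \(T=T_\gamma\). The only substantive check is the strict slack hypothesis of that theorem on \(J\), with \(\sigma=\gamma v_0\).

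First, \(T_\gamma\) is a smooth symmetric field, so it is an admissible target. Because \(\mathfrak v\ge v_0>0\) on \(J\), the positive part is inactive. Hence
\[
  \mathfrak v(t)=P(t)+\frac{\alpha}{2}\int_{\T^d}G(t):S_0(t)\,dx
\]
is itself a smooth function of \(t\). This follows from smoothness of the profile and of \(P\), using the bounded pressure-free antidivergence of Proposition~\ref{prop:antidiv}. The lower bound \(\|G(t)\|_{L^2}\ge g_0>0\) makes the coefficient \(2(1+\gamma)\mathfrak v/(\alpha\|G\|_{L^2}^2)\) smooth in \(t\) as well.

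Next I compute the signed work of the target exactly, as in the proof of Theorem~\ref{thm:local-corrector-main}. Pairing the correction with \(G\) gives
\[
  \int_{\T^d}G:\Bigl(-\tfrac{2(1+\gamma)\mathfrak v}{\alpha\|G\|_{L^2}^2}G\Bigr)\,dx
  =-\tfrac{2(1+\gamma)\mathfrak v}{\alpha}.
\]
Therefore
\[
  P+\frac{\alpha}{2}\int_{\T^d}G:T_\gamma\,dx=\mathfrak v-(1+\gamma)\mathfrak v=-\gamma\mathfrak v\le-\gamma v_0
  \qquad\text{on }J.
\]
So the hypothesis of Theorem~\ref{thm:strict-local-sufficiency} holds with \(\sigma=\gamma v_0\).

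Applying that theorem around each \(t_0\in J\) produces \(J_\tau\) and \(B_\tau\) with \(A_\tau=e^{B_\tau}\) in a slightly enlarged compact spectral envelope. It also gives \(\mathcal S[u,B_\tau]=T_\gamma+O(\tau)\) in every \(C^k_x\) norm, and a perturbed balance bounded by \(-\sigma/2=-\gamma v_0/2\) for all sufficiently small \(\tau\).

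The main point requiring care is smoothness of \(T_\gamma\) in time. This is exactly where the hypotheses \(\mathfrak v\ge v_0\) and \(g_0>0\) enter: without them the positive part and the factor \(\|G\|^{-2}\) could break the \(C^k\) and time regularity used in Proposition~\ref{prop:local-conformation-solvability}.
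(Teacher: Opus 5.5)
Your proposal is correct and follows the paper's proof. Both compute the signed work of the target as \(P+\frac{\alpha}{2}\int_{\T^d}G:T_\gamma\,dx=-\gamma\mathfrak v\le-\gamma v_0\) and then apply Theorem~\ref{thm:strict-local-sufficiency} with \(\sigma=\gamma v_0\). Your extra checks, that the positive part is inactive and that \(T_\gamma\) is smooth because \(v_0,g_0>0\), spell out what the paper compresses into ``by construction.''
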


\begin{proof}
By construction,
\[
  P(t)+\frac{\alpha}{2}\int_{\T^d}G(t):T_\gamma(t)\,dx
  =
  -\gamma\mathfrak v(t)
  \le -\gamma v_0 .
\]
The conclusion follows from Theorem~\ref{thm:strict-local-sufficiency} with
\(\sigma=\gamma v_0\).
\end{proof}

\begin{remark}[Scope of the local corrector]
Theorem~\ref{thm:local-corrector-main} removes one possible ambiguity in the
residual-work law: the optimal paying direction is compatible with the
positive-cone parametrization $A=e^B$ at the level of a local smooth
conformation perturbation.  Its natural scope is the local conformation
channel.  A realization theorem for coupled solutions would additionally have
to couple the momentum residual, patch such correctors across space-time,
control the energy--entropy inequality through the iteration, and pass to a
limit.
\end{remark}

\begin{corollary}[Three-channel budget alternative]
\label{cor:three-channel}
Fix a time window \(E\) and assume \(W_E>0\).  If a residual-matching state is
energy--entropy admissible, then
\[
  \eta_E L_E S_E \ge \frac{2W_E}{\alpha}.
\]
Consequently, for any prescribed upper bounds
\(L_E\le L_0\), \(S_E\le S_0\), and \(\eta_E\le\eta_0\), admissibility is
impossible whenever
\[
  \frac{\alpha}{2}\eta_0 L_0S_0<W_E .
\]
Thus a family with positive work can remain admissible only through enough
entropy lever, enough conformation residual, and enough negative alignment.
\end{corollary}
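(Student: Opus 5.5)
The statement is Corollary~\ref{cor:three-channel}. I would derive it directly from the windowed bound in Theorem~\ref{thm:main-obstruction}. Under energy--entropy admissibility, that theorem gives the pointwise signed-work inequality of Proposition~\ref{prop:pressure-free} for a.e.\ \(t\in E\). Its proof then yields
\[
  [P(t)]_+\le \frac{\alpha}{2}\,\eta_{\mathrm{al}}(t)\,\|G(t)\|_{L^2}\|\mathcal S[u,B](t)\|_{L^2}.
\]
I would quote the resulting estimate \(W_E\le \frac{\alpha}{2}\eta_E L_E S_E\) and multiply by \(2/\alpha>0\), which gives \(\eta_E L_E S_E\ge 2W_E/\alpha\). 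The hypothesis \(W_E>0\) does not enter this inequality. It is only used to make the conclusion informative: the right-hand side is then strictly positive, so each of \(\eta_E\), \(L_E\), \(S_E\) must be strictly positive.

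For the exclusion statement I would argue by contraposition. Suppose the state is admissible and \(L_E\le L_0\), \(S_E\le S_0\), \(\eta_E\le\eta_0\). All these quantities are non-negative: \(\eta_{\mathrm{al}}\in[0,1]\) by Cauchy--Schwarz and the zero convention in Definition~\ref{def:windowed-data}, and the norms are non-negative. The product is therefore monotone in each factor, so \(\frac{\alpha}{2}\eta_0L_0S_0\ge\frac{\alpha}{2}\eta_EL_ES_E\ge W_E\). This contradicts \(\frac{\alpha}{2}\eta_0L_0S_0<W_E\).

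One degenerate case needs a check. If \(L_E\) or \(S_E\) is infinite, or equals zero, the product convention could be ambiguous. For smooth profiles on a window of finite measure both norms are finite, so I would record this as a standing assumption. If either norm vanishes, the zero-lever or zero-residual convention gives \(\eta_{\mathrm{al}}=0\) a.e., and the bound then forces \(W_E=0\), which is consistent with the statement.

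I do not expect a real obstacle here. The only point requiring care is justifying that the monotone comparison uses non-negative factors, which follows from the definition of \(\eta_{\mathrm{al}}\) as a positive part divided by a product of norms.
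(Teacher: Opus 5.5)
Your proof is correct and matches the paper's argument: the first inequality is the windowed estimate $W_E\le\frac{\alpha}{2}\eta_E L_E S_E$ of Theorem~\ref{thm:main-obstruction} multiplied by $2/\alpha$, and the exclusion follows by substituting the prescribed upper bounds. Your remarks that $\eta_E,L_E,S_E\ge0$ (which justifies the monotone substitution) and your treatment of the zero-norm case make explicit what the paper leaves implicit.
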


\begin{proof}
The first inequality is exactly the windowed residual-work estimate
\[
  W_E\le \frac{\alpha}{2}\eta_E L_ES_E .
\]
The stated exclusion under prescribed bounds follows by substituting
\(L_E\le L_0\), \(S_E\le S_0\), and \(\eta_E\le\eta_0\).  If the resulting upper
bound is smaller than \(W_E\), the residual-work estimate is violated.
\end{proof}

\begin{corollary}[Conservative a posteriori channel certificate]
\label{cor:conservative-channel-certificate}
Let \(E\) be a time window.  Suppose that the residual data satisfy certified
observable bounds
\[
  W_-\le W_E,\qquad L_E\le L_+,\qquad S_E\le S_+,\qquad
  \eta_E\le \bar\eta_E ,
\]
where \(0\le\bar\eta_E\le1\).  If
\begin{equation}
  W_->\frac{\alpha}{2}\bar\eta_E L_+S_+,
  \label{eq:conservative-channel-certificate}
\end{equation}
then the residual data are not energy--entropy admissible on \(E\).  In
particular, when the alignment cannot be reliably estimated, the universal
choice \(\bar\eta_E=1\) gives an alignment-free but conservative obstruction.
Sharper alignment information only strengthens the same certificate.
\end{corollary}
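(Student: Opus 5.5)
The certificate follows by contraposition from the windowed residual-work estimate of Theorem~\ref{thm:main-obstruction}, in the form of Corollary~\ref{cor:three-channel}. Suppose, toward a contradiction, that the residual data were energy--entropy admissible on \(E\). Then the pointwise signed-work inequality holds for a.e.\ \(t\in E\), and Theorem~\ref{thm:main-obstruction} gives
\[
  W_E\le\frac{\alpha}{2}\eta_E L_E S_E .
\]
The remaining work is to replace each exact quantity on the right by its certified bound. Every factor is non-negative: \(L_E,S_E\) are norms, \(\eta_E\ge0\) by the positive part in Definition~\ref{def:windowed-data}, and \(\alpha>0\). Hence the product is monotone in each factor. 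Using \(\eta_E\le\bar\eta_E\), \(L_E\le L_+\) and \(S_E\le S_+\), we get
\[
  W_-\le W_E\le\frac{\alpha}{2}\eta_E L_ES_E\le\frac{\alpha}{2}\bar\eta_E L_+S_+ .
\]
This contradicts \eqref{eq:conservative-channel-certificate}, so the data cannot be admissible on \(E\).

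For the two supplementary claims, first check that the universal choice \(\bar\eta_E=1\) is always certified, i.e.\ that \(\eta_{\rm al}(t)\le1\). By Cauchy's inequality,
\[
  \left[-\int G:\mathcal S\right]_+\le\|G\|_{L^2}\|\mathcal S\|_{L^2},
\]
and \(\eta_{\rm al}=0\) on the degenerate set by convention. Therefore \(\eta_E\le1\), and the test with \(\bar\eta_E=1\) is legitimate and alignment-free. Second, a sharper alignment bound \(\bar\eta_E'\le\bar\eta_E\) lowers the right-hand side of \eqref{eq:conservative-channel-certificate}. Consequently every certificate that succeeds with \(\bar\eta_E\) also succeeds with \(\bar\eta_E'\), which is what ``strengthens'' means here.

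There is no real obstacle in this argument. The only point needing care is the sign and monotonicity bookkeeping: the lower bound must sit on the work side and the upper bounds on the lever, residual, and alignment side, and the positive-part convention for \(\eta_{\rm al}\) must hold so that no factor can be negative.
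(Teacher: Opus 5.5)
Your proof is correct and is essentially the paper's argument: assume admissibility, invoke the windowed estimate $W_E\le\frac{\alpha}{2}\eta_E L_ES_E$ (Corollary~\ref{cor:three-channel}), chain in the certified bounds, and contradict the strict inequality. Your additional checks are details the paper leaves implicit: non-negativity of the factors (needed for monotonicity of the product) and $\eta_E\le 1$ by Cauchy--Schwarz (justifying the universal choice $\bar\eta_E=1$).
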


\begin{proof}
If the data were admissible, Corollary~\ref{cor:three-channel} would give
\[
  W_E\le \frac{\alpha}{2}\eta_E L_ES_E
  \le \frac{\alpha}{2}\bar\eta_E L_+S_+ .
\]
Since \(W_-\le W_E\), this contradicts
\eqref{eq:conservative-channel-certificate}.
\end{proof}

\begin{proposition}[Optimality of the residual-work channels]
\label{prop:channel-optimality}
The residual-work criterion is sharp in the following residual-level sense.
\begin{enumerate}
\item The constant \(\alpha/2\) in
\[
  P(t)+\frac{\alpha}{2}\int_{\T^d}G(t):S(t)\,dx\le0
\]
cannot be decreased.
\item If \(P(t)>0\) and \(G(t)\not\equiv0\), the unique \(L^2\)-minimizing
paying residual is the aligned field
\[
  S_{\min}(t)=
  -\frac{2P(t)}{\alpha\norm{G(t)}_{L^2}^2}G(t).
\]
\item None of the three windowed channels \(L_E\), \(S_E\), and \(\eta_E\) can
be omitted from a necessary admissibility condition depending only on
\((W_E,L_E,S_E,\eta_E)\).
\end{enumerate}
\end{proposition}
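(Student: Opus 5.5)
The three items of Proposition~\ref{prop:channel-optimality} are separate, and I will prove each by a small explicit construction at the residual-data level, using only the constant-in-space fields available on \(\T^d\).

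\textbf{Item 1.} I show that replacing \(\alpha/2\) by any \(c<\alpha/2\) produces a condition that fails for some exactly admissible data. I take the saturated aligned configuration from Proposition~\ref{prop:sharp-cost}. Fix \(P>0\) and \(G\not\equiv0\), for instance a constant nonzero symmetric matrix \(G=I-A^{-1}\) with \(A\) a constant positive definite matrix different from \(I\). Set \(S=S_{\min}=-\frac{2P}{\alpha\|G\|_{L^2}^2}G\). Then
\[
  P+\frac{\alpha}{2}\int_{\T^d}G:S\,dx=0 .
\]
With a constant \(c<\alpha/2\), however,
\[
  P+c\int_{\T^d}G:S\,dx=P\Bigl(1-\frac{2c}{\alpha}\Bigr)>0 ,
\]
so the weakened-constant inequality would reject data that lie on the admissible boundary. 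The same example, with \(S\) scaled by \(1+\delta\), gives strictly admissible data violating the weakened inequality. If realization by an actual state is wanted, one can invoke Proposition~\ref{prop:dynamical-residual-traces} together with the identity of Proposition~\ref{prop:defect-work}: admissibility is exactly this inequality, so any constant other than \(\alpha/2\) misstates the ledger.

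\textbf{Item 2.} This is Proposition~\ref{prop:sharp-cost} restated. Uniqueness comes from the equality case of Cauchy--Schwarz in the Hilbert space \(L^2(\T^d;\Sym^d)\).

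\textbf{Item 3.} This is the main point. I show that for each channel there are two data sets that agree on the other three quantities, with one admissible and one not. A necessary condition depending only on \((W_E,L_E,S_E,\eta_E)\) that omitted that channel would then have to accept the inadmissible one. I work on a window \(E\) of unit length with constant-in-time data, so \(W_E=P\), \(L_E=\|G\|_{L^2}\) and \(S_E=\|S\|_{L^2}\). Let \(\|\cdot\|\) denote the \(L^2(\T^d)\) norm, and write \(\eta\) for the time-independent alignment.

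\emph{Omitting \(S_E\).} Take \(S=-sG/\|G\|\), so \(\eta=1\) and \(K=-s\|G\|\). With \(P\) and \(G\) fixed, \(s\) large gives admissibility and \(s\) small violates it. Only \(S_E=s\) differs.

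\emph{Omitting \(L_E\).} Take \(G=\ell e\) with \(\|e\|=1\), and \(S=-\sigma e\). Varying \(\ell\) changes only \(L_E\), since \(\eta=1\) and \(S_E=\sigma\) stay fixed. Large \(\ell\) is admissible and small \(\ell\) is not. Realizing \(G=\ell e\) as \(I-A^{-1}\) requires \(\ell e<I\), which holds for a constant matrix \(e\) with spectrum small enough after rescaling.

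\emph{Omitting \(\eta_E\).} Fix \(\|G\|\) and \(\|S\|\), and take \(S=\|S\|\bigl(-\cos\theta\,G/\|G\|+\sin\theta\,F\bigr)\) with \(F\) a unit vector orthogonal to \(G\). Then \(\eta=\cos\theta\). Choosing \(P\) with
\[
  \frac{\alpha}{2}\|G\|\|S\|\cos\theta_1<P\le\frac{\alpha}{2}\|G\|\|S\|\cos\theta_0
\]
separates the two cases.

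The only delicate step I expect is producing the orthogonal direction \(F\), since constant-in-space fields alone do not supply one; a non-constant symmetric field, such as \(\cos(x_1)\) times a fixed matrix, does. Everything else is an explicit computation with the identity \(K=-\eta\|G\|\|S\|\) valid for \(K\le0\).
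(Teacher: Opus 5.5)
Your proposal is correct and follows the paper's route: items 1 and 2 are the equality case of Cauchy--Schwarz behind Proposition~\ref{prop:sharp-cost}. Item 3 is the observation that each factor of \(\eta_E L_E S_E\) can be pushed across the threshold \(W_E\) with the other two fixed, and your explicit admissible/inadmissible pairs (including the check \(I-G>0\) for the lever) make this more precise than the paper's sketch. One small slip: constant-in-space fields do supply an orthogonal direction \(F\), since \(\dim\mathbb S^d\ge3\) (for instance, a constant trace-free matrix is Frobenius-orthogonal to an isotropic \(G\)), so the \(\cos(x_1)\) device is unnecessary, though harmless.
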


\begin{proof}
The first two assertions are the equality case in
Proposition~\ref{prop:sharp-cost}.  Indeed, for \(P(t)>0\), admissibility is
equivalent to
\[
  -\int_{\T^d}G(t):S(t)\,dx\ge \frac{2P(t)}{\alpha}.
\]
Cauchy's inequality gives the minimal \(L^2\) norm and equality occurs only
when \(S(t)\) is a negative multiple of \(G(t)\).  Substituting the displayed
\(S_{\min}\) gives equality in the admissibility inequality, so any smaller
constant than \(\alpha/2\) would exclude an exactly balanced residual.

For the third assertion, fix \(W_E>0\).  The windowed estimate depends on the
three quantities only through the product \(\eta_E L_E S_E\).  If one factor is
made small while the other two are bounded, Corollary~\ref{cor:three-channel}
excludes admissibility.  Conversely, the aligned residual in the second
assertion realizes equality at fixed time, and time localization gives equality
on windows.  Thus no smaller list of windowed quantities can encode the sharp
budget at this level of information.
\end{proof}

\begin{remark}[Cone-tip degeneracy]
When \(A\approx\Id\), the lever \(G=\Id-A^{-1}\) is small.  Thus a profile that
tries to keep the conformation close to equilibrium while generating positive
pressure-free Reynolds work must compensate by either producing a large
conformation residual or arranging nearly perfect negative alignment.  This is
the algebraic form of the positive-cone obstruction near equilibrium.
\end{remark}

\section{Structured Residual Families and a Concrete Obstruction}
\label{sec:application}

The next statements translate the residual-work obstruction into scale
criteria.  They are useful because a localized ansatz usually comes with
separate estimates for work, lever, residual size, and alignment.  The theorem
below says that no compensation outside these four channels is available at the
level of energy--entropy admissibility.

\begin{corollary}[Bulk power-law exclusion test]
\label{cor:bulk-power-law}
Let \((u^\varepsilon,B^\varepsilon)\), \(0<\varepsilon\le\varepsilon_0\), be
smooth residual-matching states on the same time window \(E\).  Suppose
\[
  W_E^\varepsilon\ge c_W\varepsilon^{-w},\quad
  L_E^\varepsilon\le C_L\varepsilon^{-\ell},\quad
  S_E^\varepsilon\le C_S\varepsilon^{-r},\quad
  \eta_E^\varepsilon\le C_\eta\varepsilon^{-q}.
  \label{eq:bulk-powers}
\]
If \(w>\ell+r+q\), then the family is not energy--entropy admissible on \(E\)
for all sufficiently small \(\varepsilon\).  The same conclusion holds when
\(w=\ell+r+q\) if
\[
  c_W>\frac{\alpha}{2}C_\eta C_LC_S .
\]
\end{corollary}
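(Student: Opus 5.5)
The plan is a direct contradiction argument built on Corollary~\ref{cor:three-channel}, or equivalently on the windowed estimate of Theorem~\ref{thm:main-obstruction}.

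Suppose that for some small \(\varepsilon\) the state \((u^\varepsilon,B^\varepsilon)\) is energy--entropy admissible on \(E\). Since \(W_E^\varepsilon\ge c_W\varepsilon^{-w}>0\), the corollary applies and gives
\[
  c_W\varepsilon^{-w}\le W_E^\varepsilon\le\frac{\alpha}{2}\eta_E^\varepsilon L_E^\varepsilon S_E^\varepsilon
  \le\frac{\alpha}{2}C_\eta C_LC_S\,\varepsilon^{-(\ell+r+q)} .
\]
The last step multiplies the three upper bounds, which is legitimate because all four quantities are non-negative. Multiplying through by \(\varepsilon^{w}\) yields
\[
  c_W\le\frac{\alpha}{2}C_\eta C_LC_S\,\varepsilon^{\,w-\ell-r-q}.
\]

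In the case \(w>\ell+r+q\), the right-hand side tends to \(0\) as \(\varepsilon\to0\). It is therefore smaller than \(c_W>0\) for every
\[
  \varepsilon<\varepsilon_1:=\min\Bigl\{\varepsilon_0,\bigl(2c_W/(\alpha C_\eta C_LC_S)\bigr)^{1/(w-\ell-r-q)}\Bigr\},
\]
which is a contradiction. If one of the constants \(C_\eta,C_L,C_S\) vanishes, the bound reads \(c_W\le0\), a contradiction for every \(\varepsilon\).

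In the equality case \(w=\ell+r+q\), the power of \(\varepsilon\) disappears and the inequality becomes \(c_W\le\frac{\alpha}{2}C_\eta C_LC_S\). This contradicts the stated strict hypothesis, so the family fails admissibility for every \(\varepsilon\le\varepsilon_0\).

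No step here is a genuine obstacle, since the argument is just exponent bookkeeping. The only points needing care are the following.
\begin{itemize}
\item Positivity of \(W_E^\varepsilon\) is what licenses the use of Corollary~\ref{cor:three-channel}.
\item Because \(\eta_E\le1\) always holds, a hypothesis with \(q>0\) is harmless, and the statement stays correct as written.
\item Admissibility means a.e.-in-time admissibility on \(E\), which is exactly what the windowed estimate requires.
\end{itemize}
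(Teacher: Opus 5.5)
Your proof is correct and takes the same route as the paper: the power-law bounds force $W_E^\varepsilon>\frac{\alpha}{2}\eta_E^\varepsilon L_E^\varepsilon S_E^\varepsilon$ for small $\varepsilon$, which contradicts the windowed estimate of Theorem~\ref{thm:main-obstruction} (equivalently Corollary~\ref{cor:three-channel}). Your version only adds the explicit exponent bookkeeping, the threshold $\varepsilon_1$, and the implicit assumption $c_W>0$, all of which the paper's one-line proof leaves tacit.
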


\begin{proof}
For small \(\varepsilon\), \eqref{eq:bulk-powers} gives
\[
  W_E^\varepsilon>
  \frac{\alpha}{2}\eta_E^\varepsilon L_E^\varepsilon S_E^\varepsilon,
\]
which contradicts Theorem~\ref{thm:main-obstruction}.
\end{proof}

\begin{remark}[Exponent bookkeeping]
The bulk test is intentionally one-sided.  The exponent \(w\) measures how fast
positive pressure-free work grows.  The exponents \(\ell,r,q\) measure how much
budget is available through the lever, conformation residual, and alignment.
The inequality
\[
  w>\ell+r+q
\]
means that the work grows faster than the entire admissible budget.  In the
borderline case, the constants matter.  This is useful for finite-thickness
ansatzes because the powers are usually robust under small changes of cutoff,
whereas the constants record the exact shape.
\end{remark}

The finite-thickness example below is written directly in bulk variables.  No
one-dimensional reduction is assumed: the velocity, conformation, momentum
residual and conformation residual are all smooth fields on the torus.  The
power-law test is used only to compare the resulting four bulk quantities.

\subsection{Model examples and non-examples}
\label{sec:examples}

We record two elementary uses of the criterion before the concrete
finite-thickness construction.  The point is to distinguish failure of
admissibility from mere largeness of a residual.

\begin{proposition}[Orthogonal residuals cannot pay positive work]
\label{prop:orthogonal-nonexample}
Let \((u,p,B)\) be a residual-matching profile on \(E\).  Suppose
\[
  P(t)>0
  \quad\hbox{and}\quad
  \int_{\T^d}G(t):\mathcal S[u,B](t)\,dx=0
\]
on a subset of \(E\) of positive measure.  Then the profile is not
energy--entropy admissible on that subset.
\end{proposition}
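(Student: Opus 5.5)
The plan is to argue by contradiction directly from the pointwise signed-work inequality of Proposition~\ref{prop:pressure-free}, equivalently Theorem~\ref{thm:main-obstruction}. Let \(F\subset E\) denote the set of positive measure on which \(P(t)>0\) and \(K(t)=\int_{\T^d}G(t):\mathcal S[u,B](t)\,dx=0\). Suppose the profile were energy--entropy admissible on \(F\). By Proposition~\ref{prop:canonical-pressure-free}, admissibility depends only on \((P,G,\mathcal S[u,B])\). Proposition~\ref{prop:pressure-free} then gives
\[
  P(t)+\frac{\alpha}{2}K(t)\le0
  \qquad\hbox{for a.e. }t\in F .
\]

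Next I substitute \(K(t)=0\) on \(F\). This reduces the inequality to \(P(t)\le0\) for a.e. \(t\in F\). That contradicts \(P>0\) on \(F\), since \(F\) has positive measure and so the exceptional null set cannot exhaust it.

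As a cross-check, the same conclusion follows from the windowed form. On \(F\) we have \([-K]_+=0\), so \(\eta_{\rm al}(t)=0\) there, and hence \(\eta_F=0\). The windowed estimate of Theorem~\ref{thm:main-obstruction} would then force \(W_F\le0\). But \(W_F=\int_F[P]_+\,dt>0\), because \(P>0\) on a set of positive measure.

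There is no substantive obstacle here. The only point needing care is measurability: \(F\) must be a measurable set so that the a.e. statement and \(W_F\) make sense. This holds because \(P\) and \(K\) are measurable in time, being smooth for a smooth profile.
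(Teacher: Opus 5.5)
Your proof is correct and is essentially the paper's argument: substituting \(\int_{\T^d}G:\mathcal S[u,B]\,dx=0\) into the pressure-free admissibility inequality leaves \(P(t)\le0\), which contradicts \(P(t)>0\) on a set of positive measure. The windowed cross-check via \(\eta_F=0\) is valid but not needed.
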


\begin{proof}
The pressure-free work identity gives
\[
  P(t)+\frac{\alpha}{2}\int_{\T^d}G(t):\mathcal S[u,B](t)\,dx=P(t)>0,
\]
which directly violates
\[
  P(t)+\frac{\alpha}{2}\int_{\T^d}G(t):\mathcal S[u,B](t)\,dx\le0 .
\]
\end{proof}

\begin{proposition}[Aligned residuals are exactly efficient]
\label{prop:aligned-example}
Let \(G(t)\not\equiv0\) and \(P(t)>0\).  If
\[
  \mathcal S[u,B](t)
  =
  -\frac{2P(t)}{\alpha\norm{G(t)}_{L^2}^2}G(t),
\]
then the pressure-free admissibility inequality is saturated at time \(t\).
If an additional residual \(Z(t)\) satisfies
\[
  \int_{\T^d}G(t):Z(t)\,dx\le0,
\]
then replacing \(\mathcal S\) by \(\mathcal S+Z\) preserves admissibility.
\end{proposition}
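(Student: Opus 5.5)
The argument is a direct substitution into the pointwise signed-work functional from Proposition~\ref{prop:pressure-free}, followed by linearity in the residual. At the fixed time $t$ I will write $\langle\cdot,\cdot\rangle$ for the $L^2(\T^d;\Sym^d)$ pairing. Because $G(t)\not\equiv0$, the norm $\norm{G(t)}_{L^2}$ is positive, so the aligned residual is well defined.

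\textbf{Saturation.} Insert the prescribed $\mathcal S[u,B](t)$ into the functional:
\[
  P(t)+\frac{\alpha}{2}\langle G(t),\mathcal S[u,B](t)\rangle
  =P(t)-\frac{\alpha}{2}\cdot\frac{2P(t)}{\alpha\norm{G(t)}_{L^2}^2}\norm{G(t)}_{L^2}^2
  =0 .
\]
The inequality of Proposition~\ref{prop:pressure-free} therefore holds with equality at time $t$, so the admissibility inequality is saturated there. This is also the equality case of Proposition~\ref{prop:sharp-cost}: the residual lies on the boundary hyperplane of the half-space $\calA(P,G)$ and has the minimal norm $2P/(\alpha\norm{G}_{L^2})$.

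\textbf{Adding $Z$.} The map $S\mapsto P+\frac{\alpha}{2}\langle G,S\rangle$ is affine in $S$. Hence
\[
  P+\frac{\alpha}{2}\langle G,\mathcal S+Z\rangle
  =\Bigl(P+\frac{\alpha}{2}\langle G,\mathcal S\rangle\Bigr)+\frac{\alpha}{2}\langle G,Z\rangle
  =0+\frac{\alpha}{2}\langle G,Z\rangle\le0 .
\]
The last step uses $\alpha>0$ together with the hypothesis $\int G:Z\,dx\le0$. The same computation shows more generally that adding $Z$ keeps any admissible residual admissible, since $\calA(P,G)$ is invariant under translations by the closed half-space $\{\langle G,Z\rangle\le0\}$.

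\textbf{Point to state explicitly.} Replacing $\mathcal S$ by $\mathcal S+Z$ leaves $P$ and $G$ unchanged. This holds because the statement is at the level of residual data $(P,G,S)$: the conformation residual is varied in the fiber $\mathfrak A_E(P,G)$ while the pressure-free work and the lever are held fixed. Under that reading the proof needs nothing beyond the two displayed computations. I do not expect any step to be a real obstacle.
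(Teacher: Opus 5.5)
Your proof is correct and follows the paper's argument: direct substitution gives $\frac{\alpha}{2}\int_{\T^d}G:\mathcal S\,dx=-P$, and the additional term $\frac{\alpha}{2}\int_{\T^d}G:Z\,dx\le0$ can only lower the left-hand side. Your explicit remark that $P$ and $G$ are held fixed, so the statement lives in the residual fiber, matches the reading the paper's proof uses implicitly.
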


\begin{proof}
Substitution gives
\[
  \frac{\alpha}{2}\int_{\T^d}G:\mathcal S\,dx=-P.
\]
The additional term contributes a non-positive quantity to the left-hand side
of the admissibility inequality.
\end{proof}

\subsection{A concrete finite-thickness obstruction}

We now give a fully bulk example, not a formal one-dimensional reduction.  The
profile is supported in a layer of thickness \(\eps\), remains inside the
positive cone by construction, and has an explicitly matched momentum and
conformation residual.  The point of the example is that all four quantities
in the residual-work criterion can be computed on the same scale.  The positive
pressure-free work grows faster than the entire lever--residual budget, so the
family fails admissibility for a structural reason rather than because of a
choice of gauge or a missing pressure mode.

The example is a calibration problem for the residual-work diagnostic.  All
fields are explicit, the pressure-free work is fixed by the bulk equations, and
the conformation tensor stays in the positive cone.  A numerical implementation
of the postprocessor should recover the predicted slopes before being applied
to residuals obtained from a filtered DNS, a coarse-grid calculation, or a
closure model.

\begin{lemma}[Finite-thickness scaling]
\label{lem:finite-thickness-scaling}
Let \(f\in C_c^\infty((-1/4,1/4))\), \(a\in\R\), and
\[
  f_\eps(x_2)=\eps^{-a}f(x_2/\eps)
\]
on \(\T^2\), for \(0<\eps\ll1\), by viewing the expression as a periodic
function of the coordinate \(x_2\).  Then
\[
  \norm{f_\eps}_{L^2(\T^2)}
  =
  \eps^{1/2-a}\norm{f}_{L^2(\R)},
  \qquad
  \norm{\partial_2 f_\eps}_{L^2(\T^2)}
  =
  \eps^{-1/2-a}\norm{f'}_{L^2(\R)}.
  \label{eq:finite-thickness-scale}
\]
If \(F\) is a smooth bounded function with \(F(0)=0\), then
\[
  \norm{F(f(x_2/\eps))}_{L^2(\T^2)}
  \le C_F\eps^{1/2}.
  \label{eq:bounded-layer-scale}
\]
\end{lemma}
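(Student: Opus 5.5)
The plan is a direct change of variables in the layer coordinate; no earlier result of the paper is needed. On \(\T^2=(\R/2\pi\mathbb Z)^2\), a function of \(x_2\) alone has
\[
  \norm{g}_{L^2(\T^2)}^2=2\pi\int_{-\pi}^{\pi}|g(x_2)|^2\,dx_2 ,
\]
so the \(x_1\)-direction contributes only the fixed factor \(2\pi\). For \(\eps\ll1\), \(\operatorname{supp} f(\cdot/\eps)\subset(-\eps/4,\eps/4)\), which lies inside one fundamental period. The periodic extension therefore has exactly one bump per period, and integrals over \([-\pi,\pi]\) coincide with integrals over \(\R\) of the unperiodized function.

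For the first identity, substitute \(y=x_2/\eps\):
\[
  \int|f_\eps|^2\,dx_2=\eps^{-2a}\int|f(x_2/\eps)|^2\,dx_2
  =\eps^{1-2a}\norm{f}_{L^2(\R)}^2 .
\]
Taking square roots gives the factor \(\eps^{1/2-a}\). For the derivative, the chain rule gives \(\partial_2f_\eps=\eps^{-a-1}f'(x_2/\eps)\), and the same substitution yields \(\eps^{-1/2-a}\norm{f'}_{L^2(\R)}\).

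One point needs care. As written with \(\T^2\) norms, both identities carry the factor \((2\pi)^{1/2}\) from the \(x_1\)-integration. I will either absorb this into the convention that the displayed equalities hold up to that fixed normalization constant, or state it explicitly; it does not affect any scaling exponent. This normalization bookkeeping is the only real obstacle.

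For the bounded estimate, use \(F(0)=0\): the function \(F(f(x_2/\eps))\) vanishes outside \(\operatorname{supp}f(\cdot/\eps)\). That support has length at most \(\eps/2\) per period, and \(|F(f)|\le\sup_{|s|\le\norm{f}_\infty}|F(s)|\). Hence
\[
  \norm{F(f(x_2/\eps))}_{L^2(\T^2)}^2\le 2\pi\cdot\frac{\eps}{2}\,\sup_{|s|\le\norm{f}_\infty}|F(s)|^2 ,
\]
which gives the bound with \(C_F\) depending on \(F\) and \(\norm{f}_\infty\).

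Alternatively, the Lipschitz bound \(|F(s)|\le\norm{F'}_{L^\infty}|s|\) combined with the first identity at \(a=0\) gives \(C_F=(2\pi)^{1/2}\norm{F'}_\infty\norm{f}_{L^2}\). Either route gives the \(\eps^{1/2}\) scaling.
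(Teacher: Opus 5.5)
Your proposal is correct and follows the paper's route: the substitution \(y=x_2/\eps\), the fact that the periodic copies of the layer do not overlap for small \(\eps\), and boundedness of \(F\) on a support of length \(O(\eps)\). Your normalization remark is also right: with \(\T^2=(\R/2\pi\mathbb Z)^2\), both displayed identities actually carry an extra factor \((2\pi)^{1/2}\) from the \(x_1\)-integration, which the paper's one-line proof passes over and which is harmless for the later scaling arguments; in your Lipschitz alternative, take \(\sup|F'|\) over \([-\norm{f}_{L^\infty},\norm{f}_{L^\infty}]\), since a smooth bounded \(F\) need not have a bounded derivative.
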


\begin{proof}
The first two identities follow from the change of variables
\(y=x_2/\eps\).  Since \(f\) is supported in \((-1/4,1/4)\), the layer is
contained in an interval of length \(O(\eps)\); for small \(\eps\), its
periodic copies do not overlap on \(\T^2\).  The last estimate follows from
boundedness and the same change of variables.
\end{proof}

\begin{proposition}[Concrete finite-thickness shear-layer obstruction]
\label{prop:concrete-layer}
Assume \(\nu>0\).  Fix a time window \(E\subset[0,T]\) of positive measure and
\(\beta>0\).  Let \(\theta,\chi\in C_c^\infty((-1/4,1/4))\), with
\[
  \int_{\R}\theta(y)\,dy=0,\qquad \theta'\not\equiv0,\qquad \chi\not\equiv0.
\]
Define
\[
  \theta_\varepsilon(x_2)=\theta(x_2/\varepsilon),\qquad
  \chi_\varepsilon(x_2)=\chi(x_2/\varepsilon),
\]
and set
\[
  H=\begin{pmatrix}1&0\\0&-1\end{pmatrix},\qquad
  u^\varepsilon(x)=
  \left(\varepsilon^{-\beta}\theta_\varepsilon(x_2),0\right),
\]
\[
  B^\varepsilon(x)=b\chi_\varepsilon(x_2)H,\qquad
  A^\varepsilon=e^{B^\varepsilon}.
\]
Let
\[
  R^\varepsilon=-2\nu D(u^\varepsilon)-\alpha(A^\varepsilon-\Id),
  \qquad
  S^\varepsilon=\mathcal S[u^\varepsilon,B^\varepsilon].
  \label{eq:concrete-defects}
\]
Then
\((u^\varepsilon,0,B^\varepsilon,R^\varepsilon,S^\varepsilon)\) is a smooth
positive-cone Reynolds state with fixed mean velocity.  Its residual-work data
obey
\[
  W_E^\varepsilon\ge c_W\varepsilon^{-(2\beta+1)},\qquad
  L_E^\varepsilon\le C_L\varepsilon^{1/2},
\]
\[
  S_E^\varepsilon\le C_S\varepsilon^{-(\beta+1/2)},\qquad
  \eta_E^\varepsilon\le1 .
\]
Thus the bulk exponents may be taken as
\[
  w=2\beta+1,\qquad \ell=0,\qquad r=\beta+\frac12,\qquad q=0,
\]
and the family is not energy--entropy admissible for all sufficiently small
\(\varepsilon\).
\end{proposition}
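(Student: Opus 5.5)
The plan is to verify the three ingredients directly from the explicit formulas: the Reynolds-state equations, the pressure-free work, and the three budget sizes. Then we conclude with Corollary~\ref{cor:bulk-power-law}. Throughout, $u^\eps_1=\eps^{-\beta}\theta_\eps(x_2)$ depends only on $x_2$. By Lemma~\ref{lem:finite-thickness-scaling}, for small $\eps$ all layer profiles are supported in a single non-overlapping strip of width $O(\eps)$ on $\T^2$.

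\emph{Step 1: Reynolds state.} We have $\divv u^\eps=\partial_1u_1^\eps=0$. Also $\divv(u^\eps\otimes u^\eps)=\partial_1((u_1^\eps)^2)e_1=0$ and $\partial_tu^\eps=0$, so with $p=0$ the left side of \eqref{eq:reynolds-momentum} is $-\nu\Delta u^\eps$. Since $\divv(2D(u))=\Delta u$ for divergence-free $u$, the choice of $R^\eps$ gives $\divv R^\eps=-\nu\Delta u^\eps-\alpha\divv(A^\eps-\Id)$. Hence the right side $\alpha\divv(A^\eps-\Id)+\divv R^\eps$ equals $-\nu\Delta u^\eps$ and the momentum equation holds. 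The conformation equation holds by the definition of $S^\eps$. $A^\eps=e^{B^\eps}$ is positive definite, and $R^\eps,S^\eps$ are symmetric. The hypothesis $\int\theta=0$ is only used to make $u^\eps$ a genuine periodic profile of fixed (zero) mean; nothing else depends on it.

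\emph{Step 2: pressure-free work.} Here $\calN[u^\eps,B^\eps]=-\nu\Delta u^\eps-\alpha\divv(A^\eps-\Id)$ is a divergence, so it has zero mean and $R^\eps$ itself is an admissible antidivergence. By Lemma~\ref{lem:gauge}, $P$ may therefore be computed with $R^\eps$:
\[
  P^\eps=2\nu\int_{\T^2}|D(u^\eps)|^2\,dx+\alpha\int_{\T^2}(A^\eps-\Id):D(u^\eps)\,dx .
\]
The key structural observation is that $B^\eps$ is diagonal, so $A^\eps-\Id=\mathrm{diag}(e^{b\chi_\eps}-1,e^{-b\chi_\eps}-1)$ is diagonal. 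Meanwhile $D(u^\eps)$ is purely off-diagonal, with entries $\tfrac12\partial_2u_1^\eps$. The cross term therefore vanishes identically, and
\[
  P^\eps=\nu\|\partial_2u_1^\eps\|_{L^2}^2=\nu\,|\T|\,\eps^{-(2\beta+1)}\|\theta'\|_{L^2(\R)}^2
\]
by \eqref{eq:finite-thickness-scale} with $a=\beta$. This is time-independent and positive since $\theta'\not\equiv0$. Hence $W_E^\eps=|E|P^\eps\ge c_W\eps^{-(2\beta+1)}$.

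\emph{Step 3: lever and residual sizes.} The lever is $G^\eps=\Id-e^{-b\chi_\eps H}$, which is a bounded smooth function of $\chi(x_2/\eps)$ vanishing where $\chi=0$. The last estimate of Lemma~\ref{lem:finite-thickness-scaling} gives $\|G^\eps(t)\|_{L^2}\le C\eps^{1/2}$, so $L_E^\eps\le C|E|^{1/2}\eps^{1/2}$. In $S^\eps$, the terms $\partial_tA^\eps$ and $u^\eps\cdot\nabla A^\eps=u_1^\eps\partial_1A^\eps$ vanish. The stretching terms $\nabla u^\eps A^\eps+A^\eps(\nabla u^\eps)^T$ are bounded by $C|\partial_2u_1^\eps|$, whose $L^2$ norm is $O(\eps^{-\beta-1/2})$. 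The relaxation term $\lambda^{-1}(A^\eps-\Id)$ is $O(\eps^{1/2})$. Therefore $S_E^\eps\le C_S\eps^{-(\beta+1/2)}$. Trivially $\eta_E^\eps\le1$ by Cauchy--Schwarz.

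\emph{Conclusion and obstacle.} The exponents are $w=2\beta+1$ and $\ell+r+q=\beta+\tfrac12$. Since $\beta>0$ we have $w>\ell+r+q$, so Corollary~\ref{cor:bulk-power-law} (i.e.\ Theorem~\ref{thm:main-obstruction}) yields inadmissibility for small $\eps$. The only delicate point is Step 2. One must justify that $R^\eps$ is a legitimate pressure-free representative, which holds because $\calN$ is exactly its divergence with zero mean, so no pressure correction is needed. One must also check that the elastic cross term really drops out, via the diagonal/off-diagonal orthogonality. If the diagonal structure were lost, I would instead bound the cross term by $\|A^\eps-\Id\|_{L^2}\|D(u^\eps)\|_{L^2}=O(\eps^{1/2}\eps^{-\beta-1/2})$, which is still lower order than $\eps^{-(2\beta+1)}$.
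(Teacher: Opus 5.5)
Your proposal is correct and follows essentially the same route as the paper. You match the momentum residual via $\divv(2D(u^\eps))=\Delta u^\eps$, kill the elastic cross term $(A^\eps-\Id):D(u^\eps)$ by diagonal/off-diagonal orthogonality to get $P^\eps=\nu\|\partial_2u_1^\eps\|_{L^2}^2$, bound $L_E^\eps$ and $S_E^\eps$ by the layer-scaling lemma, and close with the bulk power-law test; your extra factor $|\T|$ in $P^\eps$ is the accurate normalization (the paper's scaling lemma suppresses the $x_1$-integration) and only changes the constant $c_W$.
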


\begin{proof}
The support and zero-mean assumptions on \(\theta\) make
\(u^\varepsilon\) a smooth periodic divergence-free velocity with zero mean.
Since \(B^\varepsilon\) is symmetric, \(A^\varepsilon=e^{B^\varepsilon}\) is
positive definite.  More explicitly,
\[
  A^\varepsilon
  =
  \begin{pmatrix}
  e^{b\chi(x_2/\eps)}&0\\
  0&e^{-b\chi(x_2/\eps)}
  \end{pmatrix},
\]
so \(A^\varepsilon\), \((A^\varepsilon)^{-1}\), and all smooth functions of
them are uniformly bounded in \(L^\infty\), independently of \(\eps\).
Writing \(u^\varepsilon=(U^\varepsilon(x_2),0)\), one has
\[
  \divv(u^\varepsilon\otimes u^\varepsilon)=0,\qquad
  \divv(2D(u^\varepsilon))=\Delta u^\varepsilon.
\]
Therefore
\[
  \divv R^\varepsilon
  =
  -\nu\Delta u^\varepsilon-\alpha\divv(A^\varepsilon-\Id)
  =
  \calN[u^\varepsilon,B^\varepsilon],
\]
and the conformation residual is matched by the definition of
\(S^\varepsilon\).  Moreover \(D(u^\varepsilon)\) and
\(A^\varepsilon-\Id\) are symmetric, hence \(R^\varepsilon\) is a symmetric
representative of the zero-mean momentum residual.

The deformation tensor is purely off-diagonal:
\[
  \nabla u^\varepsilon
  =
  \begin{pmatrix}
  0&\partial_2U^\varepsilon\\
  0&0
  \end{pmatrix},
  \qquad
  D(u^\varepsilon)
  =
  \frac12
  \begin{pmatrix}
  0&\partial_2U^\varepsilon\\
  \partial_2U^\varepsilon&0
  \end{pmatrix}.
\]
Since \(A^\varepsilon-\Id\) is diagonal, its Frobenius product with
\(D(u^\varepsilon)\) vanishes:
\[
  (A^\varepsilon-\Id):D(u^\varepsilon)=0.
\]
Thus
\[
  P^\varepsilon(t)
  =
  \int_{\T^2}-R^\varepsilon:D(u^\varepsilon)\,dx
  =
  2\nu\int_{\T^2}|D(u^\varepsilon)|^2\,dx .
\]
Since
\[
  \partial_2U^\varepsilon
  =
  \varepsilon^{-\beta-1}\theta'(x_2/\varepsilon),
\]
and \(|D(u^\eps)|^2=\frac12|\partial_2U^\eps|^2\), Lemma~\ref{lem:finite-thickness-scaling}
gives
\[
  P^\eps(t)
  =
  \nu\norm{\partial_2U^\eps}_{L^2(\T^2)}^2
  =
  \nu\eps^{-(2\beta+1)}\norm{\theta'}_{L^2(\R)}^2.
\]
Thus
\[
  W_E^\eps
  =
  |E|\nu\norm{\theta'}_{L^2(\R)}^2
  \eps^{-(2\beta+1)}.
  \label{eq:W-exact-scale}
\]
The lever is
\[
  G^\varepsilon
  =
  \Id-(A^\varepsilon)^{-1}
  =
  \begin{pmatrix}
  1-e^{-b\chi(x_2/\eps)}&0\\
  0&1-e^{b\chi(x_2/\eps)}
  \end{pmatrix}.
\]
It is a bounded smooth function of \(b\chi(x_2/\eps)\), vanishing outside a
layer of thickness \(O(\eps)\).  Lemma~\ref{lem:finite-thickness-scaling} gives
\[
  L_E^\varepsilon\le C_L\eps^{1/2}.
  \label{eq:L-layer-scale}
\]
Finally,
\[
  \mathcal S[u^\varepsilon,B^\varepsilon]
  =
  -\nabla u^\varepsilon A^\varepsilon
  -A^\varepsilon(\nabla u^\varepsilon)^T
  +\lambda^{-1}(A^\varepsilon-\Id),
\]
because the state is steady and \(u^\varepsilon\cdot\nabla A^\varepsilon=0\).
The matrices \(-\nabla u^\varepsilon A^\varepsilon\) and
\(-A^\varepsilon(\nabla u^\varepsilon)^T\) are off-diagonal and contain the
factor \(\partial_2U^\varepsilon\) multiplied by the uniformly bounded
entries \(e^{\pm b\chi(x_2/\eps)}\).  Hence their \(L^2\) norm is bounded by
\[
  C\eps^{-(\beta+1/2)}.
\]
The relaxation term is a bounded smooth function of \(b\chi(x_2/\eps)\), so its
\(L^2\) norm is \(O(\eps^{1/2})\).  Therefore
\[
  S_E^\varepsilon\lesssim\varepsilon^{-(\beta+1/2)}.
\]
Using the weaker lever bound \(L_E^\varepsilon\le C_L\) corresponds to
\(\ell=0\).  Since \(2\beta+1>\beta+1/2\), the bulk power-law exclusion test
applies.
\end{proof}

\begin{remark}[Scale summary for the shear layer]
For the family in Proposition~\ref{prop:concrete-layer},
\[
  W_E^\eps\sim \eps^{-(2\beta+1)},
  \qquad
  L_E^\eps\lesssim \eps^{1/2},
  \qquad
  S_E^\eps\lesssim \eps^{-(\beta+1/2)},
  \qquad
  \eta_E^\eps\le1.
\]
Hence
\[
  \frac{W_E^\eps}{L_E^\eps S_E^\eps}
  \gtrsim \eps^{-(\beta+1)}\to\infty .
\]
Even with the rougher estimate \(L_E^\eps\le C\), the ratio still diverges like
\(\eps^{-(\beta+1/2)}\).  The violation is therefore a scale-stable failure of
the residual-work budget.

\end{remark}

\paragraph{Minimal numerical check of the diagnostic.}
Proposition~\ref{prop:concrete-layer} also gives a compact reproducibility test
for a residual-work postprocessor.  Take
\(d=2\), \(\alpha=\nu=\lambda=|E|=1\), \(\beta=1/2\), and choose the standard
bump
\[
  \rho(y)=
  \begin{cases}
  \exp\!\left(-\dfrac{1}{1-16y^2}\right),& |y|<1/4,\\
  0,& |y|\ge1/4,
  \end{cases}
\]
with \(\theta(y)=\rho(y)\sin(8\pi y)\), \(\chi(y)=\rho(y)\), and layer
amplitude \(b_0=1/2\).  Sampling the explicit fields in
Proposition~\ref{prop:concrete-layer} and evaluating the discrete analogues of
\(W_E,L_E,S_E\) gives the following normalized values, using
\(\varepsilon=1/8\) as the reference.  Here
\(Q_\varepsilon=W_E^\varepsilon/((\alpha/2)L_E^\varepsilon S_E^\varepsilon)\)
is the conservative alignment-free violation ratio.

\begin{center}
\small
\begin{tabular}{c|c|c|c|c}
\hline
\(\varepsilon\) & \(W_E^\varepsilon/W_E^{1/8}\) &
\(L_E^\varepsilon/L_E^{1/8}\) &
\(S_E^\varepsilon/S_E^{1/8}\) &
\(Q_\varepsilon/Q_{1/8}\) \\
\hline
\(1/8\)  & 1  & 1.000 & 1.000 & 1.000 \\
\(1/16\) & 4  & 0.707 & 2.000 & 2.828 \\
\(1/32\) & 16 & 0.500 & 4.000 & 8.000 \\
\(1/64\) & 64 & 0.354 & 8.000 & 22.627 \\
\hline
\end{tabular}
\end{center}
The measured slopes are the predicted ones:
\[
  W_E^\varepsilon\sim\varepsilon^{-2},\qquad
  L_E^\varepsilon\sim\varepsilon^{1/2},\qquad
  S_E^\varepsilon\sim\varepsilon^{-1},\qquad
  Q_\varepsilon\sim\varepsilon^{-3/2}.
\]
This is an elementary calibration benchmark for a residual-work postprocessor:
when the grid resolves the layer, it should recover these four slopes.  In a
data-driven application the same postprocessor uses residuals reconstructed
from the simulated or filtered fields rather than the explicit ansatz above.

\begin{remark}[Why viscosity makes the obstruction explicit]
The explicit layer uses the viscous part of the momentum equation to generate
large positive pressure-free work:
\[
  P^\eps=2\nu\int_{\T^2}|D(u^\eps)|^2\,dx .
\]
Thus the obstruction is clearest when \(\nu>0\).  In the inviscid case, a
different source of positive work would be needed, typically coming from
transport or from an imposed Reynolds stress.  The residual-work criterion
itself is independent of this choice; only the concrete ansatz changes.
\end{remark}

\begin{remark}[Gauge invariance]
The obstruction is not produced by choosing an unfavorable representative of
the momentum defect.  The fields \((u^\varepsilon,B^\varepsilon)\) determine
the bulk residuals, and Lemma~\ref{lem:gauge} gives the same pressure-free work
for every symmetric representative modulo pressure and divergence-free gauges.
Thus the failure is a property of the residual-work tuple itself.
\end{remark}

\section{Entropy-Dual Closures}
\label{sec:fenep-residual}

The residual-work argument is an entropy-dual statement.  Once the elastic
stress work cancels against the stretching part of the entropy identity, the
pressure-free admissibility proof uses only the pairing between the
conformation residual and the corresponding entropy lever.  This section
records the abstract closure structure behind this cancellation and then
specializes it to FENE-P.

\begin{definition}[Entropy-dual closure]
\label{def:entropy-dual-closure}
Let \(\calD\subset\Spp^d\) be an open matrix domain.  A pair
\((\Phi,T)\), with \(\Phi\in C^2(\calD)\) and
\(T:\calD\to\Sym^d\), is called an entropy-dual closure if
\[
  G(C):=D\Phi(C),\qquad G(C)C=T(C)
  \label{eq:entropy-dual-compatibility}
\]
for all \(C\in\calD\), and if \(G(C)\) commutes with \(C\).  A relaxation
\(\mathcal R:\calD\to\Sym^d\) is
entropy-dissipative for this closure if
\[
  G(C):\mathcal R(C)\ge0,\qquad C\in\calD .
\]
\end{definition}

\begin{theorem}[Residual-work law for entropy-dual closures]
\label{thm:entropy-dual-closure}
Let \((\Phi,T,\mathcal R)\) be an entropy-dual closure with
entropy-dissipative relaxation on \(\calD\).  Let \(C(t,x)\in\calD\) and let
\(R,S\in\Sym^d\) be smooth residuals in
\[
  \partial_t u+\divv(u\otimes u)-\nu\Delta u+\nabla p
  =
  \alpha\divv T(C)+\divv R,
\]
\[
  \partial_t C+u\cdot\nabla C-\nabla u\,C-C(\nabla u)^T+\mathcal R(C)
  =
  S .
\]
Then
\[
  \calE_\Phi(t)
  =
  \frac12\int_{\T^d}|u(t)|^2\,dx
  +\frac{\alpha}{2}\int_{\T^d}\Phi(C(t))\,dx
\]
satisfies
\[
  \frac{d}{dt}\calE_\Phi(t)+\nu\int_{\T^d}|\nabla u|^2\,dx
  +\frac{\alpha}{2}\int_{\T^d}G(C):\mathcal R(C)\,dx
  =
  -\int_{\T^d}R:D(u)\,dx
  +\frac{\alpha}{2}\int_{\T^d}G(C):S\,dx .
\]
Consequently, after the pressure-free reduction, every admissible
residual-matching profile satisfies on each measurable \(E\subset I\)
\[
  W_E\le
  \frac{\alpha}{2}\eta_E^\Phi L_E^\Phi S_E^\Phi,
  \label{eq:entropy-dual-window}
\]
where
\[
  L_E^\Phi=\norm{G(C)}_{L^2(E\times\T^d)},\qquad
  S_E^\Phi=\norm{S}_{L^2(E\times\T^d)},
\]
and
\[
  \eta_E^\Phi
  =
  \operatorname*{ess\,sup}_{t\in E}
  \frac{\left[-\int_{\T^d}G(C(t)):S(t)\,dx\right]_+}
       {\norm{G(C(t))}_{L^2}\norm{S(t)}_{L^2}},
\]
with value \(0\) when the denominator vanishes.  For fixed \(t\), \(P(t)>0\),
and \(G(C(t))\not\equiv0\), the sharp paying residual is
\[
  S_{\min}^{\Phi}
  =
  -\frac{2P(t)}{\alpha\norm{G(C(t))}_{L^2}^2}G(C(t)).
\]
\end{theorem}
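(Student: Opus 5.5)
The plan mirrors the proof of Proposition~\ref{prop:defect-work}, with the Hookean identities replaced by the two structural hypotheses of Definition~\ref{def:entropy-dual-closure}. First, test the momentum equation with \(u\). Incompressibility removes the transport and pressure terms, and symmetry of \(T(C)\) and \(R\) gives
\[
  \frac12\frac{d}{dt}\int_{\T^d}|u|^2\,dx+\nu\int_{\T^d}|\nabla u|^2\,dx
  =-\alpha\int_{\T^d}T(C):\nabla u\,dx-\int_{\T^d}R:D(u)\,dx .
\]
Second, pair the conformation equation with \(\frac{\alpha}{2}G(C)\) and use \(\frac{d}{dt}\int\Phi(C)\,dx=\int G(C):\partial_tC\,dx\). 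The transport term vanishes because \(G(C):(u\cdot\nabla C)=u\cdot\nabla\Phi(C)\) and \(\divv u=0\). The relaxation term produces \(\frac{\alpha}{2}\int G(C):\mathcal R(C)\,dx\), and the residual produces \(\frac{\alpha}{2}\int G(C):S\,dx\).

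The key step is the stretching cancellation. Since \(G=G(C)\) is symmetric and commutes with \(C\), I will write
\[
  G:(\nabla u\,C+C(\nabla u)^T)=\tr(G\nabla u\,C)+\tr(GC(\nabla u)^T).
\]
Cyclicity together with \(CG=GC=T(C)\) turns each trace into \(T(C):\nabla u\), so the sum is \(2\,T(C):\nabla u\). Here I need \(T(C)\) symmetric: it is \(GC\) with \(G\) and \(C\) commuting symmetric matrices, hence symmetric, which also justifies replacing \(\nabla u\) by \(D(u)\). Multiplying by \(\alpha/2\) gives exactly \(+\alpha\int T(C):\nabla u\,dx\), which cancels the polymeric stress work from the first step. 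Adding the two balances yields the stated identity. This trace bookkeeping, in particular checking that commutation rather than a Hookean formula is what is used, is the only place where care is needed. Everything else follows the Oldroyd--B case verbatim.

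For the consequences, I will run the pressure-free reduction of Lemma~\ref{lem:gauge} and Proposition~\ref{prop:canonical-pressure-free} unchanged, with \(\calN\) now containing \(-\alpha\divv T(C)\); this produces the gauge-invariant work \(P(t)\). Entropy-dissipativity makes the relaxation term non-negative, so admissibility (non-positivity of the right-hand side) is the signed inequality \(P+\frac{\alpha}{2}\int G(C):S\,dx\le0\). From there I will repeat the proof of Theorem~\ref{thm:main-obstruction} with \(G\) replaced by \(G(C)\). The pointwise bound \([P]_+\le\frac{\alpha}{2}\eta_{\rm al}\|G(C)\|_{L^2}\|S\|_{L^2}\) holds, using the degenerate convention on the zero-lever set. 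I will then take the ess sup in \(t\), integrate over \(E\), and apply Cauchy--Schwarz in time to obtain the windowed bound. Finally, the sharp paying residual \(S^\Phi_{\min}\) is Proposition~\ref{prop:sharp-cost} applied verbatim with lever \(G(C(t))\), since that proof is pure Hilbert-space Cauchy--Schwarz with its equality case.
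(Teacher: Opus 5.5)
Your proposal is correct and follows the paper's proof essentially step for step. The paper likewise tests the momentum equation with $u$ and pairs the conformation equation with $\alpha G(C)/2$. It then uses $G(C)C=T(C)$ together with commutation to turn the stretching term into $\alpha\int T(C):\nabla u\,dx$, which cancels the stress work. Finally, it defers the pressure-free reduction, the pointwise Cauchy--Schwarz bound, the time-window estimate and the sharp minimizer to the arguments of Theorem~\ref{thm:main-obstruction} and Proposition~\ref{prop:sharp-cost}, with $G$ replaced by $G(C)$. Your explicit trace bookkeeping just spells out the paper's one-line cancellation.

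One small imprecision: entropy-dissipativity is not what yields the signed inequality. Admissibility is by definition non-positivity of the right-hand side; dissipativity only makes the left-hand relaxation term a genuine dissipation.
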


\begin{proof}
The kinetic energy identity gives the stress work as
\[
  -\alpha\int_{\T^d}T(C):\nabla u\,dx-\int_{\T^d}R:D(u)\,dx .
\]
Testing the conformation equation by \(\alpha G(C)/2\) gives
\begin{align*}
  \frac{\alpha}{2}\frac{d}{dt}\int_{\T^d}\Phi(C)\,dx
  &=
  \frac{\alpha}{2}\int_{\T^d}G(C):
  \bigl(\nabla u\,C+C(\nabla u)^T\bigr)\,dx\\
  &\quad
  -\frac{\alpha}{2}\int_{\T^d}G(C):\mathcal R(C)\,dx
  +\frac{\alpha}{2}\int_{\T^d}G(C):S\,dx .
\end{align*}
Since \(G(C)C=T(C)\) and \(C\) commutes with \(G(C)\), the stretching term equals
\[
  \alpha\int_{\T^d}T(C):\nabla u\,dx,
\]
and cancels the stress work.  This proves the defect-work identity.  The
pressure-free reduction, the pointwise Cauchy--Schwarz estimate, and the
time-window estimate are the same as in Theorem~\ref{thm:main-obstruction},
with \(G(A)\) replaced by \(G(C)=D\Phi(C)\).
\end{proof}

\begin{remark}[Commutation hypothesis]
\label{rem:commutation-hypothesis}
The identity \(G(C)C=T(C)\) is the compatibility relation that cancels stress
work.  In the isotropic closures used below, \(G(C)\) and \(C\) commute because
both are functions of \(C\).  For a non-isotropic closure, this commutation is
part of the entropy-dual structure.
\end{remark}

\begin{corollary}[Hookean Oldroyd--B closure]
\label{cor:hookean-closure}
For
\[
  \calD=\Spp^d,\qquad
  \Phi(C)=\tr C-\log\det C-d,\qquad
  T(C)=C-\Id,\qquad
  \mathcal R(C)=\lambda^{-1}(C-\Id),
\]
one has \(G(C)=\Id-C^{-1}\), \(G(C)C=T(C)\), and
\[
  G(C):\mathcal R(C)
  =
  \lambda^{-1}\tr(C+C^{-1}-2\Id)\ge0.
\]
Thus Theorem~\ref{thm:entropy-dual-closure} recovers the Oldroyd--B
residual-work law.
\end{corollary}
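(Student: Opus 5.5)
The plan is to check the three hypotheses of Theorem~\ref{thm:entropy-dual-closure} for the Hookean data and then read off the conclusion. The first step is the derivative of the entropy. With \(\Phi(C)=\tr C-\log\det C-d\), Jacobi's formula gives \(D\tr C=\Id\) and \(D\log\det C=C^{-1}\), the latter using the Frobenius pairing and the symmetry of \(C\). Hence
\[
  G(C)=D\Phi(C)=\Id-C^{-1},
\]
which is the same computation already used in Proposition~\ref{prop:defect-work}. Since \(G(C)\) is a function of \(C\), it commutes with \(C\), in line with Remark~\ref{rem:commutation-hypothesis}.

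The second step is the compatibility relation. A direct computation gives
\[
  G(C)C=(\Id-C^{-1})C=C-\Id=T(C),
\]
so the stress in the momentum equation of Theorem~\ref{thm:entropy-dual-closure} reduces to \(\alpha\divv(C-\Id)\). The conformation equation with \(\mathcal R(C)=\lambda^{-1}(C-\Id)\) is then exactly \eqref{eq:reynolds-conf}.

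The third step is entropy dissipation. The product is
\[
  G(C):\mathcal R(C)=\lambda^{-1}\tr\bigl((\Id-C^{-1})(C-\Id)\bigr)=\lambda^{-1}\tr(C-2\Id+C^{-1}).
\]
Diagonalizing \(C\) with eigenvalues \(c_i>0\), this becomes \(\lambda^{-1}\sum_i(c_i+c_i^{-1}-2)=\lambda^{-1}\sum_i(\sqrt{c_i}-1/\sqrt{c_i})^2\ge0\). This is the only step needing any thought, and it reduces to AM--GM on each eigenvalue.

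Finally, Theorem~\ref{thm:entropy-dual-closure} applies on \(\calD=\Spp^d\). Its defect-work identity coincides term by term with Proposition~\ref{prop:defect-work}, including the dissipation term \(\frac{\alpha}{2\lambda}\int\tr(A+A^{-1}-2\Id)\,dx\). Its windowed bound and sharp paying residual coincide with Theorem~\ref{thm:main-obstruction} and Proposition~\ref{prop:sharp-cost}. This recovers the Oldroyd--B residual-work law.
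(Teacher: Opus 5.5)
Your proposal is correct and matches the paper's approach. The paper gives no separate proof; it treats the corollary as the direct verification $G(C)=\Id-C^{-1}$, $G(C)C=C-\Id$, and $G:(C-\Id)=\tr(C+C^{-1}-2\Id)$, computations already carried out in the proof of Proposition~\ref{prop:defect-work}, and your eigenvalue bound $c_i+c_i^{-1}-2=(\sqrt{c_i}-1/\sqrt{c_i})^2\ge0$ makes explicit the nonnegativity the paper leaves implicit.
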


With \(b>d\), set
\[
  \calD_b=\{C\in\Spp^d:\tr C<b\},\qquad
  f_b(C)=\frac{b-d}{b-\tr C},
\]
and use the Peterlin stress
\[
  T_b(C)=f_b(C)C-\Id .
\]
The finite-extension boundary \(\tr C=b\) is now part of the admissibility
geometry, not a coordinate artifact; see \cite{Bird,ChilcottRallison1988} for
the classical FENE-P modeling background.

\begin{definition}[FENE-P entropy lever]
\label{def:fenep-lever}
For \(C\in\calD_b\), define
\[
  \Phi_b(C)
  =
  - (b-d)\log\!\left(1-\frac{\tr C}{b}\right)-\log\det C,
\]
and
\[
  G_b(C)=D\Phi_b(C)
  =
  \frac{b-d}{b-\tr C}\Id-C^{-1}.
\]
For \(K<\infty\) and \(\delta>0\), the compact FENE-P envelope is
\[
  \calK_{K,\delta}^{\rm FENE}
  =
  \{C\in\calD_b:\norm{\Log C}_{L^\infty}\le K,\ b-\tr C\ge\delta\}.
\]
\end{definition}

\begin{proposition}[Finite-extensibility boundary and lever geometry]
\label{prop:fenep-boundary-geometry}
Fix \(m>0\).  On the set
\[
  \{C\in\calD_b:\ C\ge mI\},
\]
the FENE-P lever satisfies
\[
  \norm{G_b(C)}
  \ge
  \sqrt d\,\frac{b-d}{b-\tr C}-\frac{\sqrt d}{m}.
  \label{eq:fenep-lever-blowup}
\]
Consequently, as \(\tr C\uparrow b\) with \(C\ge mI\), the residual-work lever
blows up in the isotropic direction.  Moreover, on compact envelopes
\(\calK_{K,\delta}^{\rm FENE}\), the map \(C\mapsto G_b(C)\) is Lipschitz, but
the derivative contains the singular factor \((b-\tr C)^{-2}\).  Hence no
Lipschitz bound independent of the distance to the finite-extensibility
boundary can hold.  Thus the FENE-P residual geometry is not obtained from the
Hookean one by a harmless substitution: near the finite-extensibility boundary,
the admissible residual-work fibers acquire a singular isotropic lever and
stability constants degenerate.
\end{proposition}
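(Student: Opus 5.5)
The plan has three parts: a lower bound on the Frobenius norm of \(G_b(C)\), a derivative computation on compact envelopes, and a test curve that forces the Lipschitz constant to blow up.

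\textbf{Lower bound.} Write \(G_b(C)=f_b(C)\Id-C^{-1}\) with \(f_b(C)=(b-d)/(b-\tr C)>0\) on \(\calD_b\). The reverse triangle inequality gives
\[
  \norm{G_b(C)}\ge f_b(C)\norm{\Id}-\norm{C^{-1}}.
\]
Here \(\norm{\Id}=\sqrt d\). For \(C\ge mI\), the eigenvalues of \(C^{-1}\) lie in \((0,1/m]\), so \(\norm{C^{-1}}\le\sqrt d/m\). This is exactly the displayed inequality \eqref{eq:fenep-lever-blowup}.

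Since \(-\sqrt d/m\) is a fixed constant, letting \(\tr C\uparrow b\) with \(C\ge mI\) sends the right-hand side to \(+\infty\). The growing part is a multiple of \(\Id\), which is the blow-up in the isotropic direction.

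\textbf{Derivative on compact envelopes.} Differentiate in a symmetric direction \(H\):
\[
  DG_b(C)[H]=\frac{(b-d)\tr H}{(b-\tr C)^2}\Id+C^{-1}HC^{-1}.
\]
On \(\calK_{K,\delta}^{\rm FENE}\) we have \(b-\tr C\ge\delta\) and \(\norm{C^{-1}}\le e^{K}\sqrt d\). This gives the operator bound
\[
  \norm{DG_b(C)}\le \frac{(b-d)d}{\delta^2}+d\,e^{2K}.
\]

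The envelope is not obviously convex because of the log-spectral bound. I would therefore integrate \(DG_b\) along the straight segment between two points of the envelope. The segment preserves \(\tr C\le b-\delta\), since trace is linear, and it preserves \(C\ge e^{-K}I\) and \(C\le e^{K}I\), since the positive-definite order intervals are convex. The segment stays in the slightly larger convex set \(\{e^{-K}I\le C\le e^KI,\ b-\tr C\ge\delta\}\), on which the same bound holds. This yields the Lipschitz estimate with a constant of order \(\delta^{-2}\).

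\textbf{No \(\delta\)-uniform Lipschitz bound.} Take the isotropic curve \(C_s=sI\) with \(m\le s<b/d\). Its lever is scalar:
\[
  G_b(C_s)=\Bigl(\frac{b-d}{b-ds}-\frac1s\Bigr)\Id,
  \qquad
  \frac{d}{ds}G_b(C_s)=\Bigl(\frac{(b-d)d}{(b-ds)^2}+\frac1{s^2}\Bigr)\Id.
\]
The norm of this \(s\)-derivative is at least \(\sqrt d\,(b-d)d/(b-ds)^2\), and \(b-ds=b-\tr C_s\). Hence the difference quotient of \(G_b\) between two nearby points of \(\calK_{K,\delta}^{\rm FENE}\) near the boundary is \(\gtrsim\delta^{-2}\). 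It follows that any Lipschitz constant valid on \(\calK_{K,\delta}^{\rm FENE}\) must be at least \(c\,\delta^{-2}\), which is unbounded as \(\delta\to0\).

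One requirement needs care: the curve must stay inside the envelope with \(K\) fixed while \(\delta\to0\). This needs \(b/d\le e^K\), so I would choose \(K\ge\log(b/d)+1\). The qualitative conclusions about singular isotropic levers and degenerating fiber constants are then interpretive consequences of the lever being large, since the fiber geometry of Proposition~\ref{prop:residual-cone-geometry} depends on \(G\).

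The main obstacle is only this bookkeeping: keeping the test curve inside \(\calK_{K,\delta}^{\rm FENE}\), and handling the non-convexity of the envelope in the Lipschitz argument. Both are settled by the segment and isotropic-curve choices above.
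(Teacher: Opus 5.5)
Your proposal is correct and follows the paper's proof. The lower bound comes from the triangle inequality with $\norm{\Id}=\sqrt d$ and $\norm{C^{-1}}\le\sqrt d/m$. The Lipschitz part rests on the derivative of $f_b$, whose factor $(b-\tr C)^{-2}\tr H$ gives a constant of order $\delta^{-2}$ on $\calK_{K,\delta}^{\rm FENE}$ and rules out a $\delta$-uniform constant along trace-carrying directions.

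Three of your details are left implicit in the paper: the convex order-interval superset for the segment argument, the explicit curve $C_s=s\Id$, and the observation that $K$ must be large enough for the envelope to reach the boundary at all. The precise threshold on $K$ depends on which matrix norm is meant in $\norm{\Log C}_{L^\infty}$; for example, it is $K\ge\sqrt d\log(b/d)$ for the Frobenius norm.
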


\begin{proof}
Since
\[
  G_b(C)=\frac{b-d}{b-\tr C}I-C^{-1},
\]
the triangle inequality gives
\[
  \norm{G_b(C)}
  \ge
  \frac{b-d}{b-\tr C}\norm{I}-\norm{C^{-1}} .
\]
If \(C\ge mI\), then \(\norm{C^{-1}}\le\sqrt d/m\), and
\(\norm{I}=\sqrt d\), proving \eqref{eq:fenep-lever-blowup}.  On
\(\calK_{K,\delta}^{\rm FENE}\), \(C^{-1}\) and its derivative are bounded by
constants depending on \(K\), while
\[
  D\left(\frac{b-d}{b-\tr C}\right)[H]
  =
  \frac{b-d}{(b-\tr C)^2}\tr H .
\]
Hence \(C\mapsto G_b(C)\) is Lipschitz with constants bounded by
\(C_{K,b}\delta^{-2}\).  The same derivative formula shows that no
boundary-independent Lipschitz constant can hold as \(\tr C=b\) is approached
in directions with nonzero trace.
\end{proof}

\begin{corollary}[FENE-P residual-work criterion]
\label{thm:fenep-residual}
For the FENE-P closure on \(\calD_b\),
\[
  \Phi_b(C)=
  - (b-d)\log\!\left(1-\frac{\tr C}{b}\right)-\log\det C,
  \qquad
  T_b(C)=f_b(C)C-\Id,
\]
and \(\mathcal R_b(C)=\lambda^{-1}T_b(C)\), the compatibility relation
\[
  G_b(C)C=T_b(C)
\]
holds with
\[
  G_b(C)=D\Phi_b(C)=f_b(C)\Id-C^{-1}.
\]
Let \((u,p,C)\) be a smooth FENE-P residual-matching profile on a time interval
\(I\), and assume that \(C(t,x)\in\calD_b\).  Let \(P(t)\) be the same
pressure-free work as in Definition~\ref{def:pressure-free-work}, but with
\(\alpha\divv(A-I)\) replaced by \(\alpha\divv T_b(C)\).  Define
\[
  L_E^{\rm FENE}=\norm{G_b(C)}_{L^2(E\times\T^d)},\qquad
  S_E^{\rm FENE}=\norm{S}_{L^2(E\times\T^d)},
\]
and
\[
  \eta_E^{\rm FENE}
  =
  \operatorname*{ess\,sup}_{t\in E}
  \frac{\left[-\int_{\T^d}G_b(C(t)):S(t)\,dx\right]_+}
       {\norm{G_b(C(t))}_{L^2}\norm{S(t)}_{L^2}},
\]
with value \(0\) when the denominator vanishes.  If the FENE-P residual state
is energy--entropy admissible, then on every measurable \(E\subset I\),
\[
  W_E\le
  \frac{\alpha}{2}
  \eta_E^{\rm FENE}L_E^{\rm FENE}S_E^{\rm FENE}.
  \label{eq:fenep-window}
\]
For each fixed time, the sharp paying residual is aligned with
\(-G_b(C)\):
\[
  S_{\min}^{\rm FENE}
  =
  -\frac{2P(t)}{\alpha\norm{G_b(C(t))}_{L^2}^2}G_b(C(t)),
  \qquad P(t)>0 .
\]
\end{corollary}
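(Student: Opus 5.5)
The plan is to reduce the corollary to Theorem~\ref{thm:entropy-dual-closure}. That means verifying that the FENE-P triple \((\Phi_b,T_b,\mathcal R_b)\) is an entropy-dual closure on \(\calD_b\) in the sense of Definition~\ref{def:entropy-dual-closure}, with entropy-dissipative relaxation. Once that is done, the defect-work identity, the pressure-free reduction, the windowed bound \eqref{eq:fenep-window}, and the sharp paying residual are all inherited verbatim, with \(G\) replaced by \(G_b\).

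\textbf{Step 1: the lever.} I would compute the lever by differentiating \(\Phi_b\) in a symmetric direction \(H\):
\[
  D\Phi_b(C)[H]=\frac{b-d}{b}\cdot\frac{\tr H}{1-\tr C/b}-\tr(C^{-1}H)
  =\Bigl(f_b(C)\Id-C^{-1}\Bigr):H .
\]
Hence \(G_b=f_b\Id-C^{-1}\).

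\textbf{Step 2: compatibility.} I would check
\[
  G_b(C)C=f_b(C)C-\Id=T_b(C).
\]
Commutation of \(G_b(C)\) with \(C\) is immediate, since \(G_b(C)\) is a scalar multiple of \(\Id\) plus a function of \(C\) (Remark~\ref{rem:commutation-hypothesis}).

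\textbf{Step 3: entropy dissipation.} This is the only genuinely new inequality and the step I expect to need care. I need
\[
  G_b:\mathcal R_b=\lambda^{-1}G_b:(G_bC)\ge0 .
\]
Since \(G_b\) and \(C\) commute and are simultaneously diagonalizable, I would write \(G_b:(G_bC)=\tr(G_b^2C)=\sum_i g_i^2c_i\), where \(c_i>0\) are the eigenvalues of \(C\) and \(g_i=f_b-1/c_i\) those of \(G_b\). This sum is nonnegative because \(C\in\Spp^d\). It is the FENE-P analogue of the Hookean computation in Corollary~\ref{cor:hookean-closure}, and needs no bound on \(f_b\) beyond its finiteness on \(\calD_b\).

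\textbf{Step 4: conclude.} With the closure verified, I would apply Theorem~\ref{thm:entropy-dual-closure} on the open domain \(\calD_b\). Smoothness of \(\Phi_b\) there holds because \(\tr C<b\) and \(C>0\). This gives the energy identity with right-hand side \(-\int R:D(u)+\frac{\alpha}{2}\int G_b:S\).

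The pressure-free reduction goes through unchanged. Lemma~\ref{lem:gauge} does not see the stress law, and \(\calN\) is simply redefined with \(\alpha\divv T_b(C)\) in place of \(\alpha\divv(A-\Id)\). Admissibility therefore becomes
\[
  P+\frac{\alpha}{2}\int G_b:S\le0 \quad\text{a.e.}
\]
The windowed bound then follows exactly as in the proof of Theorem~\ref{thm:main-obstruction}: take positive parts, use the definition of \(\eta^{\rm FENE}_{\rm al}\), take the ess sup over \(E\), and apply Cauchy--Schwarz in time. The sharp residual \(S_{\min}^{\rm FENE}\) is the equality case of Proposition~\ref{prop:sharp-cost} applied at fixed time with \(G_b\) in place of \(G\).

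One point to note is that \(G_b(C(t))\not\equiv0\) is needed for the minimizer formula. If \(P(t)>0\) and \(G_b(C(t))\equiv0\), the zero-lever convention already makes the state inadmissible.
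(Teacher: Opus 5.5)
Your proposal is correct and follows the paper's route. You verify that \((\Phi_b,T_b,\mathcal R_b)\) is an entropy-dual closure with \(G_bC=T_b\), commuting lever and entropy-dissipative relaxation, and then invoke Theorem~\ref{thm:entropy-dual-closure} for the identity, the windowed bound and the sharp minimizer. Your eigenvalue computation \(\tr(G_b^2C)=\sum_i g_i^2c_i\ge0\) is the same inequality the paper writes as \(G_b:T_b=\tr(C^{-1}T_b^2)\ge0\), and your explicit derivative of \(\Phi_b\) simply fills in what the paper takes from Definition~\ref{def:fenep-lever}.
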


\begin{proof}
The formula for \(G_b\) is Definition~\ref{def:fenep-lever}.  The relation
\(G_b(C)C=T_b(C)\) follows immediately.  Moreover
\[
  G_b(C):T_b(C)=\tr\bigl(C^{-1}T_b(C)^2\bigr)\ge0,
\]
because \(G_b(C)=T_b(C)C^{-1}\) and \(C\) commutes with \(T_b(C)\).  The
windowed estimate and sharp minimizer are then Theorem~\ref{thm:entropy-dual-closure}.
\end{proof}

\begin{remark}[Finite extensibility in the residual-work law]
The FENE-P criterion differs from the Hookean one in two places.  First,
admissibility requires the finite-extension margin \(b-\tr C>0\).  Second, the
entropy-dual lever is
\[
  G_b(C)=f_b(C)I-C^{-1}.
\]
On compact FENE-P envelopes \(\calK_{K,\delta}^{\rm FENE}\), this is a smooth
finite-extensibility analogue of the Hookean lever.  As \(b\to\infty\) away
from the FENE boundary, \(f_b(C)\to1\) and \(G_b(C)\to I-C^{-1}\), recovering
the Oldroyd--B residual-work law.
\end{remark}

\begin{corollary}[FENE-P structured-family obstruction]
\label{cor:fenep-structured}
For a family of smooth FENE-P residual profiles on a time window \(E\), suppose
\[
  W_E^\varepsilon\ge c_W\varepsilon^{-w},\quad
  L_{E,{\rm FENE}}^\varepsilon\le C_L\varepsilon^{-\ell},\quad
  S_{E,{\rm FENE}}^\varepsilon\le C_S\varepsilon^{-r},\quad
  \eta_{E,{\rm FENE}}^\varepsilon\le C_\eta\varepsilon^{-q}.
\]
If \(w>\ell+r+q\), then the family is not FENE-P
energy--entropy admissible for all sufficiently small \(\varepsilon\).  The
same conclusion holds at equality of exponents if
\[
  c_W>\frac{\alpha}{2}C_\eta C_LC_S .
\]
\end{corollary}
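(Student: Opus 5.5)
This corollary is the FENE-P copy of Corollary~\ref{cor:bulk-power-law}, and the plan is to argue by contradiction from the windowed FENE-P budget of Corollary~\ref{thm:fenep-residual}. Suppose that for arbitrarily small \(\varepsilon\) the profile were FENE-P energy--entropy admissible on \(E\). Then Corollary~\ref{thm:fenep-residual}, which is Theorem~\ref{thm:entropy-dual-closure} applied with the lever \(G_b(C)\), gives
\[
  W_E^\varepsilon\le\frac{\alpha}{2}\,\eta_{E,{\rm FENE}}^\varepsilon L_{E,{\rm FENE}}^\varepsilon S_{E,{\rm FENE}}^\varepsilon .
\]
The hypotheses on each channel then turn this into
\[
  c_W\varepsilon^{-w}\le\frac{\alpha}{2}C_\eta C_LC_S\,\varepsilon^{-(\ell+r+q)} .
\]

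For the strict case \(w>\ell+r+q\), rearrange the last inequality as
\[
  c_W\le\frac{\alpha}{2}C_\eta C_LC_S\,\varepsilon^{\,w-\ell-r-q}.
\]
The right-hand side tends to \(0\) as \(\varepsilon\to0\), so for all sufficiently small \(\varepsilon\) it is below \(c_W>0\). That is a contradiction. The threshold can be made explicit:
\[
  \varepsilon<\left(\frac{2c_W}{\alpha C_\eta C_LC_S}\right)^{1/(w-\ell-r-q)}.
\]
If \(C_\eta C_LC_S=0\), admissibility already fails for every \(\varepsilon\), because \(W_E^\varepsilon>0\).

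For the borderline case \(w=\ell+r+q\), the powers of \(\varepsilon\) cancel. The inequality becomes \(c_W\le\frac{\alpha}{2}C_\eta C_LC_S\), which contradicts the assumed reverse strict inequality for every \(\varepsilon\).

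There is essentially no obstacle; the steps are routine. The only points to check are that the FENE-P budget applies on each fixed \(E\), and that the degenerate convention (\(\eta=0\) when the denominator vanishes) is consistent with the argument. It is, since positive work then immediately violates admissibility.
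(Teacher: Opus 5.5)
Your argument is correct and is the same as the paper's: the paper's proof says only that the result is the bulk power-law test of Corollary~\ref{cor:bulk-power-law} applied to the windowed FENE-P budget of Corollary~\ref{thm:fenep-residual}, and you write out the substitution of the four channel bounds and the exponent comparison explicitly. Your tacit use of \(c_W>0\) is equally tacit in the statement and in its Oldroyd--B counterpart.
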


\begin{proof}
This is the bulk power-law test applied to
\eqref{eq:fenep-window}.
\end{proof}

\section{Closedness, Strong Obstructions, and Weak-Compactness Interface}
\label{sec:closed-realization}

The local corrector in Section~\ref{sec:local-corrector} shows that the
least-cost paying direction is compatible with the positive cone in the
conformation channel.  It does not, by itself, construct a global solution
sequence.  The next results record the complementary obstruction: any
realization procedure whose residual data converge strongly must preserve the
signed-work budget, and any profile with a positive defect gap has a
quantitative distance from the admissible residual class.  The topology is part
of the statement: it is exactly the topology in which the three budget variables
\(P\), \(G\), and \(S\) and their product pairing are stable.  We make no claim
that weaker convergence notions preserve the same signed work.

\begin{definition}[Strong residual-data realization]
\label{def:strong-residual-realization}
Let \((u,p,B)\) be a smooth residual-matching profile on a measurable time
window \(E\), and write
\[
  A=e^B,\qquad G=I-A^{-1},\qquad S=\mathcal S[u,B],
\]
with pressure-free work \(P(t)\).  We say that \((u,p,B)\) is strongly
residual-data realizable on \(E\) by admissible positive-cone Reynolds states if
there are smooth residual-matching profiles \((u_n,p_n,B_n)\), with
\[
  A_n=e^{B_n},\qquad G_n=I-A_n^{-1},\qquad S_n=\mathcal S[u_n,B_n],
\]
and pressure-free works \(P_n(t)\), such that the associated residual states are
energy--entropy admissible and
\[
  P_n\to P\quad\hbox{in }L^1(E),
  \qquad
  G_n\to G,\quad S_n\to S
  \quad\hbox{in }L^2(E\times\T^d).
\]
\end{definition}

\begin{theorem}[Closedness of residual admissibility]
\label{thm:closed-realization}
Let \((u,p,B)\) be strongly residual-data realizable on \(E\) in the sense of
Definition~\ref{def:strong-residual-realization}.  Then
\begin{equation}
  P(t)+\frac{\alpha}{2}\int_{\T^d}G(t):S(t)\,dx\le0
  \qquad\hbox{for a.e. }t\in E .
  \label{eq:closed-realization-pointwise}
\end{equation}
Consequently, for every measurable \(F\subset E\),
\begin{equation}
  W_F\le \frac{\alpha}{2}\eta_F L_FS_F .
  \label{eq:closed-realization-window}
\end{equation}
\end{theorem}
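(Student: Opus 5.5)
The plan is to reduce the statement to the limiting part of Proposition~\ref{prop:dynamical-residual-traces} and then obtain the windowed form exactly as in the proof of Theorem~\ref{thm:main-obstruction}. Let \((u_n,p_n,B_n)\) be the realizing sequence of Definition~\ref{def:strong-residual-realization}, with data \((P_n,G_n,S_n)\). Each associated residual state is energy--entropy admissible. By the defect-work identity (Proposition~\ref{prop:defect-work}) and the canonical pressure-free reduction (Propositions~\ref{prop:canonical-pressure-free} and \ref{prop:pressure-free}), this admissibility is equivalent to
\[
  M_n(t):=P_n(t)+\frac{\alpha}{2}K_n(t)\le0,\qquad
  K_n(t)=\int_{\T^d}G_n(t):S_n(t)\,dx,
\]
for a.e.\ \(t\in E\). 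Lemma~\ref{lem:gauge} guarantees that the choice of momentum representative does not affect \(P_n\). This is the only point where the PDE structure enters.

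Next I pass to the limit in \(L^1(E)\). Writing \(K_n-K=\int(G_n-G):S_n+\int G:(S_n-S)\) and applying Cauchy--Schwarz in space and then in time gives
\[
  \norm{K_n-K}_{L^1(E)}\le\norm{G_n-G}_{L^2}\norm{S_n}_{L^2}+\norm{G}_{L^2}\norm{S_n-S}_{L^2}.
\]
Strong convergence keeps \(\norm{S_n}_{L^2(E\times\T^d)}\) bounded, so the right-hand side tends to zero. Together with \(P_n\to P\) in \(L^1(E)\), this yields \(M_n\to M:=P+\frac{\alpha}{2}K\) in \(L^1(E)\). The non-positive cone \(L^1_-(E)\) is closed; alternatively, one can pass to an a.e.\ convergent subsequence. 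Either way \(M\le0\) a.e., which is \eqref{eq:closed-realization-pointwise}. Equivalently, \((P,K)\in\mathfrak C_E\), and \(S\in\mathfrak A_E(P,G)\), using closedness from Proposition~\ref{prop:residual-cone-geometry}.

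For \eqref{eq:closed-realization-window}, fix a measurable \(F\subset E\) and repeat the argument of Theorem~\ref{thm:main-obstruction} for the limit data. The pointwise inequality gives \([P(t)]_+\le\frac{\alpha}{2}[-K(t)]_+\). I would then check this on the degenerate set where \(\norm{G(t)}_{L^2}\norm{S(t)}_{L^2}=0\). There \(K(t)=0\), so \(P(t)\le0\), and the convention \(\eta_{\rm al}=0\) is consistent. Off that set, the inequality reads \([P(t)]_+\le\frac{\alpha}{2}\eta_{\rm al}(t)\norm{G(t)}_{L^2}\norm{S(t)}_{L^2}\). Integrating over \(F\), bounding \(\eta_{\rm al}\le\eta_F\), and applying Cauchy--Schwarz in time gives \(W_F\le\frac{\alpha}{2}\eta_F L_FS_F\).

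There is no serious obstacle here. The one point needing care is that the bilinear pairing \(K_n\) converges in \(L^1(E)\), not merely weakly. This is exactly where strong convergence of both \(G_n\) and \(S_n\) is used, since the weak--weak case fails by Theorem~\ref{thm:intro-weak-sharpness}. I would also make sure the admissibility of each approximant is converted into the pressure-free form before taking limits, so that only the data \((P_n,G_n,S_n)\) are involved.
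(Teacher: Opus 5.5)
Your proposal is correct and follows essentially the same route as the paper. Both arguments reduce each approximant's admissibility to the pressure-free signed-work inequality, then prove $K_n\to K$ in $L^1(E)$ by the same Cauchy--Schwarz split using boundedness of $S_n$, then use closedness of the non-positive $L^1$ cone, and finally obtain the windowed estimate by the argument of Theorem~\ref{thm:main-obstruction}; your explicit check of the degenerate set where $\|G(t)\|_{L^2}\|S(t)\|_{L^2}=0$ simply makes visible a step the paper leaves implicit.
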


\begin{proof}
For the approximating profiles, energy--entropy admissibility and the
pressure-free reduction give
\begin{equation}
  P_n(t)+\frac{\alpha}{2}\int_{\T^d}G_n(t):S_n(t)\,dx\le0
  \qquad\hbox{for a.e. }t\in E .
  \label{eq:approx-admissible}
\end{equation}
Set
\[
  K_n(t)=\int_{\T^d}G_n(t):S_n(t)\,dx,\qquad
  K(t)=\int_{\T^d}G(t):S(t)\,dx .
\]
The strong \(L^2\) convergence gives \(K_n\to K\) in \(L^1(E)\), because
\[
\begin{aligned}
  \norm{K_n-K}_{L^1(E)}
  &\le
  \norm{G_n-G}_{L^2(E\times\T^d)}\norm{S_n}_{L^2(E\times\T^d)}\\
  &\quad
  +\norm{G}_{L^2(E\times\T^d)}
   \norm{S_n-S}_{L^2(E\times\T^d)} .
\end{aligned}
\]
Here \(\{S_n\}\) is bounded in \(L^2\) by the assumed convergence.  Since
\(P_n\to P\) in \(L^1(E)\), the left-hand side of
\eqref{eq:approx-admissible} converges in \(L^1(E)\) to
\[
  P(t)+\frac{\alpha}{2}K(t).
\]
The cone of non-positive \(L^1\) functions is closed: if a limit were positive
on a set of positive measure, its positive part would have positive integral,
contradicting \(L^1\) convergence from non-positive functions.  This proves
\eqref{eq:closed-realization-pointwise}.  Applying
Theorem~\ref{thm:main-obstruction} to the limiting residual data on the
subwindow \(F\) gives \eqref{eq:closed-realization-window}.
\end{proof}

\begin{theorem}[Sharpness of the residual-data topology]
\label{thm:weak-sharpness}
Let \(E\) be a time window of positive finite measure.

\emph{(i) One-sided strong closure.}
Let \(P_n\to P\) in \(L^1(E)\), and suppose that
\((P_n,G_n,S_n)\) are admissible residual data, namely
\[
  P_n(t)+\frac{\alpha}{2}\int_{\T^d}G_n(t):S_n(t)\,dx\le0
  \qquad\hbox{for a.e. }t\in E .
\]
If either
\[
  G_n\to G\hbox{ strongly in }L^2(E\times\T^d),
  \qquad
  S_n\rightharpoonup S\hbox{ weakly in }L^2(E\times\T^d),
\]
or the same statement holds with the roles of \(G_n\) and \(S_n\) interchanged,
then \((P,G,S)\) is admissible:
\[
  P(t)+\frac{\alpha}{2}\int_{\T^d}G(t):S(t)\,dx\le0
  \qquad\hbox{for a.e. }t\in E .
\]

\emph{(ii) Failure of weak--weak closure.}
Fix \(p_0>0\).  There are smooth residual data
\((P_n,G_n,S_n)\) such that
\[
  P_n\to p_0\quad\hbox{strongly in }L^1(E),
  \qquad
  G_n\rightharpoonup0,\quad S_n\rightharpoonup0
  \quad\hbox{weakly in }L^2(E\times\T^d),
\]
each \((P_n,G_n,S_n)\) is admissible, but the weak limit
\((p_0,0,0)\) is not admissible.  Thus the signed residual-work budget is not
closed under weak convergence of both product factors unless an additional
defect measure, or an equivalent compactness assumption, is retained.
\end{theorem}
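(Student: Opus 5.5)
For part (i), I will show that the product functionals $K_n(t)=\int_{\T^d}G_n(t):S_n(t)\,dx$ converge to $K(t)=\int_{\T^d}G(t):S(t)\,dx$ weakly in $L^1(E)$. Tested against a function $\varphi\in L^\infty(E)$, the difference splits as
\[
  \int_E\varphi(K_n-K)\,dt
  =\int_{E\times\T^d}\varphi\,(G_n-G):S_n
  +\int_{E\times\T^d}\varphi\,G:(S_n-S).
\]
The first term is bounded by $\norm{\varphi}_{L^\infty}\norm{G_n-G}_{L^2}\sup_n\norm{S_n}_{L^2}$. This tends to zero because weakly convergent sequences are bounded (uniform boundedness principle). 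The second term tends to zero by weak convergence of $S_n$, since $\varphi G\in L^2(E\times\T^d)$. The interchanged case is symmetric.

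Next, $P_n\to P$ in $L^1$ implies weak $L^1$ convergence. Hence $M_n=P_n+\frac{\alpha}{2}K_n\rightharpoonup M=P+\frac{\alpha}{2}K$ weakly in $L^1(E)$. The set $L^1_-(E)$ is convex and strongly closed, so by Mazur it is weakly closed. More concretely, test with $\varphi=\mathbf 1_{\{M>0\}}\ge0$: then $\int_E\varphi M_n\le0$ gives $\int_E[M]_+\le0$. So $M\le0$ a.e., which is exactly admissibility.

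For part (ii), I will build explicit oscillating data on $E\times\T^d$. Fix a nonzero constant symmetric matrix $N$, say $N=e_1\otimes e_1$ with $|N|=1$, and set
\[
  G_n(t,x)=\sin(nx_1)N,\qquad S_n(t,x)=-c\sin(nx_1)N,\qquad P_n\equiv p_0 .
\]
Then $\int_{\T^d}G_n:S_n\,dx=-c\int_{\T^d}\sin^2(nx_1)\,dx=-c|\T^d|/2$, independent of $n$ and $t$. Choosing $c=4p_0/(\alpha|\T^d|)$ gives
\[
  P_n+\frac{\alpha}{2}\int_{\T^d}G_n:S_n\,dx=p_0-p_0=0\le0,
\]
so every triple is admissible. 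By Riemann--Lebesgue, $G_n\rightharpoonup0$ and $S_n\rightharpoonup0$ in $L^2(E\times\T^d)$, since they are bounded and converge weakly against trigonometric polynomials, which are dense. The limit $(p_0,0,0)$ gives $p_0>0$ on $E$, which has positive measure, so it is not admissible.

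If one wants the levers to be genuine positive-cone levers with $I-G_n>0$, I would scale $G_n$ by a small amplitude $\delta$ and $S_n$ by $\delta^{-1}$. This keeps the same product while $I-G_n=(I-\delta\sin(nx_1)N)\ge(1-\delta)I$. The statement only asks for smooth residual data, so this refinement is optional.

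There is no serious obstacle here. The only point needing care is the boundedness of $S_n$ in part (i), which comes from uniform boundedness.
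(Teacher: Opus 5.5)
Your proposal is correct and follows essentially the same route as the paper: in (i) you pass to the limit in the admissibility inequality tested against non-negative $L^\infty(E)$ functions via strong--weak product convergence, and in (ii) you take a high-frequency mode in a fixed symmetric direction with $S_n$ the negative multiple of $G_n$ that exactly saturates the budget. Your explicit appeal to uniform boundedness of the weakly convergent factor fills in a step the paper uses only implicitly.
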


\begin{proof}
We prove the first assertion when \(G_n\to G\) strongly and
\(S_n\rightharpoonup S\) weakly; the other case is identical.  Let
\(\varphi\in L^\infty(E)\), \(\varphi\ge0\).  Since
\(\varphi G_n\to\varphi G\) strongly in \(L^2(E\times\T^d)\), weak convergence
of \(S_n\) gives
\[
  \int_E\varphi(t)\int_{\T^d}G_n(t):S_n(t)\,dx\,dt
  \longrightarrow
  \int_E\varphi(t)\int_{\T^d}G(t):S(t)\,dx\,dt .
\]
Together with \(P_n\to P\) in \(L^1(E)\), this permits passage to the limit in
the admissibility inequality tested against every non-negative \(\varphi\):
\[
  \int_E\varphi(t)
  \left[
    P(t)+\frac{\alpha}{2}\int_{\T^d}G(t):S(t)\,dx
  \right]dt
  \le0 .
\]
Hence the bracket is non-positive a.e. on \(E\).

For the second assertion, choose a smooth sequence
\(\Psi_n:\T^d\to\Sym^d\) with
\[
  \norm{\Psi_n}_{L^2(\T^d)}=1,
  \qquad
  \Psi_n\rightharpoonup0\quad\hbox{weakly in }L^2(\T^d;\Sym^d).
\]
For instance, one may take normalized Fourier modes in a fixed nonzero
symmetric direction.  Set
\[
  P_n(t)=p_0,\qquad
  G_n(t,x)=\Psi_n(x),\qquad
  S_n(t,x)=-\frac{2p_0}{\alpha}\Psi_n(x).
\]
Then \(P_n\to p_0\) strongly in \(L^1(E)\), while
\(G_n\rightharpoonup0\) and \(S_n\rightharpoonup0\) weakly in
\(L^2(E\times\T^d)\).  For every \(t\),
\[
  P_n(t)+\frac{\alpha}{2}\int_{\T^d}G_n(t):S_n(t)\,dx
  =
  p_0-\frac{\alpha}{2}\frac{2p_0}{\alpha}\norm{\Psi_n}_{L^2}^2
  =0 .
\]
Thus every approximating triple is admissible.  The weak limit is
\((p_0,0,0)\), for which the signed defect equals \(p_0>0\).  This proves the
failure of weak--weak closure.
\end{proof}

\begin{remark}[Product defect]
The preceding theorem identifies the compactness threshold in the residual
variables themselves.  Weak limits of \(G_n\) and \(S_n\) do not determine the
limit of the product \(G_n:S_n\).  A weaker realization theory would therefore
need to carry an additional product-defect measure, analogous to a Reynolds or
oscillation defect, before the signed work balance could be closed.
\end{remark}

\begin{theorem}[Defect-measure compactification of weak residual limits]
\label{thm:defect-compactification}
Let \(P_n\to P\) in \(L^1(E)\), and let
\[
  G_n\rightharpoonup G,\qquad S_n\rightharpoonup S
  \quad\hbox{weakly in }L^2(E\times\T^d;\Sym^d).
\]
Set
\[
  K_n(t)=\int_{\T^d}G_n(t):S_n(t)\,dx,
  \qquad
  K(t)=\int_{\T^d}G(t):S(t)\,dx .
\]
Assume that the signed measures \(K_n(t)\,dt\) converge weakly-* in
\(\mathcal M(E)\) to \(K(t)\,dt+\mu\), where \(\mu\) is a finite signed measure
on \(E\).  If every residual triple \((P_n,G_n,S_n)\) is admissible, then
\begin{equation}
  P\,dt+\frac{\alpha}{2}K\,dt+\frac{\alpha}{2}\mu\le0
  \qquad\hbox{in }\mathcal M(E).
  \label{eq:defect-augmented-budget}
\end{equation}
In particular, weak residual limits are closed after the product defect
\(\mu\) is retained.

Conversely, for every smooth density \(m(t)\) on \(E\), there are smooth
weakly null fields \(G_n,S_n\) in \(L^2(E\times\T^d;\Sym^d)\) such that
\[
  \left(\int_{\T^d}G_n(t):S_n(t)\,dx\right)dt
  \stackrel{*}{\rightharpoonup} m(t)\,dt .
\]
Thus the defect variable in \eqref{eq:defect-augmented-budget} is not
determined by the weak limits of \(G_n\) and \(S_n\).
\end{theorem}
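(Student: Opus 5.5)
The first assertion will follow by testing the admissibility inequality against non-negative test functions and passing to the limit term by term. I fix \(\varphi\in C_c(E)\) (or \(C(\overline E)\) if \(E\) is compact, matching the weak-* topology on \(\mathcal M(E)\)) with \(\varphi\ge0\). Admissibility of each triple gives
\[
  \int_E\varphi(t)P_n(t)\,dt+\frac{\alpha}{2}\int_E\varphi(t)K_n(t)\,dt\le0 .
\]
Since \(\varphi\) is bounded and \(P_n\to P\) in \(L^1(E)\), the first term converges to \(\int_E\varphi P\,dt\). By the weak-* hypothesis on \(K_n\,dt\), the second term converges to \(\frac{\alpha}{2}\bigl(\int_E\varphi K\,dt+\int_E\varphi\,d\mu\bigr)\). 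Hence the measure \(P\,dt+\frac{\alpha}{2}K\,dt+\frac{\alpha}{2}\mu\) is non-positive on every non-negative continuous test function. By the Riesz representation theorem it is therefore a non-positive measure, which is \eqref{eq:defect-augmented-budget}. Note that \(K\in L^1(E)\) by Cauchy--Schwarz in \(x\) and then in \(t\), since \(G,S\in L^2\), so \(K\,dt\) is a finite measure. I would also record that \(K_n\) need not converge to \(K\) weakly. That is precisely why \(\mu\) appears, and no compactness beyond the hypotheses is used.

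For the converse I would adapt the oscillation construction in Theorem~\ref{thm:weak-sharpness}(ii), modulated in time. I fix a nonzero constant symmetric matrix \(H_0\) with \(|H_0|=1\) and a spatial frequency vector \(k_n\in\mathbb Z^d\) with \(|k_n|\to\infty\), and set
\[
  G_n(t,x)=\sqrt2\,H_0\cos(k_n\cdot x)\,|\T^d|^{-1/2},
  \qquad
  S_n(t,x)=\sqrt2\,m(t)H_0\cos(k_n\cdot x)\,|\T^d|^{-1/2}.
\]
Both fields are smooth. By the Riemann--Lebesgue lemma, applied for fixed \(t\) and then integrated against \(L^2(E\times\T^d)\) test functions (using density of products \(\psi(t)\phi(x)\) and the uniform \(L^2\) bounds), both are weakly null. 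Since \(m\) is smooth on \(E\), it is bounded, say on a compact window. Pointwise in \(t\),
\[
  \int_{\T^d}G_n:S_n\,dx=m(t)\,\frac{2}{|\T^d|}\int_{\T^d}\cos^2(k_n\cdot x)\,dx=m(t),
\]
so \(K_n(t)\,dt=m(t)\,dt\) for every \(n\) and the weak-* convergence is trivial. If \(m\) is merely smooth but unbounded on a non-compact \(E\), I would restrict to compact subwindows, or assume \(m\in L^1\cap L^2\) as the statement implicitly requires for a finite measure. This is the only delicate point in the plan, and it is a bookkeeping issue rather than a real obstacle.

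Finally, the non-uniqueness remark follows by comparing \(m\equiv0\), which for instance comes from the pair \(G_n\) and \(S_n\equiv0\), with any \(m\not\equiv0\). The two sequences have identical weak limits \((0,0)\) but different product defects. So \(\mu\) is not a function of the marginal weak limits, and the augmented space \(\mathfrak X_E^{\rm aug}\) is genuinely needed.
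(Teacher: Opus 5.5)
Your proposal is correct and follows the paper's proof essentially step by step. For the closure part, you test the admissibility inequality against non-negative $\varphi\in C_c(E)$ and pass to the limit using $P_n\to P$ in $L^1(E)$ and the weak-* hypothesis on $K_n\,dt$. For the converse, you use the paper's construction $G_n=\varphi_nH$, $S_n=m(t)\varphi_nH$ with normalized weakly null spatial modes; your explicit cosines are one concrete choice of $\varphi_n$, and they give $K_n\equiv m$ exactly. You also note that $m$ must be bounded (or in $L^2(E)$) for $S_n$ to lie in $L^2(E\times\T^d)$, a point the paper leaves implicit.
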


\begin{proof}
For each \(n\), admissibility gives
\[
  P_n(t)\,dt+\frac{\alpha}{2}K_n(t)\,dt\le0
  \qquad\hbox{as a signed measure on }E .
\]
Let \(\varphi\in C_c(E)\), \(\varphi\ge0\).  Since
\(P_n\to P\) in \(L^1(E)\) and \(K_n\,dt\stackrel{*}{\rightharpoonup}K\,dt+\mu\),
we may pass to the limit in
\[
  \int_E\varphi P_n\,dt+\frac{\alpha}{2}\int_E\varphi K_n\,dt\le0 .
\]
This gives
\[
  \int_E\varphi P\,dt
  +\frac{\alpha}{2}\int_E\varphi K\,dt
  +\frac{\alpha}{2}\int_E\varphi\,d\mu\le0 ,
\]
which is exactly \eqref{eq:defect-augmented-budget}.

For the realization of smooth defect densities, choose a fixed
\(H\in\Sym^d\) with \(|H|=1\), and choose smooth functions
\(\varphi_n:\T^d\to\mathbb R\) such that
\[
  \varphi_n\rightharpoonup0\quad\hbox{weakly in }L^2(\T^d),
  \qquad
  \int_{\T^d}\varphi_n^2\,dx=1 .
\]
For example, one may take normalized high-frequency trigonometric modes.  Set
\[
  G_n(t,x)=\varphi_n(x)H,\qquad
  S_n(t,x)=m(t)\varphi_n(x)H .
\]
Then \(G_n\rightharpoonup0\) and \(S_n\rightharpoonup0\) weakly in
\(L^2(E\times\T^d)\), while
\[
  \int_{\T^d}G_n(t):S_n(t)\,dx
  =
  m(t)\int_{\T^d}\varphi_n^2\,dx\, |H|^2
  =
  m(t).
\]
This proves the claim.
\end{proof}

\begin{theorem}[Irreducibility of the entropy-dual product defect]
\label{thm:irreducible-product-defect}
Let \(E\) be a compact time interval.  For any two smooth densities
\(m_1,m_2\in C^\infty(E)\), there are two smooth sequences
\[
  (G_n^{(j)},S_n^{(j)}),\qquad j=1,2,
\]
bounded in \(L^2(E\times\T^d;\Sym^d)\), such that
\[
  G_n^{(j)}\rightharpoonup0,\qquad
  S_n^{(j)}\rightharpoonup0
  \quad\hbox{weakly in }L^2(E\times\T^d;\Sym^d),
\]
but
\[
  \left(\int_{\T^d}G_n^{(j)}(t):S_n^{(j)}(t)\,dx\right)dt
  \stackrel{*}{\rightharpoonup}m_j(t)\,dt .
\]
Consequently the product-defect measure in
Theorem~\ref{thm:defect-compactification} is not determined by the marginal
weak limits of \(G_n\) and \(S_n\), even in smooth bounded sequences.  Any
weak residual compactness theorem that keeps only the marginal weak limits
\((P,G,S)\) necessarily loses signed residual work.
\end{theorem}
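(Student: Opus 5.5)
The plan is to reuse the construction from the converse half of Theorem~\ref{thm:defect-compactification} twice, once for each prescribed density, and then check that the sequences have the properties the statement lists. Fix a unit symmetric direction \(H\in\Sym^d\), \(|H|=1\). Fix a smooth sequence \(\varphi_n:\T^d\to\R\) with \(\int_{\T^d}\varphi_n^2\,dx=1\) and \(\varphi_n\rightharpoonup0\) weakly in \(L^2(\T^d)\). A concrete choice is
\[
  \varphi_n(x)=|\T^d|^{-1/2}\sqrt2\cos(n x_1).
\]
For \(j=1,2\), define
\[
  G_n^{(j)}(t,x)=\varphi_n(x)H,\qquad S_n^{(j)}(t,x)=m_j(t)\varphi_n(x)H .
\]
The time-independent lever sequence may be shared by both \(j\); only the residual factor carries the density.

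First check boundedness. Since \(E\) is compact and \(m_j\) is smooth, \(m_j\) is bounded on \(E\). Hence
\[
  \norm{G_n^{(j)}}_{L^2(E\times\T^d)}^2=|E|,\qquad
  \norm{S_n^{(j)}}_{L^2(E\times\T^d)}^2=\int_E m_j^2\,dt .
\]
Both are finite and independent of \(n\).

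Next check the weak null limits. Because the sequences are bounded, it suffices to test against products \(\psi(t)\zeta(x)\), whose span is dense in \(L^2(E\times\T^d)\); pairing with a fixed symmetric matrix direction is immediate. For the residual sequence,
\[
  \int_E\int_{\T^d}S_n^{(j)}:\psi\zeta\,dx\,dt
  =\left(\int_E m_j\psi\,dt\right)H:\left(\int_{\T^d}\varphi_n\zeta\,dx\right)\to0 ,
\]
since \(\varphi_n\rightharpoonup0\). The lever sequence is the case \(m_j\equiv1\). Density together with uniform boundedness then gives weak convergence against arbitrary \(L^2\) test fields.

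Finally compute the product pairing exactly:
\[
  \int_{\T^d}G_n^{(j)}(t):S_n^{(j)}(t)\,dx=m_j(t)|H|^2\int_{\T^d}\varphi_n^2\,dx=m_j(t)
\]
for every \(t\) and every \(n\). The measures are therefore constantly equal to \(m_j\,dt\), and weak-* convergence in \(\mathcal M(E)\) is trivial.

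For the consequence, take \(m_1\ne m_2\), for instance \(m_1=0\) and \(m_2=1\). Both sequences have marginal weak limits \((0,0)\) but different product limits, so the defect is not a function of the marginals. To see that marginal-only compactness loses signed work, let \(p_0>0\) and \(m=-2p_0/\alpha\). Then \(P_n=p_0\) gives admissible triples with
\[
  p_0+\frac{\alpha}{2}m=0 ,
\]
whereas the marginal limit \((p_0,0,0)\) has positive signed defect \(p_0\). This is as in Theorem~\ref{thm:weak-sharpness}(ii).

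No step is a genuine obstacle. The only point needing care is justifying weak convergence on the product space from the spatial weak convergence of \(\varphi_n\), and boundedness plus the density of product test functions handles it.
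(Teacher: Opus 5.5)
Your proposal is correct and is essentially the paper's proof: the paper applies the converse construction of Theorem~\ref{thm:defect-compactification}, $G_n=\varphi_n H$, $S_n=m_j\varphi_n H$, separately with $m=m_1$ and $m=m_2$, and observes that both sequences have marginal weak limits $(0,0)$ but product limits $m_1\,dt$ and $m_2\,dt$. Your extra checks (the uniform $L^2$ bounds, weak convergence via density of product test functions, and the explicit signed-work loss with $m=-2p_0/\alpha$ as in Theorem~\ref{thm:weak-sharpness}(ii)) fill in details that the paper leaves implicit.
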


\begin{proof}
Apply the construction in the second part of
Theorem~\ref{thm:defect-compactification} separately with \(m=m_1\) and
\(m=m_2\).  Both sequences have the same marginal weak limits, namely
\((0,0)\), but their product measures converge to different limits.  Hence no
functional of the marginal weak limits can determine the product defect.
\end{proof}

\begin{corollary}[Local patching versus global defect accounting]
\label{cor:local-global-defect-accounting}
Let \((P_n,G_n,S_n)\) be admissible residual data on a fixed time window \(E\).
The triples may be produced by local positive-cone corrections, localized
packets, or any other smooth residual construction.  Suppose
\[
  P_n\to P\quad\hbox{in }L^1(E),\qquad
  G_n\rightharpoonup G,\quad S_n\rightharpoonup S
  \quad\hbox{weakly in }L^2(E\times\T^d),
\]
and suppose that
\[
  \left(\int_{\T^d}G_n:S_n\,dx\right)dt
  \stackrel{*}{\rightharpoonup}
  \left(\int_{\T^d}G:S\,dx\right)dt+\mu .
\]
Then the global limit satisfies the augmented budget
\begin{equation}
  P\,dt+\frac{\alpha}{2}
  \left(\int_{\T^d}G:S\,dx\right)dt
  +\frac{\alpha}{2}\mu\le0 .
  \label{eq:local-global-augmented-budget}
\end{equation}
If, in addition, one of the two factors converges strongly in
\(L^2(E\times\T^d)\), then \(\mu=0\) and the unaugmented signed-work budget is
closed.  Hence a sequence of many small local corrections can bypass a strong
residual-data obstruction only by losing strong compactness and by carrying the
lost work as an explicit product-defect measure.
\end{corollary}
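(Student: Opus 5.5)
The augmented budget \eqref{eq:local-global-augmented-budget} is the closure half of Theorem~\ref{thm:defect-compactification}, applied verbatim. The hypotheses here are exactly the ones there: $P_n\to P$ in $L^1(E)$, weak $L^2$ convergence of $G_n$ and $S_n$, weak-* convergence of $K_n\,dt$ to $K\,dt+\mu$, and admissibility of each triple. The way the triples were produced (local positive-cone corrections, packets, or anything else) plays no role, since the argument only uses the signed inequality $P_n+\frac{\alpha}{2}K_n\le0$. Concretely, I test against $\varphi\in C_c(E)$ with $\varphi\ge0$, pass to the limit in
\[
\int_E\varphi P_n\,dt+\frac{\alpha}{2}\int_E\varphi K_n\,dt\le0,
\]
and read off the measure inequality.

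For the second assertion, assume without loss of generality that $G_n\to G$ strongly and $S_n\rightharpoonup S$ weakly; the symmetric case is identical. For any $\varphi\in L^\infty(E)$, and in particular $\varphi\in C_c(E)$, write
\[
\int_E\varphi\,(K_n-K)\,dt=\int_{E\times\T^d}\varphi\,(G_n-G):S_n+\int_{E\times\T^d}\varphi\,G:(S_n-S).
\]
The first term is bounded by $\|\varphi\|_\infty\|G_n-G\|_{L^2}\sup_n\|S_n\|_{L^2}$, which tends to zero because weakly convergent sequences are bounded (uniform boundedness). The second term tends to zero because $\varphi G\in L^2$ and $S_n\rightharpoonup S$. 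So $K_n\,dt\stackrel{*}{\rightharpoonup}K\,dt$, and uniqueness of weak-* limits in $\mathcal M(E)=C_0(E)^*$ forces $\mu=0$.

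With $\mu=0$ the augmented budget becomes the unaugmented one, $(P+\frac{\alpha}{2}K)\,dt\le0$ as a measure. Since this measure is absolutely continuous with $L^1$ density, the density satisfies $P+\frac{\alpha}{2}K\le0$ a.e.; alternatively one can cite Theorem~\ref{thm:weak-sharpness}(i) directly. The interpretive final sentence is then the contrapositive: if the limit violates the unaugmented budget on a set of positive measure, then $\mu\ne0$, so neither factor converges strongly and the missing work is carried by $\mu$.

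The only step needing any care is the identification $\mu=0$. There I have to make sure the test class $C_c(E)$ determines measures on $E$ (if $E$ is not open, take test functions continuous on the closure or argue on the interior), and that the $L^2$ bound on the weakly convergent factor is invoked. Everything else is a direct citation of Theorem~\ref{thm:defect-compactification}.
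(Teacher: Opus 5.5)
Your proposal is correct and follows the paper's proof. The augmented inequality is taken directly from Theorem~\ref{thm:defect-compactification}, and $\mu=0$ follows because, with one factor converging strongly and the other weakly (hence bounded), $\int_E\varphi K_n\,dt\to\int_E\varphi K\,dt$ for every $\varphi\in C_c(E)$, so uniqueness of weak-* limits removes the defect. Your explicit splitting $(G_n-G):S_n+G:(S_n-S)$ and the a.e.\ density conclusion simply spell out what the paper compresses into the phrase ``$\varphi G_n\to\varphi G$ strongly in $L^2$''.
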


\begin{proof}
The augmented inequality is exactly Theorem~\ref{thm:defect-compactification}.
If \(G_n\to G\) strongly and \(S_n\rightharpoonup S\) weakly, then for every
\(\varphi\in C_c(E)\),
\[
  \int_E\varphi(t)\int_{\T^d}G_n:S_n\,dx\,dt
  \to
  \int_E\varphi(t)\int_{\T^d}G:S\,dx\,dt ,
\]
because \(\varphi G_n\to\varphi G\) strongly in \(L^2(E\times\T^d)\).  Thus
\(\mu=0\).  The case in which \(S_n\to S\) strongly is identical.
\end{proof}

\begin{corollary}[Defect charge forced by global violation]
\label{cor:defect-charge-violation}
In the setting of Corollary~\ref{cor:local-global-defect-accounting}, set
\[
  \Lambda
  =
  P\,dt+\frac{\alpha}{2}
  \left(\int_{\T^d}G:S\,dx\right)dt .
\]
Then every Borel set \(A\subset E\) with \(\Lambda(A)>0\) satisfies
\[
  \mu(A)\le -\frac{2}{\alpha}\Lambda(A).
\]
In particular, if \(\Lambda_+(E)>0\), where \(\Lambda_+\) is the positive part
in the Hahn decomposition of \(\Lambda\), then
\[
  \mu_-(E)\ge \frac{2}{\alpha}\Lambda_+(E).
\]
Thus a globally violating residual profile cannot be produced by patching
local positive-cone corrections with zero product defect.  Any weak patching
that reaches such a profile must store at least the displayed amount of
negative product defect.
\end{corollary}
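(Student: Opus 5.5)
The argument rests on the augmented budget \eqref{eq:local-global-augmented-budget} from Corollary~\ref{cor:local-global-defect-accounting}, which is Theorem~\ref{thm:defect-compactification}. Written with \(\Lambda\) as defined in the statement, it says that \(\Lambda+\frac{\alpha}{2}\mu\le0\) as a signed Radon measure on \(E\). Both \(\Lambda\) and \(\mu\) are finite. Indeed, \(P\in L^1(E)\), and \(K=\int_{\T^d}G:S\,dx\in L^1(E)\) by Cauchy--Schwarz with \(G,S\in L^2(E\times\T^d)\), so \(\Lambda\) is absolutely continuous with an \(L^1\) density.

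The first step upgrades the measure inequality to an inequality on Borel sets. The theorem establishes it only against nonnegative \(\varphi\in C_c(E)\). A finite signed Radon measure \(\nu=\Lambda+\frac{\alpha}{2}\mu\) with \(\int\varphi\,d\nu\le0\) for all such \(\varphi\) satisfies \(\nu(A)\le0\) for every Borel \(A\); this follows from the Riesz representation and regularity of \(\nu_\pm\), i.e.\ uniqueness of the measure representing the functional. Since \(E\) is a time window, I will allow test functions continuous up to the boundary of \(E\), or else note that the budget is applied on the relevant Borel subsets; this boundary issue is the only point that needs care. Consequently \(\frac{\alpha}{2}\mu(A)\le-\Lambda(A)\), that is, \(\mu(A)\le-\frac{2}{\alpha}\Lambda(A)\) for every Borel \(A\). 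The restriction \(\Lambda(A)>0\) in the statement only marks the case where this bound forces \(\mu(A)<0\).

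For the second assertion, I take a Hahn decomposition \(E=E_+\cup E_-\) for \(\Lambda\), so that \(\Lambda_+(E)=\Lambda(E_+)\). If this quantity is positive, the first step gives \(\mu(E_+)\le-\frac{2}{\alpha}\Lambda_+(E)<0\). Then \(\mu_-(E)\ge\mu_-(E_+)\ge-\mu(E_+)\ge\frac{2}{\alpha}\Lambda_+(E)\).

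The closing interpretation follows directly. If the product defect vanishes (\(\mu=0\)) and \(\Lambda_+(E)>0\), we get a contradiction, so a violating profile requires a nonzero negative defect. The main obstacle is the passage from continuous compactly supported test functions to Borel sets, including the treatment of the boundary of \(E\); everything else is direct.
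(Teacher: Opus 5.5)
Your proof is correct and follows the paper's argument: test the augmented budget $\Lambda+\frac{\alpha}{2}\mu\le0$ on Borel sets, then apply it to a Hahn positive set $E_+$ of $\Lambda$ and use $\mu_-(E)\ge\mu_-(E_+)\ge-\mu(E_+)$. You also justify the passage from nonnegative $C_c(E)$ test functions to Borel sets, which the paper leaves implicit; the boundary worry you flag is harmless, because the measures live on $E$ itself and, $E$ being an interval, every Borel subset of $E$ is inner-approximated in total variation by compact subsets of $E$, on which test functions in $C_c(E)$ act.
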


\begin{proof}
Corollary~\ref{cor:local-global-defect-accounting} gives
\[
  \Lambda+\frac{\alpha}{2}\mu\le0
\]
as signed measures.  Testing this inequality on a Borel set \(A\) gives the
first claim.  If \(H\) is a Hahn positive set for \(\Lambda\), then
\(\Lambda(H)=\Lambda_+(E)\).  Applying the first claim to \(H\) yields
\(\mu(H)\le -(2/\alpha)\Lambda_+(E)\), and hence
\(\mu_-(E)\ge \mu_-(H)\ge (2/\alpha)\Lambda_+(E)\).
\end{proof}

\begin{theorem}[Strict converse for positive-cone residual data]
\label{thm:strict-augmented-converse}
Let \(E\) be a compact time interval.  Let
\[
  P\in C^\infty(E),\qquad
  G,S\in C^\infty(E\times\T^d;\Sym^d),
  \qquad
  m\in C^\infty(E),
\]
assume that \(I-G(t,x)\ge\kappa I\) on \(E\times\T^d\) for some
\(\kappa>0\), and suppose that the augmented residual budget has strict slack:
\begin{equation}
  P(t)+\frac{\alpha}{2}\int_{\T^d}G(t):S(t)\,dx
  +\frac{\alpha}{2}m(t)\le-\sigma
  \qquad\hbox{on }E
  \label{eq:strict-augmented-slack}
\end{equation}
for some \(\sigma>0\).  Then there are smooth residual triples
\[
  (P_n,G_n,S_n)
\]
such that \(P_n=P\), \(I-G_n\ge(\kappa/2)I\), and therefore
\(G_n=I-A_n^{-1}\) for smooth positive-definite
\[
  A_n=(I-G_n)^{-1},
\]
\[
  G_n\rightharpoonup G,\qquad S_n\rightharpoonup S
  \quad\hbox{weakly in }L^2(E\times\T^d;\Sym^d),
\]
and
\begin{equation}
  \left(\int_{\T^d}G_n(t):S_n(t)\,dx\right)dt
  \stackrel{*}{\rightharpoonup}
  \left(\int_{\T^d}G(t):S(t)\,dx\right)dt+m(t)\,dt
  \label{eq:strict-converse-product}
\end{equation}
as signed measures on \(E\).  Moreover, for all sufficiently large \(n\),
\begin{equation}
  P_n(t)+\frac{\alpha}{2}\int_{\T^d}G_n(t):S_n(t)\,dx
  \le -\frac{\sigma}{2}
  \qquad\hbox{on }E .
  \label{eq:strict-converse-admissible}
\end{equation}
Thus every smooth augmented budget with strict negative margin is realized by
classically admissible localized positive-cone residual packets.  Moreover, for
any prescribed non-empty open set \(U\subset\T^d\), the construction can be made
so that \(G_n-G\) and \(S_n-S\) are supported in \(E\times U\).
\end{theorem}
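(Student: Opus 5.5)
The plan is to superpose on $(G,S)$ a localized high-frequency packet pair whose self-product carries the prescribed density $m$, whose cross terms with $(G,S)$ vanish in the limit, and whose lever amplitude is small enough to keep $I-G_n$ in the cone. Fix a nonempty open $U\subset\T^d$, a cutoff $\chi\in C_c^\infty(U)$ with $\int_{\T^d}\chi^2\,dx=1$, and a unit matrix $H\in\Sym^d$, $|H|=1$. For a frequency vector $k_n\in\mathbb Z^d$ with $|k_n|\to\infty$, set
\[
  \varphi_n(x)=\sqrt2\,\chi(x)\sin(k_n\cdot x),
\]
so that $\varphi_n\rightharpoonup0$ weakly in $L^2(\T^d)$ by Riemann--Lebesgue and $\int\varphi_n^2\,dx=\int\chi^2(1-\cos(2k_n\cdot x))\,dx\to1$. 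Fix a small amplitude $\delta>0$ with $\delta\sqrt2\norm{\chi}_{L^\infty}\le\kappa/2$, and define
\[
  P_n=P,\qquad
  G_n=G+\delta\varphi_n(x)H,\qquad
  S_n=S+\delta^{-1}m(t)\varphi_n(x)H .
\]
Both corrections are smooth and supported in $E\times U$. Since $|\delta\varphi_nH|\le\kappa/2$ in operator norm, we get $I-G_n\ge(\kappa/2)I$, and hence $A_n=(I-G_n)^{-1}$ is smooth and positive definite with $G_n=I-A_n^{-1}$. The reciprocal cost in $S$ is $\norm{S_n-S}_{L^2}\simeq\delta^{-1}\norm{m}_{L^2(E)}$, which matches the localized small-lever remark.

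Next verify the weak limits and the product. Because $\varphi_n\rightharpoonup0$ in $L^2(\T^d)$ and the time factors $1$ and $m(t)$ are bounded, tensorizing with test functions in $L^2(E\times\T^d)$ and using a density argument gives $G_n\rightharpoonup G$ and $S_n\rightharpoonup S$. Expanding the product,
\[
  \int_{\T^d}G_n:S_n\,dx
  =\int G:S\,dx+\delta\int\varphi_n\,H:S\,dx+\delta^{-1}m(t)\int\varphi_n\,G:H\,dx+m(t)\int\varphi_n^2\,dx .
\]
The two cross terms tend to $0$ uniformly in $t\in E$. Indeed, $G$ and $S$ are smooth on the compact set $E\times\T^d$, so integrating by parts in $x$ along the direction $k_n$ bounds each term by $C|k_n|^{-1}$ uniformly in $t$. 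The last term equals $m(t)\bigl(1-\int\chi^2\cos(2k_n\cdot x)\,dx\bigr)=m(t)+o(1)$ uniformly. Hence $K_n\to K+m$ uniformly on $E$, which gives the weak-$*$ convergence \eqref{eq:strict-converse-product} and even uniform convergence of the densities.

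Finally, uniform convergence of $K_n$ gives
\[
  P+\frac{\alpha}{2}K_n\le P+\frac{\alpha}{2}K+\frac{\alpha}{2}m+o(1)\le-\sigma+o(1),
\]
so \eqref{eq:strict-converse-admissible} with margin $\sigma/2$ holds for large $n$ by \eqref{eq:strict-augmented-slack}. The main obstacle is that the convergence of the cross terms must be uniform in time, since the budget must hold on all of $E$ and not merely a.e. in the limit. This is exactly why smoothness of $(G,S)$ on the compact interval $E$ and integration by parts, rather than bare Riemann--Lebesgue, are used.

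For the optimality of the reciprocal cost, Cauchy--Schwarz applied to the product defect works at each time: $|m(t)|\lesssim\norm{(G_n-G)(t)}_{L^2}\norm{(S_n-S)(t)}_{L^2}+o(1)$. If the lever correction satisfies $\norm{G_n-G}_{L^\infty}\le\delta$ and is supported in $U$, this forces $\norm{S_n-S}_{L^2}\gtrsim\norm{m}/(\delta|U|^{1/2})$.
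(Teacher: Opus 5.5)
Your proposal is correct and follows essentially the same construction as the paper. Both use a fixed unit matrix $H$ and a localized high-frequency scalar mode $\varphi_n$ supported in $U$, a small-amplitude lever packet $\varepsilon\varphi_n H$ that keeps $I-G_n\ge(\kappa/2)I$, the reciprocal residual packet $(m(t)/\varepsilon)\varphi_n H$, and uniform-in-time vanishing of the cross terms to retain the strict margin. The only differences are cosmetic: you allow $\int\varphi_n^2\,dx\to1$ instead of exact normalization, and you prove the uniform decay of the cross terms by integrating by parts along $k_n$, whereas the paper simply builds that property into its choice of $\varphi_n$.
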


\begin{proof}
Choose a fixed matrix \(H\in\Sym^d\) with \(|H|=1\).  Fix a non-empty open set
\(U\subset\T^d\); taking \(U=\T^d\) gives the global version.  Let
\(\varphi_n\in C^\infty_c(U)\) be normalized high-frequency scalar modes such
that
\[
  \varphi_n\rightharpoonup0\quad\hbox{weakly in }L^2(\T^d),
  \qquad
  \int_{\T^d}\varphi_n^2\,dx=1,
\]
and such that, for every smooth \(f(t,x)\), the spatial integrals
\[
  \int_{\T^d}f(t,x)\varphi_n(x)\,dx
\]
converge to zero uniformly for \(t\in E\).  Trigonometric modes give such a
sequence when \(U=\T^d\).  For general \(U\), take a non-zero
\(\chi\in C^\infty_c(U)\), multiply by high-frequency trigonometric modes, and
normalize; the Riemann--Lebesgue lemma gives the same weak convergence, and the
sequence has a uniform \(L^\infty\) bound after normalization.  Fix
\(\varepsilon>0\) so small that
\[
  \varepsilon\sup_n\norm{\varphi_n}_{L^\infty(\T^d)}\le\frac{\kappa}{2}.
\]
Define
\[
  \widehat G_n(t,x)=\varepsilon\varphi_n(x)H,
  \qquad
  \widehat S_n(t,x)=\frac{m(t)}{\varepsilon}\varphi_n(x)H,
\]
and set
\[
  P_n=P,\qquad
  G_n=G+\widehat G_n,\qquad
  S_n=S+\widehat S_n .
\]
The weak convergence of \(G_n\) and \(S_n\) to \(G\) and \(S\) is immediate.
Moreover,
\[
  I-G_n=(I-G)-\varepsilon\varphi_n H\ge\frac{\kappa}{2}I,
\]
so \(G_n\) is an entropy lever associated with the positive-definite tensor
\(A_n=(I-G_n)^{-1}\).

The product integral is
\[
\begin{aligned}
  \int_{\T^d}G_n:S_n\,dx
  &=
  \int_{\T^d}G:S\,dx
  +\frac{m(t)}{\varepsilon}\int_{\T^d}\varphi_n\,G:H\,dx  \\
  &\quad
  +\varepsilon\int_{\T^d}\varphi_n\,H:S\,dx
  +m(t)\int_{\T^d}\varphi_n^2 |H|^2\,dx .
\end{aligned}
\]
The last term is exactly \(m(t)\).  The two cross terms converge to zero
uniformly on \(E\).  Hence
\[
  \int_{\T^d}G_n:S_n\,dx
  =
  \int_{\T^d}G:S\,dx+m(t)+r_n(t),
  \qquad
  \norm{r_n}_{L^\infty(E)}\to0 .
\]
This proves \eqref{eq:strict-converse-product}.  Combining the last identity
with the strict slack assumption \eqref{eq:strict-augmented-slack} gives
\[
  P_n(t)+\frac{\alpha}{2}\int_{\T^d}G_n:S_n\,dx
  \le
  -\sigma+\frac{\alpha}{2}\norm{r_n}_{L^\infty(E)} .
\]
For all sufficiently large \(n\), the right-hand side is bounded above by
\(-\sigma/2\), which proves \eqref{eq:strict-converse-admissible}.
\end{proof}

\begin{corollary}[Quantitative localized packet bounds]
\label{cor:quantitative-localized-packets}
In Theorem~\ref{thm:strict-augmented-converse}, fix a non-empty open
\(U\subset\T^d\).  For every sufficiently small \(\eta>0\), the packet sequence
can be chosen so that, for a constant \(C_U\) depending only on the spatial
cutoff used in \(U\),
\begin{align}
  \operatorname{supp}(G_n-G)\cup\operatorname{supp}(S_n-S)
  &\subset E\times U, \label{eq:packet-support}\\
  \norm{G_n-G}_{L^\infty(E\times\T^d)}
  &\le \eta, \label{eq:packet-small-lever}\\
  \norm{S_n-S}_{L^2(E\times\T^d)}
  &\le C_U\eta^{-1}\norm{m}_{L^2(E)}, \label{eq:packet-S-cost}
\end{align}
and
\begin{equation}
  \int_{\T^d}G_n(t):S_n(t)\,dx
  =
  \int_{\T^d}G(t):S(t)\,dx+m(t)+r_n(t),
  \qquad
  \norm{r_n}_{L^\infty(E)}\to0 .
  \label{eq:packet-uniform-product-error}
\end{equation}
If \(m\) is supported in a compact subinterval \(J\Subset E\), then the packets
can be chosen with support in \(J'\times U\) for any open
\(J'\Subset E\) containing \(\operatorname{supp}m\).
\end{corollary}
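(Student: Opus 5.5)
The plan is to revisit the packet construction in the proof of Theorem~\ref{thm:strict-augmented-converse} and track its constants explicitly. We replace the fixed amplitude \(\varepsilon\) there by the free parameter \(\eta\). Fix once and for all a nonzero cutoff \(\chi\in C_c^\infty(U)\) and a unit frequency direction, and set
\[
  \varphi_n(x)=c_n\,\chi(x)\cos(n\,e\cdot x),
  \qquad
  c_n=\norm{\chi\cos(n\,e\cdot\,)}_{L^2}^{-1}.
\]
By Riemann--Lebesgue, \(\int_{\T^d}\chi^2\cos^2(n e\cdot x)\,dx\to\frac12\norm{\chi}_{L^2}^2\), so \(c_n\to\sqrt2/\norm{\chi}_{L^2}\). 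Hence \(M_U:=\sup_n\norm{\varphi_n}_{L^\infty}\le C\norm{\chi}_{L^\infty}/\norm{\chi}_{L^2}<\infty\) depends only on the cutoff. Fix \(H\in\Sym^d\) with \(|H|=1\), and define
\[
  G_n=G+\frac{\eta}{M_U}\varphi_nH,
  \qquad
  S_n=S+\frac{M_U}{\eta}\,m(t)\varphi_nH,
  \qquad
  P_n=P .
\]
Both corrections are supported in \(E\times U\), which gives \eqref{eq:packet-support}. The lever bound \eqref{eq:packet-small-lever} holds because \(\norm{G_n-G}_{L^\infty}\le\eta\). If \(\eta\le\kappa/2\), positivity \(I-G_n\ge(\kappa/2)I\) is kept exactly as in the theorem; this is what "sufficiently small \(\eta\)" refers to.

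For the cost bound \eqref{eq:packet-S-cost}, use \(\norm{\varphi_n}_{L^2(\T^d)}=1\) and Fubini to obtain
\[
  \norm{S_n-S}_{L^2(E\times\T^d)}=\frac{M_U}{\eta}\norm{m}_{L^2(E)} .
\]
So one may take \(C_U=M_U\). The product expansion is the four-term identity from the proof of Theorem~\ref{thm:strict-augmented-converse}, with \(\varepsilon\) replaced by \(\eta/M_U\). The diagonal term equals \(m(t)\) exactly, because the amplitudes \(\eta/M_U\) and \(M_U/\eta\) cancel. The two cross terms are \(\frac{M_U m(t)}{\eta}\int\varphi_n\,G:H\,dx\) and \(\frac{\eta}{M_U}\int\varphi_n\,H:S\,dx\).

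The main step is showing that these cross terms tend to zero uniformly in \(t\in E\), which gives \eqref{eq:packet-uniform-product-error}. I would integrate by parts once in the direction \(e\). For smooth \(f(t,x)\),
\[
  \Bigl|\int f\chi\cos(n e\cdot x)\,dx\Bigr|
  \le \frac{1}{n}\,\norm{\partial_e(f\chi)}_{L^1_x},
\]
and the right-hand side is bounded uniformly on compact \(E\) since \(f\) is smooth. Since \(c_n\) is bounded and \(m\) is bounded on \(E\), we get \(\norm{r_n}_{L^\infty(E)}=O(1/n)\) for each fixed \(\eta\). The rate depends on \(\eta\), but that is harmless because \(\eta\) is fixed before \(n\to\infty\). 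Strict admissibility with margin \(\sigma/2\) for large \(n\) then follows verbatim from the theorem.

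For the time-localized statement, suppose \(\operatorname{supp}m\subset J'\). The correction to \(S_n\) already vanishes off \(\operatorname{supp}m\), but the correction to \(G_n\) does not. So I would multiply the \(G\)-correction by a time cutoff \(\psi\in C_c^\infty(J')\) with \(0\le\psi\le1\) and \(\psi=1\) on \(\operatorname{supp}m\). The diagonal term is then \(\psi m=m\) exactly. The cross terms pick up only the bounded factor \(\psi\), and the \(L^\infty\) and \(L^2\) bounds are unchanged. This gives support in \(J'\times U\).
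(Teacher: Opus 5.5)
Your proposal is correct and is essentially the paper's proof. Like the paper, you take \(\varepsilon=\eta/\sup_n\|\varphi_n\|_{L^\infty}\) in the construction of Theorem~\ref{thm:strict-augmented-converse}, read off the reciprocal cost with \(C_U=\sup_n\|\varphi_n\|_{L^\infty}\), reuse the four-term product identity, and apply a time cutoff equal to one on \(\operatorname{supp}m\) to the lever correction only. Your explicit modes \(\varphi_n=c_n\chi\cos(n\,e\cdot x)\) (take \(e\) a coordinate direction so they are periodic), your identification of ``sufficiently small'' with \(\eta\le\kappa/2\), and the \(O(1/n)\) integration-by-parts rate for the cross terms fill in details the paper leaves implicit.
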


\begin{proof}
Use the construction in the proof of
Theorem~\ref{thm:strict-augmented-converse}, and choose
\[
  \varepsilon=\eta\bigl(\sup_n\norm{\varphi_n}_{L^\infty}\bigr)^{-1}.
\]
Then \(\widehat G_n=\varepsilon\varphi_nH\) satisfies
\eqref{eq:packet-small-lever}, while
\[
  \widehat S_n=\frac{m(t)}{\varepsilon}\varphi_nH
\]
gives \eqref{eq:packet-S-cost} after absorbing the uniform
\(L^\infty\)-bound for \(\varphi_n\) and the fixed normalization into \(C_U\).
The support property follows from \(\varphi_n\in C_c^\infty(U)\), and
\eqref{eq:packet-uniform-product-error} is exactly the product computation in
the theorem.  If \(m\) is supported in \(J\), multiply \(\widehat G_n\) by a
time cutoff equal to one on \(\operatorname{supp}m\) and supported in \(J'\);
the product term \(m(t)\) is unchanged.
\end{proof}

\begin{proposition}[Optimality of the reciprocal packet cost]
\label{prop:reciprocal-packet-cost}
Let \(U\subset\T^d\) be non-empty and open, let \(\eta>0\), and let
\(\delta G_n,\delta S_n\in L^2(E\times\T^d;\Sym^d)\) be supported in
\(E\times U\).  Assume that
\[
  \norm{\delta G_n}_{L^\infty(E\times\T^d)}\le\eta
\]
and that the self-interaction of the packet produces a density \(m\in L^2(E)\):
\[
  \int_{\T^d}\delta G_n(t):\delta S_n(t)\,dx
  \longrightarrow m(t)
  \qquad\hbox{in }L^2(E).
\]
Then
\begin{equation}
  \liminf_{n\to\infty}
  \norm{\delta S_n}_{L^2(E\times\T^d)}
  \ge
  |U|^{-1/2}\eta^{-1}\norm{m}_{L^2(E)} .
  \label{eq:reciprocal-packet-cost-lower}
\end{equation}
Here \(|U|\) denotes the Lebesgue measure of \(U\).  Consequently, the
\(\eta^{-1}\) dependence in
Corollary~\ref{cor:quantitative-localized-packets} is forced by any localized
small-lever packet whose limiting product defect is carried by
\(\delta G_n:\delta S_n\).
\end{proposition}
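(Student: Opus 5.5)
The plan is a pointwise-in-time Cauchy--Schwarz bound on the packet product, followed by squaring, integrating in time, and passing to the limit with the assumed \(L^2(E)\) convergence. Set
\[
  m_n(t)=\int_{\T^d}\delta G_n(t):\delta S_n(t)\,dx .
\]
Since both packets vanish outside \(E\times U\), the integral is really over \(U\), and
\[
  |m_n(t)|\le \norm{\delta G_n(t)}_{L^2(U)}\norm{\delta S_n(t)}_{L^2(U)}
  \le \eta\,|U|^{1/2}\norm{\delta S_n(t)}_{L^2(\T^d)} .
\]
The second inequality uses \(\norm{\delta G_n(t)}_{L^2(U)}\le |U|^{1/2}\norm{\delta G_n(t)}_{L^\infty}\le\eta|U|^{1/2}\). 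This holds for a.e.\ \(t\), because the \(L^\infty(E\times\T^d)\) bound implies \(\norm{\delta G_n(t)}_{L^\infty_x}\le\eta\) for a.e.\ \(t\) by Fubini.

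Next, square this inequality and integrate over \(E\):
\[
  \norm{m_n}_{L^2(E)}\le \eta\,|U|^{1/2}\norm{\delta S_n}_{L^2(E\times\T^d)} .
\]
This is the only place the \(L^2\)-in-time structure enters. It also explains why the hypothesis is convergence of \(m_n\) in \(L^2(E)\) rather than weak-* convergence of measures.

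Finally, the assumption \(m_n\to m\) in \(L^2(E)\) gives \(\norm{m_n}_{L^2(E)}\to\norm{m}_{L^2(E)}\). Taking \(\liminf\) in the previous display yields
\[
  \liminf_{n\to\infty}\norm{\delta S_n}_{L^2(E\times\T^d)}\ge |U|^{-1/2}\eta^{-1}\norm{m}_{L^2(E)},
\]
which is \eqref{eq:reciprocal-packet-cost-lower}. For the consequence, compare this lower bound with the upper bound \eqref{eq:packet-S-cost} of Corollary~\ref{cor:quantitative-localized-packets}. Both scale like \(\eta^{-1}\norm{m}_{L^2(E)}\) with constants depending only on \(U\) and the cutoff, so the reciprocal cost is optimal up to constants.

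The argument is routine. The one step needing care is the passage from the space-time \(L^\infty\) bound to the a.e.-\(t\) spatial bound, which is handled by Fubini as noted above, together with the fact that the support restriction must be used to produce the factor \(|U|^{1/2}\) instead of \(|\T^d|^{1/2}\). No earlier structural result beyond Cauchy's inequality is needed.
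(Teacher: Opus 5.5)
Your proof is correct and matches the paper's argument: Cauchy--Schwarz on each time slice, using the support in \(U\) and the \(L^\infty\) lever bound to get \(|m_n(t)|\le\eta|U|^{1/2}\norm{\delta S_n(t)}_{L^2_x}\), then taking the \(L^2(E)\) norm and passing to the lower limit. Your Fubini remark justifying the a.e.-\(t\) spatial bound is a detail the paper leaves implicit.
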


\begin{proof}
For a.e. \(t\in E\), Cauchy--Schwarz and the support condition give
\[
  \left|\int_{\T^d}\delta G_n(t):\delta S_n(t)\,dx\right|
  \le
  \norm{\delta G_n(t)}_{L^2_x(U)}
  \norm{\delta S_n(t)}_{L^2_x(U)}
  \le
  \eta |U|^{1/2}\norm{\delta S_n(t)}_{L^2_x(U)} .
\]
Taking the \(L^2(E)\)-norm and passing to the lower limit yields
\eqref{eq:reciprocal-packet-cost-lower}.
\end{proof}

\begin{remark}[Residual-data converse and lifting interface]
Theorem~\ref{thm:strict-augmented-converse} is a genuine converse to the
defect-measure compactification at the level of residual data.  It shows that,
once the product-defect measure is retained, the augmented budget is not merely
closed but also realizable by localized admissible positive-cone packets under
a strict margin, with the quantitative bounds of
Corollary~\ref{cor:quantitative-localized-packets}; Proposition~\ref{prop:reciprocal-packet-cost}
shows that the reciprocal residual-cost scale is not an artifact of the
construction.  The theorem still does not lift those packets to exact
Oldroyd--B weak solutions.  The remaining step is a positive-cone
weak-solution lifting problem: one must construct oscillations satisfying the
momentum equation, the conformation transport-stretch equation, positivity of
\(A\), and the energy--entropy inequality simultaneously.
\end{remark}

\begin{definition}[Residual defect gap]
\label{def:residual-defect-gap}
For residual data
\[
  P\in L^1(E),\qquad G,S\in L^2(E\times\T^d;\Sym^d),
\]
define
\[
  \Delta_{P,G,S}(t)
  =
  P(t)+\frac{\alpha}{2}\int_{\T^d}G(t):S(t)\,dx
\]
and
\begin{equation}
  \mathfrak D_E(P,G,S)
  =
  \norm{[\Delta_{P,G,S}]_+}_{L^1(E)}.
  \label{eq:defect-gap}
\end{equation}
For a smooth residual-matching profile, we write
\[
  \mathfrak D_E[u,B]
  =
  \mathfrak D_E(P,I-e^{-B},\mathcal S[u,B]).
\]
\end{definition}

\begin{theorem}[Quantitative separation from admissible residual data]
\label{thm:quantitative-gap}
Let \(P,\widetilde P\in L^1(E)\) and
\[
  G,S,\widetilde G,\widetilde S\in L^2(E\times\T^d;\Sym^d).
\]
Assume that \((\widetilde P,\widetilde G,\widetilde S)\) is admissible in the
residual sense,
\begin{equation}
  \widetilde P(t)+\frac{\alpha}{2}
  \int_{\T^d}\widetilde G(t):\widetilde S(t)\,dx\le0
  \qquad\hbox{for a.e. }t\in E .
  \label{eq:tilde-admissible}
\end{equation}
Then
\begin{equation}
\begin{aligned}
  \mathfrak D_E(P,G,S)
  &\le
  \norm{P-\widetilde P}_{L^1(E)}
  +\frac{\alpha}{2}
   \norm{G-\widetilde G}_{L^2(E\times\T^d)}
   \norm{S}_{L^2(E\times\T^d)}\\
  &\quad
  +\frac{\alpha}{2}
   \norm{\widetilde G}_{L^2(E\times\T^d)}
   \norm{S-\widetilde S}_{L^2(E\times\T^d)} .
  \label{eq:quantitative-gap}
\end{aligned}
\end{equation}
In particular, if \(\mathfrak D_E(P,G,S)>0\), then every admissible residual
triple has a positive weighted distance from \((P,G,S)\) in the
\(L^1\times L^2\times L^2\) residual topology.
\end{theorem}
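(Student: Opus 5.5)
The plan is a pointwise-in-time comparison followed by integration over \(E\). Fix \(t\in E\) and compare the signed defect \(\Delta_{P,G,S}(t)\) with \(\Delta_{\widetilde P,\widetilde G,\widetilde S}(t)\), which is non-positive for a.e. \(t\) by \eqref{eq:tilde-admissible}. Since \(x\mapsto[x]_+\) is monotone and \(1\)-Lipschitz, and \(\Delta_{\widetilde P,\widetilde G,\widetilde S}\le0\), one has pointwise
\[
  [\Delta_{P,G,S}(t)]_+
  \le
  \bigl[\Delta_{P,G,S}(t)-\Delta_{\widetilde P,\widetilde G,\widetilde S}(t)\bigr]_+
  \le
  \bigl|\Delta_{P,G,S}(t)-\Delta_{\widetilde P,\widetilde G,\widetilde S}(t)\bigr|
\]
for a.e. \(t\in E\). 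This reduces the defect gap to a difference of two signed-work functionals, with no further use of the cone structure.

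Next I would split the difference of the bilinear pairings with the same telescoping used for \(K_n-K\) in the proof of Theorem~\ref{thm:closed-realization}:
\[
  G:S-\widetilde G:\widetilde S
  =
  (G-\widetilde G):S+\widetilde G:(S-\widetilde S).
\]
This gives, for a.e. \(t\),
\[
  [\Delta_{P,G,S}(t)]_+
  \le |P(t)-\widetilde P(t)|
  +\frac{\alpha}{2}\norm{G(t)-\widetilde G(t)}_{L^2_x}\norm{S(t)}_{L^2_x}
  +\frac{\alpha}{2}\norm{\widetilde G(t)}_{L^2_x}\norm{S(t)-\widetilde S(t)}_{L^2_x},
\]
by Cauchy's inequality in \(x\). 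The ordering of the split is chosen so that the factors match the statement: \(S\) pairs with \(G-\widetilde G\), and \(\widetilde G\) pairs with \(S-\widetilde S\).

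Integrating over \(E\) and applying Cauchy's inequality in time to each product of \(L^2_x\) norms then yields \eqref{eq:quantitative-gap}. The left side is exactly \(\mathfrak D_E(P,G,S)\) by \eqref{eq:defect-gap}. The "in particular" clause follows at once: if \(\mathfrak D_E>0\), the right-hand side cannot vanish, so every admissible triple lies at positive weighted distance from \((P,G,S)\).

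No step is a genuine obstacle. The only points needing care are measurability and integrability of \(t\mapsto\int G:S\,dx\), which follow from Fubini and Cauchy's inequality for \(L^2(E\times\T^d)\) data, and the a.e. use of admissibility, which suffices because only the integrated quantity is estimated. The shear-layer remark \(\mathfrak D_E\gtrsim\eps^{-(2\beta+1)}\) is not part of this statement. It would follow from Proposition~\ref{prop:concrete-layer}, since there \(\Delta\ge P^\eps-\frac{\alpha}{2}L S\) pointwise and \(P^\eps\) dominates.
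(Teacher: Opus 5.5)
Your proposal is correct and follows essentially the same route as the paper. It uses the pointwise bound $[\Delta]_+\le|\Delta-\widetilde\Delta|$ from admissibility of the comparison triple, the same split $G:S-\widetilde G:\widetilde S=(G-\widetilde G):S+\widetilde G:(S-\widetilde S)$, and Cauchy's inequality; the paper applies it directly in space-time, which is equivalent to your two-step spatial-then-temporal application.
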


\begin{proof}
Let
\[
  \Delta(t)=P(t)+\frac{\alpha}{2}\int_{\T^d}G(t):S(t)\,dx
\]
and define \(\widetilde\Delta(t)\) analogously.  By
\eqref{eq:tilde-admissible}, \(\widetilde\Delta\le0\) a.e.; hence
\[
  [\Delta]_+\le |\Delta-\widetilde\Delta| .
\]
Therefore
\[
\begin{aligned}
  \mathfrak D_E(P,G,S)
  &\le
  \norm{P-\widetilde P}_{L^1(E)}
  +\frac{\alpha}{2}
  \int_E
  \left|
  \int_{\T^d}(G:S-\widetilde G:\widetilde S)\,dx
  \right|dt .
\end{aligned}
\]
Decompose
\[
  G:S-\widetilde G:\widetilde S
  =
  (G-\widetilde G):S+\widetilde G:(S-\widetilde S).
\]
Cauchy's inequality in space-time gives \eqref{eq:quantitative-gap}.
\end{proof}

\begin{proposition}[Error-bar defect-gap certificate]
\label{prop:error-bar-defect-gap}
Let \((P_h,G_h,S_h)\) be observed or discretized residual data on \(E\), and
let \((P,G,S)\) be the exact residual data.  Suppose that
\[
  \norm{P-P_h}_{L^1(E)}\le\varepsilon_P,\qquad
  \norm{G-G_h}_{L^2(E\times\T^d)}\le\varepsilon_G,\qquad
  \norm{S-S_h}_{L^2(E\times\T^d)}\le\varepsilon_S .
\]
Set
\[
  \mathcal E_h
  =
  \varepsilon_P
  +\frac{\alpha}{2}
  \left[
    \varepsilon_G\norm{S_h}_{L^2(E\times\T^d)}
    +\bigl(\norm{G_h}_{L^2(E\times\T^d)}+\varepsilon_G\bigr)
     \varepsilon_S
  \right].
\]
Then
\begin{equation}
  \left|
  \mathfrak D_E(P,G,S)-\mathfrak D_E(P_h,G_h,S_h)
  \right|
  \le\mathcal E_h .
  \label{eq:error-bar-defect-gap}
\end{equation}
Consequently, if
\[
  \mathfrak D_E(P_h,G_h,S_h)>\mathcal E_h,
\]
then the exact residual data violate the signed-work inequality on \(E\).  The
certified defect gap is at least
\[
  \mathfrak D_E(P_h,G_h,S_h)-\mathcal E_h .
\]
If the observed gap is no larger than \(\mathcal E_h\), the residual-work test
is inconclusive rather than affirmative: the error bar can hide the violation.
\end{proposition}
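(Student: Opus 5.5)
The plan is to reduce the statement to a Lipschitz estimate for the signed defect \(\Delta_{P,G,S}\) and then use that \(x\mapsto[x]_+\) is \(1\)-Lipschitz. For a.e. \(t\in E\) we have
\[
  \bigl|[\Delta_{P,G,S}(t)]_+-[\Delta_{P_h,G_h,S_h}(t)]_+\bigr|
  \le|\Delta_{P,G,S}(t)-\Delta_{P_h,G_h,S_h}(t)| .
\]
Integrating over \(E\) and using the reverse triangle inequality for the \(L^1(E)\) norm, we obtain
\[
  \left|\mathfrak D_E(P,G,S)-\mathfrak D_E(P_h,G_h,S_h)\right|
  \le\norm{\Delta_{P,G,S}-\Delta_{P_h,G_h,S_h}}_{L^1(E)} .
\]
This is the same mechanism as in Theorem~\ref{thm:quantitative-gap}. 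The difference is that we now bound the difference of the two gaps, instead of bounding one gap against an admissible triple.

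Next we expand the difference of the defects, writing
\[
  \Delta_{P,G,S}-\Delta_{P_h,G_h,S_h}
  =(P-P_h)+\frac{\alpha}{2}\int_{\T^d}\bigl(G:S-G_h:S_h\bigr)\,dx .
\]
To make the constant match \(\mathcal E_h\), the product difference must be split so that the observed quantities \(S_h\) and \(G_h\) appear. We therefore use
\[
  G:S-G_h:S_h=(G-G_h):S_h+G:(S-S_h).
\]
Cauchy--Schwarz in space-time then bounds the \(L^1(E)\) norm of the product term by
\[
  \norm{G-G_h}_{L^2}\norm{S_h}_{L^2}+\norm{G}_{L^2}\norm{S-S_h}_{L^2}.
\]
Finally, \(\norm{G}_{L^2}\le\norm{G_h}_{L^2}+\varepsilon_G\) by the triangle inequality. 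Inserting the three error bounds \(\varepsilon_P,\varepsilon_G,\varepsilon_S\) gives exactly \(\mathcal E_h\), which proves \eqref{eq:error-bar-defect-gap}.

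The consequences follow directly. If \(\mathfrak D_E(P_h,G_h,S_h)>\mathcal E_h\), then \eqref{eq:error-bar-defect-gap} gives
\[
  \mathfrak D_E(P,G,S)\ge\mathfrak D_E(P_h,G_h,S_h)-\mathcal E_h>0 .
\]
Hence \([\Delta_{P,G,S}]_+\) has positive integral, so the signed-work inequality fails on a subset of \(E\) of positive measure. The same displayed bound is the certified lower bound on the gap.

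For the inconclusive case, we do not prove admissibility. It suffices to note that \eqref{eq:error-bar-defect-gap} only places \(\mathfrak D_E(P,G,S)\) in the interval \([0,\mathfrak D_E(P_h,G_h,S_h)+\mathcal E_h]\), and this interval contains positive values. To show that such a violation can actually be hidden, one can perturb \(P_h\) by a positive constant of \(L^1\) size at most \(\varepsilon_P\). This can produce an exact triple with a positive gap while the observed gap vanishes.

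The only delicate step is the choice of the asymmetric splitting of \(G:S-G_h:S_h\). The other splitting, \((G-G_h):S+G_h:(S-S_h)\), would produce \(\norm{S}\), which is not observed. The stated \(\mathcal E_h\) is designed to use \(\norm{S_h}\) and \(\norm{G_h}+\varepsilon_G\), so the splitting above is the one that matches it.
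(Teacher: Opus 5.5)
Your proposal is correct and follows the paper's proof essentially verbatim: the $1$-Lipschitz property of $[\cdot]_+$, the splitting $G:S-G_h:S_h=(G-G_h):S_h+G:(S-S_h)$, space-time Cauchy--Schwarz, and the bound $\|G\|_{L^2}\le\|G_h\|_{L^2}+\varepsilon_G$. Your additional construction of a hidden violation in the inconclusive case goes slightly beyond what the paper argues (and works, for instance, when the observed signed defect is saturated), but it is consistent with the statement.
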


\begin{proof}
Let
\[
  \Delta=P+\frac{\alpha}{2}\int_{\T^d}G:S\,dx,
  \qquad
  \Delta_h=P_h+\frac{\alpha}{2}\int_{\T^d}G_h:S_h\,dx .
\]
Since \(a\mapsto[a]_+\) is \(1\)-Lipschitz,
\[
  \left|\mathfrak D_E(P,G,S)-\mathfrak D_E(P_h,G_h,S_h)\right|
  \le \norm{\Delta-\Delta_h}_{L^1(E)} .
\]
Moreover,
\[
  G:S-G_h:S_h=(G-G_h):S_h+G:(S-S_h).
\]
Therefore, by Cauchy's inequality,
\[
\begin{aligned}
  \norm{\Delta-\Delta_h}_{L^1(E)}
  &\le
  \varepsilon_P
  +\frac{\alpha}{2}
  \varepsilon_G\norm{S_h}_{L^2(E\times\T^d)}
  +\frac{\alpha}{2}
  \norm{G}_{L^2(E\times\T^d)}\varepsilon_S\\
  &\le
  \varepsilon_P
  +\frac{\alpha}{2}
  \left[
    \varepsilon_G\norm{S_h}_{L^2(E\times\T^d)}
    +\bigl(\norm{G_h}_{L^2(E\times\T^d)}+\varepsilon_G\bigr)
     \varepsilon_S
  \right],
\end{aligned}
\]
which is \eqref{eq:error-bar-defect-gap}.  The consequences follow by
subtracting the error bar from the observed defect gap.
\end{proof}

\begin{corollary}[Strong non-realization of violating residual profiles]
\label{cor:strong-nonrealization}
Let \((u,p,B)\) be a smooth residual-matching profile on \(E\).  If
\[
  P(t)+\frac{\alpha}{2}\int_{\T^d}G(t):\mathcal S[u,B](t)\,dx>0
\]
on a subset of \(E\) of positive measure, then \((u,p,B)\) is not strongly
residual-realizable on \(E\) by energy--entropy admissible positive-cone
Reynolds states.  The same conclusion holds if the windowed estimate
\[
  W_F\le \frac{\alpha}{2}\eta_F L_FS_F
\]
fails on some measurable \(F\subset E\).
\end{corollary}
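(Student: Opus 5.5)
The plan is to argue by contraposition from Theorem~\ref{thm:closed-realization}.

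Suppose, to the contrary, that \((u,p,B)\) is strongly residual-data realizable on \(E\) in the sense of Definition~\ref{def:strong-residual-realization}. Theorem~\ref{thm:closed-realization} then gives the pointwise signed-work inequality \eqref{eq:closed-realization-pointwise} for the limiting data \((P,G,\mathcal S[u,B])\) for a.e. \(t\in E\). This contradicts the hypothesis that
\[
  P+\tfrac{\alpha}{2}\int_{\T^d}G:\mathcal S[u,B]\,dx>0
\]
on a set of positive measure. The first claim therefore follows.

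For the second claim, I will again assume a strong realization. The same theorem gives the windowed bound \eqref{eq:closed-realization-window} on every measurable \(F\subset E\). This is incompatible with the assumed failure of \(W_F\le\frac{\alpha}{2}\eta_FL_FS_F\) on some \(F\).

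Alternatively, one can bypass \eqref{eq:closed-realization-window}. Once the pointwise inequality holds, the proof of Theorem~\ref{thm:main-obstruction} applies verbatim on \(F\): take positive parts, use the definition of \(\eta_{\rm al}\), and then apply Cauchy--Schwarz in time. This yields the windowed budget directly.

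There is no genuine obstacle; all the content was already established in Theorem~\ref{thm:closed-realization}. The only point needing a moment's care is that the windowed estimate is applied to the limiting smooth profile, whose data \((P,G,\mathcal S)\) lie in \(L^1\times L^2\times L^2\). This ensures that \(W_F\), \(L_F\), \(S_F\) and \(\eta_F\) are well defined and finite. The degenerate convention \(\eta_{\rm al}=0\) on the zero-lever or zero-residual set is consistent with the pointwise inequality, because there it forces \(P\le0\).
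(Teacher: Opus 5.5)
Your proposal is correct and is the paper's own argument: the paper's proof is the single line that either violation contradicts Theorem~\ref{thm:closed-realization}, and you apply that theorem's pointwise and windowed conclusions in exactly this way. Your extra remarks (rederiving the windowed bound from the pointwise inequality, and noting that $\eta_{\rm al}=0$ on the zero-lever set is consistent because $P\le 0$ is forced there) are accurate but not needed.
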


\begin{proof}
Either violation contradicts Theorem~\ref{thm:closed-realization}.
\end{proof}

\begin{corollary}[Quantitative gap for the finite-thickness obstruction]
\label{cor:finite-thickness-quantitative-gap}
For the finite-thickness shear-layer profile in
Proposition~\ref{prop:concrete-layer},
\begin{equation}
  \mathfrak D_E[u^\varepsilon,B^\varepsilon]
  \ge
  |E|\nu\norm{\theta'}_{L^2(\R)}^2\,
  \varepsilon^{-(2\beta+1)} .
  \label{eq:finite-thickness-gap}
\end{equation}
Consequently, any admissible residual triple
\((\widetilde P^\varepsilon,\widetilde G^\varepsilon,
\widetilde S^\varepsilon)\) must satisfy the lower bound obtained from
\eqref{eq:quantitative-gap} with
\((P,G,S)=(P^\varepsilon,G^\varepsilon,S^\varepsilon)\) and the right-hand side
at least the quantity in \eqref{eq:finite-thickness-gap}.
\end{corollary}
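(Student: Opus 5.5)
The plan is to show that the signed defect $\Delta^\varepsilon(t)=P^\varepsilon(t)+\frac{\alpha}{2}\int_{\T^2}G^\varepsilon:\mathcal S[u^\varepsilon,B^\varepsilon]\,dx$ is bounded below pointwise in time by $P^\varepsilon(t)$, and then to integrate over $E$. By Proposition~\ref{prop:concrete-layer}, $P^\varepsilon(t)=\nu\varepsilon^{-(2\beta+1)}\norm{\theta'}_{L^2(\R)}^2$ for every $t$. So it suffices to show that the pairing $K^\varepsilon(t)=\int G^\varepsilon:\mathcal S[u^\varepsilon,B^\varepsilon]\,dx$ is non-negative. Once that holds, $[\Delta^\varepsilon]_+\ge P^\varepsilon$, and integrating gives exactly $|E|\nu\norm{\theta'}^2_{L^2}\varepsilon^{-(2\beta+1)}$.

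To compute $K^\varepsilon$, I will use the explicit structure from the proof of Proposition~\ref{prop:concrete-layer}. The lever $G^\varepsilon$ is diagonal. In the residual, the stretching terms $-\nabla u^\varepsilon A^\varepsilon-A^\varepsilon(\nabla u^\varepsilon)^T$ are purely off-diagonal, because $\nabla u^\varepsilon$ has only a $(1,2)$ entry and $A^\varepsilon$ is diagonal. Their Frobenius product with $G^\varepsilon$ therefore vanishes pointwise. The time-derivative and transport terms are zero, since the profile is steady and $u^\varepsilon\cdot\nabla A^\varepsilon=0$. This leaves only the relaxation term, for which
\[
G^\varepsilon:\lambda^{-1}(A^\varepsilon-\Id)=\lambda^{-1}\tr\bigl(A^\varepsilon+(A^\varepsilon)^{-1}-2\Id\bigr)\ge0,
\]
using the identity from the proof of Proposition~\ref{prop:defect-work}. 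Hence $K^\varepsilon(t)\ge0$. Explicitly, $K^\varepsilon=\frac{2}{\lambda}\int(\cosh(b\chi_\varepsilon)-1)\cdot 2\,dx=O(\varepsilon)$, but only its sign is needed.

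Since $P^\varepsilon$ is also independent of the choice of pressure-free representative (Lemma~\ref{lem:gauge}), this establishes \eqref{eq:finite-thickness-gap}. The final statement then follows by inserting this lower bound into Theorem~\ref{thm:quantitative-gap} with $(P,G,S)=(P^\varepsilon,G^\varepsilon,S^\varepsilon)$. That yields, for every admissible triple, the inequality that the right-hand side of \eqref{eq:quantitative-gap} is at least $|E|\nu\norm{\theta'}^2\varepsilon^{-(2\beta+1)}$.

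The only step needing care is the sign of $K^\varepsilon$, that is, checking that no cross term between the off-diagonal stretching part and the diagonal lever survives. This is where the diagonal/off-diagonal splitting is essential. If one preferred not to track the sign, the fallback would be $[\Delta^\varepsilon]_+\ge P^\varepsilon-\frac{\alpha}{2}|K^\varepsilon|$ together with $|K^\varepsilon|=O(\varepsilon)$. That still gives the stated exponent, though only with a constant slightly smaller than the one displayed, for small $\varepsilon$. The sign argument gives the exact constant.
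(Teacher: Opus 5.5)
Your proposal is correct and matches the paper's own proof: $G^\varepsilon$ is diagonal, while the stretching part of $\mathcal S[u^\varepsilon,B^\varepsilon]$ is off-diagonal and the time-derivative and transport terms vanish, so only relaxation contributes and $G^\varepsilon:S^\varepsilon=\lambda^{-1}\tr\bigl(A^\varepsilon+(A^\varepsilon)^{-1}-2I\bigr)\ge0$, which gives $[\Delta^\varepsilon]_+\ge P^\varepsilon$ and the bound after integrating over $E$. The consequence is then just Theorem~\ref{thm:quantitative-gap} with this lower bound on its left-hand side, as you describe.
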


\begin{proof}
For the shear-layer profile, the pressure-free work satisfies
\[
  P^\varepsilon(t)
  =
  \nu\norm{\partial_2U^\varepsilon}_{L^2(\T^2)}^2
  =
  \nu\varepsilon^{-(2\beta+1)}\norm{\theta'}_{L^2(\R)}^2 .
\]
Moreover \(G^\varepsilon\) is diagonal, while the stretching part of
\(S^\varepsilon\) is off-diagonal.  Hence only the relaxation term contributes
to \(G^\varepsilon:S^\varepsilon\), and
\[
  G^\varepsilon:S^\varepsilon
  =
  \lambda^{-1}G^\varepsilon:(A^\varepsilon-\Id)
  =
  \lambda^{-1}
  \tr\bigl(A^\varepsilon+(A^\varepsilon)^{-1}-2\Id\bigr)
  \ge0 .
\]
Thus the signed defect is at least \(P^\varepsilon(t)\), and integrating over
\(E\) gives \eqref{eq:finite-thickness-gap}.
\end{proof}

\begin{corollary}[Finite-thickness profiles are not strongly residual-data realizable]
\label{cor:finite-thickness-nonrealization}
For every sufficiently small \(\varepsilon>0\), the finite-thickness
shear-layer profile in Proposition~\ref{prop:concrete-layer} is not strongly
residual-data realizable on \(E\) by energy--entropy admissible positive-cone
Reynolds states.
\end{corollary}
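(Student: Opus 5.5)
The plan is to argue by contradiction through Corollary~\ref{cor:strong-nonrealization}, using the explicit pointwise violation computed in Corollary~\ref{cor:finite-thickness-quantitative-gap}. The shear-layer profile \((u^\varepsilon,0,B^\varepsilon)\) of Proposition~\ref{prop:concrete-layer} is a smooth residual-matching profile on \(E\). Its momentum residual is matched by the symmetric representative \(R^\varepsilon\), and the mean condition holds because \(\calN[u^\varepsilon,B^\varepsilon]=\divv R^\varepsilon\) has zero mean on the torus. So Definition~\ref{def:strong-residual-realization} applies to it, with data \(P^\varepsilon\), \(G^\varepsilon=I-(A^\varepsilon)^{-1}\) and \(\mathcal S^\varepsilon=\mathcal S[u^\varepsilon,B^\varepsilon]\).

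First, I will record the signed defect at each time. Proposition~\ref{prop:concrete-layer} computes the pressure-free work via Lemma~\ref{lem:gauge}:
\[
  P^\varepsilon(t)=\nu\varepsilon^{-(2\beta+1)}\norm{\theta'}_{L^2(\R)}^2 .
\]
This is strictly positive because \(\theta'\not\equiv0\) and \(\nu>0\). The proof of Corollary~\ref{cor:finite-thickness-quantitative-gap} shows that the diagonal lever annihilates the off-diagonal stretching part of \(\mathcal S^\varepsilon\). Hence
\[
  \int_{\T^2}G^\varepsilon:\mathcal S^\varepsilon\,dx
  =\lambda^{-1}\int_{\T^2}\tr\bigl(A^\varepsilon+(A^\varepsilon)^{-1}-2I\bigr)\,dx\ge0 .
\]
Consequently, for every \(t\in E\), which is a set of positive measure,
\[
  P^\varepsilon(t)+\frac{\alpha}{2}\int_{\T^2}G^\varepsilon(t):\mathcal S^\varepsilon(t)\,dx\ge P^\varepsilon(t)>0 .
\]
This holds for every \(\varepsilon\) small enough that the layers do not overlap periodically, as required in Lemma~\ref{lem:finite-thickness-scaling}.

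Second, I will invoke Corollary~\ref{cor:strong-nonrealization}. In fact it is Theorem~\ref{thm:closed-realization} that would force the signed-work inequality a.e. on \(E\) for any strongly realizable profile, and that contradicts the positivity just established. Alternatively, one can use the windowed route: by Proposition~\ref{prop:concrete-layer}, \(W_E^\varepsilon\gtrsim\varepsilon^{-(2\beta+1)}\) while \(\frac{\alpha}{2}\eta_E L_E S_E\lesssim\varepsilon^{-\beta}\). The windowed budget \eqref{eq:closed-realization-window} with \(F=E\) then fails for small \(\varepsilon\).

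The only real subtlety, and the step I would check most carefully, is that the data in Definition~\ref{def:strong-residual-realization} are genuinely the profile's own \(P,G,\mathcal S\). In particular, the pressure-free work of the profile must coincide with the value computed from the chosen representative \(R^\varepsilon\). This requires \(R^\varepsilon-R_{\calN}\) to lie in the quotient subspace, which follows from Lemma~\ref{lem:gauge}, since both representatives have divergence \(\calN_0\) and zero pressure correction. The pointwise argument actually yields the conclusion for every admissible \(\varepsilon\), not merely small ones.
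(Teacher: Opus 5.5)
Your proof is correct, but your main argument takes a different route from the paper's. The paper's proof uses only the windowed budget. Proposition~\ref{prop:concrete-layer} supplies the scale bounds \(W_E^\varepsilon\gtrsim\varepsilon^{-(2\beta+1)}\), \(L_E^\varepsilon S_E^\varepsilon\lesssim\varepsilon^{-(\beta+1/2)}\) together with the crude bound \(\eta_E^\varepsilon\le1\). Corollary~\ref{cor:bulk-power-law} turns these into a violation of \(W_E\le\frac{\alpha}{2}\eta_EL_ES_E\) for small \(\varepsilon\), and the second clause of Corollary~\ref{cor:strong-nonrealization} then applies. That is essentially your ``alternative'' (you use the sharper lever bound, giving \(\varepsilon^{-\beta}\)).

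Your primary argument instead reuses the sign computation from Corollary~\ref{cor:finite-thickness-quantitative-gap}: the diagonal lever annihilates the off-diagonal stretching, and relaxation contributes \(\lambda^{-1}\tr(A^\varepsilon+(A^\varepsilon)^{-1}-2I)\ge0\). This gives a pointwise violation on all of \(E\), which you feed into Theorem~\ref{thm:closed-realization}.

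Your route gains a stronger conclusion: nonrealizability for every \(\varepsilon\) for which the layer is well defined, with no exponent comparison. The ``sufficiently small'' in the statement is an artifact of the Cauchy--Schwarz/\(\eta_E\le1\) loss. In fact \(K^\varepsilon\ge0\) forces \(\eta_{\rm al}\equiv0\), so even the windowed test with the exact alignment fails for all such \(\varepsilon\).

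The paper's route gains robustness. It uses only scale bounds, not the exact diagonal/off-diagonal orthogonality, so it would survive perturbations of the ansatz that destroy that cancellation.

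One small imprecision in your gauge remark: a general pressure-free representative \(R'\) may carry a nonzero pressure correction \(\pi'\). In that case \(R^\varepsilon-R'=Q+\pi'I\) with \(\divv Q=0\), which is exactly the hypothesis Lemma~\ref{lem:gauge} uses.
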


\begin{proof}
Proposition~\ref{prop:concrete-layer} and the bulk power-law test show that the
profile violates the residual-work budget for all sufficiently small
\(\varepsilon\).  Corollary~\ref{cor:strong-nonrealization} then excludes strong
residual-data realization in the topology of
Definition~\ref{def:strong-residual-realization}.
\end{proof}

\begin{remark}[Topological scope of the finite-thickness obstruction]
The obstruction above is topological and residual-level.  It rules out
realization procedures whose pressure-free work, entropy lever, and conformation
residual converge strongly to a violating profile.  The topology is sharp at
the level of residual data: Theorem~\ref{thm:weak-sharpness} shows that
one-sided strong convergence is enough to pass the signed product, while
weak--weak convergence can hide an oscillatory \(G:S\) defect.  The theorem
therefore identifies the missing datum.  Theorem~\ref{thm:defect-compactification}
shows that if this product defect is retained as a signed measure, the
augmented residual budget is closed.  The strict converse in
Theorem~\ref{thm:strict-augmented-converse} shows that, at the residual-data
level, this augmented budget is also sufficient under a strict margin.  Lifting
such residual-data realizations to genuine weak solutions requires additional
compactness, oscillation, and positive-cone control beyond the residual
compactification used here.
\end{remark}

\subsection{Defect-Augmented Weak-Solution Interface and Packet Lifting}
\label{sec:weak-compactness}

The preceding sections were formulated entirely at the residual-data level.
Theorem~\ref{thm:strict-augmented-converse} proves the converse realization of
the augmented budget in that category, under strict slack.  We now isolate the
compactness class to which those residual data belong after weak limiting.  The
framework below records the opposite direction for genuine weak solutions: any
realizing sequence with enough compactness to identify its Reynolds defects
must enter the same closed augmented class.

\begin{definition}[Augmented positive-cone Reynolds state]
\label{def:augmented-reynolds-state}
Let \(E\) be a time window.  An augmented positive-cone Reynolds state on \(E\)
is a tuple
\[
  (u,p,B,R,S,\mu),
\]
where \((u,p,B,R,S)\) is a positive-cone Reynolds state in the sense of
Definition~\ref{def:positive-cone-reynolds-state} and
\(\mu\in\mathcal M(E)\) is a finite signed measure.  With
\[
  A=e^B,\qquad G=\Id-A^{-1},
\]
and with \(P\) denoting the pressure-free work of \(R\) along \(u\), define the
augmented residual-work measure
\begin{equation}
  \mathfrak B[u,B,R,S,\mu]
  :=
  P\,dt+\frac{\alpha}{2}
  \left(\int_{\T^d}G:S\,dx\right)dt
  +\frac{\alpha}{2}\mu .
  \label{eq:augmented-budget-measure}
\end{equation}
The augmented state is admissible if
\[
  \mathfrak B[u,B,R,S,\mu]\le0
  \qquad\hbox{in }\mathcal M(E).
\]
When \(\mu=0\), this is exactly the residual-work admissibility inequality for
the underlying positive-cone Reynolds state.
\end{definition}

\begin{theorem}[Sequential closedness of the augmented compactness framework]
\label{thm:closed-compactness-framework}
Let
\[
  (u_n,p_n,B_n,R_n,S_n,\mu_n)
\]
be admissible augmented positive-cone Reynolds states on \(E\).  Suppose, after
passing to a subsequence, that the convergence is strong enough to pass the
linear and nonlinear terms in the Reynolds equations, so that
\[
  u_n\to u,\qquad B_n\to B,\qquad p_n\to p,\qquad
  R_n\to R,\qquad S_n\to S
\]
in the corresponding distributional senses and \(A_n=e^{B_n}\to A=e^B\).
Assume moreover that, for \(G_n=\Id-A_n^{-1}\), \(G=\Id-A^{-1}\), and the
pressure-free works \(P_n\) and \(P\),
\begin{align}
  P_n\,dt &\stackrel{*}{\rightharpoonup} P\,dt,
  \label{eq:closed-framework-P}\\
  \left(\int_{\T^d}G_n:S_n\,dx\right)dt+\mu_n
  &\stackrel{*}{\rightharpoonup}
  \left(\int_{\T^d}G:S\,dx\right)dt+\mu
  \label{eq:closed-framework-product}
\end{align}
as finite signed measures on \(E\).  Then
\((u,p,B,R,S,\mu)\) is an admissible augmented positive-cone Reynolds state.
In particular, the class defined by
Definition~\ref{def:augmented-reynolds-state} is sequentially closed under the
convergence that retains the pressure-free work and the \(G:S\) product defect.
\end{theorem}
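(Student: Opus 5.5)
The argument has two parts: identify the limit tuple as a positive-cone Reynolds state, then pass the admissibility inequality to the limit by testing against non-negative continuous functions of time, as in Theorem~\ref{thm:defect-compactification}.

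\emph{Structural part.} By hypothesis the convergence is strong enough to pass to the limit in every linear and nonlinear term of \eqref{eq:reynolds-momentum} and \eqref{eq:reynolds-conf}. Hence the limit \((u,p,B,R,S)\) satisfies both equations in the distributional sense. The constraints \(\divv u=0\) and the symmetry of \(R\) and \(S\) are preserved under distributional limits. Positive definiteness of \(A=e^B\) is automatic from the exponential parametrization. The pressure-free work \(P\) of the limit is defined from \(R\) and \(u\) by Definition~\ref{def:pressure-free-work}. Lemma~\ref{lem:gauge} shows it does not depend on the representative, so the \(P\) in \eqref{eq:closed-framework-P} is the canonical one. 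I would make this identification explicit, since the hypothesis names \(P\) as the pressure-free work of the limiting state.

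\emph{Budget part.} For each \(n\), admissibility gives \(\mathfrak B_n:=P_n\,dt+\frac{\alpha}{2}(\int G_n:S_n\,dx)\,dt+\frac{\alpha}{2}\mu_n\le0\) in \(\mathcal M(E)\). Fix \(\varphi\in C_c(E)\) with \(\varphi\ge0\); then \(\int_E\varphi\,d\mathfrak B_n\le0\). By \eqref{eq:closed-framework-P}, \(\int\varphi P_n\,dt\to\int\varphi P\,dt\). By \eqref{eq:closed-framework-product}, the remaining combined term tends to \(\frac{\alpha}{2}\bigl(\int\varphi(\int G:S\,dx)\,dt+\int\varphi\,d\mu\bigr)\).

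The key point is that \(\mu_n\) never has to converge separately. The hypothesis groups \(\mu_n\) with the product \(\int G_n:S_n\,dx\), and that grouping is exactly the combination that appears in \(\mathfrak B_n\). Taking limits therefore gives \(\int\varphi\,d\mathfrak B\le0\) for every non-negative \(\varphi\in C_c(E)\). By the Riesz representation theorem, \(\mathfrak B\le0\) as a signed Radon measure on the interior of \(E\).

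\emph{Endpoints.} This is the step I expect to need care. If \(E\) is compact, I would read weak-* convergence as testing against \(C(E)\), which gives the inequality on all of \(E\). If \(E\) is open, \(C_c\) test functions already suffice. For \(\mu=0\) the conclusion reduces to Definition~\ref{def:defect-admissibility}.

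The main obstacle is bookkeeping rather than analysis. One must check that the \(P\) and \(G:S\) appearing in the limits are those computed from the limit state. This is where \(A_n\to A\), and hence \(G_n\to G\), and the gauge invariance of \(P\) are used; the hypothesis itself supplies the actual passage to the limit in the product.
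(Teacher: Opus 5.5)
Your proposal is correct and takes essentially the same route as the paper's proof. The paper passes the momentum and conformation equations to the limit via the assumed convergence, with positivity inherited from $A=e^B$. It then passes the budget by testing the grouped inequality $P_n\,dt+\frac{\alpha}{2}\bigl[(\int G_n:S_n\,dx)\,dt+\mu_n\bigr]\le 0$ against non-negative $\varphi\in C_c(E)$, exactly as you do.

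Your endpoint concern is unnecessary. When $E$ is compact, $C_c(E)=C(E)$ in the relative topology of $E$, so the tested inequality already holds as an inequality of measures on all of $E$, not only on its interior.
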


\begin{proof}
The distributional convergence in the equations passes the momentum and
conformation balances to the limit.  The strong convergence of \(B_n\) preserves
the positive-cone parametrization, because \(A_n=e^{B_n}\to e^B=A\).  Hence
\((u,p,B,R,S)\) is a positive-cone Reynolds state.

It remains only to pass the sign of the augmented budget.  For each \(n\),
admissibility means
\[
  P_n\,dt+\frac{\alpha}{2}
  \left[
  \left(\int_{\T^d}G_n:S_n\,dx\right)dt+\mu_n
  \right]\le0
  \qquad\hbox{in }\mathcal M(E).
\]
By \eqref{eq:closed-framework-P}--\eqref{eq:closed-framework-product}, the
left-hand side converges weakly-* to
\(\mathfrak B[u,B,R,S,\mu]\).  The cone of non-positive finite measures is
weakly-* closed: testing against any non-negative \(\varphi\in C_c(E)\) and
passing to the limit preserves the inequality.  Therefore
\(\mathfrak B[u,B,R,S,\mu]\le0\), which is the claimed augmented admissibility.
\end{proof}

\begin{definition}[Compactness realization by weak solutions]
\label{def:weak-compactness-realization}
Let \((u,A=e^B)\) be a smooth positive-cone profile on a time window \(E\).
We say that \((u,A)\) is compactly realized by admissible weak solutions with
defects \((R,S,\mu)\) if there are energy--entropy admissible Oldroyd--B weak
solutions \((u_n,A_n)\) and pressures \(p_n\) such that, after passing to a
subsequence, the following hold.

First, the barycentric fields converge to the target profile in the sense
needed to pass the linear terms in the equations.  Second, the nonlinear weak
limits generate the distributional defects \(R\) and \(S\):
\begin{align}
  \partial_t u+\divv(u\otimes u)-\nu\Delta u+\nabla p
  &=
  \alpha\divv(A-\Id)+\divv R,
  \label{eq:weak-compactness-momentum}\\
  \partial_t A+u\cdot\nabla A-\nabla u\,A-A(\nabla u)^T
  +\lambda^{-1}(A-\Id)
  &=
  S.
  \label{eq:weak-compactness-conformation}
\end{align}
Third, with \(G=\Id-A^{-1}\) and with \(P\) the pressure-free work of \(R\),
the product measures associated with the realizing sequence converge to
\[
  \left(\int_{\T^d}G:S\,dx\right)dt+\mu
\]
in the sense of Theorem~\ref{thm:defect-compactification}.  The signed measure
\(\mu\) is called the weak-solution product defect.  Equivalently, the realizing
sequence induces the augmented state \((u,p,B,R,S,\mu)\).
\end{definition}

\subsubsection{Preparatory cone and oscillatory tools for packet lifting}
\label{subsec:packet-tools}

The preceding definition leaves the packet-lifting property as a genuine PDE
input.  The next three lemmas record the algebraic and oscillatory tools that
are already available before such a lifting theorem is attempted.  They do not
construct weak solutions.  Rather, they specify the positive-cone dictionary,
the principal averaging rule, and the anti-divergence gain that any localized
positive-cone lifting argument must satisfy.

\begin{lemma}[Square-root positive-cone dictionary]
\label{lem:sqrt-cone-dictionary}
Let \(K\) be a compact subset of a smooth space-time chart, and let
\(H\) be the restriction to \(K\) of a \(C^s\) map into \(\Sym^d_+\), with
\(s\ge1\).  For every \(\eta>0\) there are finitely many unit vectors
\(\xi_\nu\in\mathbb R^d\), rank-one tensors
\[
  Q_\nu=\xi_\nu\otimes\xi_\nu,
  \qquad \nu=1,\ldots,N,
\]
and non-negative functions \(a_\nu\in C^s(K)\) such that
\begin{equation}
  H+\eta I=\sum_{\nu=1}^N a_\nu^2 Q_\nu
  \qquad\hbox{on }K .
  \label{eq:sqrt-cone-dictionary}
\end{equation}
Consequently
\[
  \left\|H-\sum_{\nu=1}^N a_\nu^2Q_\nu\right\|_{C^0(K)}
  \le C_d\eta .
\]
Moreover, for \(0\le r\le s\),
\begin{equation}
  \sum_{\nu=1}^N \norm{a_\nu}_{C^r(K)}
  \le
  C_{d,s,r,\eta,M}\bigl(1+\norm{H}_{C^r(K)}^{\gamma_r}\bigr),
  \qquad
  M=\norm{H}_{C^0(K)} ,
  \label{eq:sqrt-cone-derivative-bound}
\end{equation}
for constants depending only on the displayed parameters and on the finite
cone cover.
\end{lemma}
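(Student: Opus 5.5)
This is the classical geometric decomposition lemma from convex integration, adapted to the present positive-cone setting. The plan is as follows.

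Since \(H\) is positive semidefinite with \(\norm{H}_{C^0(K)}\le M\), the tensor \(H+\eta I\) takes values in the compact set
\[
  \mathcal K_{\eta,M}=\{Y\in\Sym^d:\ \eta I\le Y\le (M+\eta)I\},
\]
which lies strictly inside the open cone \(\Spp^d\).

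\textbf{Step 1: local linear decomposition.} For each \(Y_0\in\mathcal K_{\eta,M}\), write the spectral decomposition \(Y_0=\sum_i\lambda_i e_i\otimes e_i\). Then choose a spanning family of rank-one directions around it, namely \(e_i\otimes e_i\) and \((e_i\pm e_j)\otimes(e_i\pm e_j)/2\) normalized to unit vectors. These \(d(d+1)/2+\binom d2\) tensors span \(\Sym^d\). One can write \(Y_0\) with strictly positive coefficients: take \(Y_0=\sum_i(\lambda_i-c)e_i\otimes e_i+c\,I\), and spread \(cI\) positively over the mixed directions using \(\sum_\pm(e_i\pm e_j)\otimes(e_i\pm e_j)=2(e_i\otimes e_i+e_j\otimes e_j)\). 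Here \(c>0\) is small relative to \(\eta\).

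Fix a linear right inverse of the spanning map, that is, coefficients \(\gamma_\nu(Y)\) linear in \(Y\) with \(\sum_\nu\gamma_\nu(Y)Q_\nu=Y\). Add a positive multiple of a fixed strictly positive kernel element so that \(\gamma_\nu(Y_0)>0\). By linearity and continuity, \(\gamma_\nu(Y)\ge c_0>0\) on a ball \(B(Y_0,r_0)\).

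\textbf{Step 2: globalization.} Cover \(\mathcal K_{\eta,M}\) by finitely many such balls; this is the finite cone cover mentioned in the statement. Take a smooth partition of unity \(\{\psi_k\}\) subordinate to the cover with \(\sum_k\psi_k^2=1\) on \(\mathcal K_{\eta,M}\). Then define
\[
  a_{k,\nu}(z)=\psi_k\bigl(Y(z)\bigr)\sqrt{\gamma_{k,\nu}\bigl(Y(z)\bigr)},
  \qquad Y(z)=H(z)+\eta I .
\]
Since
\[
  \sum_{k,\nu}a_{k,\nu}^2Q_{k,\nu}=\sum_k\psi_k^2Y=Y,
\]
identity \eqref{eq:sqrt-cone-dictionary} holds exactly. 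The functions \(a_{k,\nu}\) are non-negative and \(C^s\), because \(\sqrt{\cdot}\) is smooth on \([c_0,\infty)\) and \(\psi_k\) is smooth. The \(C^0\) estimate is then immediate: \(H-\sum a_\nu^2Q_\nu=-\eta I\), and \(|I|=\sqrt d\).

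\textbf{Step 3: derivative bounds.} Each \(a_\nu\) is \(F_\nu\circ H\) with \(F_\nu\) smooth on a neighbourhood of the compact set \(\{0\le H\le M\}\). All derivatives of \(F_\nu\) up to order \(r\) are bounded by constants depending on \(d,\eta,M\) and the cover. The Faà di Bruno formula together with the interpolation inequality \(\norm{H}_{C^j}\lesssim\norm{H}_{C^0}^{1-j/r}\norm{H}_{C^r}^{j/r}\) gives
\[
  \norm{F\circ H}_{C^r}\le C\bigl(1+\norm{H}_{C^r}\bigr)
\]
for \(r\ge1\). So \(\gamma_r=1\) suffices, assuming the chart \(K\) is regular enough for interpolation. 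If it is not, one can take the cruder exponent \(\gamma_r=r\), obtained by bounding each product term directly by \(\norm{H}_{C^r}^{j}\).

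\textbf{Main obstacle.} The only delicate point is Step 1: guaranteeing strictly positive coefficients uniformly on a neighbourhood. This is exactly where the shift \(+\eta I\) is used, because it moves \(H\) off the boundary of the cone and so makes the square roots smooth. Without it, positive semidefinite values of \(H\) could force coefficients to vanish, and \(\sqrt{\cdot}\) would lose regularity there.
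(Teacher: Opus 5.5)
Your route is the same as the paper's. You shift to \(H+\eta I\) so that the range lies in the compact set \(\mathcal K_{\eta,M}\) inside the open cone. You build local rank-one frames \(e_i\otimes e_i\), \((e_i\pm e_j)\otimes(e_i\pm e_j)\) in the eigenbasis of a base point, glue them with a squared partition of unity, take square roots of coefficients bounded below, and finish with the chain rule.

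\textbf{The failing step.} One sentence in Step 1 does not hold: a strictly positive kernel element does not exist. If \(k_\nu>0\) for every \(\nu\), then \(\sum_\nu k_\nu Q_\nu\) is positive semidefinite with trace \(\sum_\nu k_\nu|\xi_\nu|^2>0\), so it cannot vanish. Positivity at \(Y_0\) is also not automatic for a linear right inverse. The natural one puts \(Y_{ii}\) on \(e_i\otimes e_i\) and \(\pm\) multiples of \(Y_{ij}\) on the mixed directions, so its mixed coefficients vanish at a diagonal \(Y_0\). The positivity has to come from a sign-indefinite kernel shift.

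\textbf{The fix.} You already have the ingredients. Since \(\sum_\nu c^*_\nu Q_\nu=Y_0=\sum_\nu\gamma_\nu(Y_0)Q_\nu\), the vector \(k=c^*-\gamma(Y_0)\) lies in the kernel. The affine coefficients \(\tilde\gamma(Y)=\gamma(Y)+k\) still satisfy \(\sum_\nu\tilde\gamma_\nu(Y)Q_\nu=Y\), and \(\tilde\gamma(Y_0)=c^*>0\), so continuity gives positivity on a ball. Alternatively, keep \(\gamma\) linear by prescribing \(\gamma(Y_0)=c^*\) on \(\operatorname{span}\{Y_0\}\) and extending to a complement.

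This is exactly the paper's construction. Its coefficients \(c_{ij}^\pm(M)=\beta\pm\tfrac12 M_{ij}\) and \(c_i(M)=M_{ii}-2(d-1)\beta\) are the natural linear right inverse plus the kernel vector with entries \(+\beta\) on the mixed directions and \(-2(d-1)\beta\) on the diagonal ones. That vector is positive on some components and negative on others.

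\textbf{Remaining steps.} With this correction the rest of your argument goes through. Your normalization \((e_i\pm e_j)/\sqrt2\) matches the statement's unit-vector requirement more literally than the paper's \(q_{ij}^\pm=e_i\pm e_j\). Your Step 3 is more explicit than the paper's one-line chain-rule remark. The exponent \(\gamma_r=r\) is the safe choice when \(K\) is an arbitrary compact set where interpolation may not be available.
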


\begin{proof}
Set \(H_\eta=H+\eta I\).  The range of \(H_\eta\) is contained in the compact
subset
\[
  \mathcal K_{\eta,M}
  =
  \{M'\in\Sym^d:\eta I\le M'\le(M+\eta)I\}
\]
of the positive definite cone.  We first build a positive rank-one frame near
one point \(M_0\in\mathcal K_{\eta,M}\).  After an orthogonal change of basis,
write
\[
  M_0=\operatorname{diag}(\lambda_1,\ldots,\lambda_d),
  \qquad \lambda_i>0.
\]
For \(i<j\) put \(q_{ij}^\pm=e_i\pm e_j\).  Choose \(\beta>0\) so small that
\(\lambda_i-2(d-1)\beta>0\) for every \(i\).  In a neighbourhood of \(M_0\),
every symmetric matrix \(M=(M_{ij})\) has the representation
\[
  M=
  \sum_i c_i(M)e_i\otimes e_i
  +\sum_{i<j}c_{ij}^+(M)q_{ij}^+\otimes q_{ij}^+
  +\sum_{i<j}c_{ij}^-(M)q_{ij}^-\otimes q_{ij}^- ,
\]
where
\[
  c_{ij}^\pm(M)=\beta\pm \frac12 M_{ij},
  \qquad
  c_i(M)=M_{ii}-\sum_{j\ne i}\bigl(c_{ij}^+(M)+c_{ij}^-(M)\bigr).
\]
For \(M\) close enough to \(M_0\), all these coefficients are strictly
positive.  Undoing the orthogonal change of basis gives a local representation
by fixed rank-one tensors with positive affine coefficients.

By compactness of \(\mathcal K_{\eta,M}\), finitely many such neighbourhoods
\(U_j\) cover the range.  On \(U_j\) write
\[
  M=\sum_{\mu=1}^{m_j}c_{j\mu}(M)Q_{j\mu},
  \qquad c_{j\mu}(M)>0 .
\]
After shrinking the \(U_j\), assume \(c_{j\mu}\ge\kappa_j>0\) on \(U_j\).
Choose a smooth squared partition of unity on a neighbourhood of
\(\mathcal K_{\eta,M}\):
\[
  \theta_j\ge0,\qquad \operatorname{supp}\theta_j\subset U_j,
  \qquad \sum_j\theta_j^2=1 .
\]
Then, for \(M\in\mathcal K_{\eta,M}\),
\[
  M=
  \sum_j\sum_{\mu=1}^{m_j}
  \theta_j(M)^2 c_{j\mu}(M)Q_{j\mu}.
\]
Taking \(M=H_\eta(t,x)\) and setting
\[
  a_{j\mu}(t,x)
  =
  \theta_j(H_\eta(t,x))\,c_{j\mu}(H_\eta(t,x))^{1/2}
\]
gives \eqref{eq:sqrt-cone-dictionary}.  The square roots are \(C^s\) because
the coefficients are bounded below on the supports of the \(\theta_j\).
The derivative estimate \eqref{eq:sqrt-cone-derivative-bound} follows from
the chain rule applied to the finitely many smooth coefficient functions.
\end{proof}

\begin{remark}[Cone floor]
\label{rem:cone-floor}
The term \(\eta I\) is deliberate.  It avoids taking square roots at the
singular tip of the closed positive cone.  In a future iteration this
deterministic floor must be charged to the next residual budget; the lemma
does not claim exact smooth square-root coordinates on changing-rank targets.
\end{remark}

\begin{lemma}[Oscillatory products and principal averages]
\label{lem:oscillatory-product-averaging}
Let \(k,\ell\in\mathbb Z^d\setminus\{0\}\), let \(\lambda\in\mathbb N\), and
write
\[
  A_r(a)=\sum_{|\beta|\le r}\norm{\partial^\beta a}_{C^0(\mathbb T^d)} .
\]
For every \(r\ge0\),
\begin{equation}
  \norm{a(x)e^{i\lambda k\cdot x}}_{C^r}
  \le
  C_{r,k}\sum_{j=0}^r
  \lambda^{r-j}A_j(a).
  \label{eq:osc-product-Cr}
\end{equation}
For every \(m\ge1\),
\begin{equation}
  \norm{a(x)e^{i\lambda k\cdot x}}_{C^{-m}}
  \le C_{m,k}\lambda^{-m}A_m(a),
  \label{eq:osc-product-negative}
\end{equation}
where \(C^{-m}\) denotes the dual norm against \(C^m\) test functions.  If
\(k+\ell\ne0\), then
\begin{equation}
  \norm{a(x)b(x)e^{i\lambda(k+\ell)\cdot x}}_{C^{-m}}
  \le C_{m,k,\ell}\lambda^{-m}A_m(ab).
  \label{eq:nonresonant-product}
\end{equation}
If \(\ell=-k\), the paired interaction has a non-oscillatory principal
average.  For instance,
\begin{equation}
  2a\cos(\lambda k\cdot x)\,b\cos(\lambda k\cdot x)
  =
  ab+ab\cos(2\lambda k\cdot x),
  \label{eq:paired-cosine-average}
\end{equation}
and the second term is \(O(\lambda^{-m}A_m(ab))\) in \(C^{-m}\).
\end{lemma}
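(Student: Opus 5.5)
The plan is to prove the four estimates in Lemma~\ref{lem:oscillatory-product-averaging} directly on $\T^d$, using the Leibniz rule for the positive-order bound and repeated integration by parts against the phase for the negative-order bounds.

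\emph{Positive order.} For \eqref{eq:osc-product-Cr}, apply Leibniz to $\partial^\beta(a e^{i\lambda k\cdot x})$ with $|\beta|\le r$. Each term has the form $\binom{\beta}{\gamma}\partial^\gamma a\,(i\lambda k)^{\beta-\gamma}e^{i\lambda k\cdot x}$. Its modulus is at most $C_{r,k}\lambda^{|\beta-\gamma|}\norm{\partial^\gamma a}_{C^0}$. Summing over $\gamma$ and $\beta$, and grouping by $j=|\gamma|$, gives the stated bound; here $A_j(a)$ dominates every derivative of order $\le j$.

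\emph{Negative order.} For \eqref{eq:osc-product-negative}, fix a test function $\psi\in C^m$ with $\norm{\psi}_{C^m}\le1$. Choose a coordinate direction $e$ with $k\cdot e\neq0$, say $e=e_{i_0}$ where $k_{i_0}\ne0$. Then
\[
  e^{i\lambda k\cdot x}=(i\lambda k_{i_0})^{-1}\partial_{i_0}e^{i\lambda k\cdot x}.
\]
Integrate by parts $m$ times on the torus; periodicity means there are no boundary terms. This gives
\[
  \int a\psi e^{i\lambda k\cdot x}\,dx=(-i\lambda k_{i_0})^{-m}\int \partial_{i_0}^m(a\psi)e^{i\lambda k\cdot x}\,dx.
\]
By Leibniz, $|\partial_{i_0}^m(a\psi)|\le C_m A_m(a)\norm{\psi}_{C^m}$, and integrating over the torus adds only the factor $|\T^d|$. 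Taking the supremum over $\psi$ gives the bound $C_{m,k}\lambda^{-m}A_m(a)$. The nonresonant bound \eqref{eq:nonresonant-product} is then the same estimate applied to the amplitude $ab$ and the nonzero integer frequency $k+\ell$.

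\emph{Resonant case.} The paired identity \eqref{eq:paired-cosine-average} is the double-angle formula $2\cos^2\theta=1+\cos2\theta$ multiplied by $ab$. For the remainder, write $\cos(2\lambda k\cdot x)=\tfrac12(e^{2i\lambda k\cdot x}+e^{-2i\lambda k\cdot x})$. Apply \eqref{eq:osc-product-negative} with frequencies $\pm2k$ and amplitude $ab$ to get $O(\lambda^{-m}A_m(ab))$ in $C^{-m}$. The general resonant case $\ell=-k$ with exponentials works the same way: the product $a e^{i\lambda k\cdot x}\,b e^{-i\lambda k\cdot x}=ab$ is exactly non-oscillatory.

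\emph{Main obstacle.} There is no deep difficulty. The step needing the most care is bookkeeping in the negative-norm estimate: the constant must depend only on $m$ and $k$, not on $\lambda$. The choice of a single coordinate direction $i_0$ with $k_{i_0}\neq0$ ensures this. It avoids dividing by $|k|^2$ through a general $k\cdot\nabla$ operator, although that operator would also work, with $k\cdot\nabla e^{i\lambda k\cdot x}=i\lambda|k|^2e^{i\lambda k\cdot x}$. One also needs $k,\ell$ to be integer vectors, so that the phases are periodic and the integration by parts produces no boundary terms.
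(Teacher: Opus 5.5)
Your proposal is correct and follows essentially the same route as the paper: Leibniz' rule for \eqref{eq:osc-product-Cr}, $m$-fold integration by parts against the phase on the torus for the $C^{-m}$ bounds, and the double-angle identity followed by the negative-order bound at frequencies $\pm2k$ for the resonant remainder. In the Leibniz step, absorbing the terms with $|\beta|<r$ into $\lambda^{r-j}$ tacitly uses $\lambda\ge1$; the only other difference is cosmetic, since you integrate by parts with $\partial_{i_0}$ for a coordinate with $k_{i_0}\ne0$ where the paper uses $L_k=(k\cdot\nabla)/(i\lambda|k|^2)$, and both give $\lambda$-independent constants.
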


\begin{proof}
The \(C^r\) bound is Leibniz' rule: each derivative landing on the exponential
contributes a factor \(\lambda k\), while derivatives landing on the amplitude
are recorded in \(A_j(a)\).  For \eqref{eq:osc-product-negative}, use the
integration-by-parts operator
\[
  L_k=\frac{k\cdot\nabla}{i\lambda |k|^2},
  \qquad
  L_ke^{i\lambda k\cdot x}=e^{i\lambda k\cdot x}.
\]
Moving \(L_k\) onto \(a\varphi\) \(m\) times gains \(\lambda^{-m}\) when
testing against \(\norm{\varphi}_{C^m}\le1\).  The non-resonant estimate is the
same argument with \(k+\ell\) in place of \(k\).  The resonant identity
\eqref{eq:paired-cosine-average} is the elementary trigonometric product
formula, followed again by \eqref{eq:osc-product-negative}.
\end{proof}

\begin{lemma}[High-frequency anti-divergence gain]
\label{lem:anti-divergence-gain}
Let \(\mathcal A\) be a fixed symmetric trace-free anti-divergence operator on
\(\mathbb T^d\), so that
\[
  \operatorname{div}\mathcal A f=f-\langle f\rangle
\]
for smooth vector fields \(f\).  Let
\[
  f_\lambda(x)=a(x)e^{i\lambda k\cdot x},
  \qquad k\ne0,
\]
where \(a\) is vector-valued and smooth, and set
\[
  \widetilde f_\lambda=f_\lambda-\langle f_\lambda\rangle .
\]
Then, for every \(r\ge0\),
\begin{equation}
  \norm{\mathcal A\widetilde f_\lambda}_{C^r}
  \le
  C_{r,k}\sum_{j=0}^{r+1}
  \lambda^{r-1-j}A_j(a),
  \label{eq:anti-divergence-Cr}
\end{equation}
and, for every \(m\ge0\),
\begin{equation}
  \norm{\mathcal A\widetilde f_\lambda}_{C^{-m}}
  \le C_{m,k}\lambda^{-m-1}A_{m+1}(a).
  \label{eq:anti-divergence-negative}
\end{equation}
The subtracted mean obeys
\begin{equation}
  |\langle f_\lambda\rangle|
  \le C_{m,k}\lambda^{-m}A_m(a).
  \label{eq:localized-mean-bound}
\end{equation}
\end{lemma}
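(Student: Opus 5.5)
The argument splits into the mean bound, the negative-norm bound, and the positive-regularity bound, in that order, and uses Lemma~\ref{lem:oscillatory-product-averaging} throughout. The mean \(\langle f_\lambda\rangle\) is the pairing of \(a e^{i\lambda k\cdot x}\) with the constant test function \(1\), whose \(C^m\) norm is \(1\). Hence \eqref{eq:localized-mean-bound} follows directly from \eqref{eq:osc-product-negative}, or equivalently from \(m\) integrations by parts with \(L_k\). For \(m=0\) the bound is the trivial estimate \(|\langle f_\lambda\rangle|\le A_0(a)\).

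For \eqref{eq:anti-divergence-negative} I would not estimate \(\mathcal A\) as an abstract operator. Instead I would write \(\widetilde f_\lambda\) as an explicit divergence with a \(\lambda^{-1}\) gain, and then use that \(\mathcal A\) is a Fourier multiplier of order \(-1\) on zero-mean fields.
\begin{itemize}
\item Fix a symmetric trace-free constant-coefficient tensor \(\mathcal B_k(v)\), linear in \(v\in\mathbb C^d\), with \(i k\cdot\) acting as the identity: \(\mathcal B_k(v)\,ik = v\). One can take the standard construction \(\mathcal B_k(v)=\frac{1}{i|k|^2}(v\otimes k+k\otimes v) - \frac{(v\cdot k)}{i|k|^4}(k\otimes k)\) plus the trace correction. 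Any symmetric trace-free right inverse of contraction with \(k\) works, and one exists for \(d\ge2\).
\item Then
\[
\operatorname{div}\bigl(\lambda^{-1}\mathcal B_k(a)e^{i\lambda k\cdot x}\bigr)=f_\lambda+\lambda^{-1}(\nabla\cdot\mathcal B_k(a))e^{i\lambda k\cdot x}.
\]
Iterating this \(m+1\) times gives
\[
f_\lambda=\operatorname{div}T_\lambda + \lambda^{-(m+1)}g_\lambda e^{i\lambda k\cdot x},
\]
where \(T_\lambda=\sum_{j\le m}\lambda^{-1-j}\mathcal B_k(\partial^j a)e^{i\lambda k\cdot x}\) schematically, and \(g_\lambda\) involves \(m+1\) derivatives of \(a\).
\item The difference between \(\mathcal A\widetilde f_\lambda\) and \(T_\lambda\) (after removing means) is divergence-free, symmetric and trace-free modulo the remainder. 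I would not identify it exactly. It suffices to estimate \(\mathcal A\widetilde f_\lambda\) by duality: for a test function \(\varphi\in C^m\), write \(\langle \mathcal A\widetilde f_\lambda,\varphi\rangle=\langle \widetilde f_\lambda,\mathcal A^*\varphi\rangle\).
\item The adjoint \(\mathcal A^*\) is a zeroth-order multiplier composed with one inverse derivative, so it maps \(C^m\) into \(C^{m+1-\epsilon}\) or Hölder spaces. Then apply \eqref{eq:osc-product-negative} with \(m+1\) derivatives.
\end{itemize}

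The main obstacle is the failure of boundedness of Calderón–Zygmund-type multipliers on \(C^r\) spaces. I would handle it by working in Hölder spaces \(C^{r,\theta}\) (or Zygmund spaces) and absorbing the small loss into the constants. Alternatively I would note that the statement's constants \(C_{r,k}\) may absorb a slight loss of derivative, by bounding \(C^r\) by \(C^{r,\theta}\). The statement as written allows \(A_{r+1}\) on the right, which gives the needed extra derivative.

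For \eqref{eq:anti-divergence-Cr}, apply \(\mathcal A\) to the decomposition above with \(m\) chosen large relative to \(r\).
\begin{itemize}
\item \(\mathcal A(\operatorname{div}T_\lambda)\) differs from \(T_\lambda\) by the zero-order projection \(\mathcal A\operatorname{div}\). This projection is bounded \(C^{r,\theta}\to C^{r,\theta}\).
\item \(T_\lambda\) satisfies \eqref{eq:osc-product-Cr} with an extra \(\lambda^{-1}\), giving \(\sum_j\lambda^{r-1-j}A_j(a)\).
\item The remainder \(\lambda^{-(m+1)}g_\lambda e^{i\lambda k\cdot x}\) is smoothed by \(\mathcal A\). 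Its \(C^r\) norm is bounded by \(\lambda^{-(m+1)}\lambda^{r}\) times derivatives of \(a\), which fits the displayed sum once the sum is truncated at \(j=r+1\): choose \(m=r\) and bound the resulting factor \(\lambda^{-1}A_{r+1}(a)\) by the \(j=r+1\) term \(\lambda^{-2}\)-type contribution.
\item The mean subtraction contributes only \(|\langle f_\lambda\rangle|\), which is controlled by \eqref{eq:localized-mean-bound}.
\end{itemize}
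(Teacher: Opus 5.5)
Your mean bound \eqref{eq:localized-mean-bound} is correct, and it is the paper's argument: test \eqref{eq:osc-product-negative} against the constant $1$.

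\textbf{The gap is in \eqref{eq:anti-divergence-Cr}, at your last bullet.} Your own count gives a remainder of size $\lambda^{-(r+1)}\lambda^{r}A_{r+1}(a)=\lambda^{-1}A_{r+1}(a)$. The $j=r+1$ term of the claimed bound is $\lambda^{-2}A_{r+1}(a)$, so bounding one by the other leaves you short a full factor of $\lambda$. Two further problems:
\begin{itemize}
\item Whenever derivatives land on $g_\lambda$, which already carries $r+1$ derivatives of $a$, you need up to $A_{2r+1}(a)$. The statement does not provide this.
\item A $\lambda^\theta$ loss from working in $C^{r,\theta}$ cannot be absorbed into $C_{r,k}$, which must not depend on $\lambda$.
\end{itemize}

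No rearrangement can rescue this, because the estimate is false at $r=0$. Take $a(x)=v\,e^{-i(\lambda k-e_1)\cdot x}$ with $|v|=1$, so that $\widetilde f_\lambda=f_\lambda=v\,e^{ix_1}$.
\begin{itemize}
\item Pairing $\operatorname{div}\mathcal A\widetilde f_\lambda=v\,e^{ix_1}$ with $\bar v\,e^{-ix_1}$ and integrating by parts gives $\|\mathcal A\widetilde f_\lambda\|_{C^0}\ge c_d$ for \emph{any} anti-divergence.
\item On the other side, $A_0(a)=1$ and $A_1(a)\le 2+\lambda\sum_j|k_j|$. Hence $C_{0,k}(\lambda^{-1}A_0+\lambda^{-2}A_1)=O_k(\lambda^{-1})$, which tends to zero.
\end{itemize}
The mechanism is that an amplitude oscillating at frequency $\sim\lambda$ cancels the carrier. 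Then $\mathcal A$ acts at frequency $O(1)$ and gains nothing, and each derivative of $a$ buys exactly one power of $\lambda^{-1}$, with no extra baseline gain.

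The same mechanism affects other $r$. With $a=h\,e^{-i\lambda k\cdot x}$ and $h$ band-limited to $|\xi|\lesssim\lambda$, it costs a factor $\log\lambda$ at $r=1$, because $\nabla\mathcal A$ is unbounded on $L^\infty$. A Littlewood--Paley split of $a$ at frequency $\sim\lambda|k|$ shows that \eqref{eq:anti-divergence-Cr} does hold for $r\ge2$.

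The paper's proof is only a symbolic-expansion sketch: a leading term $e^{i\lambda k\cdot x}\lambda^{-1}B_k(a)$ plus commutators ``with further inverse powers of $\lambda$''. It has the same blind spot, since it never controls the part of $a$ at frequencies comparable to $\lambda$. So there is nothing in it that closes your argument.

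\textbf{On \eqref{eq:anti-divergence-negative}, your duality route is sound in outline but loses $\lambda^{\epsilon}$ as written.} The missing derivative sits on $\psi=\mathcal A^*\varphi\in C^{m+1-\epsilon}$, not on $a$. So the extra derivative available on $a$ does not supply it. The fix is to use the endpoint space:
\begin{itemize}
\item An order $-1$ multiplier maps $C^m$ into the Zygmund class $B^{m+1}_{\infty,\infty}$.
\item The Moser product estimate then gives $\|a\cdot\psi\|_{B^{m+1}_{\infty,\infty}}\lesssim A_{m+1}(a)\|\varphi\|_{C^m}$.
\item Up to $(2\pi)^d$, $\langle f_\lambda,\psi\rangle$ is the Fourier coefficient of $a\cdot\psi$ at $-\lambda k$. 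The bound $|\widehat g(\xi)|\lesssim|\xi|^{-s}\|g\|_{B^s_{\infty,\infty}}$ then gives the full factor $\lambda^{-m-1}$.
\end{itemize}
Equivalently, after $m$ integrations by parts with $L_k$, use the half-period second difference with $h=\pi k/(\lambda|k|^2)$.

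For $r\in\{0,1\}$ the statement itself has to be changed. One option is $\|\mathcal A\widetilde f_\lambda\|_{C^0}\lesssim\lambda^{-1}A_0(a)+\lambda^{-N}A_N(a)$. Another is the standard H\"older stationary-phase bound, whose remainder $\lambda^{\alpha-N}[a]_N+\lambda^{-N}[a]_{N+\alpha}$ carries no extra $\lambda^{-1}$.
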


\begin{proof}
On nonzero Fourier modes, \(\mathcal A\) is a multiplier of order \(-1\).
Applied to \(a e^{i\lambda k\cdot x}\), its leading symbolic term is
\[
  e^{i\lambda k\cdot x}\lambda^{-1}B_k(a),
\]
with \(B_k\) a fixed linear map depending on the direction \(k\).  Commutators
in which derivatives fall on \(a\) carry further inverse powers of \(\lambda\).
Differentiating the resulting expansion gives
\eqref{eq:anti-divergence-Cr}.  Pairing the same expansion against \(C^m\)
test functions and applying Lemma~\ref{lem:oscillatory-product-averaging}
gives \eqref{eq:anti-divergence-negative}.  Finally,
\eqref{eq:localized-mean-bound} is \eqref{eq:osc-product-negative} tested
against the constant function \(1\).
\end{proof}

\begin{remark}[Scale gates for a lifting theorem]
\label{rem:packet-scale-gates}
Suppose a future packet-lifting step receives the cone decomposition
\[
  H_q+\eta_q I=\sum_{\nu=1}^{N_q}a_{q,\nu}^2Q_{q,\nu}
\]
from Lemma~\ref{lem:sqrt-cone-dictionary}.  The non-resonant products in a
phase-coloured packet family must satisfy a gate of the form
\[
  \lambda_q^{-m}
  \sum_{\nu,\mu}A_m(a_{q,\nu}a_{q,\mu})
  \ll \delta_{q+1},
\]
and the momentum anti-divergence correction must satisfy
\[
  \lambda_q^{-1}\sum_\nu A_1(a_{q,\nu})
  +\lambda_q^{-2}\sum_\nu A_2(a_{q,\nu})
  +\cdots
  \ll \delta_{q+1}.
\]
If the packets are localized by cutoffs \(\chi_q\), the same inequalities must
hold with \(a_{q,\nu}\) replaced by \(\chi_q a_{q,\nu}\), and the mean
subtraction bound \eqref{eq:localized-mean-bound} must also be charged to the
residual budget.  These conditions are necessary bookkeeping for a localized
lifting theorem; they are not, by themselves, such a theorem.
\end{remark}

\begin{definition}[Admissible packet-lifting topology]
\label{def:admissible-lifting-topology}
Fix a packet sequence \((P_n,G_n,S_n)\) on \(E\times\T^d\) and a candidate
induced residual-data sequence
\((\widehat P_n,\widehat G_n,\widehat S_n)\).  A convergence criterion or
topology \(\mathfrak T\) for the residual-data error is called admissible if
convergence of
\[
  (\widehat P_n-P_n,\widehat G_n-G_n,\widehat S_n-S_n)\to0
  \qquad\hbox{in }\mathfrak T
\]
implies the three consequences used by the compactness argument:
\begin{align}
  (\widehat P_n-P_n)\,dt
  &\stackrel{*}{\rightharpoonup}0
  \quad\hbox{in }\mathcal M(E), \label{eq:packet-lift-P}\\
  \widehat G_n-G_n&\to0,\qquad
  \widehat S_n-S_n\to0
  \quad\hbox{in }\mathcal D'(E\times\T^d;\Sym^d),
  \label{eq:packet-lift-GS}
\end{align}
and the product-error measures are continuous at the packet sequence:
\begin{equation}
  \left[
  \int_{\T^d}\widehat G_n:\widehat S_n\,dx
  -\int_{\T^d}G_n:S_n\,dx
  \right]dt
  \stackrel{*}{\rightharpoonup}0
  \quad\hbox{in }\mathcal M(E).
  \label{eq:packet-lift-product}
\end{equation}
Thus admissibility is a contract on the quantities that enter the weak
formulation and the augmented budget; it is not tied to a particular norm.
\end{definition}

\begin{definition}[Positive-cone weak-solution lifting of residual packets]
\label{def:packet-lifting}
Let \((u,p,B,R,S,\mu)\) be a smooth augmented positive-cone Reynolds state with
\(\mu=m(t)\,dt\), and let \((P_n,G_n,S_n)\) be a positive-cone residual packet
sequence produced by Theorem~\ref{thm:strict-augmented-converse}, with the
localized bounds of Corollary~\ref{cor:quantitative-localized-packets} whenever
localization is required.  The preparatory estimates in
Lemmas~\ref{lem:sqrt-cone-dictionary}--\ref{lem:anti-divergence-gain} describe
the algebraic cone decomposition and the principal oscillatory gains expected
of any localized implementation.  We say that this packet sequence admits a
positive-cone weak-solution lifting if there are energy--entropy admissible
Oldroyd--B weak solutions \((u_n,A_n)\), pressures \(p_n\), and induced residual
data
\[
  (P_n^{\rm sol},G_n^{\rm sol},S_n^{\rm sol})
\]
from their compactness defects such that the barycentric fields converge to
\((u,A=e^B)\), the distributional defects converge to \(R,S\), and for some
admissible packet-lifting topology \(\mathfrak T\) in the sense of
Definition~\ref{def:admissible-lifting-topology},
\[
  (P_n^{\rm sol}-P_n,G_n^{\rm sol}-G_n,S_n^{\rm sol}-S_n)\to0
  \qquad\hbox{in }\mathfrak T .
\]
\end{definition}

\begin{remark}[Concrete sufficient lifting topologies]
\label{rem:concrete-lifting-topologies}
The concrete topology
\[
  P_n^{\rm sol}-P_n\to0\hbox{ in }L^1(E),\qquad
  G_n^{\rm sol}-G_n\rightharpoonup0,\quad
  S_n^{\rm sol}-S_n\rightharpoonup0
  \hbox{ weakly in }L^2(E\times\T^d),
\]
together with \eqref{eq:packet-lift-product}, is one sufficient certificate.
It is not part of the definition.  Strong Sobolev convergence, compactness plus
equiintegrability, biting/Young-measure convergence, or a localized
distributional topology are equally admissible if they imply
\eqref{eq:packet-lift-P}--\eqref{eq:packet-lift-product} and the weak
formulation can be passed to the limit.
\end{remark}

\begin{theorem}[Residual-error certificate for packet lifting]
\label{thm:strong-error-lifting-certificate}
Let \((P_n,G_n,S_n)\) be a residual packet sequence on \(E\times\T^d\) with
\[
  \sup_n\bigl(\norm{G_n}_{L^2(E\times\T^d)}
  +\norm{S_n}_{L^2(E\times\T^d)}\bigr)<\infty .
\]
Let \((\widehat P_n,\widehat G_n,\widehat S_n)\) be induced residual data from
a candidate weak-solution approximation, and set
\[
  e^P_n=\widehat P_n-P_n,\qquad
  e^G_n=\widehat G_n-G_n,\qquad
  e^S_n=\widehat S_n-S_n .
\]
If
\begin{equation}
  e^P_n\to0\hbox{ in }L^1(E),\qquad
  e^G_n\to0,\quad e^S_n\to0
  \hbox{ in }L^2(E\times\T^d),
  \label{eq:strong-error-certificate}
\end{equation}
then the residual-data error defines an admissible packet-lifting topology in
the sense of Definition~\ref{def:admissible-lifting-topology}.  Consequently,
any energy--entropy admissible weak solutions producing
\((\widehat P_n,\widehat G_n,\widehat S_n)\) and satisfying
\eqref{eq:strong-error-certificate} give a positive-cone weak-solution lifting
of the packet sequence.

The same conclusion holds under the following one-strong-factor variant:
\[
  e^P_n\to0\hbox{ in }L^1(E),\qquad
  e^G_n\to0\hbox{ in }L^2(E\times\T^d),\qquad
  e^S_n\rightharpoonup0\hbox{ weakly in }L^2(E\times\T^d),
\]
provided the modulated product error satisfies
\begin{equation}
  \left(\int_{\T^d}G_n:e^S_n\,dx\right)dt
  \stackrel{*}{\rightharpoonup}0
  \qquad\hbox{in }\mathcal M(E).
  \label{eq:modulated-weak-annihilation}
\end{equation}
The symmetric statement with \(G\) and \(S\) interchanged is also valid.
\end{theorem}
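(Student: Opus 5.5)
The plan is to check directly the three conditions \eqref{eq:packet-lift-P}--\eqref{eq:packet-lift-product} of Definition~\ref{def:admissible-lifting-topology}. The lifting conclusion then follows from Definition~\ref{def:packet-lifting}: once the errors converge to zero in an admissible topology, energy--entropy admissible weak solutions that produce $(\widehat P_n,\widehat G_n,\widehat S_n)$ and also satisfy the barycentric and defect convergence required there are a lifting.

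For the first two conditions, let $\varphi\in C_c(E)$. Then
\[
\Bigl|\int_E\varphi\, e^P_n\,dt\Bigr|\le\|\varphi\|_\infty\|e^P_n\|_{L^1(E)}\to0,
\]
which is \eqref{eq:packet-lift-P}. Strong $L^2$ convergence implies distributional convergence, since $L^2(E\times\T^d)$ embeds continuously in $\mathcal D'$. The same holds under merely weak $L^2$ convergence, because test functions lie in $L^2$. This gives \eqref{eq:packet-lift-GS} in both the strong and the one-strong-factor cases.

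The main step is the product condition \eqref{eq:packet-lift-product}. I would expand
\[
\widehat G_n:\widehat S_n-G_n:S_n=e^G_n:S_n+G_n:e^S_n+e^G_n:e^S_n .
\]
In the fully strong case, each term's $L^1(E)$ norm (after spatial integration) is controlled by Cauchy--Schwarz in space-time:
\[
\|e^G_n\|_{L^2}\sup_k\|S_k\|_{L^2}+\sup_k\|G_k\|_{L^2}\|e^S_n\|_{L^2}+\|e^G_n\|_{L^2}\|e^S_n\|_{L^2}\to0 .
\]
So the product-error densities converge to zero in $L^1(E)$, hence weakly-* as measures. This is where the uniform bound on $\|G_n\|_{L^2}+\|S_n\|_{L^2}$ is used.

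In the one-strong-factor variant, the term $e^G_n:S_n$ is still handled by Cauchy--Schwarz, since $e^G_n\to0$ strongly and $S_n$ is bounded. The term $e^G_n:e^S_n$ is also handled this way: $e^S_n$ is weakly convergent, so it is bounded in $L^2$ by uniform boundedness, and it is multiplied by a strongly null factor. The obstacle is the middle term $G_n:e^S_n$. Here both factors are only weakly controlled, and as Theorem~\ref{thm:weak-sharpness}(ii) shows, such a product can carry a nonzero defect. This term is exactly why the hypothesis \eqref{eq:modulated-weak-annihilation} is imposed, and with it the sum tends to zero weakly-*. The interchanged case follows symmetrically: with $e^S_n$ strongly null, $G_n:e^S_n$ and $e^G_n:e^S_n$ are handled by Cauchy--Schwarz, and the assumed annihilation of $\int e^G_n:S_n\,dx\,dt$ handles the rest.
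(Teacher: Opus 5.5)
Your proposal is correct and follows the paper's proof essentially step for step. Both use the three-term expansion $e^G_n:S_n+G_n:e^S_n+e^G_n:e^S_n$ and the same space-time Cauchy--Schwarz bound, which relies on the uniform packet bound and on boundedness of weakly convergent sequences; in the one-strong-factor variant, both invoke the modulated annihilation hypothesis exactly for the one term not multiplied by a strongly null factor. You also note that the weak solutions must satisfy the barycentric and defect convergence of Definition~\ref{def:packet-lifting}; the paper's ``Consequently'' leaves this implicit, and your reading of that clause is the correct one.
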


\begin{proof}
The convergence of \(e^P_n\) in \(L^1(E)\) gives
\[
  e^P_n\,dt\stackrel{*}{\rightharpoonup}0
  \qquad\hbox{in }\mathcal M(E).
\]
The \(L^2\) convergence in \eqref{eq:strong-error-certificate} implies
distributional convergence of \(e^G_n\) and \(e^S_n\) to zero.  It remains to
check the product measure.  Since
\[
  \widehat G_n:\widehat S_n-G_n:S_n
  =
  e^G_n:S_n+G_n:e^S_n+e^G_n:e^S_n,
\]
Cauchy's inequality gives the total-variation estimate
\[
\begin{aligned}
  &\left\|
  \left[
  \int_{\T^d}\widehat G_n:\widehat S_n\,dx
  -\int_{\T^d}G_n:S_n\,dx
  \right]dt
  \right\|_{\mathcal M(E)}
\\
  &\qquad\le
  \norm{e^G_n}_{L^2}\norm{S_n}_{L^2}
  +\norm{G_n}_{L^2}\norm{e^S_n}_{L^2}
  +\norm{e^G_n}_{L^2}\norm{e^S_n}_{L^2}.
\end{aligned}
\]
The right-hand side tends to zero by the assumed \(L^2\) convergence and the
uniform packet bounds.  Thus \eqref{eq:packet-lift-P}--\eqref{eq:packet-lift-product}
hold, so the topology is admissible.

For the one-strong-factor variant, the first and third terms above still tend
to zero because \(e^G_n\to0\) strongly and \(S_n\), \(e^S_n\) are bounded in
\(L^2\).  The middle term is exactly
\eqref{eq:modulated-weak-annihilation}.  Distributional convergence of
\(e^S_n\) follows from its weak \(L^2\) convergence.  The symmetric case is the
same argument with the roles of the two factors exchanged.
\end{proof}

\begin{remark}[Certificate contract for packet lifting]
\label{rem:lifting-certificate-contract}
Definition~\ref{def:packet-lifting} is intended as a falsifiable certificate,
not as a hidden existence assertion.  A proof of the packet-lifting property
must specify the exact Oldroyd--B model, domain, boundary conditions, pressure
gauge, and approximation scheme; prove solvability of the approximate problems
with uniform energy--entropy bounds; pass the transport, stretching,
convection, and stress-divergence products in an admissible topology strong
enough for the weak formulation; justify the positive-cone and matrix-entropy
statements through measurable spectral calculus or a regularized log-entropy
argument; and state whether each residual, Reynolds stress, oscillation defect, or
concentration defect vanishes or remains in the augmented solution concept.  In
particular, an exact weak-solution converse requires a defect-vanishing or
selection mechanism in addition to the residual packet construction.
\end{remark}

\begin{theorem}[Conditional weak-solution converse from packet lifting]
\label{thm:conditional-full-converse}
Let \((u,p,B,R,S,\mu)\) be a smooth augmented positive-cone Reynolds state on
\(E\), with \(\mu=m(t)\,dt\).  Assume that \(I-G\ge\kappa I\) and that the
augmented budget has a strict negative margin.  Let
\((P_n,G_n,S_n)\) be any localized positive-cone packet sequence supplied by
Theorem~\ref{thm:strict-augmented-converse}, with the quantitative bounds of
Corollary~\ref{cor:quantitative-localized-packets}.  If this packet sequence
admits a positive-cone weak-solution lifting for some admissible
packet-lifting topology in the sense of
Definitions~\ref{def:admissible-lifting-topology} and
\ref{def:packet-lifting}, then \((u,A=e^B)\) is compactly realized
by energy--entropy admissible Oldroyd--B weak solutions with defects
\((R,S,\mu)\).  Consequently, the strict augmented converse becomes a
weak-solution converse only for classes of states for which the packet-lifting
property has been proved.
\end{theorem}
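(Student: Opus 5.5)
The proof is essentially an unpacking of definitions, followed by one application of the residual-level converse and one application of the packet-to-solution transfer. I would proceed in three steps.

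\textbf{Step 1: the packet sequence.} The hypotheses of Theorem~\ref{thm:strict-augmented-converse} hold: \(I-G\ge\kappa I\), and the augmented budget \(P+\frac{\alpha}{2}\int G:S\,dx+\frac{\alpha}{2}m\le-\sigma\) has a strict margin. That theorem, with the localization bounds of Corollary~\ref{cor:quantitative-localized-packets}, supplies packets \((P_n,G_n,S_n)\) with the following properties:
\begin{itemize}
\item \(P_n=P\);
\item \(G_n\rightharpoonup G\) and \(S_n\rightharpoonup S\) weakly in \(L^2(E\times\T^d)\);
\item \(\bigl(\int G_n:S_n\,dx\bigr)dt\stackrel{*}{\rightharpoonup}\bigl(\int G:S\,dx\bigr)dt+m\,dt\);
\item strict admissibility with margin \(\sigma/2\).
\end{itemize}
The statement quantifies over any such sequence, so I would simply fix one.

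\textbf{Step 2: transfer to the solutions.} The lifting hypothesis (Definition~\ref{def:packet-lifting}) provides admissible weak solutions \((u_n,A_n)\) with pressures \(p_n\). Their barycentric fields converge to \((u,A)\), their distributional defects converge to \(R,S\), and their induced data \((P_n^{\rm sol},G_n^{\rm sol},S_n^{\rm sol})\) differ from the packets by an error tending to zero in an admissible topology \(\mathfrak T\).

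I would verify the three requirements of Definition~\ref{def:weak-compactness-realization}.
\begin{itemize}
\item \textbf{Barycentric convergence and the defect equations.} Both are contained in the lifting definition. The limit equations \eqref{eq:weak-compactness-momentum}--\eqref{eq:weak-compactness-conformation} therefore hold with the given \(R,S\), because \((u,p,B,R,S)\) is a smooth Reynolds state.
\item \textbf{Work.} By \eqref{eq:packet-lift-P}, \(P_n^{\rm sol}\,dt=P_n\,dt+(P_n^{\rm sol}-P_n)\,dt\stackrel{*}{\rightharpoonup}P\,dt\), since \(P_n=P\).
\item \textbf{Marginals.} By \eqref{eq:packet-lift-GS}, \(G_n^{\rm sol}\to G\) and \(S_n^{\rm sol}\to S\) in \(\mathcal D'\), because weak \(L^2\) convergence of the packets implies distributional convergence.
\item \textbf{Products.} By \eqref{eq:packet-lift-product} combined with the packet product limit,
\[
  \Bigl(\int_{\T^d}G_n^{\rm sol}:S_n^{\rm sol}\,dx\Bigr)dt
  \stackrel{*}{\rightharpoonup}
  \Bigl(\int_{\T^d}G:S\,dx\Bigr)dt+m\,dt ,
\]
as a sum of two weakly-* convergent measure sequences.
\end{itemize}
This identifies the product defect as \(\mu=m\,dt\).

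\textbf{Step 3: the main obstacle.} The delicate point is matching the mode of convergence required by Theorem~\ref{thm:defect-compactification}, which asks for weak \(L^2\) convergence of the marginals. The admissible topology only gives distributional convergence.

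My first attempt would be to read Definition~\ref{def:weak-compactness-realization} as requiring only the product-measure convergence "in the sense of" that theorem, with marginals identified distributionally. I would then remark that if \(\mathfrak T\) is one of the concrete topologies of Remark~\ref{rem:concrete-lifting-topologies} or Theorem~\ref{thm:strong-error-lifting-certificate}, the solution data inherit \(L^2\)-boundedness from the packets. In that case the distributional limits upgrade to weak \(L^2\) limits by uniqueness of limits.

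Finally, as a consistency check and not as a requirement, I would note that admissibility of the limiting augmented state follows from Theorem~\ref{thm:closed-compactness-framework}, or directly from the strict margin. This completes the compact realization with defects \((R,S,\mu)\). The final sentence of the theorem is then a restatement that the packet-lifting hypothesis is exactly where the conditional input enters.
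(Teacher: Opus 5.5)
Your proposal is correct and follows the paper's own argument. You take the packet properties ($P_n=P$, weak marginal convergence, product limit with defect $m\,dt$) from Theorem~\ref{thm:strict-augmented-converse}, transfer them to the induced solution data through the consequences \eqref{eq:packet-lift-P}--\eqref{eq:packet-lift-product} of an admissible topology, and read off the requirements of Definition~\ref{def:weak-compactness-realization} from Definition~\ref{def:packet-lifting}. Your Step~3 remark on distributional versus weak $L^2$ marginals makes explicit a point that the paper's proof passes over by saying only that the induced data have ``the same distributional limits'' as the packets.
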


\begin{proof}
The packet sequence from Theorem~\ref{thm:strict-augmented-converse} satisfies
\[
  P_n=P,\qquad
  G_n\rightharpoonup G,\qquad
  S_n\rightharpoonup S,
\]
and
\[
  \left(\int_{\T^d}G_n:S_n\,dx\right)dt
  \stackrel{*}{\rightharpoonup}
  \left(\int_{\T^d}G:S\,dx\right)dt+\mu .
\]
The admissible-topology consequences
\eqref{eq:packet-lift-P}--\eqref{eq:packet-lift-product} say that the residual
data induced by the weak solutions have the same distributional limits and the
same product defect as the packet sequence.  The barycentric convergence and
the convergence of distributional defects are part of
Definition~\ref{def:packet-lifting}.  Therefore the weak solutions compactly
realize \((u,A=e^B)\) with defects \((R,S,\mu)\) in the sense of
Definition~\ref{def:weak-compactness-realization}.
\end{proof}

\begin{theorem}[Necessary compactness law for weak-solution realizations]
\label{thm:weak-compactness-realization}
Let a smooth positive-cone profile \((u,A=e^B)\) be compactly realized by
energy--entropy admissible Oldroyd--B weak solutions in the sense of
Definition~\ref{def:weak-compactness-realization}.  Then the induced
\((u,p,B,R,S,\mu)\) is an admissible augmented positive-cone Reynolds state:
\begin{equation}
  P\,dt+\frac{\alpha}{2}
  \left(\int_{\T^d}G:S\,dx\right)dt
  +\frac{\alpha}{2}\mu\le0
  \qquad\hbox{in }\mathcal M(E).
  \label{eq:weak-compactness-budget}
\end{equation}
Consequently, if \(\mu=0\), every compact weak-solution realizing sequence satisfies
the residual-work admissibility inequality of
Theorem~\ref{thm:closed-realization}.  If a smooth target profile violates the
augmented inequality \eqref{eq:weak-compactness-budget} for every possible
product defect allowed by the chosen compactness class, then it cannot be
obtained from that class of admissible weak solutions.
\end{theorem}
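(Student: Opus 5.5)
The plan is to reduce the statement to the measure-level closure already proved in Theorem~\ref{thm:defect-compactification}, applied to the residual data of the realizing sequence.

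\emph{Step 1.} Unpack Definition~\ref{def:weak-compactness-realization}. It provides energy--entropy admissible weak solutions $(u_n,A_n)$ together with induced residual data $(P_n,G_n,S_n)$. These satisfy $P_n\,dt\stackrel{*}{\rightharpoonup}P\,dt$ and
\[
\Bigl(\int_{\T^d}G_n:S_n\,dx\Bigr)dt\stackrel{*}{\rightharpoonup}\Bigl(\int_{\T^d}G:S\,dx\Bigr)dt+\mu
\]
in the sense of Theorem~\ref{thm:defect-compactification}. The limiting pair $(R,S)$ solves \eqref{eq:weak-compactness-momentum}--\eqref{eq:weak-compactness-conformation}, so $(u,p,B,R,S)$ is a positive-cone Reynolds state.

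\emph{Step 2.} Show that each member of the sequence obeys the signed-measure inequality
\[
P_n\,dt+\tfrac{\alpha}{2}\Bigl(\int G_n:S_n\,dx\Bigr)dt\le 0 .
\]
For genuine weak solutions the defects vanish pointwise in time. The energy--entropy inequality is the weak-solution form of Proposition~\ref{prop:defect-work}: the dissipation terms are nonnegative and the defect right-hand side is $-\int R_n:D(u_n)\,dx+\frac{\alpha}{2}\int G_n:S_n\,dx$. After the pressure-free reduction (Lemma~\ref{lem:gauge}, Proposition~\ref{prop:canonical-pressure-free}), this contribution is nonpositive as a measure.

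\emph{Step 3.} Test against $\varphi\in C_c(E)$ with $\varphi\ge0$ and pass to the limit using the two weak-* convergences. This is exactly the argument of Theorem~\ref{thm:defect-compactification}. Alternatively, apply Theorem~\ref{thm:closed-compactness-framework} with $\mu_n=0$. Either route gives \eqref{eq:weak-compactness-budget}.

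\emph{Corollaries.} If $\mu=0$, the inequality reduces to the pointwise budget of Theorem~\ref{thm:closed-realization}. A target profile that violates the inequality for every admissible $\mu$ contradicts what was just proved, so it cannot be realized by that compactness class.

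\emph{Main obstacle.} The hardest point is Step 2: giving the per-$n$ admissibility a precise meaning at weak-solution regularity. Here $A_n^{-1}$ and the entropy identity are only available through the energy--entropy inequality. I would take the admissibility of $(P_n,G_n,S_n)$ as part of ``energy--entropy admissible'' in Definition~\ref{def:weak-compactness-realization}. The sequence is then admissible residual data in the sense required by Theorem~\ref{thm:defect-compactification}, and the proof becomes purely functional-analytic.
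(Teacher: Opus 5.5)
Your proposal is correct and follows the paper's proof essentially step for step. Like the paper, you read off the limiting positive-cone Reynolds state from the distributional limits, use energy--entropy admissibility plus the pressure-free reduction to get a nonpositive residual-work measure for each $n$, pass to the limit via Theorem~\ref{thm:defect-compactification}, and obtain the $\mu=0$ case and the non-realization claim as you describe. The paper also simply asserts the per-$n$ admissibility at weak-solution regularity rather than deriving it, so reading that admissibility into Definition~\ref{def:weak-compactness-realization} puts your argument at the same level of rigor as the paper's.
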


\begin{proof}
The distributional limits in
\eqref{eq:weak-compactness-momentum}--\eqref{eq:weak-compactness-conformation}
are exactly the equations in Definition~\ref{def:weak-compactness-realization}.
Since \(A=e^B\), the limiting state remains in the positive cone, and therefore
\((u,p,B,R,S,\mu)\) is an augmented positive-cone Reynolds state in the sense of
Definition~\ref{def:augmented-reynolds-state}, with the residuals \(R\) and
\(S\) generated by the weak compactness defects.

For the realizing weak solutions, energy--entropy admissibility gives the
non-positive residual-work measure at the approximate level after the same
pressure-free reduction used in Proposition~\ref{prop:canonical-pressure-free}.
By assumption, the pressure-free work converges to \(P\) and the signed product
converges with defect \(\mu\).  The compactification theorem,
Theorem~\ref{thm:defect-compactification}, therefore gives
\eqref{eq:weak-compactness-budget}.  If \(\mu=0\), the measure inequality is
absolutely continuous with density
\[
  P(t)+\frac{\alpha}{2}\int_{\T^d}G(t):S(t)\,dx,
\]
and hence recovers the residual-work admissibility inequality.  The final
claim is the contrapositive of \eqref{eq:weak-compactness-budget}.
\end{proof}

\begin{theorem}[Packet-ledger obstruction to automatic zero-defect closure]
\label{thm:packet-ledger-obstruction}
Let \(\mathcal Y\) be a Hausdorff topological vector space in which summable
series and limits are meaningful.  Suppose a positive-cone packet-lifting
scheme carries a coarse conformation residual \(Q_q\in\mathcal Y\), an
accumulated weak stretching defect \(D_q\in\mathcal Y\), and intended packet
averages \(H_q\in\mathcal Y\), with update laws
\begin{equation}
  Q_{q+1}=Q_q-H_q+E_q,
  \qquad
  D_{q+1}=D_q+H_q+F_q .
  \label{eq:packet-ledger-update}
\end{equation}
Assume that the error series
\[
  \sum_{q=0}^{\infty}E_q,
  \qquad
  \sum_{q=0}^{\infty}F_q
\]
converge in \(\mathcal Y\), and that
\[
  Q_q\to0,\qquad D_q\to D_\infty
\]
in \(\mathcal Y\).  Then
\begin{equation}
  D_\infty
  =
  Q_0+D_0+\sum_{q=0}^{\infty}(E_q+F_q).
  \label{eq:packet-ledger-limit}
\end{equation}
In particular, when \(D_0=0\), zero-defect closure \(D_\infty=0\) requires an
exact cancellation of the initial coarse residual \(Q_0\) by the summable
non-principal errors.  A stretching-packet cancellation of a nonzero \(Q_0\)
therefore does not by itself select a standard weak-solution limit.
\end{theorem}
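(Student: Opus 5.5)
The statement is a telescoping identity in a Hausdorff topological vector space, so the plan is to sum the two update laws \eqref{eq:packet-ledger-update} and pass to the limit. First add the two recursions. The intended packet averages \(H_q\) appear with opposite signs and cancel exactly, giving
\[
  Q_{q+1}+D_{q+1}=Q_q+D_q+E_q+F_q .
\]
Summing from \(q=0\) to \(N-1\) yields the finite ledger identity
\[
  Q_N+D_N=Q_0+D_0+\sum_{q=0}^{N-1}(E_q+F_q),
\]
which uses only the vector-space operations and no topology.

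Next pass to the limit \(N\to\infty\). Addition is continuous in a topological vector space, so \(Q_N+D_N\to 0+D_\infty\) by the assumed convergences \(Q_q\to0\) and \(D_q\to D_\infty\). Convergence of the two error series gives convergence of the partial sums of \(\sum_q(E_q+F_q)\) to \(\sum_q E_q+\sum_q F_q\), again by continuity of addition. Hence the right-hand side converges to
\[
  Q_0+D_0+\sum_{q=0}^{\infty}(E_q+F_q).
\]
Limits are unique because \(\mathcal Y\) is Hausdorff, so the two limits coincide, and this is \eqref{eq:packet-ledger-limit}.

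The consequence is then read off directly. When \(D_0=0\), the condition \(D_\infty=0\) is equivalent to
\[
  Q_0=-\sum_{q=0}^{\infty}(E_q+F_q),
\]
which is an exact cancellation of the initial coarse residual by the summable non-principal errors. Because the \(H_q\) drop out of the ledger, no choice of packet averages can by itself transfer a nonzero \(Q_0\) to zero total defect.

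None of these steps is a real obstacle. The only point needing care is stating explicitly that the series converge, so that the two partial-sum limits can be split, and that the Hausdorff property is what identifies the two limits. The interpretive sentence about weak-solution selection is a restatement of the identity, not an additional claim to prove.
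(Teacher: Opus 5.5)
Your proposal is correct and follows the paper's proof essentially verbatim: you add the two update laws so that the packet averages $H_q$ cancel, telescope to $Q_N+D_N=Q_0+D_0+\sum_{q<N}(E_q+F_q)$, and pass to the limit $N\to\infty$. You also make explicit two points the paper leaves implicit, namely continuity of addition (to split the series and combine the limits) and the Hausdorff property (to identify the two limits).
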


\begin{proof}
Adding the two identities in \eqref{eq:packet-ledger-update} gives
\[
  Q_{q+1}+D_{q+1}
  =
  Q_q+D_q+E_q+F_q .
\]
Iterating from \(q=0\) to \(q=N-1\) yields
\[
  Q_N+D_N
  =
  Q_0+D_0+\sum_{q=0}^{N-1}(E_q+F_q).
\]
The assumed convergence of \(Q_N\), \(D_N\), and the error series gives
\eqref{eq:packet-ledger-limit}.  The final assertion follows by setting
\(D_0=0\).  The principal average \(H_q\) cancels from the sum because it is
subtracted from the smooth residual and added to the weak stretching ledger.
\end{proof}

\begin{remark}[Meaning of the ledger obstruction]
Theorem~\ref{thm:packet-ledger-obstruction} does not rule out standard weak
solutions.  It says only that a lifting mechanism based solely on high-high
stretching averages cannot both cancel a generic nonzero coarse conformation
residual and make the associated weak stretching defect disappear.  A
zero-defect theorem would need a genuinely additional selection mechanism:
for example a non-stretching conformation corrector, a commutator regularity
threshold, or an entropy principle strong enough to exclude the defect.
\end{remark}

\begin{remark}[The lifting interface]
Theorem~\ref{thm:closed-compactness-framework} gives a closed compactness
framework, and Theorem~\ref{thm:weak-compactness-realization} places genuine
weak-solution realizing sequences inside it.  Together with
Theorem~\ref{thm:strict-augmented-converse}, this proves the converse at the
residual-data level and identifies the PDE input needed beyond residual
compactness.  Theorem~\ref{thm:strong-error-lifting-certificate} turns that PDE
input into a concrete error target: strong residual-data matching, or one
strong factor plus modulated weak annihilation of the other, is enough to
certify the packet lift.  A coupled weak-solution realization must therefore
produce exact Oldroyd--B weak solutions whose induced residual data meet this
certificate while preserving \(A>0\), the energy--entropy inequality, and, if
one wants a standard rather than defect-augmented weak limit, a zero-defect
selection condition not supplied by the principal packet average alone.  The
preparatory tools in Lemmas~\ref{lem:sqrt-cone-dictionary}--\ref{lem:anti-divergence-gain}
make two parts of this lifting problem explicit: the residual target must first
be written as square-root-compatible rank-one positive-cone atoms, and all
non-principal packet interactions must be charged through oscillatory averaging
or anti-divergence gains.
\end{remark}

\section{Discussion: Residual Work, Compactness, and Numerical Diagnostics}
\label{sec:discussion}

The residual-work law reduces admissibility to one signed ledger.  After
pressure and mean gauges have been removed, the momentum residual contributes
the scalar work \(P\), while the conformation residual contributes only through
\(\int G:S\).  This gives a sharp cone at fixed time, the windowed estimate
\(W_E\le(\alpha/2)\eta_E L_ES_E\), and the least-cost Hilbert projection that
repairs a violating residual.  The same geometry also explains the weak
compactness threshold: if both \(G_n\) and \(S_n\) converge only weakly, the
product \(G_n:S_n\) may carry an additional measure \(\mu\), and retaining
\(\mu\) is exactly what closes the residual ledger.

For numerical and closure applications, the useful object is the defect gap
\[
  \mathfrak D_E(P,G,S)
  =
  \int_E\left[
  P(t)+\frac{\alpha}{2}\int_{\mathbb T^d}G(t):S(t)\,dx
  \right]_+dt .
\]
Proposition~\ref{prop:error-bar-defect-gap} makes this gap stable under
\(L^1\times L^2\times L^2\) data errors, while
Corollary~\ref{cor:conservative-channel-certificate} gives an alignment-free
certificate when only upper bounds for \(L_E\), \(S_E\), and \(\eta_E\) are
available.  Proposition~\ref{prop:concrete-layer} supplies a manufactured
calibration case with known scaling; filtered DNS or coarse-grid data enter the
same diagnostic by replacing the explicit ansatz with reconstructed residuals.

The local positive-cone correctors should be read as compatibility tools.  They
show that the optimal paying direction is not excluded by the positive-cone
geometry of the conformation variable.  Global realization is then governed by
the compactness topology: strong residual-data convergence preserves the
unaugmented budget, whereas weak packet limits charge the missing product to
the defect measure.  This is the separation between local repair geometry and
global residual admissibility.

\begin{proposition}[Necessary signed component of any corrector]
\label{prop:necessary-corrector}
Let \(P(t)>0\) and \(G(t)\not\equiv0\).  If a correction
\(\widetilde S\) makes the pressure-free work admissible, then its projection
onto \(-G\) must satisfy
\[
  \left\langle \widetilde S,
  -\frac{G}{\norm{G}_{L^2}}\right\rangle_{L^2}
  \ge
  \frac{2P(t)}{\alpha\norm{G}_{L^2}} .
\]
In particular, a correction that is \(L^2\)-orthogonal to \(G\) cannot pay
positive pressure-free work.
\end{proposition}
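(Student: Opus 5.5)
The plan is to read the statement as a direct rewriting of the pressure-free admissibility inequality of Proposition~\ref{prop:pressure-free}, applied to the corrected residual \(\widetilde S\) at the fixed time \(t\). The phrase ``makes the pressure-free work admissible'' will be taken to mean
\[
  P(t)+\frac{\alpha}{2}\int_{\T^d}G(t):\widetilde S\,dx\le0,
\]
that is, \(\widetilde S\in\calA(P,G)\) in the notation of Proposition~\ref{prop:metric-projection}.

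First, I rewrite the integral as the \(L^2\) inner product \(\langle G,\widetilde S\rangle_{L^2}\). The admissibility inequality then becomes
\[
  -\langle \widetilde S,G\rangle_{L^2}\ge \frac{2P(t)}{\alpha}.
\]
Since \(G(t)\not\equiv0\), the norm \(\norm{G}_{L^2}\) is strictly positive. Dividing by it gives
\[
  \left\langle \widetilde S,-\frac{G}{\norm{G}_{L^2}}\right\rangle_{L^2}\ge\frac{2P(t)}{\alpha\norm{G}_{L^2}}.
\]
The left-hand side is the signed component of \(\widetilde S\) along the unit vector \(-G/\norm{G}_{L^2}\), so this is the displayed bound. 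Note that positivity of \(P(t)\) is not needed for this inequality itself. It is needed only so that the right-hand side is strictly positive.

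For the orthogonal case, suppose \(\langle\widetilde S,G\rangle_{L^2}=0\). Then the left-hand side vanishes, while the right-hand side is strictly positive because \(P(t)>0\). This is a contradiction, in agreement with Proposition~\ref{prop:orthogonal-nonexample}.

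The only potential obstacle is interpretive. If ``correction'' means an additive increment \(Z\) to an existing residual \(S_0\), the same argument applies to the full residual \(S_0+Z\). In that reading the bound on \(Z\) alone acquires the shift \(\langle S_0,G\rangle\), which is the situation handled by Proposition~\ref{prop:subspace-repair}. I would state the result for the full corrected residual \(\widetilde S\) and remark that it reduces to the increment form when \(S_0\) is orthogonal to \(G\) or when \(S_0=0\).
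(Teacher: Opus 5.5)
Your proposal is correct and is essentially the paper's proof: the paper also rewrites admissibility as $-\int_{\T^d}G:\widetilde S\,dx\ge 2P(t)/\alpha$ and divides by $\norm{G}_{L^2}>0$. The orthogonal case follows, as you say, because the right-hand side is strictly positive. Your reading of $\widetilde S$ as the full corrected residual matches the paper's usage.
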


\begin{proof}
The admissibility inequality requires
\[
  -\int_{\T^d}G:\widetilde S\,dx
  \ge \frac{2P(t)}{\alpha}.
\]
Dividing by \(\norm{G}_{L^2}\) gives the projection lower bound.
\end{proof}

\begin{remark}[Open problem: positive-cone weak-solution lifting]
\label{rem:realization-open}
The residual-data converse constructs localized positive-cone packets with a
prescribed augmented product defect.  The open PDE problem is to lift such
packets to energy--entropy admissible Oldroyd--B weak solutions whose induced
momentum Reynolds stress, conformation residual, and \(G:S\) product defect
converge to the same augmented state.  A zero-defect weak-solution limit would
also require a selection mechanism removing the packet-ledger defect identified
in Theorem~\ref{thm:packet-ledger-obstruction}.
\end{remark}

\section{Conclusion}

This paper proves a residual-work compatibility theory for positive-cone
viscoelastic Reynolds states.  The PDE input is the Oldroyd--B/FENE-P
energy--entropy cancellation.  Once incompressible transport, upper-convected
stretching, polymeric stress work, pressure tensors, and spatial means have
been accounted for, the remaining momentum residual has a gauge-invariant
pressure-free work \(P\).  Positive work is admissible only when it is paid by
the entropy-dual conformation channel:
\[
  P(t)+\frac{\alpha}{2}\int_{\T^d}G(t):S(t)\,dx\le0,
  \qquad G=I-A^{-1}.
\]
Equivalently, on each time window,
\[
  W_E\le \frac{\alpha}{2}\eta_E L_ES_E .
\]
This inequality is sharp.  It gives the least-cost paying direction \(-G\), the
explicit Hilbert-space projection that repairs a proposed conformation
residual, and the closure-subspace obstruction obtained by replacing \(G\) with
its resolved projection.

\paragraph{Outputs of the theory.}
The results give a gauge-invariant residual admissibility test for smooth
positive-cone Reynolds profiles, filtered data, reduced closures, and
coarse-grid numerical residuals.  They give an exact metric repair formula, so
the distance from a proposed residual to the admissible class is computable.
They identify the compactness threshold: strong convergence of \(P,G,S\) closes
the signed-work budget, while weak--weak convergence of \(G\) and \(S\) closes
only after retaining the product-defect measure \(\mu\).  They also extend the
ledger to FENE-P through
\[
  G_b(C)=\frac{b-d}{b-\operatorname{tr}C}I-C^{-1},
\]
so the finite-extensibility boundary becomes part of the residual-work
geometry.  The diagnostic workflow and the finite-thickness shear-layer
calibration show how these outputs are evaluated in a numerical postprocessor:
compute \(P\), \(G\), and \(S\); compare the signed gap with the data-error bar;
and, when needed, compute the least-cost repair or the projected closure repair.

\paragraph{Scope and open direction.}
The theorem is a residual-admissibility and compactness result.  Its natural
inputs are residual traces produced by filtering, closure modeling, numerical
reconstruction, or residual-level packet limits.  Exact smooth solutions have
zero residual trace, while a full weak-solution realization of the residual
packets requires an additional PDE construction.  The resulting open problem is
precise: lift the localized positive-cone residual packets to
energy--entropy-admissible Oldroyd--B weak solutions while controlling the
momentum Reynolds stress, the conformation residual, the pressure recovery, and
the \(G:S\) product defect.  If the target is a standard weak limit with no
retained stretching defect, the construction must also select away the
packet-ledger defect identified in Theorem~\ref{thm:packet-ledger-obstruction}.

The main message is a signed-work ledger for positive-cone viscoelastic
residuals.  Pressure can be gauged away, but positive residual work must be
paid by an entropy-dual conformation residual, by a retained product-defect
measure in weak limits, or by a non-vanishing residual-data error in an
approximation of an inadmissible profile.


\begin{thebibliography}{99}

\bibitem{BarrettBoyaval}
J.~W. Barrett and S.~Boyaval.
\newblock Existence and approximation of a regularized Oldroyd--B model.
\newblock \emph{Mathematical Models and Methods in Applied Sciences},
21:1783--1837, 2011.

\bibitem{BealeKatoMajda}
J.~T. Beale, T.~Kato, and A.~Majda.
\newblock Remarks on the breakdown of smooth solutions for the 3-D Euler
equations.
\newblock \emph{Communications in Mathematical Physics}, 94:61--66, 1984.

\bibitem{Bird}
R.~B. Bird, R.~C. Armstrong, and O.~Hassager.
\newblock \emph{Dynamics of Polymeric Liquids. Vol. 1: Fluid Mechanics}.
\newblock Wiley, second edition, 1987.

\bibitem{CheminMasmoudi}
J.-Y. Chemin and N.~Masmoudi.
\newblock About lifespan of regular solutions of equations related to
viscoelastic fluids.
\newblock \emph{SIAM Journal on Mathematical Analysis}, 33:84--112, 2001.

\bibitem{ChilcottRallison1988}
M.~D. Chilcott and J.~M. Rallison.
\newblock Creeping flow of dilute polymer solutions past cylinders and spheres.
\newblock \emph{Journal of Non-Newtonian Fluid Mechanics}, 29:381--432, 1988.

\bibitem{ConstantinKliegl}
P.~Constantin and M.~Kliegl.
\newblock Note on global regularity for two-dimensional Oldroyd--B fluids with
diffusive stress.
\newblock \emph{Archive for Rational Mechanics and Analysis}, 206:725--740,
2012.

\bibitem{DeLellisSzekelyhidi}
C.~De Lellis and L.~Szekelyhidi Jr.
\newblock The Euler equations as a differential inclusion.
\newblock \emph{Annals of Mathematics}, 170:1417--1436, 2009.

\bibitem{ElgindiRousset}
T.~M. Elgindi and F.~Rousset.
\newblock Global regularity for some Oldroyd--B type models.
\newblock \emph{Communications on Pure and Applied Mathematics}, 68:2005--2021,
2015.

\bibitem{FattalKupferman}
R.~Fattal and R.~Kupferman.
\newblock Constitutive laws for the matrix-logarithm of the conformation tensor.
\newblock \emph{Journal of Non-Newtonian Fluid Mechanics}, 123:281--285, 2004.

\bibitem{FattalKupferman2005}
R.~Fattal and R.~Kupferman.
\newblock Time-dependent simulation of viscoelastic flows at high Weissenberg
number using the log-conformation representation.
\newblock \emph{Journal of Non-Newtonian Fluid Mechanics}, 126:23--37, 2005.

\bibitem{GuillopeSaut}
C.~Guillope and J.-C. Saut.
\newblock Existence results for the flow of viscoelastic fluids with a
 differential constitutive law.
\newblock \emph{Nonlinear Analysis}, 15:849--869, 1990.

\bibitem{LeiMasmoudiZhou}
Z.~Lei, N.~Masmoudi, and Y.~Zhou.
\newblock Remarks on the blowup criteria for Oldroyd models.
\newblock \emph{Journal of Differential Equations}, 248:328--341, 2010.

\bibitem{LionsMasmoudi}
P.-L. Lions and N.~Masmoudi.
\newblock Global solutions for some Oldroyd models of non-Newtonian flows.
\newblock \emph{Chinese Annals of Mathematics. Series B}, 21:131--146, 2000.

\bibitem{MasmoudiFENE}
N.~Masmoudi.
\newblock Well-posedness for the FENE dumbbell model of polymeric flows.
\newblock \emph{Communications on Pure and Applied Mathematics}, 61:1685--1714,
2008.

\bibitem{Oldroyd}
J.~G. Oldroyd.
\newblock On the formulation of rheological equations of state.
\newblock \emph{Proceedings of the Royal Society of London. Series A},
200:523--541, 1950.


\bibitem{DiPernaMajda}
R.~J. DiPerna and A.~J. Majda.
\newblock Oscillations and concentrations in weak solutions of the
incompressible fluid equations.
\newblock \emph{Communications on Pure and Applied Mathematics}, 40:301--345,
1987.

\bibitem{FeireislBook}
E.~Feireisl.
\newblock \emph{Dynamics of Viscous Compressible Fluids}.
\newblock Oxford University Press, 2004.

\bibitem{LionsIncompressible}
P.-L. Lions.
\newblock \emph{Mathematical Topics in Fluid Mechanics. Vol. 1: Incompressible
Models}.
\newblock Oxford University Press, 1996.

\bibitem{PengEndpoint2026}
S.~Peng.
\newblock Pressure quotients and endpoint continuation for non-diffusive
viscoelastic flows.
\newblock arXiv:2606.25258v2, 2026.

\bibitem{Renardy}
M.~Renardy.
\newblock \emph{Mathematical Analysis of Viscoelastic Flows}.
\newblock SIAM, 2000.

\bibitem{TuWangWen}
Y.~Z. Tu, Y.~H. Wang, and H.~Y. Wen.
\newblock The Cauchy problem for an inviscid and non-diffusive Oldroyd--B model
in two dimensions.
\newblock \emph{Nonlinear Analysis: Real World Applications}, 79:104100, 2024.

\end{thebibliography}
\end{document}